\newtheorem{theorem}{Theorem}[section]
\newtheorem{definition}[theorem]{Definition}
\newtheorem{lemma}[theorem]{Lemma}
\newtheorem{corollary}[theorem]{Corollary}
\newtheorem{conjecture}[theorem]{Conjecture}
\newtheorem*{remark}{Remark}
\newcommand{\be}{\begin{equation}}
\newcommand{\ee}{\end{equation}}
\newcommand{\pl}{\eta_i^+}
\newcommand{\mi}{\eta_i^-}
\newcommand{\dd}{\mathrm{d}}
\newcommand{\al}{\alpha}
\newcommand{\Lai}{{\Lambda\backslash i}}
\newcommand{\atanh}{{\rm atanh}}
\newcommand{\arccotanh}{{\rm arccotanh}}
\newcommand{\plO}{\omega_{i_0}^{+,i}}
\newcommand{\miO}{\omega_{i_0}^{-,i}}
\numberwithin{equation}{section}
\DeclareMathOperator{\lab}{lab}
\DeclareMathOperator{\inter}{int}
\DeclareMathOperator{\type}{type}
\DeclareMathOperator{\cotanh}{cotanh}
\DeclareMathOperator{\ex}{ex}
\newcommand{\interior}[1]{%
	{\kern0pt#1}^{\mathrm{o}}%
}
\def\a{\alpha}
\def\Z{\mathbb{Z}}
\def\R{\mathbb{R}}
\def\N{\mathbb N}
\title{Dynamical Gibbs-non-Gibbs transitions in lattice 
	Widom-Rowlinson models with hard-core and soft-core interactions}
\author
{
 Sascha Kissel\footnote{
   Ruhr-Universit\"at Bochum, Fakult\"at f\"ur Mathematik, Universit\"atsstra\ss e 150, 44780 Bochum, Germany.
   E-mail: {\tt sascha.kissel@ruhr-uni-bochum.de},  {\tt christof.kuelske@ruhr-uni-bochum.de}
 }
 \and Christof K\"ulske\footnotemark[\value{footnote}] 
 }
\date{\today}
\begin{document}
	\maketitle 
	
	\begin{abstract} We consider the Widom-Rowlinson model 
		on the lattice $\Z^d$ in two versions, comparing the cases of a hard-core repulsion and of a 
		soft-core repulsion between particles 
		carrying opposite signs. 
		For both versions we investigate their 
		dynamical Gibbs-non-Gibbs transitions under an independent 
		stochastic symmetric spin-flip dynamics.  
		While both models have a similar phase transition in the high-intensity regime in equilibrium, 
		we show that they behave differently under time-evolution: 
		The time-evolved soft-core model is Gibbs for small times and loses the Gibbs property 
		for large enough times. 
		By contrast, the time-evolved hard-core model loses the Gibbs property immediately, and   
		for asymmetric intensities, shows a transition back to the Gibbsian regime at a sharp transition time. 
		%
		%
		%
		%
		%
		%
		%which arises by weakening the Hard-Core constraint of the discrete Widom-Rowlinson model. This model is called SOft-Core Widom-Rowlinson model where $+$ and $-$ particles are allowed to be connected but will be punished by some constant $\beta$.\\
		% We are investigating the behaviour of pressure of this model and it is qualitatively the same as in the Curie-Weiss model. The symmetric model has a second order phase transition and we give a sharp value of the temperature where the transition occurs. This will be done by a large deviation analysis. \\
		% In the second part of this work we consider an independent stochastic spin-flip of the soft-core model. We establish that the time evolved measure is Gibbs for small times or hight temperature.

	\end{abstract}
	\textbf{AMS 2000 subject classification:} 82B20, 82B26, 82C20\vspace{0.3cm}\\ 
	\textbf{Keywords:} Widom-Rowlinson model, Gibbs measures, non-Gibbsian measures, stochastic dynamics,  
	dynamical 	Gibbs-non-Gibbs transitions, 
	Peierls argument, Dobrushin uniqueness, percolation, phase transitions. 
	\newpage
	\section{Introduction}
		Dynamical Gibbs-non{-}Gibbs transitions have attracted much attention over the last 
		years. This started from an investigation of the Ising model under a high-temperature 
		Glauber time-evolution on the lattice in 
		\cite{enter-fernandez-hollander-redig02}. It was found that, in zero external magnetic field, 
		the Gibbs property is lost at a finite transition time, after which the measure continues
		to be non-Gibbsian. The loss of the Gibbs property is indicated by a very 
		long-range (discontinuous) dependence 
		of finite-volume conditional probabilities. When such discontinuities occur they 
		are related to a hidden phase transition of an internal system which provides a mechanism 
		to carry the influence of variations of boundary conditions over very long distances. As there 
		are model-dependent different mechanisms of such phase-transitions, also a  variety of types of associated 
		Gibbs-non{-}Gibbs transitions 
		may occur. 
		For more related work on dynamical Gibbs-non{-}Gibbs transitions in a Glauber-evolved 
		Ising model, and beyond, see \cite{fernandez-pfister97},\cite{enter-fernandez-sokal93},\cite{kuelske-le07},\cite{kraaij-redig-zuijlen17}.
		
		The present paper is an essential piece in a series of investigations 
		in which we study Gibbs-non-Gibbs transitions of the {\em Widom-Rowlinson model} 
		under stochastic spin-flip-dynamics in {\em various geometries}. 	
		The Widom-Rowlinson model is{,} in its original form \cite{widom-rowlinson70}{,} a model for 
		point particles in Euclidean space which carry a plus-sign or a minus-sign, and 
		which interact via a hardcore repulsion which forbids particles of opposite 
		sign to become closer than a fixed radius. 
		It is one of the simplest continuum models for which  a phase transition has been proved \cite{chayes-chayes-kotecky95}, and analyzed. 
		An investigation of the Euclidean hard-core Widom-Rowlinson model under 
		a stochastic spin-flip dynamics was given in \cite{jahnel-kuelske17c, jahnel-kuelske17a}. 
		In this work a strong form of non-Gibbsian 
		behavior{,} which appears to be more severe than for instance 
		in the case of the Ising model{,} was found, 
		including full measure discontinuities 
		of the time-evolved conditional probabilities, and an immediate loss of the Gibbs property. 
		The latter is quite unusual for a lattice model, see however the examples in mean-field  
		\cite{hollander-redig-zuijlen15},  on a  tree \cite{enter-ermolaev-Iacobelli-kuelske12}, and for a transformed measure not coming from a time-evolution in \cite{kuelske-le-redig04}
		
		Motivated by  the strong anomalies which occur for the Widom-Rowlinson model in continuum, one becomes interested in the behavior of the model 
		in other geometries:  as a  mean-field model, 
		on the lattice, on a tree, on more general graphs, 
		or in a long-range Kac-version.  
		For a recent overview, see \cite{kuelske19}. 
		
		In the present paper we focus on the Widom-Rowlinson model on the integer lattice, where we 
		treat and compare two versions. 
		The hard-core version comes with a {\em hard-core constraint} which forbids 
		particles of opposite sign to occupy neighboring lattice sites (see also \cite{gallavotti-lebowitz71,  higuchi-takei04}), the soft-core version comes with a soft  
		constraint where such pairs of opposite signs are not strictly forbidden, only energetically 
		disfavored with a repulsion constant $\beta$.
		% {The hard-core model was first investigated by \cite{gallavotti-lebowitz71} where also a phase transition in the %absence of a external magnetic field has been observed. While for the Ising model \emph{with an external field} on the %lattice there is a unique Gibbs measure, which can be proven by the Lee-Yang theorem \cite{lee-yang52}, it is still %unclear if there is a phase transition for the hard-core Widom-Rowlinson model with external field. In \cite{higuchi-%takei04} results according to this problem can be found.}
		The soft-core model has a mean-field analogue which was analyzed in \cite{kissel-kuelske18}, where 
		the loss of the sequential Gibbs property under a stochastic independent spin-flip dynamics 
		was found, at a finite transition time.  
		A closed solution for the equilibrium model was given and it was shown that the 
		sets of bad empirical measures (discontinuity points of a limiting 
		specification kernel) consist of finitely many curves which evolve with time. 
		For the Widom-Rowlinson model on a Cayley tree so far there are detailed equilibrium 
		results (see \cite{rozikov-book} for the hard-core model, and \cite{kissel-kuelske-rozikov19} 
		for the soft-core model), but no dynamical results yet. 
		
		The remainder of the paper is organized as follows. In {Section} \ref{Sec: 2} we introduce equilibrium models, time-evolution and state 
		our results. {In Section 3 and Section 4} the proofs are found. 
		Theorem \ref{thm : Sc Phase} of {Subsection} \ref{Sec: 2.1} ensures that both models have a phase transition in equilibrium,  
		at sufficiently large symmetric particle intensities. The proof relies on a Peierls argument 
		which treats both models in a unified way. In {Subsection} \ref{sec: dob} we discuss the Dobrushin uniqueness theory in relation 
		to our model, and present regions in the parameter space of a priori measures and repulsion strength 
		for which Dobrushin uniqueness holds, see Theorem \ref{thm: dob hc},\ref{Thm: beta B},\ref{thm: neigh} and Figure \ref{fig: Dob sc}. This is first described in an equilibrium setup, but will later be used for the dynamical 
		model. 
		In {Subsection} \ref{sec: time} our results on dynamical Gibbs-non-Gibbs transitions are presented, starting with 
		the hard-core model. 
		Theorem \ref{thm: non-gibbs hc} gives a sharp result for the hard-core model in the percolation regime, 
		on the immediate loss of the Gibbs property with
		full-measure discontinuities.  
		The proof relies on a cluster representation of 
		single-site conditional probabilities. Theorem \ref{thm: non-gibbs hc low} describes the weaker singularities in the non-percolation 
		regime. {In both cases the Gibbs property for the asymmetric model is recovered after a sharp time which is stated in Theorem \ref{thm: hc is gibbs}}.  In view of these two theorems the 
		dynamical lattice hard-core model behaves as the corresponding Euclidean hard-core model, but 
		different to the lattice soft-core model, as the following results show.  
		Indeed, Theorem \ref{thm: short time gibbs} asserts that for the lattice soft-core model 
		there is a short-time Gibbsian behavior,  with a proof based on Dobrushin uniqueness. Theorems \ref{thm: gibbs all time} and \ref{thm: gibbs high asm}  give more sufficient criteria for the Gibbs property.  
		Theorem \ref{thm: sc non gibbs}  on the opposite ensures large-time non-Gibbsianness, by an argument which reduces 
		the question to the {corresponding} statement 
		for the dynamical Ising model for which it is known to be true.  
	
	\section{Setup and main-results}\label{Sec: 2}
	\subsection{The hard-core and soft-core Widom-Rowlinson model and phase transition}\label{Sec: 2.1}
	We consider the single-state space $E:=\{-1,0,1\}$ and the site space $\Z^d$. The configuration space $\Omega := E^{\Z^d}$ is equipped with the product $\sigma$-Field $\mathcal{F}$ given by the discrete topology on $E$. For a finite set $\Lambda$ of $\Z^d$ we write $\Lambda\Subset \Z^d$.  By $\Omega_\Lambda$ and $\mathcal{F}_\Lambda$ we denote the restriction to some set $\Lambda\subset\Z^d$. For neighboring sites $i,j\in \Z^d$, i.e. $\Vert i-j\Vert_1 =1$, we write $i\sim j$. By $\mathcal{E}_\Lambda^b:=\{\{i,j\}\subset \Z^d\,:\, \Lambda\cap \{i,j\}\neq \emptyset \,,i\sim j \}$ we denote the set of bonds in $\Lambda\cup \partial \Lambda$, where $\partial \Lambda:=\{j\in \Lambda\,:\, i \sim j \text{ for some }i\in \Lambda\}$ is the outer boundary of $\Lambda$.
	
	If a function $f\,:\,\Omega \rightarrow \R$ is $\mathcal{F}_\Lambda$-measurable for some $\Lambda \Subset Z^d$ then $f$ is called local function. A function $f$ is quasilocal on $\Omega$ if there exists a sequence of local functions $(f_n)_{n\in \N}$ with $\lim_{n\rightarrow \infty} \Vert f-f_n\Vert_\infty = 0$. 
	
	A specification $\gamma=(\gamma_\Lambda)_{\Lambda \Subset \Z^d}$ is a family of probability kernel $\gamma_\Lambda$ from $\mathcal{F}_{\Lambda^c}$ to $\mathcal{F}$ which satisfy the properness condition $\gamma_\Lambda(B\vert \cdot) = \mathds{1}_B(\cdot)$ and the consistency condition $\gamma_\Lambda(\gamma_\Delta(A\vert\cdot )\vert \omega) = \gamma_\Lambda(A\vert \omega)$, for all $\Delta \subset \Lambda\Subset \Z^d, \omega\in {\Lambda^c}, A\in \mathcal{F}$ and $B\in\mathcal{F}_{\Lambda^c}$. A specification is called quasilocal if for every $\Lambda \Subset \Z^d$ and every quasilocal function $f\,:\, \Omega\rightarrow \R$ the function
	\begin{align*}
			\gamma_\Lambda(f\vert \cdot ) := \int_\Omega \gamma_\Lambda(d\omega\vert \cdot ) f(\omega) 
	\end{align*} is quasilocal.
	We say a measure $\mu$ on $(\Omega,\mathcal{F})$ is admitted by a specification $\gamma$ if the DLR-equation 
	\begin{align*}
	\mu(A\vert \mathcal{F}_{\Lambda^c} )(\cdot) = \gamma_\Lambda(A\vert \cdot) \,\, \mu\text{-a.s.}
	\end{align*} holds for every $\Lambda \Subset \Z^d$ and $A\in \mathcal{F}$. If $\mu$ is admitted by a quasilocal specification we call $\mu$ a Gibbs measure. We define the set of all Gibbs measures for a quasilocal specification $\gamma$ by $\mathcal{G}(\gamma)$. We say a phase transition occurs if there are multiple  Gibbs measures for a specification. 
	
The interpretation of the spin state is as follows.	If $\omega_i=0$ we say that there is no particle at site $i$, if $|\omega_i|=1$ we say that a particle is present at $i$, where we interpret the value $-1$ as particle with a negative spin, and $+1$ as a particle with positive spin. We are interested in a model with hard-core repulsion in the sense that $+$ and $-$-particle are not allowed to be nearest neighbors, and also a related model with a soft-core repulsion where particles with different sign of the spin value can be nearest neighbors but it will be punished by a parameter $\beta>0$.
	
	\begin{definition}\label{defi: models}
		Let $h\in\R$ and $\beta,\lambda>0$.
		\begin{itemize}
			\item The specification $\gamma_{\lambda,h}^{hc}$  for the discrete hard-core Widom-Rowlinson model with parameters $h$ and $\lambda$ is defined via
			\begin{align*}
			\gamma^{hc}_{\Lambda,\lambda,h}(\omega\vert \eta ) =\frac{1 }{Z^{hc}_\eta} I_{\Lambda}^{hc}(\omega_\Lambda\eta_{\Lambda^c})\,e^{\sum_{i\in\Lambda}\log(\lambda)\omega_i^2+h\omega_i}
			\end{align*}
			where $I_{\Lambda}^{hc}(\omega):=\prod_{\{i,j\}\in\mathcal{E}^b_\Lambda}\mathds{1}(\omega_i\omega_j\neq-1)$, $\Lambda \Subset V$ is the hard-core restriction, $\omega\in \Omega_\Lambda$ and $\eta \in \Omega_{\Lambda^c}$.
			\item The specification $\gamma^{sc}_{\beta,\lambda,h}$ for the discrete soft-core Widom-Rowlinson model with parameters $\beta,\lambda$ and $h$ is defined via 
			\begin{align*}
			\gamma^{sc}_{\Lambda,\beta,\lambda,h}(\omega\vert \eta ) = \frac{1}{Z^{sc}_\eta} e^{-\mathcal{H}_\Lambda(\omega_{\Lambda}\eta_{\Lambda^c})}
			\end{align*}
			where $\mathcal{H}_\Lambda(\omega):= \sum_{\{i,j\}\in \mathcal{E}_{\Lambda}^b}\beta\mathds{1}(\omega_i\omega_j=-1)-\sum_{i\in\Lambda} \log(\lambda)\omega_i^2-h\omega_i$ is the finite-volume Hamiltonian, $\Lambda \Subset V$, $\omega\in \Omega_\Lambda$ and $\eta \in \Omega_{\Lambda^c}$. 
		\end{itemize}
		$Z^{hc}_\eta$ and $Z^{sc}_\eta$ are called partition functions and are chosen such that  $\gamma^{hc}_{\Lambda,\lambda,h}(\cdot\vert \eta )$ and $\gamma^{sc}_{\Lambda,\beta,\lambda,h}(\cdot\vert \eta )$ are probability measures on $(\Omega_\Lambda,\mathcal{F}_\Lambda)$
	\end{definition}
	The parameters of our models can be understood as external magnetic field $h$, particle intensity $\lambda$ and repulsion strength $\beta$. Another useful description is to work with an a priori measure $\al\in\mathcal{M}_1(E)$ where all information about the single-site behavior is contained. The relation between the descriptions is given by $h= \frac{1}{2}\log(\frac{\al(1)}{\al(-1)})$ and $\lambda = \frac{\sqrt{\al(1)\al(-1)}}{\al(0)} $. The particles interact only if they are connected with a bond hence both specifications are local and consequently quasilocal. 
	
	\begin{remark}
		In literature the hard-core Widom-Rowlinson model is usually called discrete Widom-Rowlinson model. We introduced the prefix hard-core just to distinguish between our two models. The name of the second model is justified by the fact that $\lim_{\beta\rightarrow \infty} \gamma^{sc}_{\Lambda,\beta,\lambda,h}(\cdot\vert\eta)=\gamma^{hc}_{\Lambda,\lambda,h}(\cdot\vert \eta)$.
	\end{remark}
	
	 For our models we have the following theorem concerning phase transition.

		\begin{theorem}\label{thm : Sc Phase}
			Let $d\geq2$ and $h=0$. There exist $\beta_c,\lambda_c>0$ such that for all $\beta\geq\beta_c$ and $\lambda\geq\lambda_c$ the soft-core Widom-Rowlinson model has a phase transition, i.e.
			\begin{align*}
			\vert \mathcal{G}(\gamma^{sc}_{\beta,\lambda,0})\vert >1.
			\end{align*}
		\end{theorem}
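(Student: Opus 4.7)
Proof proposal. The plan is a Peierls-type contour argument run simultaneously on both specifications in finite volumes with plus boundary condition. By the $\omega \mapsto -\omega$ symmetry of the Hamiltonian at $h=0$, the measures $\mu_\Lambda^+ := \gamma^{sc}_{\Lambda, \beta, \lambda, 0}(\cdot \mid +)$ and $\mu_\Lambda^- := \gamma^{sc}_{\Lambda, \beta, \lambda, 0}(\cdot \mid -)$ are mutual flip-images, so it suffices to prove $\mu_\Lambda^+(\omega_0 = -1) < \tfrac{1}{3}$ uniformly in $\Lambda \Subset \Z^d$ when $\lambda, \beta$ are large. Any weak-$\ast$ limit points $\mu^+, \mu^-$ are then distinct Gibbs measures in $\mathcal{G}(\gamma^{sc}_{\beta, \lambda, 0})$. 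Formally setting $\beta = +\infty$ covers the hard-core specification in exactly the same framework (replacing $e^{-\beta}$ by $0$ throughout).

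The unified contour is constructed as follows. Given $\omega$ with $\omega_0 = -1$, let $V := V(\omega)$ be the nearest-neighbor connected component of $\{i \in \Lambda : \omega_i \neq +1\}$ containing the origin, and let $B \subset V$ be its inner site boundary. On $V$ the spin takes values in $\{-1, 0\}$, and every site of $\partial^{ex}V$ carries the value $+1$. The surgery map $T$ that replaces $\omega|_V$ by the constant $+1$ produces an admissible configuration for both models, since no $\pm$ adjacency survives. Since $h=0$ and no $\pm$ bond can occur inside $V$ (there are no $+1$-spins there), summing the unified Boltzmann weight over $\omega|_V \in \{-1, 0\}^V$ for a fixed shape $V$ and dividing by the weight of $T\omega$ factorizes site by site and gives
\begin{align*}
\mu_\Lambda^+(V(\omega) = V) \leq \prod_{i \in V \setminus B}(1 + \lambda^{-1}) \cdot \prod_{i \in B}(e^{-\beta k_i} + \lambda^{-1}) \leq (1 + \lambda^{-1})^{|V|}(e^{-\beta} + \lambda^{-1})^{|B|},
\end{align*}
where $k_i \geq 1$ counts neighbors of $i \in B$ in $\partial^{ex}V$.

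The key observation is that each inner-boundary site contributes the small factor $e^{-\beta} + \lambda^{-1}$, which tends to $0$ as $\min(\lambda, \beta) \to \infty$. Combining the per-site bound with the isoperimetric inequality $|B| \geq c_d |V|^{(d-1)/d}$ and the classical animal-counting estimate that the number of nearest-neighbor connected subsets $V \ni 0$ of $\Z^d$ with $|B| = L$ is at most $K_d^L$ for a dimension-dependent $K_d$, one obtains after summing over $V$
\begin{align*}
\mu_\Lambda^+(\omega_0 = -1) \leq \sum_{L \geq 2d} L \, K_d^L \, (e^{-\beta} + \lambda^{-1})^L \cdot \sup_{|B| = L}(1 + \lambda^{-1})^{|V|},
\end{align*}
which is $<\tfrac{1}{3}$ once $\lambda, \beta$ are sufficiently large. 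The main obstacle in executing this scheme cleanly is the interior-entropy factor $(1+\lambda^{-1})^{|V|}$, which is not a priori controlled by $|B|$: for very large $V$ with thin boundary the volume term can a priori overwhelm the boundary decay. The remedy I expect to carry out is either to refine the notion of contour by demoting all interior $0$-sites to contour elements (slightly inflating the entropy constant $K_d$ but ensuring every contour site carries a uniformly small weight), or to exploit the isoperimetric inequality together with the large-$\lambda$ regime so that $(1+\lambda^{-1})^{|V|} \leq e^{|V|/\lambda}$ is absorbed into $(e^{-\beta} + \lambda^{-1})^{c_d|V|^{(d-1)/d}}$. Once this absorption step is carried out, the Peierls estimate and hence phase transition follow.
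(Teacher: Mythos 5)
Your fixed-shape estimate is correct as far as it goes: for a given connected component $V\ni 0$ of $\{\omega\neq +1\}$ the surgery ratio does factorize and gives $(1+\lambda^{-1})^{|V|}\prod_{i\in B}(e^{-\beta k_i}+\lambda^{-1})$. But the proof is incomplete exactly at the point you flag, and neither of your two remedies closes it. The absorption remedy fails for a hard reason: by isoperimetry $|V|$ can be as large as $c|B|^{d/(d-1)}$, so the interior factor is $e^{|V|/\lambda}$ with a \emph{linear} exponent in $|V|$ while the boundary decay has exponent only of order $|V|^{(d-1)/d}$; since the theorem requires fixed thresholds $\beta_c,\lambda_c$ working uniformly in $\Lambda$ and in the component, the interior term always wins for large $V$. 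The entropy input is also wrong as stated: the number of connected sets $V\ni 0$ with inner boundary $|B|=L$ is \emph{not} $O(K_d^L)$. A box of side $n$ with $\sim n^{d-1}$ isolated interior sites removed has $|B|\asymp n^{d-1}=:L$, and the placements of the removed sites already give $e^{cL\log L}$ such sets; this superexponential entropy beats any fixed exponential decay in $|B|$, so even the ``demote interior $0$'s to contour elements'' remedy (which moreover produces a disconnected object) does not restore a convergent Peierls sum. The classical lattice-animal bound is exponential in the \emph{volume}, not in the boundary, and your scheme sums over the full interior structure of $V$ (including its holes), which is precisely what one must avoid.

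The missing idea is to never enumerate the interior of a large droplet, but to remove it by a symmetry-respecting surgery, which is how the paper proceeds. There one calls a site incorrect if its neighborhood does not agree with the all-plus or all-minus ground state; every incorrect site (in particular every $0$ and every interface site, but \emph{not} the sites deep inside a minus sea) pays a uniform energy $\rho=\min\{\beta,\log\lambda\}/(2d+1)$ (Lemma \ref{lem : relative Ham}). Contours are the connected components of the incorrect set; the probability that a fixed contour $\kappa^*$ occurs is bounded by $e^{-\rho|\bar\kappa^*|}$ via a map $F^{\kappa^*}$ that erases that contour and \emph{flips} the interior components carrying the wrong label, using the $\pm$ spin-flip symmetry so these interiors cost nothing and are never summed over. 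Finally, the event $\omega_0\in\{-1,0\}$ forces a single connected contour containing or surrounding the origin, and only those are counted (exponentially many in the contour size, by the standard estimate), yielding $a(\beta,\lambda)\to 0$. Your droplet-plus-surgery picture can be repaired, but only by reorganizing it along these lines (connected contours of incorrect sites plus a flip map), not by the absorption or re-labelling steps you propose.
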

		
		We will prove this theorem by a Peierls argument. Since the Peierls constant turns out to be of the form $\rho_{\beta,\lambda}= \frac{\min\{\beta,\log(\lambda)\}}{2d+1}$ we get a phase transition result for the hard-core model from the estimate for the soft-core model.
		
		\begin{corollary}\label{thm : Hc Phase}
			Let $d\geq2$ and $h=0$. There exists $\lambda_c>0$ such that for all $\lambda\geq\lambda_c$ the hard-core Widom-Rowlinson model has a phase transition, i.e.
			\begin{align*}
			\vert \mathcal{G}(\gamma^{hc}_{\lambda,0})\vert >1.
			\end{align*}
		\end{corollary}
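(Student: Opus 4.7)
The plan is to deduce the corollary from Theorem \ref{thm : Sc Phase} by sending $\b\to\infty$, exploiting the pointwise convergence $\g^{sc}_{\Lambda,\b,\l,0}(\cdot\vert\z)\to\g^{hc}_{\Lambda,\l,0}(\cdot\vert\z)$ recorded in the remark after Definition \ref{defi: models}, together with the observation that the Peierls constant $\rho_{\b,\l}=\min\{\b,\log\l\}/(2d+1)$ stabilizes to the $\b$-independent value $\log(\l)/(2d+1)$ as soon as $\b\geq\log\l$. The latter means that the soft-core Peierls estimate is uniform in $\b$ in this regime, so the two-phase picture should be inherited by any $\b\to\infty$ subsequential limit.

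Concretely, I would fix $\l\geq\l_c$ large enough that $\log(\l)/(2d+1)$ dominates the contour-entropy factor of the Peierls sum, and, for every $\b\geq\b_c\vee\log\l$, let $\m^{\pm,sc}_{\b,\l}$ denote the soft-core Gibbs measures obtained as weak limits over cubes with all-$+$ and all-$-$ boundary conditions respectively. Theorem \ref{thm : Sc Phase} supplies, uniformly in such $\b$, a constant $c(\l)>\tfrac{1}{3}$ with $\m^{\pm,sc}_{\b,\l}(\o_0=\pm 1)\geq c(\l)$. By compactness of $\Omega$ in the product topology, along a subsequence $\b_n\to\infty$ both measures converge weakly to limits $\m^{\pm,hc}_\l$; since $\{\o_0=\pm 1\}$ is clopen, the magnetization bounds pass to the limit, so $\m^{+,hc}_\l\neq \m^{-,hc}_\l$. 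To identify both $\m^{\pm,hc}_\l$ as elements of $\mathcal{G}(\g^{hc}_{\l,0})$, I would pass to the limit in the DLR identity $\int \g^{sc}_{\Lambda,\b_n,\l,0}(f\vert\cdot)\,\dint\m^{\pm,sc}_{\b_n,\l}=\int f\,\dint\m^{\pm,sc}_{\b_n,\l}$ for each local $f$ and each finite $\Lambda\Subset\Z^d$: the kernels depend only on finitely many coordinates and converge pointwise, hence uniformly, to the quasi-local function $\g^{hc}_{\Lambda,\l,0}(f\vert\cdot)$, so weak convergence of the measures takes care of the remainder.

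The main subtlety I foresee is keeping the Peierls bound genuinely uniform in $\b$ throughout the regime $\b\geq\log\l$: nothing in the contour-entropy counting or in the minimal-energy-per-contour-step estimate should introduce hidden $\b$-dependence, which is precisely what the stated form $\rho_{\b,\l}=\min\{\b,\log\l\}/(2d+1)$ is designed to reflect. Once that uniformity is read off from the proof of Theorem \ref{thm : Sc Phase} (a routine check given the explicit form of the constant), the corollary follows with no further Peierls computation.
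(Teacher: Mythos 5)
Your proposal is correct in outline but takes a genuinely different (and heavier) route than the paper. The paper's own proof stays entirely at the level of \emph{finite-volume} conditional probabilities: by Lemma \ref{lem: bound for kernel phase} the bound $a(\beta,\lambda)$ depends on $\beta$ only through $\rho=\min\{\beta,\log\lambda\}/(2d+1)$, so for $\beta\geq\log\lambda$ it stabilizes to a $\beta$-independent $a(\lambda)$; sending $\beta\to\infty$ in the inequality and using the pointwise convergence of kernels then gives $\gamma^{hc}_{\Lambda,\lambda}(\omega_0\in\{-1,0\}\mid\eta^+)\leq a(\lambda)$ directly, after which the exact same symmetry/Peierls argument as in Theorem \ref{thm : Sc Phase} yields two distinct hard-core Gibbs measures. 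You instead pass to the limit at the level of \emph{infinite-volume measures}: extract weak subsequential limits $\mu^{\pm,hc}_\lambda$ of $\mu^{\pm,sc}_{\beta_n,\lambda}$ by compactness, show the magnetization bound survives the limit, and verify the hard-core DLR equations in the limit using uniform convergence of the kernels on the finite space $\Omega_{\partial\Lambda}$. Both work, but the paper's approach is shorter and avoids the compactness and DLR-persistence bookkeeping entirely, since it never leaves the finite-volume world. Two small points to tidy up in your version: the threshold $c(\lambda)>1/3$ alone does not separate $\mu^{+,hc}_\lambda$ from $\mu^{-,hc}_\lambda$ (you need either $c>1/2$, which the Peierls bound does give once $a(\lambda)<1/2$, or to invoke the spin-flip symmetry relation $\mu^{-,hc}_\lambda(\omega_0=1)=\mu^{+,hc}_\lambda(\omega_0=-1)\leq a(\lambda)$ explicitly); and strictly speaking the uniform-in-$\beta$ bound should be cited from Lemma \ref{lem: bound for kernel phase} rather than from the statement of Theorem \ref{thm : Sc Phase}, which does not itself assert uniformity.
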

		That a phase transition occurs for the two dimensional hard-core model was already proven in \cite{higuchi83} with percolation methods.
		
		\subsection{Dobrushin condition}\label{sec: dob}
		
		A crucial part in proving the short-time Gibbs property of the time-evolved model plays Dobrushin's uniqueness theorem. It gives a condition for absence of a phase transition and can be handled by discrete computations and works for strong asymmetry (i.e. high external magnetic field) or weak interacting.
		
		We will formulate this theory for connected locally finite graphs with infinite vertex set. Later results for models on the graph $\Z^d\backslash \{0\}$ are needed. So let $G= (V,K)$ be a locally finite graph with vertex set $V$ and edge set $K$. The construction of the DLR-formalism can be adapted to this setup. By $B_i$ we denote the degree of the vertex $i$, i.e. the number of edges which are connected to this vertex, and we define the maximal degree $B=\sup_{i\in V} B_i$. 
		
		For the Dobrushin theorem we need the single-site kernels 
		$$\gamma_i^0(\omega_i\vert \eta):= \gamma_{\{i\}}(\omega_i\vert \eta_{V\backslash \{i\}}) \,,\, \eta\in \Omega_V,\omega_i\in E, i\in V,$$
		of a specification $\gamma$ where $\gamma_i^0(\cdot\vert \eta)$ is a measure only on the single-site space $(E,\mathcal{F}_0)$. Via these kernels one can define Dobrushin's interdependence matrix
		\begin{align*}
		C(\gamma) &:= (C_{ij}(\gamma))_{i,j\in V} \text{ with }\\
		C_{ij}(\gamma)&:= \sup_{\eta,\zeta\in \Omega :\, \eta_{V\backslash{j} }=\zeta_{V\backslash {j}}} \dd_{TV}(\gamma_i^0(\cdot\vert \eta),\gamma_i^0(\cdot\vert \zeta)).
		\end{align*}
		where $d_{TV}$ is the total variational distance on the space $\mathcal{M}_1(E)$. The entries $C_{ij}$ measure how much the single-site kernels depend on the boundary condition, if we change one site in it. If all $C_{ij}$ are small then the model depends only weakly on the boundary condition. 
		\begin{definition}
			Let $\gamma$ be a specification. If the Dobrushin constant $c(\gamma):=\sup_{i\in V} \sum_{j\in V} C_{ij} <1$  and $\gamma$ is quasilocal we say that $\gamma$ satisfies Dobrushin's condition.   
		\end{definition}
		  Let $\ell^\infty$ the space of bounded sequences equipped with the uniform norm. Then one can see $C(\gamma)$ as a linear operator from $\ell^\infty\rightarrow \ell^\infty$. The Dobrushin condition can be rephrased with the operator norm $c(\gamma)= \Vert C(\gamma) \Vert_{op} <1$ and hence one can see this as an contradiction argument.
		\begin{theorem}
		Suppose  $(E,\mathcal{F}_0)$ is a standard Borel space.	If a specification $\gamma$ satisfies the Dobrushin condition then $\vert \mathcal{G}(\gamma)\vert=1$.
		\end{theorem}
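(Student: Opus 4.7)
The plan is to reduce uniqueness to showing $\mu(f) = \nu(f)$ for every bounded local function $f$ and every pair $\mu, \nu \in \mathcal{G}(\gamma)$; since $(E,\mathcal{F}_0)$ is standard Borel, such functions are measure-determining on $(\Omega,\mathcal{F})$. Fix $f$ with finite support $S$ and let $\Lambda \Subset V$ contain $S$. Applying the DLR equation to both $\mu$ and $\nu$ yields
\[
|\mu(f)-\nu(f)| = |\mu(\gamma_\Lambda(f|\cdot)) - \nu(\gamma_\Lambda(f|\cdot))| \leq \sup_{\eta,\zeta\in\Omega}|\gamma_\Lambda(f|\eta) - \gamma_\Lambda(f|\zeta)|,
\]
so everything reduces to controlling how strongly $\gamma_\Lambda f$ depends on its boundary condition, uniformly as $\Lambda\nearrow V$.

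The bound will come from the Dobrushin comparison lemma, formulated via the site oscillations $\delta_j(g) := \sup\{|g(\omega)-g(\omega')| : \omega_{V\setminus\{j\}} = \omega'_{V\setminus\{j\}}\}$. The starting block is the single-site estimate
\[
\delta_j(\gamma_i^0 g) \leq \mathds{1}\{j\ne i\}\,\delta_j(g) + C_{ij}\,\delta_i(g),
\]
which I would prove by splitting $\gamma_i^0 g(\omega) - \gamma_i^0 g(\omega')$ into a $g$-variation piece bounded by $\delta_j(g)$ and a kernel-variation piece bounded by $C_{ij}\,\delta_i(g)$, using the very definition of $C_{ij}$ as a total variation distance. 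Iterating this estimate over repeated sweeps of single-site updates inside $\Lambda$, and using properness and consistency of $\gamma$ to identify the limit of such a Gibbs-sampler composition with $\gamma_\Lambda(\cdot|\eta)$, one arrives at a bound of the form
\[
\delta_j(\gamma_\Lambda g) \leq \sum_{k\in V} D^\Lambda_{jk}\,\delta_k(g),\qquad j\in\Lambda^c,
\]
with $\delta_j(\gamma_\Lambda g) = 0$ for $j\in\Lambda$, where the comparison matrix $D^\Lambda$ is dominated entrywise by the Neumann series $D:=\sum_{n\ge 0} C^n$. The hypothesis $c(\gamma) = \|C\|_{\mathrm{op}}<1$ is precisely what is needed to make this series converge in operator norm.

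To finish, telescope one coordinate at a time along a path from $\eta$ to $\zeta$ in $\Lambda^c$ to obtain
\[
\sup_{\eta,\zeta}|\gamma_\Lambda(f|\eta)-\gamma_\Lambda(f|\zeta)| \leq \sum_{j\in\Lambda^c}\delta_j(\gamma_\Lambda f) \leq \sum_{j\in\Lambda^c}\sum_{k\in S} D_{jk}\,\delta_k(f),
\]
where locality of $f$ restricts $k$ to the finite set $S$. Since the Neumann series converges, each column $(D_{jk})_j$ is summable, so $\sum_{j\in\Lambda^c}D_{jk}\to 0$ as $\Lambda\nearrow V$ for every $k\in S$. Combined with the first display this forces $\mu(f)=\nu(f)$ and hence $\mu=\nu$.

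The main obstacle I expect is the Gibbs-sampler step, namely rigorously identifying the limit of iterated single-site updates with the finite-volume kernel $\gamma_\Lambda(\cdot|\eta)$ and pushing the oscillation bound through that limit; the standard Borel hypothesis enters here to secure regular conditional distributions and limits of kernels. In a full write-up I would most likely invoke the Dobrushin comparison theorem in its standard Georgii--K\"unsch formulation (see Georgii, \emph{Gibbs Measures and Phase Transitions}, Chapter~8) as a black box, once the abstract quasilocality and measurability hypotheses have been verified.
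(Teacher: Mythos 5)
Your overall strategy is the standard one, and it is worth noting that the paper itself offers no proof at all for this statement: it simply cites \cite[Theorem 8.7]{georgii-book}, and later (after the Dobrushin comparison theorem) remarks that uniqueness follows from that theorem with $b_i\equiv 0$. So the only question is whether your sketch is sound, and there is one step that, as written, fails. You bound $\delta_j(\gamma_\Lambda g)\leq \sum_k D^{\Lambda}_{jk}\,\delta_k(g)$ and then conclude with the claim that ``each column $(D_{jk})_j$ is summable'' because the Neumann series $D=\sum_{n\geq 0}C^n$ converges. The Dobrushin condition $c(\gamma)=\sup_i\sum_j C_{ij}<1$ controls \emph{row} sums of $C$, hence row sums of $D$ ($\sum_j D_{ij}\leq (1-c)^{-1}$), but it gives no control whatsoever on column sums: for instance the matrix with $C_{i j_0}=\tfrac12$ for all $i\neq j_0$ and all other entries $0$ has $c<1$ while $\sum_i D_{i j_0}=\infty$. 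So the justification of the crucial limit $\sum_{j\in\Lambda^c}D_{jk}\to 0$ is invalid in the generality of the theorem.

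The good news is that the error is only an index transposition, and your own single-site estimate shows how to fix it. The inequality $\delta_j(\gamma_i^0 g)\leq \mathds{1}\{j\neq i\}\,\delta_j(g)+C_{ij}\,\delta_i(g)$ says that an update at site $i$ transfers oscillation \emph{from} $i$ \emph{to} $j$ with weight $C_{ij}$; iterating over sweeps therefore produces path weights $C_{k i_1}C_{i_1 i_2}\cdots C_{i_m j}=(C^{m+1})_{kj}$, i.e.\ the correct conclusion is $\delta_j(\gamma_\Lambda f)\leq \sum_k D_{kj}\,\delta_k(f)$ for $j\in\Lambda^c$ (first index at the support of $f$, second at the boundary site). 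For local $f$ with support $S\subset\Lambda$ this gives $\sum_{j\in\Lambda^c}\delta_j(\gamma_\Lambda f)\leq \sum_{k\in S}\delta_k(f)\sum_{j\in\Lambda^c}D_{kj}$, and now only the row sums $\sum_{j}D_{kj}\leq(1-c)^{-1}$ are needed, so the tails over $\Lambda^c$ do vanish as $\Lambda\uparrow V$ and uniqueness follows. The remaining points you flag yourself — identifying the limit of iterated single-site updates with $\gamma_\Lambda(\cdot\vert\eta)$, and telescoping over the infinitely many coordinates of $\Lambda^c$, which uses the quasilocality built into the paper's definition of the Dobrushin condition — are standard, and invoking Georgii's Chapter 8 machinery for them is exactly what the paper does.
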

		\begin{proof}
			See \cite[Theorem 8.7]{georgii-book}
		\end{proof}
		Of course the space $E=\{-1,0,1\}$ equipped with the discrete topology is standard Borel. In the following it is easier to state the results for the $\al$ description. For the hard-core model we can give an explicit regime for Dobrushin uniqueness. 
		\begin{theorem}\label{thm: dob hc}
		The hard-core Widom-Rowlinson specification satisfies  Dobrushin's condition iff \begin{itemize}
				\item $B=1$ and $\al\notin \{\delta_1,\delta_{-1}\}$,
				\item $2 \leq B<\infty$ and $\max\{\al(-1),\al(1)\} < \frac{\al(0)}{ B-1}$ or
				\item $B=\infty$ and $\al(0)=1$
			\end{itemize}
		\end{theorem}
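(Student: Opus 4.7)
My plan is to compute Dobrushin's interdependence matrix $C_{ij}(\gamma^{hc})$ essentially in closed form and then read off the three cases from the answer. The hard-core restriction $I_\Lambda^{hc}$ only affects $\gamma^0_i(\cdot|\eta)$ through the two indicators
\[
m_+(\eta):=\mathds{1}(\exists k\sim i:\eta_k=+1),\qquad m_-(\eta):=\mathds{1}(\exists k\sim i:\eta_k=-1),
\]
so the single-site kernel takes at most the four values $K_{(m_+,m_-)}$: the unconstrained $K_{00}=\alpha$, two constrained kernels $K_{10},K_{01}$ supported on $\{0,\pm1\}$ with renormalized weights, and the deterministic $K_{11}=\delta_0$. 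In particular $C_{ij}=0$ whenever $j\not\sim i$, so the row sums in $c(\gamma^{hc})$ reduce to sums over the neighbors of $i$.

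For a neighbor $j\sim i$, I would then enumerate the worst case of flipping $\eta_j$ while holding the other $B_i-1$ neighbors fixed. Three generic configurations arise: (a) both signs $+1$ and $-1$ already present among the fixed neighbors, so the kernel stays at $K_{11}$ regardless of $\eta_j$ and contributes nothing; (b) only one sign present, giving a transition $K_{10}\leftrightarrow K_{11}$ or $K_{01}\leftrightarrow K_{11}$ of size $\alpha(\pm1)/(\alpha(\pm1)+\alpha(0))$; and (c) all other neighbors equal to $0$, which permits the transition $K_{10}\leftrightarrow K_{01}$. A short computation via $d_{TV}(\mu,\nu)=\sum_x(\mu(x)-\nu(x))_+$ reduces $d_{TV}(K_{10},K_{01})$ to the clean expression
\[
C_{ij}=\max\Bigl\{\frac{\alpha(+1)}{\alpha(+1)+\alpha(0)},\,\frac{\alpha(-1)}{\alpha(-1)+\alpha(0)}\Bigr\},
\]
which turns out to dominate the TV of all other single-flip transitions. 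Since this quantity is independent of $j$, one obtains $\sum_j C_{ij}=B_i\cdot C_{ij}$ and hence $c(\gamma^{hc})=B\cdot C^*$ with $B=\sup_i B_i$.

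The three cases of the theorem now follow by rearranging $c(\gamma^{hc})<1$. For $2\le B<\infty$ the inequality becomes $(B-1)\max\{\alpha(+1),\alpha(-1)\}<\alpha(0)$; for $B=1$ the condition is that each of the two fractions be strictly below $1$, which excludes precisely $\alpha\in\{\delta_{+1},\delta_{-1}\}$; and for $B=\infty$ the Dobrushin constant is finite only when $C^*=0$, forcing $\alpha(+1)=\alpha(-1)=0$, i.e.\ $\alpha=\delta_0$.

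The main obstacle is the TV computation in case~(c): among the six pairwise TV distances between $K_{00},K_{10},K_{01},K_{11}$, one must certify that $(K_{10},K_{01})$ is indeed extremal for a single-flip transition and that its TV collapses---via cancellation between the $\omega_i=0$ and $\omega_i=\mp1$ contributions---to the clean maximum displayed above. Once this identity is in hand, the remaining analysis is straightforward bookkeeping.
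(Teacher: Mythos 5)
Your route is essentially the paper's own: the published proof consists precisely of writing down the four possible single-site kernels (the unconstrained $\alpha$, the two renormalized restrictions to $\{0,1\}$ and $\{0,-1\}$, and $\delta_0$) and comparing them pairwise over single-site flips of the boundary condition, excluding the pair $(\delta_0,\alpha)$ which cannot be realized by changing one site -- exactly the bookkeeping you propose. Your central identity $C_{ij}=\max\bigl\{\tfrac{\alpha(1)}{\alpha(1)+\alpha(0)},\tfrac{\alpha(-1)}{\alpha(-1)+\alpha(0)}\bigr\}$ for $j\sim i$ is correct, the dominance you worry about does hold (for instance $d_{TV}(K_{00},K_{10})=\alpha(-1)\le\tfrac{\alpha(-1)}{\alpha(-1)+\alpha(0)}$ and $d_{TV}(K_{10},K_{11})=\tfrac{\alpha(1)}{\alpha(1)+\alpha(0)}$), and multiplying by $B_i$ and taking the supremum yields exactly the conditions in the second and third bullets.

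The one step that does not follow as written is your $B=1$ case: ``both fractions strictly below $1$'' is equivalent to $\alpha(0)>0$, not to $\alpha\notin\{\delta_1,\delta_{-1}\}$. Concretely, for $B=1$, $\alpha(0)=0$, $\alpha(1)=\alpha(-1)=\tfrac12$, the two constrained kernels are $\delta_1$ and $\delta_{-1}$, and flipping the unique neighbour from $+1$ to $-1$ gives total variation $1$, hence $c(\gamma^{hc})=1$ and Dobrushin's condition fails even though $\alpha\notin\{\delta_1,\delta_{-1}\}$. So your computation actually delivers the condition $\alpha(0)>0$ in that case; the mismatch is inherited from the theorem's first bullet, which is an edge case anyway (a connected infinite graph, the paper's standing assumption, cannot have $B=1$). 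Apart from making that caveat explicit, your argument is complete and coincides with the paper's.
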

			In fig. \ref{fig: B4}  one can see the areas of Dobrushin uniqueness (blue) on the simplex of probability measures on $E$. Since the boundary condition has more influence on the single-site behavior if $B$ is large the regions get smaller with increasing $B$.
%			\begin{figure}[!htb]
%				\hfill %
%				\subfloat[$B=2$]{\includegraphics[width=0.32\textwidth]{HC_2_new.png}}
%				\hfill % alternativ auch \hspace{1cm} für genaue Angaben
%				\subfloat[$B=4$]{\includegraphics[width=0.32\textwidth]{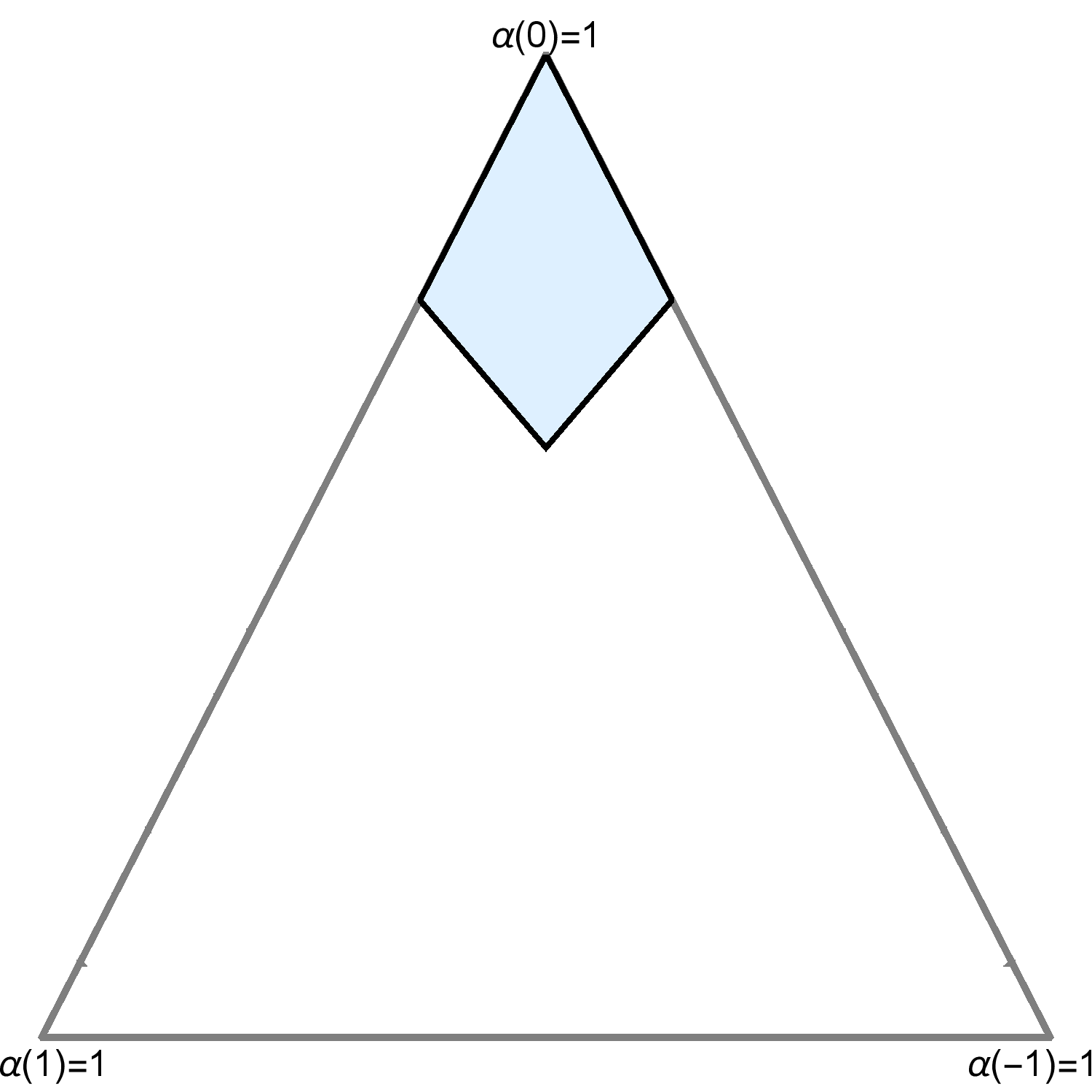}}
%				\hfill %
%				\subfloat[$B=10$]{\includegraphics[width=0.32\textwidth]{HC_10_new.png}}
%				\hfill %
%				\subfloat[$B=4, \beta=0.4$\label{fig: beta 0,4}]{\includegraphics[width=0.32\textwidth]{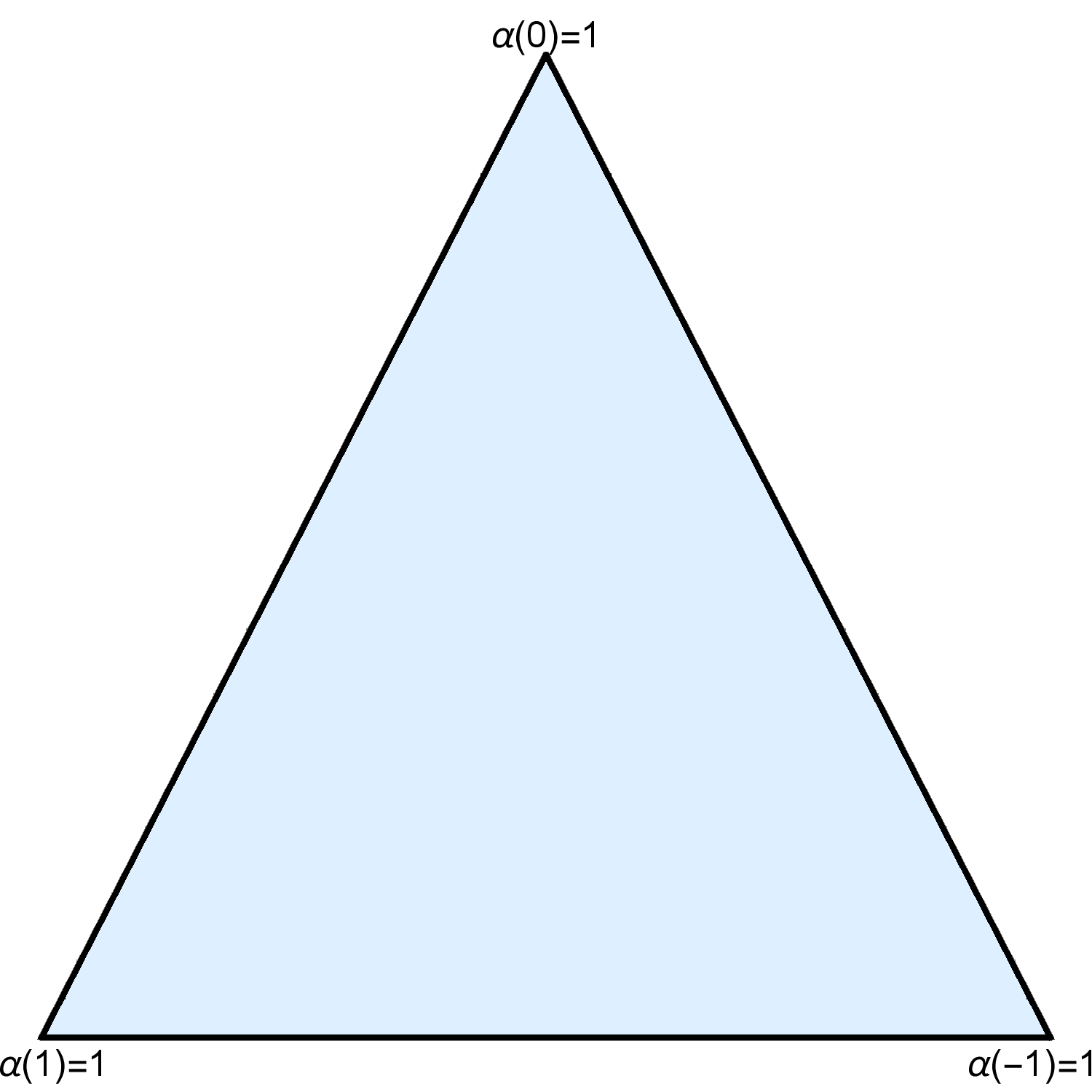}}
%				\hfill % alternativ auch \hspace{1cm} für genaue Angaben
%				\subfloat[$B=4, \beta=1$\label{fig: beta 1}]{\includegraphics[width=0.32\textwidth]{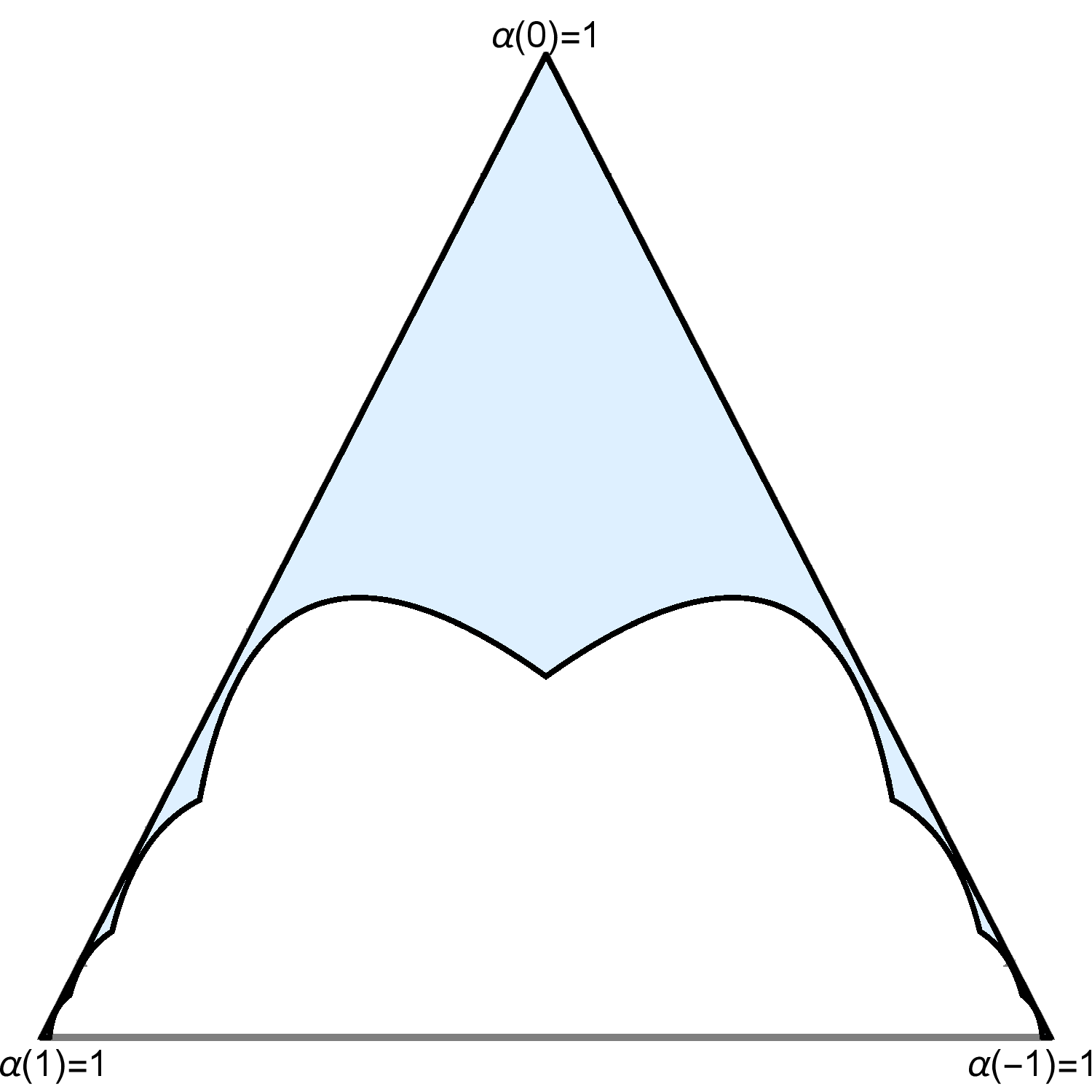}}
%				\hfill %
%				\subfloat[$B=4,\beta =1.1$\label{fig: beta 1,1}]{\includegraphics[width=0.32\textwidth]{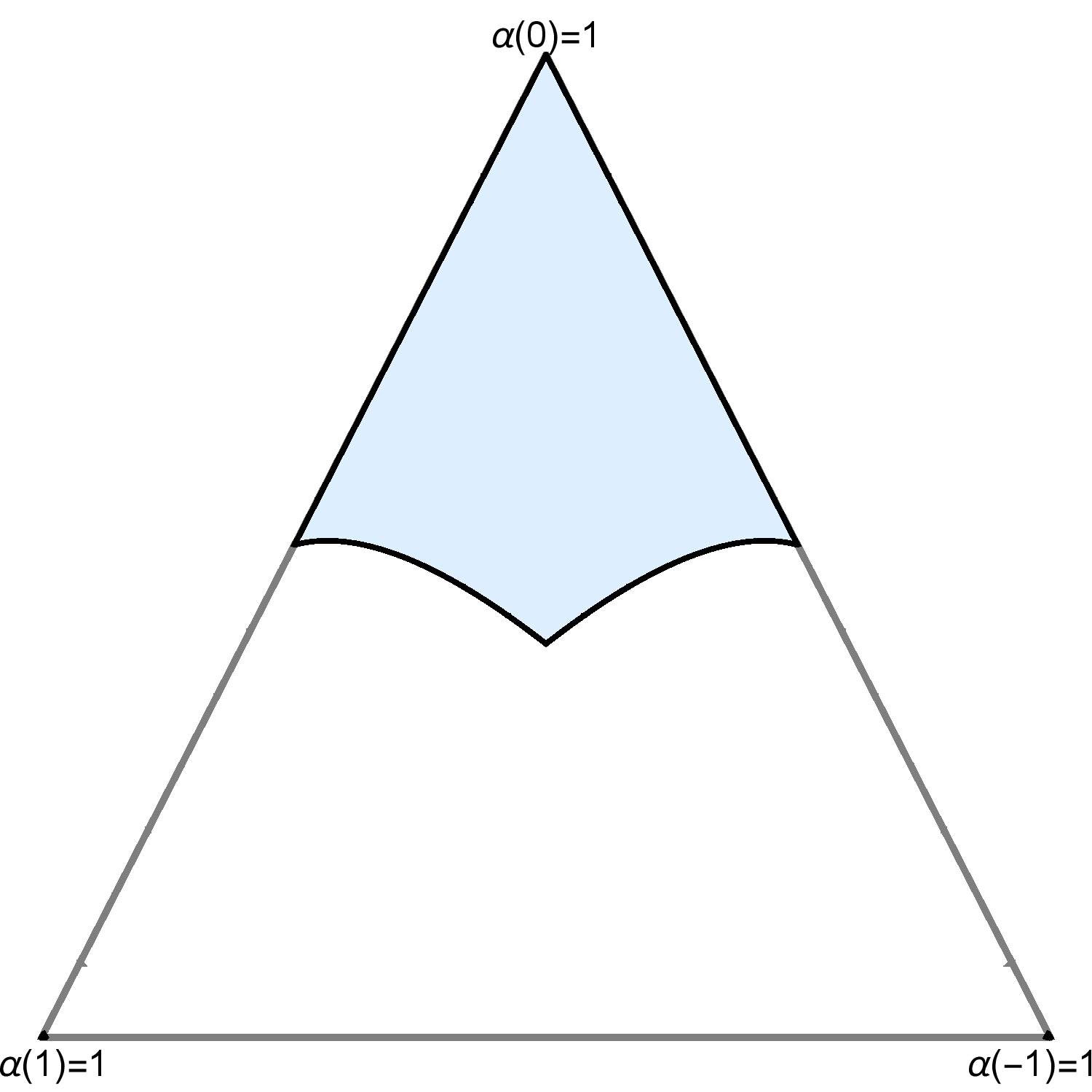}}
%				\hfill %
%				\caption{Regions of Dobrushin uniqueness (blue) for the hard-core model}\label{fig: Dob hc}
%			\end{figure}
			\begin{figure}[!htb]
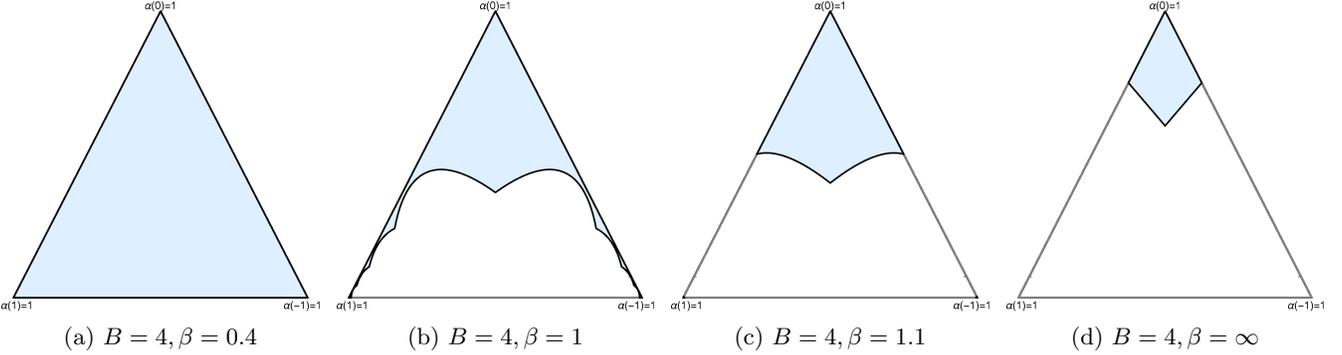

				\subfloat[$B=4, \beta=0.4$\label{fig: beta 0,4}]{\includegraphics[width=0.24\textwidth]{SC_0,4_4_new.png}}
				\hfill
				\subfloat[$B=4, \beta=1$\label{fig: beta 1}]{\includegraphics[width=0.24\textwidth]{SC_1_4_new.png}}
				\hfill
				\subfloat[$B=4,\beta =1.1$\label{fig: beta 1,1}]{\includegraphics[width=0.24\textwidth]{SC_1,1_4_new.png}}
				\hfill
				\subfloat[$B=4,\beta=\infty$\label{fig: B4}]{\includegraphics[width=0.24\textwidth]{HC_4_new.png}}
				\hfill
				\caption{Regions of Dobrushin uniqueness (blue) for the  soft-core model (first three) and hard-core model (last one).}\label{fig: Dob sc}
			\end{figure}
			
			For the soft-core model we can give a formula for the entries of $C_{ij}$ where we have to maximize over a finite set (see Lemma \ref{lem: C_ij_sc}). {It turns out that the entries are given as fractions of quadratic polynomials $\frac{Q_1(x,y)}{Q_2(x,y)}$ in two variables and we can reformulate the condition by requiring that all $B$-dependent quadratic polynomials $Q_B(x,y):=BQ_1(x,y)-Q_2(x,y)$ have to be smaller than $0$ for Dobrushin uniqueness. Since there only finitely many such polynomials the boundary of the Dobrushin uniqueness region on the simplex is given by the boundary of finitely many level sets of the polynomials.} If the interaction between particles is small, i.e. $\beta$ is small,  the specification satisfies Dobrushin's condition for every a priori measure $\al$ (see fig. \ref{fig: beta 0,4}). 
			
				\begin{theorem}\label{Thm: beta B}
					If $\beta B<2$ then the specification of the soft-core model satisfies Dobrushin's condition for every choice of $\al\in\mathcal{M}_1(E)$.
				\end{theorem}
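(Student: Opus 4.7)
The plan is to bound Dobrushin's interdependence matrix $C(\gamma^{sc}_{\beta,\lambda,h})$ entrywise, uniformly in the a priori measure $\alpha$, and then sum. Because the soft-core Hamiltonian of Definition \ref{defi: models} couples only nearest neighbors, the single-site kernel $\gamma_i^{0,sc}(\cdot\mid\eta)$ depends on $\eta$ only through $\{\eta_k : k\sim i\}$, so $C_{ij}(\gamma^{sc})=0$ whenever $j\not\sim i$. The Dobrushin sum $\sum_{j} C_{ij}$ therefore reduces to a sum over the at most $B$ neighbors of $i$, and it suffices to prove the uniform neighbor bound $C_{ij}(\gamma^{sc})\leq\beta/2$. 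Summing then yields $c(\gamma^{sc})\leq B\beta/2<1$ whenever $\beta B<2$, and since $\gamma^{sc}$ is local, hence quasilocal, Dobrushin's condition is verified.

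For the neighbor bound I would compare $\gamma_i^{0,sc}(\cdot\mid\eta)$ with $\gamma_i^{0,sc}(\cdot\mid\zeta)$ for two boundary configurations differing only at a single neighbor $j\sim i$. Reading off the densities, $\gamma_i^{0,sc}(\cdot\mid\zeta)$ is obtained from $\gamma_i^{0,sc}(\cdot\mid\eta)$ by a reweighting $\omega_i\mapsto r(\omega_i)=\exp(-\Delta H(\omega_i))$ with $\Delta H(\omega_i)=\beta\bigl(\mathds{1}(\omega_i\zeta_j=-1)-\mathds{1}(\omega_i\eta_j=-1)\bigr)$, followed by normalisation. Enumerating the six off-diagonal pairs $(\eta_j,\zeta_j)\in E^{2}$ and exploiting the symmetry $\omega\mapsto-\omega$ to halve the case work, the oscillation of $\Delta H$ over $\omega_i\in E$ is at most $D=2\beta$, attained exactly when $\{\eta_j,\zeta_j\}=\{-1,+1\}$. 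The remaining ingredient is the general reweighting estimate
\begin{equation*}
d_{TV}\bigl(\mu,\;\mu\, r/\langle r\rangle_\mu\bigr)\leq \tanh(D/4)\leq D/4
\end{equation*}
valid for every probability measure $\mu$ on $E$, which, specialised to $D=2\beta$, produces $C_{ij}(\gamma^{sc})\leq\beta/2$ uniformly in $\alpha$.

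The main obstacle is the last reweighting estimate, which must hold for \emph{every} three-point distribution $\mu$ arising from $\alpha$ and the remaining boundary spins, not only for the symmetric or two-point marginals where the bound is most transparent. This is exactly the role of Lemma \ref{lem: C_ij_sc}: it produces an explicit formula for $C_{ij}$ as a fraction of quadratic polynomials in two variables, so the inequality above collapses to a single polynomial condition on the probability simplex, which can be verified by elementary algebra after plugging in the worst-case pair $(\eta_j,\zeta_j)=(-1,+1)$ identified above. Once the uniform $\beta/2$-bound is in place, the conclusion of Theorem \ref{Thm: beta B} follows from the summation argument sketched in the first paragraph.
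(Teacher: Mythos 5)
Your proposal is correct and coincides, in substance, with the paper's proof, which is a one-line citation of Proposition~8.8 of Georgii's book: that proposition gives $C_{ij}(\gamma^\Phi)\le\tfrac12\sum_{A\ni i,j}\operatorname{osc}(\Phi_A)$ for a Gibbsian specification with potential $\Phi$, and for the nearest-neighbour pair potential $\Phi_{\{i,j\}}(\omega)=\beta\,\mathds{1}(\omega_i\omega_j=-1)$ (oscillation $\beta$) this yields $C_{ij}\le\beta/2$ for $j\sim i$, hence $c(\gamma^{sc})\le B\beta/2<1$ whenever $\beta B<2$. The reweighting inequality you invoke, $d_{TV}\bigl(\mu,\,\mu r/\langle r\rangle_\mu\bigr)\le\tanh(D/4)\le D/4$ with $D=\operatorname{osc}(\log r)$, is precisely the lemma behind Georgii's proposition and holds for \emph{every} probability measure $\mu$ on a finite set by an extreme-point/convexity argument (the supremum over $\mu$ at fixed oscillation is attained at a two-point measure sitting on the extremes of $r$, giving $\tanh(D/4)$ exactly), so the detour through Lemma~\ref{lem: C_ij_sc} that you sketch at the end is not needed.
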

			\begin{proof}
				This follows by Proposition 8.8 in \cite{georgii-book}.  
			\end{proof}
			In fig. \ref{fig: beta 1} and \ref{fig: beta 1,1} we see that around the measures with $\al(i)=1$, $i\in \{-1,1\}$, there are small areas of Dobrushin uniqueness. The existence of these small areas is one of the main ingredient to prove short-time Gibbs for the time-evolved soft-core model. For $\epsilon>0$ and $i\in E$ we write $U^i_\epsilon:= \{\al\in \mathcal{M}_1(E)\,:\, d_{TV}(\al,\delta_i)<\epsilon\}$ for an $\epsilon$-neighborhood of $\al = \delta_i$.
				
			\begin{theorem}\label{thm: neigh}
				Assume  $B<\infty$. Then for every $\beta>0$ there exist neighborhoods $U^1_\epsilon, U^0_\epsilon $ and $U_\epsilon^{-1}$ such that for every $\al\in \bigcup_{i\in E} U^i_\epsilon$ the specification for the soft-core model satisfies Dobrushin's condition.
			\end{theorem}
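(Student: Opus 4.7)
The plan is to exploit that at each extreme point $\alpha=\delta_s$, $s\in E$, of $\mathcal{M}_1(E)$ the single-site kernel of $\gamma^{sc}$ puts mass one at $s$ independently of the boundary condition, so that $C_{ij}=0$; by continuity of the kernel in $\alpha$ and finiteness of $B$, Dobrushin's condition will then persist on a full $d_{TV}$-neighborhood of each $\delta_s$. I would work throughout in the $\alpha$-parametrization, which is essential because the equivalent parameters $\lambda$ and $|h|$ diverge as $\alpha\to\delta_{\pm 1}$.

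First I would rewrite the single-site kernel in the form
\begin{equation*}
\gamma^{sc,0}_i(s\vert \eta)
=\frac{\alpha(s)\,e^{-\beta N^i_s(\eta)}}
      {\sum_{t\in E}\alpha(t)\,e^{-\beta N^i_t(\eta)}},\qquad
N^i_s(\eta):=\#\{j\sim i:\ s\eta_j=-1\}\in\{0,\dots,B\},
\end{equation*}
with the convention $N^i_0\equiv 0$. Plugging in the worst case $N^i_s(\eta)=B$ and $N^i_t(\eta)=0$ for $t\ne s$ gives the uniform-in-$\eta$ lower bounds
\begin{equation*}
\inf_{\eta}\gamma^{sc,0}_i(0\vert \eta)\;\ge\;\alpha(0),\qquad
\inf_{\eta}\gamma^{sc,0}_i(\pm 1\vert \eta)\;\ge\;
\frac{\alpha(\pm 1)\,e^{-\beta B}}{\alpha(\pm 1)\,e^{-\beta B}+1-\alpha(\pm 1)},
\end{equation*}
each of which tends to $1$ as $\alpha\to\delta_s$.

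Next I would invoke the elementary fact that two probability measures on $E$ both placing mass at least $1-\delta$ on the same point $s$ are within total variation distance $\delta$; applied to $\gamma^{sc,0}_i(\cdot\vert \eta)$ and $\gamma^{sc,0}_i(\cdot\vert \zeta)$ this yields $C_{ij}(\gamma^{sc})\le 1-\inf_\eta \gamma^{sc,0}_i(s\vert \eta)$ for every $j\sim i$. Combined with the crude estimate $c(\gamma^{sc})\le B\sup_{i,\,j\sim i}C_{ij}(\gamma^{sc})$, Dobrushin's condition reduces to requiring $\inf_\eta \gamma^{sc,0}_i(s\vert \eta)>1-1/B$. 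With the explicit lower bounds above this becomes $\alpha(0)>1-1/B$ for $s=0$ and $\alpha(\pm 1)>(B-1)/(B-1+e^{-\beta B})$ for $s=\pm 1$; each defines an open $d_{TV}$-neighborhood of the corresponding $\delta_s$, and the minimum of the three associated radii supplies a common $\epsilon$ as in the statement. There is no substantial obstacle in the argument; the one point requiring care is the uniform control in $\eta$, which is precisely what the finiteness of $B$ furnishes through $N^i_s(\eta)\le B$.
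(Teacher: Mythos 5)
Your proof is correct, and it takes a different technical route from the paper. The paper proves this theorem by going through the explicit formula for the interdependence coefficients $C_{ij}$ (Lemma \ref{lem: C_ij_sc}): since each entry is a maximum over finitely many explicit fractions that are continuous in $\al$ and vanish at $\al=\delta_l$, and since $B<\infty$ makes all maximizations finite, the Dobrushin constant tends to $0$ along any sequence $\al_n\to\delta_l$, and the neighborhoods are then produced by a soft continuity argument with no explicit radii. You bypass Lemma \ref{lem: C_ij_sc} entirely: you bound the single-site kernel from below uniformly in the boundary condition (using $N^i_s(\eta)\le B$, which is where finiteness of $B$ enters), invoke the elementary fact that two laws on $E$ each giving mass at least $1-\delta$ to the same point are within total variation $\delta$, and conclude via $c(\gamma^{sc})\le B\,\sup_{i,\,j\sim i}C_{ij}$ (legitimate since $C_{ij}=0$ for $i\nsim j$ by locality of the kernel). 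Your monotonicity argument for the worst case is sound, and the resulting conditions $\al(0)>1-1/B$ and $\al(\pm1)>(B-1)/(B-1+e^{-\beta B})$ indeed define open $d_{TV}$-neighborhoods of $\delta_0,\delta_{\pm1}$, so the conclusion follows. What your approach buys is an explicit, quantitative description of the uniqueness neighborhoods (which could even be fed into the time-evolution arguments to give explicit times), at the price of cruder constants; the paper's approach yields no explicit radii but recycles the already-computed matrix entries that it needs elsewhere (e.g. for Corollary \ref{Coro: Dob pm} and Figure \ref{fig: Dob sc}). The only cosmetic point is that Dobrushin's condition as defined in the paper also requires quasilocality of the specification, which holds trivially here since the interaction is finite range; stating this in one line would make the argument complete.
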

			\subsection{Time-evolution}\label{sec: time}
			For the time-evolved model we consider a stochastic kernel which exchanges $+$ and $-$ spins with the same rate independently at each site $i\in \Z^d$ and there is no creation or erasing of a particle. Since the transition is independent at each site it is enough to define the transition kernel for a single-site
			\begin{align}\label{trans}
			p_t(a,b)= \frac{1}{2}(1+e^{-2t})\mathds{1}_{a=b\neq0}+\frac{1}{2}(1-e^{-2t})\mathds{1}_{a b =-1} 
			+\mathds{1}_{a=b=0}
			\end{align}
			where $a,b\in E$ and $t>0$. We write $\omega\in\Omega$ for a configuration at time $0$ and $\eta\in\Omega$ for one at time $t$. 
			Let $\mu$ be a Gibbs measure for the hard-core or soft-core model then the time-evolved measure at time $t>0$ is defined via $\mu_t(f) = \int_\Omega\int_{\Omega} f(\eta) p_t(\omega,d\eta)\mu(d\omega)$.  
			
			Whether the time-evolved measure is a Gibbs measure or not, depends on the existence of a quasilocal specification for $\mu_t$. For asymmetric $\al\in\mathcal{M}_1(E)$, i.e. $\al(1)\neq \al(-1)$, we have the Gibbs property for the time-evolved hard-core model, for large enough $t$, as we will describe now. By $\mu^+$ we denote the limiting Gibbs measure $\mu^+:= \lim_{\Lambda \uparrow} \gamma^{hc}_{\Lambda, \al}(\cdot\vert \omega^+)$ coming from the all-plus boundary condition. 
			
			\begin{theorem}\label{thm: hc is gibbs}
				Let $\al\in\mathcal{M}_1(E)$ with $\al(1)>\al(-1)$ and $\mu^+\in\mathcal{G}(\gamma^{hc}_\al)$. Then for all $t>t_G:= \frac{1}{2}\log\left(\frac{\al(1)+\al(-1)}{\al(1)-\al(-1)}\right)$ the time-evolved measure $\mu^+_t$ is Gibbs. 
			\end{theorem}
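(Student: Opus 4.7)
The plan is to use the standard two-layer representation of the time-evolved conditional probabilities. For $\Lambda\Subset\Z^d$ I would first write
\begin{align*}
\mu_t^+(\eta_\Lambda\vert\eta_{\Lambda^c})=\int\hat\mu_{\eta_{\Lambda^c}}(d\omega)\prod_{i\in\Lambda}p_t(\omega_i,\eta_i),
\end{align*}
where the first-layer measure $\hat\mu_{\eta_{\Lambda^c}}$ is $\mu^+$ reweighted on $\Lambda^c$ by $\prod_{i\in\Lambda^c}p_t(\omega_i,\eta_i)$. Gibbsianness of $\mu_t^+$ then reduces to quasilocality in total variation of the map $\eta_{\Lambda^c}\mapsto\hat\mu_{\eta_{\Lambda^c}}$.

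The next step exploits the rigid structure of the hard-core hidden model. Because $p_t(0,\pm 1)=p_t(\pm 1,0)=0$, the reweighting pins the occupation pattern on $\Lambda^c$ to $\{i\in\Lambda^c:\eta_i\neq 0\}$, and the hard-core rule $\omega_i\omega_j\neq -1$ then forces $\omega$ to take a common sign $s_C\in\{-1,+1\}$ on each connected component $C$ of $O:=\{i:\omega_i\neq 0\}$. Conditionally on $O$ the signs $(s_C)_C$ are independent, and on a finite component $C$ the weight ratio between $s_C=+1$ and $s_C=-1$ under $\hat\mu_{\eta_{\Lambda^c}}$ is bounded below by $r_t^{|C|}$, where
\begin{align*}
r_t:=\frac{\al(1)(1-e^{-2t})}{\al(-1)(1+e^{-2t})}
\end{align*}
is the minimum per-site weight ratio, realized at sites $i\in\Lambda^c$ with $\eta_i=-1$. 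A direct calculation identifies the unique $t$ at which $r_t=1$ with the threshold $t_G$, so $t>t_G$ yields $r_t>1$ with a positive gap, uniformly in $\eta$.

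With this uniform exponential bias in hand I would prove quasilocality by direct comparison. If $\eta_{\Lambda^c}$ and $\eta'_{\Lambda^c}$ differ only at a single site $j$ with $R:=\operatorname{dist}(j,\Lambda)$ large, the kernel $\mu_t^+(\eta_\Lambda\vert\cdot)$ can only change through a connected component of $O$ containing both $j$ and some site of $\Lambda$; any such component has size at least $R$, so by the previous step its sign is $+1$ with probability at least $1-r_t^{-R}$. A single-site perturbation at $j$ multiplies the sign-weight ratio by a bounded factor and therefore cannot overturn this exponentially pinned sign, which yields a bound of order $r_t^{-R}$ on the one-site oscillation of $\mu_t^+(\eta_\Lambda\vert\cdot)$, going to zero as $R\to\infty$.

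The main obstacle I anticipate is the presence of infinite components of $O$, which occur typically in the percolation regime of $\mu^+$ and may reach $\Lambda$ from any distance. On such an infinite cluster the sign $s_C$ cannot be read off a finite two-way weight ratio but must be constructed as the almost sure limit of the $+1$-biased finite-volume conditionals; one then has to show that this limit is the deterministic value $+1$, robustly under local modifications of $\eta_{\Lambda^c}$. The strict inequality $t>t_G$, i.e.\ $\log r_t>0$, supplies precisely the Peierls-type exponential gap required to sum over all contours separating a hypothetical wrong sign choice from the dominant $+1$ phase, in close analogy with the argument used for the Euclidean hard-core Widom-Rowlinson model of \cite{jahnel-kuelske17a}.
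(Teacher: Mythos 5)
Your plan is essentially the paper's proof: the paper likewise passes to a cluster representation of the conditional probabilities (via the $(\omega,\sigma)$-identification and Lemma \ref{lem: cluster rep}), identifies the per-cluster sign weight as a product of per-site ratios whose worst-case value is $\al_r/q_t = r_t$, observes that $r_t>1$ exactly when $t>t_G=\arccotanh(\al_r)$, and then builds an explicit candidate specification $\gamma^\infty_{\Delta,\al,t}$ in which infinite clusters are deterministically pinned to $+1$, proving admission (Lemma \ref{lem: gamma infty adm}) and quasilocality (Lemma \ref{lem: gamma infty quasi}) by exactly the exponential-gap argument you invoke, following the continuum analogue. The only point your sketch leaves implicit and the paper makes precise is that one must actually exhibit the specification (rather than work directly with a.s.-defined conditional probabilities) and verify both admission and consistency, not only the total-variation quasilocality estimate.
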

			It is conjectured that in the asymmetric model at time zero there is no phase transition and then all time-evolved measures would be Gibbs for $t>t_G$. 
			 Since the DLR-equation is formulated almost surely one has to prove for non-Gibbsianness that all specifications for $\mu_t$ are non-quasilocal.

			\begin{definition}
				Let $\gamma$ be a specification on $\Z^d$ with single-site spin state $(E,\mathcal{F}_0)$. A configuration $\eta\in \Omega$ is called bad for $\gamma$ if there exist $\Delta\Subset \Z^d$, a local function $f$ and $\zeta^1,\zeta^2\in \Omega$ such that 
				\begin{align*}
					\lim_{\Lambda \uparrow \Z^d} \vert \gamma_\Delta(f\vert \eta_{\Lambda\backslash\Delta} \zeta^1_{\Lambda^c})-\gamma_\Delta(f\vert \eta_{\Lambda\backslash\Delta} \zeta^2_{\Lambda^c})\vert>0.
				\end{align*}
			\end{definition}
			By \cite{georgii-book} the existence of a bad configuration for a specification $\gamma$ implies the non-quasilocality of $\gamma$. 
			
			For the time-evolved hard-core model we will prove that bad configurations exist by using a cluster representation of the model.

				\begin{definition}\label{Defi: Cluster}
						Let $\zeta\in \{0,1\}^{\Z^d}$.  Then $C\subset\Z^d$ is called a cluster (or connected component) 
						if it is connected, that is, 
						if for all $i,j\in C$ there exists a finite sequence  $i=i_1,\ldots,i_k=j\in \Z^d$ with $i_{m+1}\sim i_m$ and $\zeta_m=1$, and $C$ is maximal with this property. 
						The set of all clusters for $\zeta$ is denoted by 
						$\mathcal{C}(\zeta)$. \\
						Further define for a finite volume $\Lambda\Subset \Z^d$,  
						$\mathcal{C}_\Lambda(\zeta_\Lambda\zeta_{\Lambda^c})$ to be the set 
						of clusters for $\zeta$ with $C\cap (\Lambda\cup \partial\Lambda)\neq \emptyset$. 
						Denote by $\mathcal{C}_{\Lambda^c}(\zeta)$ the complement of 
						$\mathcal{C}_\Lambda(\zeta_\Lambda\zeta_{\Lambda^c})$ in 
						$\mathcal{C}(\zeta)$.  
			\end{definition}
			This decomposition of $\mathcal{C}$ has the advantage that for fixed $\zeta_{\Lambda^c}$ the set $\mathcal{C}_{\Lambda^c}(\zeta)= \mathcal{C}_{\Lambda^c}(\zeta_\Lambda\zeta_{\Lambda^c})$ does not depend on $\zeta_{\Lambda}$.
			 Since all connected components of the time zero configuration have the same sign, a connected component will be a cluster. We say the model is in a high intensity regime if for some $\Lambda\Subset \Z^d$ the event $\{$there exists an infinite cluster with $\Lambda \cap C \neq \emptyset\}=:\{\Lambda\leftrightarrow \infty\}$ has positive probability under $\mu\in\mathcal{G}(\gamma^{hc}_\al)$. 
			
			\begin{theorem}\label{thm: non-gibbs hc}
				Consider the asymmetric model $\al(-1)<\al(1)$, in the high-intensity regime.  
				Then the time-evolved hard-core measure $\mu^+_t$ is non-Gibbs if  $0<t<t_G$. 
				
				Consider the symmetric model $\al(-1)=\al(1)$, in the high-intensity regime. Then, for any translation-invariant Gibbs measure as a starting 
				measure,  the time-evolved hard-core measure 
				$\mu_t$ is non-Gibbs for all $t>0$.  
				
				In both cases the sets of bad configurations have full measure with respect to the time-evolved measure.  
			\end{theorem}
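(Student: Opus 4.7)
The plan is to use the two-layer representation of the joint time-zero/time-$t$ distribution of $(\omega,\eta)$ to rewrite conditional probabilities of $\mu_t$ as those of a \emph{transformed} Gibbs measure, and then to locate bad configurations via a hidden phase transition in that transformed system. Since the single-site kernel $p_t$ preserves occupations, $|\omega_i|=|\eta_i|$ almost surely, so fixing $\eta_{\Delta^c}$ produces a distribution on $\omega_{\Delta^c}$ with modified a priori weight $\alpha(\omega_i)p_t(\omega_i,\eta_i)$ at each site, together with the hard-core constraint. The hard-core constraint forces each connected cluster $C$ of $\{i:|\eta_i|=1\}$ to carry a single sign $s_C\in\{+1,-1\}$, so, writing $a=(1+e^{-2t})/2$ and $b=(1-e^{-2t})/2$, the conditional distribution of $\omega$ given $\eta$ factorises over clusters with
\[
\frac{\prob(s_C=+1)}{\prob(s_C=-1)} = \left(\frac{\alpha(1)}{\alpha(-1)}\right)^{|C|}\left(\frac{a}{b}\right)^{n_+(C)-n_-(C)},
\]
where $n_\pm(C)=|\{i\in C:\eta_i=\pm 1\}|$; for infinite clusters this must be interpreted as a limit along a growing finite volume.

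To produce bad configurations, I would pick $\eta$ containing some site $x$ lying in an infinite cluster of $\{i:|\eta_i|=1\}$, choose $\Delta=\{x\}$, and compare the two boundary conditions $\zeta^+\equiv +1$ and $\zeta^-\equiv -1$ on $\Lambda^c$. Because the $\eta$-cluster of $x$ is infinite, for $\Lambda$ sufficiently large it already connects $x$ to $\partial\Lambda$ within $\eta_{\Lambda\setminus\{x\}}$, and under the fully occupied boundaries $\zeta^\pm_{\Lambda^c}$ it absorbs all of $\Lambda^c$. Each additional site in $\Lambda^c$ contributes a multiplicative factor of $(\alpha(1)/\alpha(-1))(a/b)$ to the Bernoulli weight ratio for $s_C$ under $\zeta^+$, and $(\alpha(1)/\alpha(-1))(b/a)$ under $\zeta^-$. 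The first always exceeds $1$, pinning $s_C=+1$ under $\zeta^+$. For $s_C=-1$ to be pinned under $\zeta^-$ one needs $(\alpha(1)/\alpha(-1))(b/a)<1$, i.e.\ $a/b>\alpha(1)/\alpha(-1)$, which by a short computation is equivalent to $t<t_G$ in the asymmetric case and to the harmless $t>0$ in the symmetric case. Once the two boundaries pin opposite cluster signs, the single-site conditional distribution of $\eta_x$ switches between $(a,b)$ and $(b,a)$ on $\{+1,-1\}$, producing the required discontinuity as $\Lambda\uparrow\Z^d$.

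The full-measure statement then follows because the bad $\eta$'s so produced include every configuration with at least one site in an infinite cluster of $\{i:|\eta_i|=1\}$. Since the dynamics preserves occupations, this event coincides with the existence of an infinite cluster of $\{i:|\omega_i|=1\}$ under the starting measure, which in the high-intensity regime has probability one by translation invariance together with tail-triviality on the extremal decomposition.

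The main technical obstacle, to which I would devote most care, will be making the cluster representation and the pinning argument rigorous in infinite volume, since one cannot literally evaluate a product over all of $\Lambda^c$ at once. I would introduce an intermediate finite volume $\tilde\Lambda\supset\Lambda$, compute $\gamma_\Delta(f\mid\eta_{\Lambda\setminus\Delta}\zeta^\pm_{\Lambda^c})$ as a finite-volume expression for the transformed hard-core Widom-Rowlinson system on $\tilde\Lambda$, and establish a Peierls-type bound showing that the sign of the large but finite piece of the cluster of $x$ visible in $\tilde\Lambda\setminus\Lambda$ is exponentially locked to $\pm 1$ by the boundary data, with an error uniform in the order of the two limits. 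Taking $\tilde\Lambda\uparrow\Z^d$ first and then $\Lambda\uparrow\Z^d$ then yields the non-coincidence of the limits required in the definition of a bad configuration.
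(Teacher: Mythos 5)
Your mechanism is the right one, and it is essentially the paper's: the cluster representation in which each occupied cluster carries one sign with odds $\al_r^{|C|}q_t^{\,n_+(C)-n_-(C)}$, the pinning of the cluster sign by all-plus versus all-minus conditioning, the threshold $q_t>\al_r$ being exactly $t<t_G$ (and vacuous in the symmetric case), and percolation supplying the full-measure statement. But there is a genuine gap in the step that turns the discontinuity of your explicit cluster formula into non-Gibbsianness of $\mu_t$. Non-Gibbsianness means that \emph{every} specification admitted by $\mu_t$ fails to be quasilocal, and any version of the conditional probabilities may be modified on $\mu_t$-null sets; the specific configurations $\eta_{\Lambda\setminus\Delta}\zeta^{\pm}_{\Lambda^c}$ at which you evaluate, and indeed each fixed fully occupied exterior, form such null sets. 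Moreover, your identification of the conditional distribution with the cluster-sign formula is asserted also at configurations whose cluster of $x$ is infinite, where no canonical formula exists (the "limit along a growing finite volume" depends on the boundary data — that dependence is precisely the non-Gibbsian effect, so it cannot be used as a definition). Showing that one particular candidate kernel is discontinuous at infinite-cluster configurations does not rule out other versions of the conditional probabilities.

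The paper closes this gap differently: it identifies $\mu_t(\cdot\,\vert\,\boldsymbol\eta_{\Delta^c})$ with an explicit kernel $\gamma^{\mathrm f}_{\Delta,\al,t}$ \emph{only} on the event $\{\Delta\nleftrightarrow\infty\}$ (Lemma \ref{lem : con. prob mu_t gamma^f}), proves a discontinuity bound for $\gamma^{\mathrm f}$ that is uniform over the positive-measure set of configurations connecting $\Delta$ to $\Lambda_n^c$, with the sign flips confined to the finite annulus $\Lambda_n\setminus\Lambda$ rather than all of $\Lambda^c$ (Lemma \ref{lem : gamma^f bad}), and then argues by contradiction against an \emph{arbitrary} assumed quasilocal specification $\tilde\gamma$: inserting the time-zero decoupling event that the annulus $\bar\Lambda_n\setminus\Lambda_n$ is empty (reweighted by its conditional probability) forces the cluster of $\Delta$ to be finite, so $\tilde\gamma_\Delta=\gamma^{\mathrm f}_\Delta$ can be used $\mu_t$-a.s.\ under the integral; quasilocality makes the integral vanish as $\Lambda\uparrow\Z^d$, while the percolation bound $\mu(\{\Delta\leftrightarrow\infty\})>0$ keeps it above $\delta\,\mu(\{\Delta\leftrightarrow\infty\})$. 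Your final paragraph (intermediate volume $\tilde\Lambda$, Peierls-type locking of the visible finite piece of the cluster) addresses the order-of-limits for your explicit formula, but not this essential point: you need a device, such as the empty-annulus conditioning, that transfers the explicit computation to an arbitrary specification on a set of positive $\mu_t$-measure, and you should generate the discontinuity from large finite clusters reaching $\Lambda_n^c$ rather than by conditioning directly on infinite clusters. Without that, the argument only shows that one natural candidate specification is non-quasilocal, which is weaker than the theorem.
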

			The last statement means that the set of bad configurations for any specification of the time-evolved measure has probability one for the time-evolved measure. 
			In the low intensity regime the time-evolved model is also non-Gibbs but the bad configurations form a null set. 
			\begin{theorem}\label{thm: non-gibbs hc low}
				Consider the asymmetric model $\al(-1)<\al(1)$, in the low-intensity regime.  
				Then the time-evolved hard-core measure $\mu^+_t$ is non-Gibbs if  $0<t<t_G$. 
				
				Consider the symmetric model $\al(-1)=\al(1)$, in the low-intensity regime. Then, for any translation-invariant Gibbs measure as a starting 
				measure,  the time-evolved hard-core measure 
				$\mu_t$ is non-Gibbs for all $t>0$.  
				
				In both cases the sets of bad configurations have zero measure with respect to the time-evolved measure. 
			\end{theorem}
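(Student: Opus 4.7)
The plan is to re-use the bad-configuration construction from the proof of Theorem \ref{thm: non-gibbs hc} and to verify separately that this family forms a $\mu_t$-null set in the low-intensity regime. First I would observe that the cluster-representation argument behind Theorem \ref{thm: non-gibbs hc} produces a discontinuity of the single-site conditional probability at a site $i_0$ whenever $\eta$ contains an infinite particle cluster passing near $\Delta$: once such a cluster exists, Bayes' formula for the kernel $p_t$ of \eqref{trans} reduces the analysis to an internal cluster-sign model along the cluster. In the asymmetric case with $0<t<t_G$, the threshold $t_G=\frac{1}{2}\log\frac{\alpha(1)+\alpha(-1)}{\alpha(1)-\alpha(-1)}$ is exactly the time below which the noise has not yet overwhelmed the tilt along a single infinite cluster, so varying the far-away boundary condition along that cluster produces the required non-vanishing oscillation; in the symmetric case, the cluster-sign model reduces to an Ising-type system on an infinite cluster which exhibits symmetry breaking for all $t>0$. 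Crucially, neither step uses the intensity regime of $\mu$: each needs only the pointwise existence of an infinite cluster in the chosen explicit $\eta$.

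Second, I would prove the null-measure claim. The kernel \eqref{trans} satisfies $p_t(a,b)>0$ only when $|a|=|b|$, so the dynamics preserve the particle support $\{i:\omega_i\neq 0\}$ almost surely; in particular, the cluster decomposition of $|\eta|$ coincides with that of $|\omega|$ almost surely under the joint distribution of $(\omega,\eta)$. By the definition of the low-intensity regime stated just before Theorem \ref{thm: non-gibbs hc}, $\mu(\Lambda\leftrightarrow\infty)=0$ for every $\Lambda\Subset\Z^d$, hence $\mu$-almost surely no infinite particle cluster exists, and the same then holds $\mu_t$-almost surely. Since the bad configurations described in the first step all require an infinite cluster through $\Delta$, the bad set has $\mu_t$-measure zero, yielding the second claim of the theorem.

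The main obstacle is the first step: confirming that the discontinuity arguments of Theorem \ref{thm: non-gibbs hc} really go through at individual, explicitly constructed configurations $\eta$, rather than only at $\mu_t$-typical ones. In the asymmetric case this reduces to a direct computation of the posterior sign distribution along a single infinite path, while in the symmetric case it reduces to the non-uniqueness of the hidden Ising-type specification supported on the same infinite cluster. In both cases, inspecting the proof of Theorem \ref{thm: non-gibbs hc} shows that the high-intensity hypothesis enters only to guarantee positive $\mu_t$-measure of the bad set, not to produce the discontinuity at a given $\eta$, so the pointwise arguments transfer verbatim to the low-intensity setting.
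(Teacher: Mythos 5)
Your second step (the null-measure claim) is in the spirit of the paper, but your first step contains the genuine gap, and it is exactly the difficulty that the low-intensity case is about. Non-Gibbsianness requires showing that \emph{every} specification $\tilde\gamma$ consistent with $\mu_t$ fails to be quasilocal, and $\tilde\gamma$ is constrained by the DLR equation only up to $\mu_t$-null sets. In the high-intensity proof the contradiction is obtained from the integral $I^{\delta}_{\Lambda,n}$, whose lower bound is precisely $\mu(\{\Delta\leftrightarrow\infty\})>0$; in the low-intensity regime this quantity is zero, so that argument yields nothing, and a configuration carrying an infinite cluster is a $\mu_t$-null point at which an arbitrary specification could a priori be redefined. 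Hence the assertion that ``the pointwise arguments transfer verbatim'' is not justified: a discontinuity of the explicit kernels $\gamma^{\mathrm{f}}$ or $\gamma^{\infty}$ at a null configuration does not by itself defeat an arbitrary version of the conditional probabilities. What is missing is the mechanism the paper supplies: one fixes the explicit half-line configuration $\boldsymbol{\zeta}^b$ (particles with plus sign on $X=\{i: i_1\geq 0,\ i_k=0\}$), introduces the weight $g^n$ which conditions the first layer to equal $\boldsymbol{\zeta}^b$ on $\Lambda_n$ and to be empty on a decoupling annulus $\bar\Lambda_n\setminus\Lambda_n$ (a cylinder event of \emph{positive} $\mu$-probability), and then bounds the quantity $\tilde I_{\Lambda,n}$ from two sides: from below by $\delta>0$, using Lemma \ref{lem : con. prob mu_t gamma^f} to replace $\tilde\gamma_\Delta$ by $\gamma^{\mathrm{f}}_\Delta$ on the finite-cluster event and Lemma \ref{lem : gamma^f bad} for the oscillation along the long finite segment; and from above, via the DLR equation for $\mu$ and the empty annulus, by the oscillation of $\tilde\gamma_\Delta$ at the single configuration $\boldsymbol{\zeta}^b$. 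This is what forces $\boldsymbol{\zeta}^b$ to be bad for every candidate $\tilde\gamma$; without such a finite-volume approximation/decoupling argument your first step does not go through.

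On the null-measure statement: your observation that the dynamics preserve particle supports and that $\mu(\{\Lambda\leftrightarrow\infty\})=0$ in the low-intensity regime is correct, but ``the bad configurations we constructed all require an infinite cluster'' does not yet give the claim. One must exhibit a specification for $\mu_t$ whose set of discontinuity points is contained in the (null) set of configurations with an infinite cluster; the paper does this with $(\gamma^{\infty}_{\Delta,\al,t})_{\Delta\Subset\Z^d}$, which by Lemma \ref{lem : con. prob mu_t gamma^f} agrees with the conditional probabilities of $\mu_t$ on configurations whose clusters through $\Delta$ are finite and which is continuous at every such configuration. Adding this step would complete your second part.
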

			In this case there exists an almost-surely quasilocal specification for the time-evolved measure and we say $\mu_t$ is almost surely Gibbs.
			The time zero measure $\mu$ is Gibbs and immediately after starting the time evolution it loses the Gibbs property. In the asymmetric model it recovers the Gibbs property after some time. For the soft-core model the case is different. Here the model is short-time Gibbs and in a low interaction regime it is Gibbs for all times $t>0$.
			
			\begin{theorem}\label{thm: short time gibbs}
			Let $\mu\in\mathcal{G}(\gamma^{sc}_{\beta,\al})$. For every $\beta$ and every $\al\in\mathcal{M}_1(E)$ there exists a time $t_0(\beta,\al)$ such that $\mu_t$ is a Gibbs measure for all times $t<t_0(\beta,\al)$.
			\end{theorem}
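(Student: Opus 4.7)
The plan is to use the two-layer approach to Gibbsianness of time-evolved Gibbs measures (\cite{enter-fernandez-hollander-redig02,fernandez-pfister97}), fed with the neighborhood Dobrushin estimate of Theorem \ref{thm: neigh}. Consider the joint distribution $P(d\omega,d\eta):=\mu(d\omega)\prod_{i\in\Z^d}p_t(\omega_i,d\eta_i)$ on $\Omega\times\Omega$, whose second marginal is $\mu_t$. Since the factors $p_t(\omega_i,\eta_i)$ multiply the a priori measure site by site, the conditional distribution $P(d\omega\mid\eta)$ is, for $P$-a.e.\ $\eta$, a Gibbs measure for the soft-core Widom--Rowlinson specification on $\Z^d$ with the same repulsion strength $\beta$ but with \emph{site-dependent} a priori measures
\begin{align*}
\tilde\al_i^{t,\eta}(d\omega_i)\propto\al(d\omega_i)\,p_t(\omega_i,\eta_i),\qquad i\in\Z^d.
\end{align*}
By the two-layer method, $\mu_t$ is Gibbs as soon as this \emph{first-layer specification} admits a unique Gibbs measure for every $\eta\in\Omega$ which depends continuously on $\eta$ on local observables.

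Inspection of \eqref{trans} shows that $\tilde\al_i^{t,\eta}\to\delta_{\eta_i}$ in total variation as $t\downarrow0$, \emph{uniformly} in $\eta_i\in E$: for $\eta_i=0$ the kernel $p_t(\cdot,0)$ forces $\tilde\al_i^{t,\eta}=\delta_0$ exactly, while for $\eta_i=\pm1$ the mass on the opposite sign is of order $t$. Choosing $\epsilon>0$ as supplied by Theorem \ref{thm: neigh} for the finite maximal degree $B=2d$ of $\Z^d$, there exists $t_0(\beta,\al)>0$ such that for every $0<t<t_0$, every $\eta\in\Omega$ and every $i\in\Z^d$ one has $\tilde\al_i^{t,\eta}\in U^{\eta_i}_\epsilon$. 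Now the key structural remark: by Lemma \ref{lem: C_ij_sc} each entry $C_{ij}$ of the Dobrushin interdependence matrix of a soft-core specification depends only on $\beta$ and on the a priori measure at the single site $i$ (where the kernel lives), and it vanishes whenever this measure is a Dirac mass. Consequently the row sum $\sup_i\sum_jC_{ij}$ of the first-layer specification depends only on $\beta$ and on $\sup_i\dd_{TV}(\tilde\al_i^{t,\eta},\delta_{\eta_i})$, which is made arbitrarily small by shrinking $t$; after possibly shrinking $\epsilon$, the Dobrushin constant stays strictly below $1$, \emph{uniformly in} $\eta\in\Omega$.

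Dobrushin's comparison theorem (\cite{georgii-book}) applied to this uniform estimate gives, for every $\eta$, a unique first-layer Gibbs measure depending continuously on $\eta$ on local observables, and feeding this back into the two-layer machinery produces a quasilocal specification for $\mu_t$, yielding the short-time Gibbs property. I expect the main obstacle to be not the Dobrushin input itself, which is essentially a corollary of Theorem \ref{thm: neigh} once one records the row-locality of $C$, but rather the book-keeping needed to translate uniform first-layer Dobrushin uniqueness into an explicit quasilocal specification for $\mu_t$: in particular, making the identification of $P(d\omega\mid\eta)$ with the site-dependent Gibbs measure above valid uniformly in $\eta$ (not merely $\mu_t$-a.s.), despite $\mu$ being only assumed Gibbs and possibly non-unique. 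This last step is the standard argument of \cite{enter-fernandez-hollander-redig02,fernandez-pfister97}.
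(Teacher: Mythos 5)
Your proposal is correct and follows essentially the same route as the paper: the key insight in both is that conditioning on the time-$t$ configuration $\eta$ produces a first-layer soft-core model whose single-site a priori measures $\tilde\alpha^{t,\eta}_i$ converge to $\delta_{\eta_i}$ uniformly in $\eta$ as $t\downarrow0$, so that Theorem \ref{thm: neigh} (together with the observation that $C_{ij}$ depends only on the a priori measure at $i$, and vanishes for Dirac masses, including at sites where $\eta_i=0$) yields a Dobrushin constant strictly below $1$ uniformly in $\eta$; this uniform estimate is then fed through the Dobrushin comparison theorem to build a quasilocal specification for $\mu_t$. The one point you defer to the literature is worked out in the paper as the ``restricted constrained first-layer model'' on the thinned graph $\Z^d\setminus\Delta$ (Lemmas \ref{lem: exis. speci}, \ref{lem: admi. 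Dob}, \ref{lem: time quasi}), which is the correct object for producing a kernel depending only on $\eta_{\Delta^c}$ rather than on all of $\eta$; your version phrases the first layer on all of $\Z^d$, which is slightly imprecise but you explicitly flag this as the bookkeeping step.
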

			\begin{theorem}\label{thm: gibbs all time}
			Let $\mu\in\mathcal{G}(\gamma^{sc}_{\beta,\al})$. If $\beta<\log(\frac{2d+1}{2d-1})$ then the time-evolved measure $\mu_t$ is Gibbs for all $t>0$.  
			\end{theorem}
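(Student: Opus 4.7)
The plan is to use the standard two-layer approach for time-evolved measures, combined with a uniform-in-future Dobrushin condition on the time-zero layer. First I would write, for a bounded local function $f$ and $\Delta\Subset\Z^d$, the conditional expectation of $\mu_t$ given the time-$t$ configuration outside $\Delta$ as a ratio of partition functions arising from a joint two-layer specification, in the spirit of Fernández-Pfister \cite{fernandez-pfister97} or van Enter-Fernández-den Hollander-Redig \cite{enter-fernandez-hollander-redig02}. Concretely, given a future configuration $\eta$, the past configuration $\omega$ is distributed under a modified specification $\hat\gamma^{\eta}$ proportional to $\gamma^{sc}_{\Lambda,\beta,\alpha}(d\omega\mid\omega')\prod_{i\in\Lambda}p_t(\omega_i,\eta_i)$.

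Next I would rewrite $\hat\gamma^{\eta}$ as a standard specification of Widom-Rowlinson type with an inhomogeneous, site-dependent a priori measure $\alpha_i^{\eta}$ determined by $\eta_i$ and $p_t$. Since $p_t(\omega_i,0)=\mathds{1}_{\omega_i=0}$, the effective measure $\alpha_i^{\eta}=\delta_0$ whenever $\eta_i=0$, so such sites get frozen to the value $0$ and disappear from the interaction. Whenever $|\eta_i|=1$, the effective measure is supported on $\{-1,+1\}$ with weights proportional to $\alpha(\eta_i)(1+e^{-2t})/2$ and $\alpha(-\eta_i)(1-e^{-2t})/2$. The pair interaction $\beta\mathds{1}(\omega_i\omega_j=-1)$ between active neighbors is unchanged, so $\hat\gamma^{\eta}$ restricted to the active sublattice $\{i:|\eta_i|=1\}\subset\Z^d$ is an inhomogeneous-field Ising specification with pair coupling $\beta/2$ (using $\mathds{1}(\sigma_i\sigma_j=-1)=(1-\sigma_i\sigma_j)/2$) on a subgraph of $\Z^d$ with maximal degree at most $2d$.

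The heart of the argument is to verify Dobrushin's condition for $\hat\gamma^{\eta}$ uniformly in $\eta$. Because the frozen sites only decrease the effective degree and the external field entering via $\alpha_i^{\eta}$ only improves the Dobrushin constant by Proposition~8.8 of \cite{georgii-book}, the worst case is $|\eta_i|=1$ for all $i$, i.e.\ the pure Ising model on $\Z^d$ with coupling $J=\beta/2$. For this model the standard computation gives $C_{ij}=\tanh(J)$ for $i\sim j$ and $0$ otherwise, so $c(\hat\gamma^{\eta})\le 2d\tanh(\beta/2)$. The hypothesis $\beta<\log\!\bigl(\tfrac{2d+1}{2d-1}\bigr)$ is exactly $\tanh(\beta/2)<\tfrac{1}{2d}$, so $c(\hat\gamma^{\eta})<1$ uniformly in $\eta$.

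Finally I would invoke the general result (see again \cite{fernandez-pfister97}) that uniform Dobrushin uniqueness of the conditioned-past specification $\hat\gamma^{\eta}$ implies that the map $\eta\mapsto \mu_t(f\mid\mathcal F_{\Delta^c})(\eta)$ has a quasilocal version for every local $f$ and $\Delta\Subset\Z^d$, which is the Gibbs property for $\mu_t$. The main obstacle, and the place where the specific constant $\log\!\bigl(\tfrac{2d+1}{2d-1}\bigr)$ enters, is the uniform Dobrushin estimate: one must check that neither the inhomogeneous single-site fields produced by $p_t$ and $\alpha$, nor the varying active sublattice structure, can increase $c(\hat\gamma^{\eta})$ beyond the homogeneous Ising bound — a monotonicity-in-degree and monotonicity-in-field observation that is not entirely automatic and must be stated carefully.
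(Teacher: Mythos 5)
Your proposal follows the same overall architecture as the paper's proof: condition on the future configuration, study the constrained first-layer model $\hat\gamma^{\eta}$ (the paper's ``restricted constrained first-layer model''), verify a Dobrushin condition for it uniformly in $\eta$ and $t$, and conclude Gibbsianness of $\mu_t$. Where you genuinely differ is in how the uniform Dobrushin bound is obtained. The paper derives it from its explicit formula for the soft-core interdependence coefficients (Lemma \ref{lem: C_ij_sc}) specialized to a priori measures with $\al(0)=0$, packaged as the second part of Corollary \ref{Coro: Dob pm}; you instead observe that on the active sublattice $\{i:|\eta_i|=1\}$ the constrained model is an inhomogeneous-field Ising model with coupling $\beta/2$, so the classical field-uniform bound $C_{ij}\le\tanh(\beta/2)$ gives $c\le 2d\tanh(\beta/2)$, and $2d\tanh(\beta/2)<1$ is exactly $\beta<\log\bigl(\tfrac{2d+1}{2d-1}\bigr)$. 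This is correct and arguably more transparent than the paper's polynomial computation; just be careful that the ``field only helps'' claim is not Proposition 8.8 of \cite{georgii-book} (which ignores single-site terms and would only give $\beta B<2$) but the elementary fact that $\sup_m\frac12|\tanh(m+J)-\tanh(m-J)|=\tanh(J)$ is attained at zero field.

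The weak point is your last step. The implication ``uniform Dobrushin uniqueness for the constrained first layer $\Rightarrow$ $\mu_t$ Gibbs'' is not available off the shelf in the form you cite: \cite{fernandez-pfister97} concerns projections/global specifications, and the closest general two-layer result, \cite{kuelske-opoku08}, assumes strictly positive transformation kernels, which fails here since $p_t(0,\pm1)=p_t(\pm1,0)=0$; the paper explicitly says it must extend that approach. Concretely, one still has to (i) show the finite-volume conditional kernels converge to a candidate specification $\gamma_{\Delta,\al,\beta,t}$ built from the unique constrained first-layer Gibbs measure (Lemma \ref{lem: exis. speci}), (ii) verify the DLR equation for $\mu_t$ (Lemma \ref{lem: admi. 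Dob}), and (iii) prove quasilocality of the candidate kernels (Lemma \ref{lem: time quasi}); all three are carried out in the paper via the Dobrushin comparison theorem, using precisely the uniform bound you established. Given your uniform Dobrushin estimate these steps go through with $t_0=\infty$, so your argument is repairable, but as written the final paragraph replaces the bulk of the actual proof by a citation that does not cover the degenerate kernel $p_t$.
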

			
			For highly asymmetric $\al$ the model is Gibbs for large times.
			\begin{theorem}\label{thm: gibbs high asm}
				Let $\mu\in \mathcal{G}(\gamma^{sc}_{\beta,\al})$, $U^{1}_{\epsilon},U^{-1}_{\epsilon}$ the neighborhoods given by Theorem \ref{thm: neigh}, and $\al\in\mathcal{M}_1(E)$ such that the probability measure $\bar{\al}$ with $\bar{\al}(\pm1)= \frac{\al(\pm1)}{\al(1)+\al(-1)}$ is an element of $U^1_{\epsilon} \cup U^{-1}_{\epsilon}$. Then there exists a time $t_1(\beta,\al)$ such that for all $t>t_1(\beta,\al)$ the time-evolved measure is Gibbs.
			\end{theorem}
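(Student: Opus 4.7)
The plan is to use the two-layer representation of the time-evolved measure together with Theorem \ref{thm: neigh}. I would construct the joint distribution of the pair $(\omega,\eta)\in\Omega\times\Omega$ in which $\omega\sim\mu$ and, conditionally on $\omega$, the $\eta_i$ are independent with law $p_t(\omega_i,\cdot)$ given by \eqref{trans}; then the $\eta$-marginal is $\mu_t$, and the conditional distribution of $\omega$ given $\eta$ is itself a soft-core Widom--Rowlinson Gibbs measure with the same repulsion $\beta$ but with a site-dependent a priori measure
\[
\al^{\eta,t}_i(\omega_i)\;\propto\;\al(\omega_i)\,p_t(\omega_i,\eta_i).
\]
A site with $\eta_i=0$ forces $\omega_i=0$ and can be removed from the graph, so the hidden system lives on an $\eta$-dependent subgraph of $\Z^d$ with maximum degree at most $2d$; at the remaining particle sites $\al^{\eta,t}_i$ is supported on $\{\pm 1\}$ and an explicit computation from \eqref{trans} gives, for both choices $\eta_i=\pm 1$, the convergence $\al^{\eta,t}_i\to\bar\al$ as $t\to\infty$, uniformly in $\eta$ and $i$.

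Next, I would apply Theorem \ref{thm: neigh} with $B=2d$: it provides an $\epsilon>0$ and open neighborhoods $U^{\pm1}_\epsilon$ on which Dobrushin's condition holds. Because $\bar\al\in U^1_\epsilon\cup U^{-1}_\epsilon$ by assumption and the neighborhoods are open, there exists $t_1(\beta,\al)$ such that for all $t>t_1(\beta,\al)$ the effective a priori measures $\al^{\eta,t}_i$ lie in the same neighborhood, uniformly in $\eta$ and $i$. Since for the nearest-neighbour soft-core model the sum $\sum_j C_{ij}$ has at most $2d$ non-zero terms and each $C_{ij}$ depends continuously on the local a priori measure, the pointwise bound $c(\tilde\gamma^{\eta,t})<1$ provided by Theorem \ref{thm: neigh} upgrades to a uniform bound $c(\tilde\gamma^{\eta,t})\leq c_0<1$ that is independent of $\eta$ (possibly after enlarging $t_1$).

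The Gibbs property of $\mu_t$ for $t>t_1(\beta,\al)$ would then follow from the standard two-layer Dobrushin argument of \cite{enter-fernandez-hollander-redig02,fernandez-pfister97,kuelske-le07}: uniform-in-$\eta$ Dobrushin uniqueness of the hidden specification $\tilde\gamma^{\eta,t}$, together with the purely single-site dependence of the modification on $\eta$ through the factor $p_t(\omega_i,\eta_i)$, implies that the conditional expectations $\mu_t\bigl(f(\eta_\Delta)\bigm|\eta_{\Lambda\setminus\Delta}\zeta_{\Lambda^c}\bigr)$ converge as $\Lambda\uparrow\Z^d$ uniformly in the far boundary condition $\zeta\in\Omega$, yielding a quasilocal specification for $\mu_t$.

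The main obstacle is this last step, in which the hidden-layer Dobrushin correlation decay has to be transferred to quasilocality at the outer layer; the argument is standard but requires some care because the hidden graph itself depends on $\eta$ through the removal of the $\eta_i=0$ sites. A milder technical point is the uniform-in-$\eta$ control of the Dobrushin constant, which Theorem \ref{thm: neigh} delivers only pointwise and which we upgrade by the continuity argument above, exploiting that the degree bound $B\leq 2d$ is uniform in $\eta$.
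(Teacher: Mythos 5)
Your proposal is correct and follows essentially the same route as the paper: you reduce the problem to a uniform-in-$\eta$ Dobrushin condition for the hidden (restricted constrained first-layer) soft-core model, whose effective single-site measures $\al^{\eta,t}_i(\cdot)\propto\al(\cdot)p_t(\cdot,\eta_i)$ converge to $\bar\al$ as $t\to\infty$ (with $\eta_i=0$ sites trivial), so that Theorem \ref{thm: neigh} applies for $t$ large, exactly as in the paper's proof. The transfer step you flag as the main obstacle is precisely what the paper's short-time machinery (Lemma \ref{lem: admi. Dob} and Lemma \ref{lem: time quasi}, via the Dobrushin comparison theorem) provides, and the paper concludes by reusing those lemmas with $t>t_1$.
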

			
			But for symmetric $\al$ the checkerboard configuration $\eta^{cb}$  is bad for the time-evolved measure and large times $t$. Its  defined via  
			\begin{align}\label{eq: check board}
			\eta^{cb}_i =\left\{  \begin{array}{ccc}1 &$if$&  \sum_{k=1}^d \vert i_k \vert$ is even$\\  -1 &\text{if}&  \sum_{k=1}^d \vert i_k\vert$ is odd$\end{array}\right..
			\end{align}
			
			\begin{theorem}\label{thm: sc non gibbs}
				Let $\mu\in \mathcal{G}(\gamma^{sc}_{\beta,\al})$ and $\al\in\mathcal{M}_1(E)$ symmetric. Then for large enough $\beta$ and $\lambda$ there exists a time $t_2(\beta,\al)$ such that $\eta^{cb}$ is bad for the time-evolved measure for all times $t>t_2(\beta,\al)$.
			\end{theorem}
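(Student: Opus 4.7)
The plan is to establish that the checkerboard $\eta^{cb}$ is a bad configuration via the standard two-layer Bayesian representation of the conditional probabilities of $\mu_t$, reducing the problem to a phase transition of an internal antiferromagnetic Ising model in a small uniform external field --- the same internal system that drives non-Gibbsianness for the Glauber-evolved Ising model in \cite{enter-fernandez-hollander-redig02}.

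Fix a small window $\Delta\Subset\Z^d$, say $\Delta=\{0\}$, and for a large box $\Lambda\supset\Delta$ write the candidate conditional probability as
$$\gamma^t_\Delta(f|\eta_{\Lambda\setminus\Delta}\zeta_{\Lambda^c}) = \int\!\!\int f(\eta_\Delta)\,p_t(\omega_\Delta,d\eta_\Delta)\,\nu^{\Lambda,t,\eta,\zeta}(d\omega),$$
where the posterior $\nu^{\Lambda,t,\eta,\zeta}$ is proportional to $\mu(d\omega)\prod_{i\in\Lambda\setminus\Delta}p_t(\omega_i,\eta_i)\prod_{i\in\Lambda^c}p_t(\omega_i,\zeta_i)$. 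Specializing to $\eta=\eta^{cb}$: since $p_t(\omega_i,\eta^{cb}_i)$ vanishes on $\omega_i=0$, the posterior forces $\omega_i\in\{\pm1\}$ for $i\in\Lambda\setminus\Delta$; on $\{\pm1\}$ it equals $\tfrac12(1+e^{-2t}\omega_i\eta^{cb}_i)$, contributing a site-wise field term $h(t)\,\eta^{cb}_i\omega_i$ with $h(t)=\tfrac12\log\tfrac{1+e^{-2t}}{1-e^{-2t}}$. On $|\omega|=1$ the Widom-Rowlinson interaction reduces to $\tfrac{\beta}{2}(1-\omega_i\omega_j)$, so the internal distribution of $\omega_{\Lambda\setminus\Delta}$ under the posterior (still coupled to $\omega_\Delta$ through the bond terms straddling $\partial\Delta$) is a ferromagnetic Ising model on $\Lambda\setminus\Delta$ with coupling $\beta/2$ in the alternating field $h(t)\eta^{cb}_i$. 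The sublattice flip $\sigma_i:=\eta^{cb}_i\omega_i$ transforms it into the antiferromagnetic Ising model with coupling $\beta/2$ in the uniform field $h(t)$.

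For $d\geq 2$, the antiferromagnetic Ising model in uniform field admits two translation-invariant Gibbs measures with opposite staggered magnetizations whenever $\beta$ is large and $|h|$ lies below a Peierls threshold $h_c(\beta)$. Choosing $\beta,\lambda$ large and $t$ so large that $h(t)<h_c(\beta/2)$ --- this fixes $t_2(\beta,\alpha)$ --- the far-boundary choices $\zeta^1\equiv+1$ and $\zeta^2\equiv-1$ drive the internal antiferromagnet into its two distinct phases in the limit $\Lambda\uparrow\Z^d$. FKG monotonicity for the internal ferromagnetic Ising representation then yields a strictly positive gap in the staggered magnetization at $\partial\Delta$, which propagates through the outer integration over $\omega_\Delta$ via the Widom-Rowlinson interaction to give
$$\lim_{\Lambda\uparrow\Z^d}\bigl|\gamma^t_\Delta(f|\eta^{cb}_{\Lambda\setminus\Delta}\zeta^1_{\Lambda^c})-\gamma^t_\Delta(f|\eta^{cb}_{\Lambda\setminus\Delta}\zeta^2_{\Lambda^c})\bigr|>0$$
for a suitable local observable such as $f(\eta)=\eta_0$, establishing that $\eta^{cb}$ is bad for every specification of $\mu_t$.

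The main obstacle is ensuring that the two phases of the internal antiferromagnet genuinely induce distinct conditional expectations on the small window $\Delta$, rather than being averaged out by the integration over $\omega_\Delta$; this requires a quantitative finite-energy/influence estimate showing that the sign of the staggered magnetization on $\partial\Delta$ tilts the distribution of $\omega_0$, and hence of $\eta_0$ under $p_t$, in a measurable way. The cleanest route --- the one advertised in the introduction --- is precisely to recognize the internal antiferromagnet in small uniform field as the same system responsible for the Glauber-dynamical Ising non-Gibbsianness analysed in \cite{enter-fernandez-hollander-redig02}, whence the propagation of the phase difference from the bulk into $\Delta$ can be imported from there.
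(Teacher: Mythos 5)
Your proposal follows essentially the same route as the paper: condition the two-layer measure on the checkerboard, use $p_t(0,\pm1)=0$ to force the first layer into $\{\pm1\}$ so that it becomes a ferromagnetic Ising model with coupling $\beta/2$ in the staggered field $h_t\eta^{cb}_i$ (equivalently an antiferromagnet in a small uniform field), and for large $\beta$ and large $t$ import the badness of $\eta^{cb}$ from the Glauber-evolved Ising result of \cite{enter-fernandez-hollander-redig02}. The "main obstacle" you flag --- that the phase difference might be washed out when integrating over the three-state spin at the origin --- is exactly what the paper closes, by comparing the ratios $\mu_t(\eta_0=1\vert\cdot)/\mu_t(\eta_0=-1\vert\cdot)$ with the corresponding Ising ratios, which coincide because $p_t(0,\pm1)=0$ removes the $\omega_0=0$ contribution, so the discontinuity at $\eta^{cb}$ transfers directly.
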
 
			 
			\section{Proofs for the static models}\label{Sec: 3}
			\subsection{Phase transition and Peierls argument}
		In this part we are only interested in models with no external magnetic field therefore we will not mention the parameter $h$. 
			The existence of a Gibbs measure for the soft-core model is given by the monotonicity property of the single-site kernels of the specification $\gamma^{sc}_{\beta,\lambda}$ and the FKG-inequality. Even more one can prove that there exist two special Gibbs measures which are translation invariant and are given by $\lim_{\Lambda\uparrow \Z^d}  \gamma^{sc}_{\Lambda,\beta,\lambda}(\cdot\vert \eta^{\pm})= \mu^{\pm}(\cdot)$ where $\eta^{\pm}$ are the all-plus and all-minus
		configurations, respectively. For more information about FKG-inequality see \cite{georgii-haggstrom-maes01}.
		
		A Hamiltonian can also be defined via a potential $\phi$. For the symmetric soft-core model it is given by 
			\begin{align*}
			\phi_\Delta(\omega):=\left\{\begin{array}{ccc}
			\beta \mathds{1}(\omega_i\omega_j=-1) & if & \Delta=\{i,j\} \text{ with } i\sim j\\
			-\log(\lambda)\omega_i^2 &if & \Delta=\{i\}\\
			0& else &\end{array}\right.
			\end{align*}	
			for $\Delta \Subset \Z^d$ and the Hamiltonian can be written as $\mathcal{H}_\Lambda(\omega)= \sum_{\Delta\Subset \Z^d, \Lambda\cap \Delta \neq \emptyset}\phi_\Delta(\omega)$. For the Peierls argument we need the definition of a ground state. 
		\begin{definition}
			Two configurations $\omega,\eta\in \Omega$  are equal up to a finite set, if there exists a finite set $\Lambda \subset \Z^d$ with $\eta_{\Lambda^c} = \omega_{\Lambda^c}$. This is denoted by  $\omega \stackrel{\infty}{=} \eta$. \\
				For those pair of configurations the relative Hamiltonian is defined by
			$	\mathcal{H}_\phi(\omega\vert \eta) = \sum_{\Delta \Subset V} (\phi_\Delta(\omega)-\phi_\Delta(\eta)) .$
				If $\mathcal{H}_\phi(\omega\vert \eta)\geq 0$ for all $\omega \stackrel{\infty}{=} \eta$ then $\eta$ is called ground state. 
		\end{definition}\noindent
		A ground state admits the minimal energy for a Hamiltonian and every finite change of the configuration increases the energy. The all-plus and all-minus configurations $\eta^{\pm}$ are the only periodic ground states for the symmetric soft-core model which can be proven by \cite[Lemma 7.4]{friedli_velenik_2017}. To specify the location of sites which not coincide with the spin of a ground state we define the following set $\mathcal{K}$.
		
		\begin{definition}
			A site $i \in \Z^d$ is said to be correct if there exists a ground state $\eta^\#$ with $\#\in \{+,-\}$ such that $\omega_j=\eta^\#_i$ for all $j\in \{k\,\vert\, k \sim i\}\cup \{i\}$. Then the set of incorrect sites is defined by
			\begin{align*}
			\mathcal{K}(\omega) := \{i \in \Z^d \,:\, \text{ i is  incorrect for any ground state}\}.
			\end{align*}
		\end{definition} 
		
		\hspace{-3pt}With $\mathcal{K}$ one can give a lower bound for the relative Hamiltonian of a ground state and a configuration which differs only on finitely many sites.
			\begin{lemma}\label{lem : relative Ham}
				Let $\omega \in \Omega$ be a configuration with $\omega \stackrel{\infty}{=} \eta^+$ or $\omega \stackrel{\infty}{=} \eta^-$ then 
				\begin{align*}
				\mathcal{H}_\phi(\omega\vert \eta^{\pm})\geq \vert \mathcal{K}(\omega) \vert \frac{\min\{\beta, \log(\lambda)\}}{(2d+1)} .
				\end{align*}
			\end{lemma}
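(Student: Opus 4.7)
The plan is to compute the relative Hamiltonian explicitly, write it as a sum of two non-negative terms (a ``zero-site'' term and an ``opposite-sign-bond'' term), and then prove a counting inequality that bounds $|\mathcal{K}(\omega)|$ by a constant times these two counts. Because only finitely many sites of $\omega$ differ from $\eta^{\pm}$, only finitely many terms contribute, so everything is well-defined.

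First I would expand the relative Hamiltonian. Comparing with $\eta^{+}$ (the case $\eta^-$ is identical by symmetry), the single-site potentials contribute $\log(\lambda)(1-\omega_i^2)$, and the bond potentials contribute $\beta\mathds{1}(\omega_i\omega_j=-1)$. Writing $N(\omega):=\{i\in\Z^d:\omega_i=0\}$ for the set of zero-sites and $M(\omega):=\{\{i,j\}\in\mathcal{E}:\omega_i\omega_j=-1\}$ for the set of opposite-sign bonds, this gives
\begin{equation*}
\mathcal{H}_\phi(\omega\mid\eta^{\pm}) \;=\; \log(\lambda)\,|N(\omega)|\;+\;\beta\,|M(\omega)|.
\end{equation*}

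The heart of the argument is then a combinatorial charging scheme assigning every incorrect site $i\in\mathcal{K}(\omega)$ to either an element of $N(\omega)$ or of $M(\omega)$ located within distance one of $i$. By definition, $i$ is incorrect iff the closed neighborhood $\{i\}\cup\{j:j\sim i\}$ either contains a zero-spin or contains both a $+1$ and a $-1$. I would split into cases accordingly: (a) if $\omega_i=0$ charge $i$ to itself in $N(\omega)$; (b) if $\omega_i\neq 0$ and some neighbor $j\sim i$ carries $\omega_j=0$, charge $i$ to $j\in N(\omega)$; (c) otherwise $\omega_i\neq 0$ and the closed neighborhood contains both signs but no zero, which forces some neighbor $j\sim i$ with $\omega_j=-\omega_i$, and I charge $i$ to the bond $\{i,j\}\in M(\omega)$.

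Next I would check the multiplicity of this charging map. A zero-site $j\in N(\omega)$ can only receive charges from incorrect sites in $\{j\}\cup\{k:k\sim j\}$, hence from at most $2d+1$ incorrect sites. An opposite-sign bond $\{k,\ell\}\in M(\omega)$ only receives charges from its two endpoints, since in case (c) the incorrect site itself is an endpoint of the chosen bond. Consequently
\begin{equation*}
|\mathcal{K}(\omega)| \;\leq\; (2d+1)\,|N(\omega)|\;+\;2\,|M(\omega)|\;\leq\;(2d+1)\bigl(|N(\omega)|+|M(\omega)|\bigr).
\end{equation*}
Combining with the expression for $\mathcal{H}_\phi$ and using $\log(\lambda)|N|+\beta|M|\geq \min\{\beta,\log(\lambda)\}(|N|+|M|)$ yields the claimed bound.

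The main point requiring care is the case analysis in the charging scheme: one must verify that case (c) really does produce a bond with both endpoints carrying opposite non-zero spins (so it lies in $M(\omega)$) and that this bond is incident to $i$, so that the 2-to-1 bound on bonds is correct. Everything else is routine bookkeeping, and the factor $2d+1$ comes naturally from the size of the closed lattice neighborhood in dimension $d$.
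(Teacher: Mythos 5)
Your proposal is correct and follows essentially the same route as the paper: the paper also localizes the excess energy of each incorrect site in a single-site or bond term of the potential within the closed neighborhood $\tilde B(i)=\{\{i\}\}\cup\{\{j\}:j\sim i\}\cup\{\{i,j\}:j\sim i\}$ and then divides by the multiplicity $2d+1$ with which a given term can be claimed, which is exactly your charging scheme onto zero-sites $N(\omega)$ and opposite-sign bonds $M(\omega)$. Your version merely makes the identity $\mathcal{H}_\phi(\omega\vert\eta^{\pm})=\log(\lambda)\vert N(\omega)\vert+\beta\vert M(\omega)\vert$ and the counting bound $\vert\mathcal{K}(\omega)\vert\leq(2d+1)\vert N(\omega)\vert+2\vert M(\omega)\vert$ explicit (even slightly sharper on the bond multiplicity), so no substantive difference.
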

			\begin{proof}
				We only prove it for $\eta^+$.  The key idea is to show that if $i$ is incorrect then there exists a 
				\begin{align*}
				\Delta\in\tilde{B}(i):=\{\{i\}\}\cup\{\{j\}\,\vert \, j\sim i\} \cup \{\{i,j\} \,\vert \, j\sim i\}
				\end{align*} 
				such that $\phi_\Delta(\omega) > \tilde{\phi}_\Delta:= \min_{\omega\in \Omega}\phi_\Delta(\omega) $. Since $\eta^+$ is a ground state it is easy to see that $\tilde{\phi}_\Delta= \phi_\Delta(\eta^+) = -\log(\lambda)\mathds{1}_{\vert \Delta \vert =1}$ for all $\Delta\subset \Z^d$.\\
				For $\omega_i=0$ and $\Delta=\{i\}$ the potential $\phi_\Delta(\omega)=0>-\log(\lambda)$. If $\omega_i=1$ two cases are possible. Either there exists a $j\sim i$ with $\omega_j=0$ or $\omega_j=-1$. For the first case set $\Delta=\{j\}$ then $\Delta\in \tilde{B}(i)$ and $\phi_\Delta(\omega)=0>-\log(\lambda)$. For the second case $\Delta=\{i,j\}$ can be used because $\phi_\Delta(\omega)=\beta > 0$. If the configuration at site $i$ is equal to minus we process the same as for the case where $\omega_i=1$.   It follows for every set $\Delta$ which is not in  $\cup_{i \in \mathcal{K}(\omega)}\tilde{B}(i)$ that $\phi_\Delta(\omega)-\tilde{\phi}_\Delta=0$. By this the relative Hamiltonian has the form
				\begin{align*}
				\mathcal{H}_\phi(\omega\vert \eta^+) &= \sum_{\Delta\in \cup_{i\in \mathcal{K}(\omega)}\tilde{B}(i)} (\phi_\Delta(\omega)-\tilde{\phi}_\Delta).
				\end{align*}
				We know that for every $i\in\mathcal{K}(\omega)$ there exists an $\Delta_i\in \tilde{B}(i)$ with $\phi_{\Delta_i}(\omega)-\tilde{\phi}_{\Delta_i}>0$ and so we can say that $i$ contributes $\frac{1}{2d+1}$  of the difference  $\phi_{\Delta_i}(\omega)-\tilde{\phi}_{\Delta_i}$. With this idea it follows that 
				\begin{align*}
				\mathcal{H}_\phi(\omega\vert \eta^+) \geq \sum_{i\in\mathcal{K}(\omega)}\frac{1}{2d+1} (\phi_{\Delta_i}(\omega)-\tilde{\phi}_{\Delta_i}) \geq \vert\mathcal{K}(\omega)\vert \frac{\epsilon}{2d+1},
				\end{align*}
				where $\epsilon=\min\{\phi_\Delta(\omega)-\tilde{\phi}_{\Delta}\,:\, \phi_\Delta(\omega)> \tilde{\phi}_{\Delta} , \Delta\in\cup_{i\in \mathcal{K}(\omega)}\tilde{B}(i)\} = \min\{\beta,\log(\lambda)\}$ and the lower bound has been proven.
			\end{proof}
		The constant $\rho:=\rho_{\beta,\lambda,d}:=\frac{\min\{\beta, \log(\lambda)\}}{(2d+1)}$ is called Peierls constant. For configurations $\omega\stackrel{\infty}{=} \eta^{\pm}$ one can write for every $\Lambda\Subset \Z^d$ that 
		\begin{align*}
		\gamma^{sc}_{\Lambda,\beta,\lambda}(\omega\vert \eta^+)= \frac{e^{\vert\Lambda\vert\log(\lambda)}e^{-\mathcal{H}_\phi(\omega\vert \eta^+)}}{\sum_{\omega\in\Omega_\Lambda}e^{\vert\Lambda\vert\log(\lambda)}e^{-\mathcal{H}_\phi(\omega\vert \eta^+)}}= \frac{e^{-\mathcal{H}_\phi(\omega\vert \eta^+)}}{\sum_{\omega\in\Omega_\Lambda}e^{-\mathcal{H}_\phi(\omega\vert \eta^+)}}.
		\end{align*}
		To prove phase transition we want to show that $\gamma^{sc}_{\Lambda,\beta,\lambda}(\{\omega\in \Omega\,:\, \omega_0=\{-1,0\}\}\vert \eta^+)<a(\beta,\lambda)$ with $\lim_{\beta,\lambda\rightarrow\infty}a(\beta,\lambda)=0$. For this we split $\mathcal{K}(\omega)$ into several parts.
		
		We say a set $\Lambda \subset \Z^d$ is connected if for all $i,j\in \Lambda$ there exists a sequence $i_1=i,\ldots, i_n=j$ such that $i_k\sim i_{k+1}$ and $i_k\in \Lambda$ for all $k\in \{1,\ldots,n-1\}$. Let $W\subset \Z^d$. A connected set $\Lambda\subset W$ is maximal if any set $\Delta$ with $W\supset\Delta\supsetneq\Lambda $ is disconnected.  This implies that for every configuration $\omega\stackrel{\infty}{=}\eta^+$ the set $\mathcal{K}(\omega)$ can be disassembled into maximal finite connected components $\bar{\kappa}_1,\ldots,\bar{\kappa}_k$ for some finite $k$. Furthermore every $\bar{\kappa}_j$ splits $\Z^d$ again into a finite set of maximal connected components $A_0,A_1,\ldots,A_k$ with $ \bar{\kappa}_j^c:=\cup_{i=0}^kA_i$. There exists exactly one of the $A_k$ which is unbounded and without loss of generality we say that $A_0$ is this set.  The pair $\kappa = (\bar{\kappa},\omega_{\bar{\kappa}})$ is called a contour of $\omega$.
		\begin{lemma}
			 For every $A_k$, which is defined by the decomposition given by some contour $\kappa$, we have $\omega_i=\eta_i^+$ for all $i\in\partial A_k$ or $\omega_i=\eta_i^-$ for all $i\in\partial A_k$. 
		\end{lemma}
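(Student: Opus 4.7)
The plan is to exploit the rigidity of correct sites: at each correct site $i$ the configuration coincides with one of the two periodic ground states $\eta^{\pm}$, and since $\eta^+_j=+1\neq -1=\eta^-_j$ at every $j\in\Z^d$, the ground-state label $\#(i)\in\{+,-\}$ is uniquely determined by $\omega_i$. In particular, two adjacent correct sites must carry the same label, because the value $\omega$ forced on the neighbour by one of them is then consistent with only one choice of ground state at that neighbour.

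First I would show that every $b\in\partial A_k$ lies in $\bar\kappa$ and admits a correct neighbour inside $A_k$. Since the decomposition $\bar\kappa^c=\bigcup_\ell A_\ell$ means that $A_k^c$ is the disjoint union of $\bar\kappa$ and the other components $A_\ell$, a point $b\in\partial A_k$ sitting in some $A_\ell$ would, being adjacent to $A_k$, violate the maximality of $A_k$, so $b\in\bar\kappa$. Now pick $i\in A_k$ with $i\sim b$: if $i$ were incorrect, its containing maximal connected component $\bar\kappa'$ of $\mathcal K(\omega)$ would be merged with $\bar\kappa$ via the edge $\{i,b\}$, contradicting maximality of $\bar\kappa$. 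Hence $i$ is correct and $\omega_b=\eta^{\#(i)}_b\in\{\pm1\}$.

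Next I would show that $\#(i)$ is the same for all such correct $i\in A_k$, which by the previous step fixes $\omega_b$ to a single value $\pm1$ across $b\in\partial A_k$. Using connectedness of $A_k$ in $\Z^d$, take two correct boundary sites $i_1,i_2\in A_k$ and join them by a nearest-neighbour path inside $A_k$; between consecutive correct points the label is preserved by the rigidity above, and across a single-site incorrect detour the labels on the two bordering correct sites must coincide, since each pins the intermediate $\omega$-value to $\eta^{\#}$, forcing the two choices of $\#$ to agree. The general case of an excursion through a whole inner contour $\bar\kappa''$ nested inside $A_k$ reduces to this by induction on the nesting depth of contours, noting that inner contours split $A_k$ into sub-regions to which the same dichotomy applies. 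This produces a common label $\#_{A_k}$ with $\omega_b=\eta^{\#_{A_k}}_b$ for all $b\in\partial A_k$, which is the claim. I expect the main delicate point to be precisely the handling of excursions through nested inner contours; a clean formulation either by induction on nesting depth, or by first establishing the claim for innermost contours and working outwards, should settle it.
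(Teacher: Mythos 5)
Your first step is correct and is essentially the paper's: since two distinct maximal connected components of $\mathcal{K}(\omega)$ can never be adjacent, every site of $A_k$ neighbouring $\partial A_k$ is correct, hence each boundary site lies in $\bar\kappa$, carries spin $\pm1$, and inherits the (uniquely determined) label of the adjacent correct site; also your observations that adjacent correct sites share a label and that a single incorrect site wedged between two correct sites forces their labels to agree are fine. The gap is in the second step, which is the actual content of the lemma. Propagating the label along an \emph{arbitrary} nearest-neighbour path inside $A_k$ does not work: as soon as the path crosses a nested contour $\bar\kappa''\subset A_k$ whose interior carries the opposite phase (an interface of incorrect sites separating a $+$ region from a $-$ region is exactly what a contour is), the correct sites on the far side of the thick incorrect stretch have the \emph{opposite} label, so the step-by-step propagation genuinely fails; the single-site-detour argument does not extend. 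Your proposed repair, ``induction on nesting depth, noting that inner contours split $A_k$ into sub-regions to which the same dichotomy applies,'' is not carried out and is not routine: the sub-regions one obtains (e.g.\ $A_k$ intersected with the component of $(\bar\kappa'')^c$ containing $\bar\kappa$) are not themselves of the form ``connected component of the complement of a single contour,'' so the induction hypothesis does not apply to them, and even their connectedness is an unproven geometric assertion.

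What is missing is precisely the geometric input the paper relies on: it works with the inner boundary $\partial^{in}A_k$, notes that all of its sites are correct, and invokes the connectivity result of \cite[Appendix B.15]{friedli_velenik_2017} to join any two connected components of this boundary set by a path along which the spin is constant; constancy of the label on all of $\partial A_k$ then follows. To close your argument you would either have to quote such a boundary-connectivity lemma (together with a check that labels propagate along the resulting path, which is itself delicate if the path is only $*$-connected), or construct a constant-label route, e.g.\ one staying within distance one of $\bar\kappa$ (using that no other contour comes that close to $\bar\kappa$) --- but the connectedness of that shell is again a non-trivial statement. As it stands, the ``delicate point'' you flag at the end is the whole lemma, and the sketch does not settle it.
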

		\begin{proof}
			Define the dual set $\partial^{in} A := \{i\in A\,:\, \exists j\in A^c \text{ with } i\sim j\}$. Then for every $i\in \partial^{in} A_k$ there exists a $j\in \partial A_k$ with $i\sim j$. The site $i$ has to be correct for $+$ or $-$ otherwise it would be an element of $\mathcal{K}(\omega)$ and is connected to $\bar{\kappa}$. This implies that $\omega_i=\omega_j\in\{-1,1\}$. By this it is enough show that if there exists a $i\in \partial^{in} A_k$ with $\omega_i=1$ then all site in $\partial^{in} A_k$ are occupied with positive spin value. It follows by the correctness of $i$ that the configuration of every site $m\in \partial^{in} A_k$ which is connected to $i$ has to be  positive. The set $A_k$ is not connected but for the maximal connected components of $A_k$, labeled by $A_{k_1},\ldots,A_{k_s}$, it follows by \cite[Appendix B.15]{friedli_velenik_2017} that there exists for every two sets $A_{k_l},A_{k_{l'}}$ a path $i_1=i\in A_{k_l},i_2,\ldots, i_n \in A_{k_{l'}}$ where $\omega_{i_j}=\omega_{i}$ for every $j\in\{1,\ldots,n\}$. This concludes the proof.
		\end{proof}
		With $\lab(A_k)$ we define the label of a set $A_k$ and say the label is positive (resp. negative) if all $i\in \partial A_k$ are occupied by plus (resp. minus) spin values. The label of the unbounded set $A_0$ of a decomposition given by some  $\kappa$ is called the type of the contour. \\
		The next lemma is one of the core idea of the proof. It combines the Peierls constant with the idea of splitting the incorrect set into disjoint sets.
			\begin{lemma}
				Let $\Lambda\Subset\Z^d$, $\rho=\frac{\min\{\beta, \log(\lambda)\}}{(2d+1)}$ the Peierls constant and $\kappa^*$ be some contour. Then 
				\begin{align*}
				\gamma^{sc}_{\Lambda,\beta,\lambda}(\{\omega\in \Omega\,:\; \bar{\kappa}^*\in \mathcal{K}(\omega)\}\vert \eta^+ )\leq e^{-\vert \bar{\kappa}^*\vert \rho}.
				\end{align*}
			\end{lemma}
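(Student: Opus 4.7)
The plan is to carry out a classical Peierls contour-erasing argument. Given $\kappa^*$, write $\Omega_{\kappa^*}\subset \Omega_\Lambda$ for the set of $\omega$ that have $\kappa^*$ as one of its contours. I would construct an injective map $\tau\colon \Omega_{\kappa^*}\to \Omega_\Lambda$ satisfying
\[ \mathcal{H}_\phi(\omega|\eta^+) - \mathcal{H}_\phi(\tau(\omega)|\eta^+) \geq |\bar{\kappa}^*|\rho \]
for every $\omega\in\Omega_{\kappa^*}$. Combined with the rewriting of $\gamma^{sc}_{\Lambda,\beta,\lambda}$ in terms of $\mathcal{H}_\phi(\cdot|\eta^+)$ displayed just above the statement, this immediately yields
\[ \gamma^{sc}_{\Lambda,\beta,\lambda}(\Omega_{\kappa^*}|\eta^+) \leq e^{-|\bar{\kappa}^*|\rho}, \]
since summing $e^{-\mathcal{H}_\phi(\tau(\omega)|\eta^+)}$ over $\omega\in\Omega_{\kappa^*}$ is bounded by the full partition sum.

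To build $\tau$, observe that the $\eta^+$-boundary forces the unbounded component $A_0$ of the complement of $\bar{\kappa}^*$ to have $\lab(A_0)=+$; let $\ell_k:=\lab(A_k)$ for the bounded components $k\geq 1$. Define
\[ \tau(\omega)_i=\begin{cases} +1 & \text{if } i\in\bar{\kappa}^*,\\ \omega_i & \text{if } i\in A_0,\ \text{or } i\in A_k \text{ with } \ell_k=+,\\ -\omega_i & \text{if } i\in A_k \text{ with } \ell_k=-,\end{cases} \]
with the convention $-0:=0$. Injectivity is immediate: $\kappa^*$ determines $\bar{\kappa}^*$ and $\omega|_{\bar{\kappa}^*}$, hence all the labels $\ell_k$, so $\omega$ can be reconstructed from $\tau(\omega)$ region by region.

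For the energy bound I split the potentials $\phi_\Delta$ by the region supporting $\Delta$. Sets entirely inside $A_0$, or inside some $A_k$ with $\ell_k=+$, contribute nothing because $\tau(\omega)=\omega$ there; sets inside an $A_k$ with $\ell_k=-$ also contribute nothing because $\phi_\Delta$ is invariant under the global sign flip $\omega\mapsto -\omega$. For a bond $\{i,j\}$ between $\bar{\kappa}^*$ and some $A_k$ the preceding lemma on $\partial A_k$ forces $\omega_j=\omega_i=\pm 1$ uniformly according to $\ell_k$, and the definition of $\tau$ then makes both endpoints equal to $+1$ in $\tau(\omega)$, so $\phi_\Delta(\tau(\omega))=\phi_\Delta(\omega)$ on these bonds as well. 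The only remaining non-zero contributions come from vertices $\{i\}\subset\bar{\kappa}^*$ with $\omega_i=0$ (giving $\log\lambda$) and from bonds $\{i,j\}\subset\bar{\kappa}^*$ with $\omega_i\omega_j=-1$ (giving $\beta$); both are non-negative.

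Now I would repeat the case analysis of Lemma \ref{lem : relative Ham} to produce, for each $i\in\bar{\kappa}^*$, a witness $\Delta_i\in\tilde B(i)$ with $\phi_{\Delta_i}(\omega)-\phi_{\Delta_i}(\tau(\omega))\geq \min\{\beta,\log\lambda\}$; by maximality of $\bar{\kappa}^*$ as a connected component of $\mathcal{K}(\omega)$, any incorrect neighbor of $i$ already lies in $\bar{\kappa}^*$, so the witness $\Delta_i$ is entirely contained in $\bar{\kappa}^*$, i.e.\ in the region where contributions are non-negative. Summing over $i\in\bar{\kappa}^*$ and using that a given $\Delta$ appears in $\tilde B(\cdot)$ for at most $2d+1$ sites yields
\[ |\bar{\kappa}^*|\min\{\beta,\log\lambda\}\leq (2d+1)\big(\mathcal{H}_\phi(\omega|\eta^+)-\mathcal{H}_\phi(\tau(\omega)|\eta^+)\big), \]
which is exactly the required energy estimate. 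The most delicate point, and what I expect to be the main obstacle, is verifying that $\tau$ creates no new energetic cost \emph{outside} $\bar{\kappa}^*$: sign-flipping the $A_k$'s of label $-$, rather than the naive ``replace $\bar{\kappa}^*$ by $+1$ and leave everything else alone'', is precisely what cancels the $+/-$ conflicts that would otherwise appear along the boundaries $\partial A_k$ with $\ell_k=-$.
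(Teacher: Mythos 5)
There is a genuine gap, and it sits exactly at the point you flag as ``the most delicate'': your assertion that the $\eta^+$ boundary condition forces $\lab(A_0)=+$ for the unbounded component $A_0$ of $\bar{\kappa}^{*c}$ is false. The event $\{\bar{\kappa}^*\in\mathcal{K}(\omega)\}$ allows $\bar{\kappa}^*$ to be nested inside other contours: e.g.\ a large minus droplet (bounded by its own outer contour) containing $\bar{\kappa}^*$, so that the correct sites of $A_0$ adjacent to $\bar{\kappa}^*$ carry spin $-1$ and $\type(\kappa^*)=\lab(A_0)=-$. In that case your map $\tau$, which erases $\bar{\kappa}^*$ to $+1$ but leaves $A_0$ untouched, creates of order $|\bar{\kappa}^*|$ new $+/-$ conflict bonds along $\partial A_0$, each costing $\beta$, and the claimed inequality $\mathcal{H}_\phi(\omega|\eta^+)-\mathcal{H}_\phi(\tau(\omega)|\eta^+)\geq|\bar{\kappa}^*|\rho$ breaks down: take $\bar{\kappa}^*$ to be a shell consisting of a ring of $0$'s together with its adjacent $+$ layer inside and $-$ layer outside, sitting inside a large sea of minuses; the gain from filling the zeros is about $\tfrac{1}{3}|\bar{\kappa}^*|\log\lambda$ while the newly created conflicts cost about $\tfrac{1}{3}|\bar{\kappa}^*|\beta$ or more, so for $\beta\gtrsim\log\lambda$ the map can even raise the energy. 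Your check of crossing bonds (``the definition of $\tau$ then makes both endpoints equal to $+1$'') covers only the bounded $A_k$'s, which you flip; it silently skips the $\bar{\kappa}^*$--$A_0$ bonds, which is precisely where the argument fails when the type is $-$.

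The repair is what the paper does: erase $\bar{\kappa}^*$ to the value $\type(\kappa^*)$ (not unconditionally to $+1$) and flip exactly those bounded components whose label differs from the type, exploiting the $\pm$ spin-flip invariance of the interaction; then no interface, including $\partial A_0$, acquires a conflict. With that corrected map your injective-map bookkeeping could be pushed through, but note the paper organizes the estimate differently and more economically: it first factorizes the relative Hamiltonian over the contours of $\omega$, pulls out the factor $e^{-\mathcal{H}_\phi(\omega_{\bar{\kappa}^*}|\eta^+)}\leq e^{-|\bar{\kappa}^*|\rho}$ directly from Lemma \ref{lem : relative Ham}, and then only needs the (type-respecting) flip map to show that the remaining ratio of constrained to unconstrained sums is at most $1$, so no per-site witness count or energy-difference estimate for the map is required. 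As a minor further caveat, your reconstruction argument for injectivity uses that $\omega|_{\bar{\kappa}^*}$ is part of the fixed data $\kappa^*$; since the event in the lemma only fixes the support $\bar{\kappa}^*$, you would either have to fix the decoration as well or sum over it, another bookkeeping step the paper's ratio argument avoids.
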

			\begin{proof}
				First note that the relative Hamiltonian for some $\omega\stackrel{\infty}{=}\eta^+$ can decomposed into 
				$\mathcal{H}_{\phi}(\omega\vert \eta) = \sum_{\bar{\kappa}\in\mathcal{K}(\omega)}\mathcal{H}_{\phi}(\omega_{\bar{\kappa}}\omega^+_{\bar{\kappa}^c}\vert \eta^+)$.
				 Since we are only interested in configuration where $\bar{\kappa}^*$ is an element of $\mathcal{K}(\omega)$ we can write
				\begin{align*}
				\gamma^{sc}_{\Lambda,\beta,\lambda}(\{\omega\,:\; \bar{\kappa}^*\in \mathcal{K}(\omega)\} \vert \eta^+) &= e^{-\mathcal{H}_{\Lambda,\beta,\lambda}(\omega_{\bar{\kappa}^*}\vert \eta^+)} \frac{\sum_{\omega\,:\; \bar{\kappa}^*\in \mathcal{K}(\omega)}\prod_{\bar{\kappa}\in \mathcal{K}(\omega)\backslash\{\bar{\kappa}^*\}}e^{-\mathcal{H}_{\Lambda,\beta,\lambda}(\omega_{\bar{\kappa}}\vert \eta^+)}}{\sum_{\omega}\prod_{\bar{\kappa}\in \mathcal{K}(\omega)}e^{-\mathcal{H}_{\Lambda,\beta,\lambda}(\omega_{\bar{\kappa}}\vert \eta^+)}}\\
				&\leq e^{-\vert \bar{\kappa}^*\vert \rho}  \frac{\sum_{\omega\,:\; \bar{\kappa}^*\in \mathcal{K}(\omega)}\prod_{\bar{\kappa}\in \mathcal{K}(\omega)\backslash\{\bar{\kappa}^*\}}e^{-\mathcal{H}_{\Lambda,\beta,\lambda}(\omega_{\bar{\kappa}}\vert \eta^+)}}{\sum_{\omega}\prod_{\bar{\kappa}\in \mathcal{K}(\omega)}e^{-\mathcal{H}_{\Lambda,\beta,\lambda}(\omega_{\bar{\kappa}}\vert \eta^+)}}.
				\end{align*}
				It remains to show that $ \frac{\sum_{\omega\,:\; \bar{\kappa}^*\in \mathcal{K}(\omega)}\prod_{\bar{\kappa}\in \mathcal{K}(\omega)\backslash\{\bar{\kappa}^*\}}e^{-\mathcal{H}_{\Lambda,\beta,\lambda}(\omega_{\bar{\kappa}}\vert \eta^+)}}{\sum_{\omega}\prod_{\bar{\kappa}\in \mathcal{K}(\omega)}e^{-\mathcal{H}_{\Lambda,\beta,\lambda}(\omega_{\bar{\kappa}}\vert \eta^+)}}\leq1$. For this define the site-wise flip-function by
				\begin{align*}
				{F}^{{\kappa}^*}_i(\omega) = \left\{\begin{array}{lcc}
				\omega_i & \text{if }& i \in A_0\\
				\type({\kappa}^*)& \text{if }& i\in \bar{\kappa}^*\\
				\omega_i& \text{if }& i\in A_j \text{ for some } j \text{ and } \lab(A_j) = \type({\kappa}^*)\\
				-\omega_i& \text{if } &i\in A_j \text{ for some } j \text{ and } \lab(A_j) \neq \type({\kappa}^*)\end{array}
				\right. ,
				\end{align*}
				where $\{A_0,A_1,\ldots, A_k\}$ are given by the decomposition of $\mathcal{K}$.\\
				For a configuration $\omega$ with $\bar{\kappa}^* \in\mathcal{K}(\omega)$ the function ${F}^{{\kappa}^*}$ erases the contour $\kappa^*$ but leaves every other contour unchanged beside a possible spin flip. Write $\mathcal{F}(\kappa^*)$ for the set of configurations where the contour $\kappa^*$ has been removed. Since the relative Hamiltonian of the soft-core Widom-Rowlinson model is invariant under spin flip we get
				\begin{align*}
				e^{-\vert \bar{\kappa}^*\vert \rho}  \frac{\sum_{\omega\,:\; \bar{\kappa}^*\in \mathcal{K}(\omega)}\prod_{\bar{\kappa}\in \mathcal{K}(\omega)\backslash\{\bar{\kappa}^*\}}e^{-\mathcal{H}_{\Lambda,\beta,\lambda}(\omega_{\bar{\kappa}}\vert \eta^+)}}{\sum_{\omega}\prod_{\bar{\kappa}\in \mathcal{K}(\omega)}e^{-\mathcal{H}_{\Lambda,\beta,\lambda}(\omega_{\bar{\kappa}}\vert \eta^+)}}=e^{-\vert \bar{\kappa}^*\vert \rho}  \frac{\sum_{\omega\in\mathcal{F}(\kappa^*)}\prod_{\bar{\kappa}\in \mathcal{K}(\omega)}e^{-\mathcal{H}_{\Lambda,\beta,\lambda}(\omega_{\bar{\kappa}}\vert \eta^+)}}{\sum_{\omega}\prod_{\bar{\kappa}\in \mathcal{K}(\omega)}e^{-\mathcal{H}_{\Lambda,\beta,\lambda}(\omega_{\bar{\kappa}}\vert \eta^+)}}.
				\end{align*}
				The summation over $\omega\in\mathcal{F}(\kappa^*)$ is a restriction with respect to sum over all configuration and the fraction can be bounded by $1$.
			\end{proof}
			For configurations $\omega \stackrel{\infty}{=} \eta^+$ and $\omega_0 = 0$ or $\omega_0=-1$ there exists necessarily a contour $\kappa^*$ which is around the site $0$. By this we can prove the next lemma.
				\begin{lemma}\label{lem: bound for kernel phase}
					There exists a function $a(\beta,\lambda)$ such that $\lim_{\beta,\lambda\rightarrow\infty}a(\beta,\lambda)=0$ and
					\begin{align*}
					\gamma^{sc}_{\Lambda,\beta,\lambda}(\{\omega\in\Omega\,:\,\omega_0\in\{-1,0\}\}\vert \eta^+) \leq a(\beta,\lambda).
					\end{align*}
				\end{lemma}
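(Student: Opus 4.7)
The plan is to combine the single-contour estimate of the preceding lemma with a standard Peierls-type combinatorial count of contours surrounding the origin.

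First, I observe that any configuration $\omega \stackrel{\infty}{=} \eta^+$ with $\omega_0 \in \{-1,0\}$ admits at least one contour $\kappa^* = (\bar\kappa^*, \omega_{\bar\kappa^*})$ whose interior (its support together with the bounded components of its complement) contains the origin. Indeed, if $\omega_0 = 0$ then $0$ is incorrect for both ground states, so $0 \in \bar\kappa^*$ for some contour; and if $\omega_0 = -1$ then either $0 \in \mathcal{K}(\omega)$ with the same conclusion, or $0$ is correct for $\eta^-$, in which case $0$ must lie in a bounded, negatively-labeled component of the complement of some contour's support (otherwise the $-$-region around $0$ would extend to infinity, contradicting $\omega \stackrel{\infty}{=} \eta^+$).

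Second, a union bound over all such surrounding contours, combined with the previous lemma, yields
$$
\gamma^{sc}_{\Lambda,\beta,\lambda}\bigl(\{\omega : \omega_0 \in \{-1,0\}\} \mid \eta^+\bigr) \;\leq\; \sum_{\kappa^* \text{ surrounding } 0} e^{-|\bar\kappa^*|\rho}.
$$
Third, a classical lattice-animal estimate handles the combinatorics: the number of connected subsets of $\mathbb{Z}^d$ of cardinality $n$ that either contain $0$ or separate $0$ from infinity is bounded by $C_d^n$ for some constant $C_d$ depending only on $d$. Incorporating the at most $3^n$ configurations $\omega_{\bar\kappa^*}$, the total number of contours of support size $n$ surrounding $0$ is at most $(3C_d)^n$. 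Hence
$$
\gamma^{sc}_{\Lambda,\beta,\lambda}\bigl(\{\omega : \omega_0 \in \{-1,0\}\} \mid \eta^+\bigr) \;\leq\; \sum_{n=1}^{\infty} (3C_d)^n\, e^{-n\rho} \;=\; \frac{3C_d\, e^{-\rho}}{1 - 3C_d\, e^{-\rho}} \;=:\; a(\beta,\lambda),
$$
valid once $\rho > \log(3C_d)$. Since $\rho = \min\{\beta,\log\lambda\}/(2d+1) \to \infty$ as $\beta,\lambda \to \infty$, the function $a(\beta,\lambda)$ tends to $0$ in that limit, as required.

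The main subtlety — quite mild — is the case $\omega_0 = -1$ with all neighbors of $0$ also equal to $-1$: here $0$ itself is not an element of $\mathcal{K}(\omega)$, so the union bound must extend to contours whose enclosed interior contains $0$, forcing us to count connected sets separating $0$ from infinity rather than merely sets containing $0$. Beyond this standard bookkeeping step, everything reduces to a direct geometric-series computation built on the preceding lemma.
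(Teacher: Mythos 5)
Your proposal is correct and follows essentially the same route as the paper: locate a contour whose support contains or surrounds $0$, apply the preceding single-contour Peierls bound to each term of a union bound, count the surrounding contours by a lattice-animal estimate, and sum the resulting geometric series. The only cosmetic difference is that you sum over labeled contours $\kappa^*$ and pay an extra factor $3^n$ for the possible colorings $\omega_{\bar\kappa^*}$, whereas the paper sums directly over contour supports $\bar\kappa^*$ — since the preceding lemma already controls the event $\{\bar\kappa^*\in\mathcal{K}(\omega)\}$ independently of the labeling, that factor is superfluous; it is harmless here because it is absorbed into the exponential anyway.
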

				\begin{proof}
					For a configuration $\omega$ with $\omega_0=-1$ there are two cases. Either the site $0$ is inside the interior of a contour $\kappa^*$ or is an element of $\kappa^*$. If $\omega_0=0$ then the site $0$ is an element of $\kappa^*$. By this we can bound the measure by 
					\begin{align}
					&\gamma^{sc}_{\Lambda,\beta,\lambda}(\{\omega\,:\,\omega_0\in\{-1,0\}\}\vert \eta^+)\notag\\
					&\leq \sum_{\bar{\kappa}^*\,:\, 0\in\inter(\bar{\kappa}^*)} \gamma^{sc}_{\Lambda,\beta,\lambda}(\{\omega\,:\; \bar{\kappa}^*\in \mathcal{K}(\omega)\} \vert \eta^+) + \sum_{\bar{\kappa}^*\,:\, 0\in(\bar{\kappa}^*)} \kappa^{sc}_{\Lambda,\beta,\lambda}(\{\omega\,:\; \bar{\kappa}^*\in \mathcal{K}(\omega)\} \vert \eta^+) \notag\\
					&\leq \sum_{\bar{\kappa}^*\,:\, 0\in\inter(\bar{\kappa}^*)} e^{-\vert \bar{\kappa}^*\vert \rho} + \sum_{\bar{\kappa}^*\,:\, 0\in(\bar{\kappa}^*)} e^{-\vert \bar{\kappa}^*\vert \rho}\notag\\
					&\leq\sum_{k=2d+1}^\infty( \sum_{\stackrel{\bar{\kappa}^*\,:\, 0\in\inter(\bar{\kappa}^*)}{\vert \bar{\kappa}^*\vert=k}} e^{-\vert \bar{\kappa}^*\vert \rho} + \sum_{\stackrel{\bar{\kappa}^*\,:\, 0\in\bar{\kappa}^*}{\vert \bar{\kappa}^*\vert=k}} e^{-\vert \bar{\kappa}^*\vert \rho})\notag\\
					&=\sum_{k=2d+1}^\infty e^{-k \rho}( \#\{\bar{\kappa}^*\,:\, 0\in\inter(\bar{\kappa}^*)\,,\,\vert \bar{\kappa}^*\vert=k \} + \#\{\bar{\kappa}^*\,:\, 0\in\bar{\kappa}^*\,,\, \vert\bar{\kappa}^*\vert=k \} )\notag\\
					&\leq \sum_{k=2d+1}^\infty (e^{-k \rho} (k(2d)^{2k}+(2d)^{2k})). \notag
					\end{align}
					The last inequality follows by \cite[Lemma 3.38]{friedli_velenik_2017}. As long as $e^{-\rho}(2d)^2$ is smaller than $1$ the sum is finite and it follows that
					\begin{align*}
					\gamma^{sc}_{\Lambda,\beta,\lambda}(\{\omega\,:\,\omega_0=\{0,-1\}\}\vert \eta^+) \leq \frac{e^{-\rho}(2d)^2}{(1-e^{-\rho}(2d)^2)^2}+ \frac{1}{1-e^{-\rho}(2d)^2}-1.
					\end{align*}
					Since $\rho = \frac{\min\{\beta,\log{\lambda}\}}{2d+1}$ goes to infinity for $\beta,\lambda\rightarrow \infty$ the right hand side of the inequality goes to $0$ and we can define $a(\beta,\lambda):= \frac{e^{-\rho}(2d)^2}{(1-e^{-\rho}(2d)^2)^2}+ \frac{1}{1-e^{-\rho}(2d)^2}-1$.
					\end{proof}
					
					We are now able to prove the phase transition for the hard-core and soft-core model.
					\begin{proof}[Proof of Theorem \ref{thm : Sc Phase}]
						Due to the $\pm$-spin-flip symmetry of the soft-core model the non-existence of a phase transition would imply that 
						$\mu^+(\mathds{1}(\omega_0=\cdot)) = 0$ since 
						\begin{align*}
							\mu^+(\mathds{1}(\omega_0=\cdot)) =\lim_{\Lambda\uparrow\Z^d } \gamma^{sc}_{\Lambda,\beta,\lambda}(\mathds{1}(\omega_0=\cdot)\vert \eta^+)= -\lim_{\Lambda\uparrow\Z^d }\gamma^{sc}_{\Lambda,\beta,\lambda}(\mathds{1}(\omega_0=\cdot)\vert \eta^-) = -\mu^-(\mathds{1}(\omega_0=\cdot)).
						\end{align*} 
						Hence it is enough for the existence of a phase transition that $\mu^+(\mathds{1}(\omega_0=\cdot)) =\lim_{\Lambda\uparrow\Z^d }\gamma^{sc}_{\Lambda,\beta,\lambda}(\mathds{1}(\omega_0=\cdot)\vert \eta^+)>0$.
							A short calculation gives
							\begin{align*}
							\gamma^{sc}_{\Lambda,\beta,\lambda}(\mathds{1}(\omega_0=\cdot)\vert \eta^+)=1-\gamma^{sc}_{\Lambda,\beta,\lambda}(\omega_0\in\{0,-1\}\vert\eta^+)-\gamma^{sc}_{\Lambda,\beta,\lambda}(\omega_0=-1\vert \eta^+)
							\end{align*}
							and $\gamma^{sc}_{\Lambda,\beta,\lambda}(\omega_0=-1\vert \eta^+)$ can be bounded by $\gamma^{sc}_{\Lambda,\beta,\lambda}(\omega_0\in\{0,-1\}\vert \eta^+)$. This implies $\gamma^{sc}_{\Lambda,\beta,\lambda}(\omega_0=1\vert \eta^+)>1-2a(\beta,\lambda)$ and since $\lim_{\beta,\lambda\rightarrow \infty}a(\beta,\lambda)=0$ there exists $\beta_c$ and $\lambda_c$ such that $a(\beta,\lambda)<\frac{1}{2}$ for all $\beta\geq\beta_c$ and $\lambda\geq\lambda_c$.
					
					\end{proof}

		\begin{proof}[Proof of Corollary \ref{thm : Hc Phase}]
			By Lemma \ref{lem: bound for kernel phase} we have for  $\lambda\mapsto a(\lambda)= \lim_{\beta\rightarrow \infty} a(\beta,\lambda)$ that $$\gamma^{hc}_{\Lambda,\beta,\lambda}(\{\omega\in\Omega\,:\,\omega_0\in\{-1,0\}\}\vert \eta^+) =\lim_{\beta\rightarrow \infty}\gamma^{sc}_{\Lambda,\beta,\lambda}(\{\omega\in\Omega\,:\,\omega_0\in\{-1,0\}\}\vert \eta^+) \leq a(\lambda)$$ since the Peierls constant is given in terms of the minimum of $\log(\lambda)$ and $\beta$. By the arguments as in proof of Theorem \ref{thm : Sc Phase} the phase transition follows.
		\end{proof}
		\subsection{Regions of Dobrushin uniqueness}\label{sec: dob uniq}
		We start with the hard-core model.
		\begin{proof}[Proof of Theorem \ref{thm: dob hc}]
			The single-site probability measures reduce to 
			\begin{align*}
			\gamma_i^0 (\cdot \vert \eta) = \frac{\mathds{1}(\omega_i\eta_j\neq-1 \,:\, \forall j \sim i) \al(\cdot)}{\sum_{\tilde{\omega}\in\{-1,0,1\}} \mathds{1}(\tilde{\omega}_i\eta_j\neq-1 \,:\, \forall j \sim i) \al(\tilde{\omega_i})}.
			\end{align*} 
			Because of the hard-core restriction, there are only 4 different probability measures. %, depending on the existence of negative or positive particle which are connected to the vertex $i$
			The indicator $\mathds{1}(\omega_i\eta_j\neq-1 \,:\, \forall j \sim i)$ is equal to $0$ if there exists one vertex $j$ with $\omega_i\eta_j=-1$ and it does not matter if there are one or more vertices connected with $i$ which have this property. In the following for shorter notation $1\in \eta$ means that there exists a vertex $j$ with $i\sim j$ and $\eta_j=1$, and similar for the other cases. The 4 measures are:
	\begin{align*}
		\gamma_i^0(A\vert \eta)=	\left\{\begin{array}{cl} \delta_0(A) ,&  \text{ if }  1  \text{ and }  -1 \in \eta\\
		\frac{\sum_{k\in A}\al(k) }{\sum_{k\in E}\al(k)} , & \text{ if } \eta  \text{ contains only }  0 \\ 
		\frac{\sum_{k\in A}\al(k) \mathds{1}_{\{k\neq-1\}}}{\sum_{k\in \{0,1\}}\al(k)} &  \text{ if } -1\notin \eta  \text{ and }  1\in \eta \\
		\frac{\sum_{k\in A}\al(k) \mathds{1}_{\{k\neq1\}}}{\sum_{k\in \{0,-1\}}\al(k)} &  \text{ if } 1\notin \eta  \text{ and }  -1\in \eta  \end{array} \right. .
	\end{align*}

		By pair-wise comparing of the 4 measures, except the first with the second one, the proof follows.
		\end{proof}
		For the soft-core model the case is different. Here one have to care how many pluses and minuses are in the boundary condition. Therefore we denote by $\eta_i^{\pm}:=\vert\{j\sim i\,:\, \omega_j = \pm 1\}\vert$ the number of pluses and minuses connected to the site $i$, respectively. The next lemma gives a representation for the $C_{ij}$.

	\begin{lemma}\label{lem: C_ij_sc}
		Let $i,j\in V$ with $i \sim j$. In the soft-core Widom-Rowlinson model  $C_{ij}$ is given by 
		\begin{align*}
		&C_{ij}(\gamma^{sc}_{\beta,\al})= \max\\
		&\Biggl\{\max_{\stackrel{0\leq\pl+\mi}{\leq B_i-1}}  \frac{\al(-1)(\al(0)(e^{-\beta\pl}-e^{-\beta(\pl+1)})+\al(1)(e^{-\beta(\pl+\mi)}-e^{-\beta(\pl+\mi+1)})}{( \al(0)+\al(1) e^{-\beta \mi}+\al(-1)e^{-\beta\pl})(\al(0)+\al(1) e^{-\beta \mi}+\al(-1)e^{-\beta(\pl+1)})},\\
		&\max_{\stackrel{0\leq\pl+\mi}{\leq B_i-1}}  \frac{\al(1)(\al(0)(e^{-\beta\mi}-e^{-\beta(\mi+1)})+\al(-1)(e^{-\beta(\pl+\mi)}-e^{-\beta(\pl+\mi+1)})}{( \al(0)+\al(1) e^{-\beta \mi}+\al(-1)e^{-\beta\pl})(\al(0)+\al(1) e^{-\beta (\mi+1)}+\al(-1)e^{-\beta\pl})},\\
		&\max_{\stackrel{0\leq\pl+\mi\leq B_i}{\mi>0}}\frac{\al(1)(\al(0)(e^{-\beta (\mi-1)}-e^{-\beta \mi})+\al(-1)(e^{-\beta(\pl+\mi-1)}-e^{-\beta(\pl+\mi+1)})\mathds{1}_{A_i}(\eta_i^+,\eta_i^-)}{(\al(0)+\al(1) e^{-\beta \mi}+\al(-1)e^{-\beta\pl})(\al(0)+\al(1) e^{-\beta (\mi-1)}+\al(-1)e^{-\beta(\pl+1)})},\\
		&\max_{\stackrel{0\leq\pl+\mi\leq B_i}{\mi>0}}\frac{\al(-1)(\al(0)(e^{-\beta \pl}-e^{-\beta( \pl+1)})+\al(1)(e^{-\beta(\pl+\mi-1)}-e^{-\beta(\pl+\mi+1)})\mathds{1}_{A^c_i}(\eta_i^+,\eta_i^-)}{(\al(0)+\al(1) e^{-\beta \mi}+\al(-1)e^{-\beta\pl})(\al(0)+\al(1) e^{-\beta (\mi-1)}+\al(-1)e^{-\beta(\pl+1)})}\Biggr\}
		\end{align*}
	 where $A_i := \{ (a,b) \in \{0,\ldots,B_i\}^2\,:\; \frac{\al(1)}{\al(-1)} e^{-\beta(b-a-1)} >1 \}$.\\
	 If $i\nsim j$ then $C_{ij}=0$.
	\end{lemma}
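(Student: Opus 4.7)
The idea is to reduce the supremum defining $C_{ij}$ to a finite maximisation. Since $\gamma^{sc}_{\beta,\al}$ is nearest-neighbour, the single-site kernel
\begin{equation*}
\gamma^0_i(\omega_i\,|\,\eta)=\frac{\al(\omega_i)\,e^{-\beta[\mathds{1}(\omega_i=1)\eta_i^-+\mathds{1}(\omega_i=-1)\eta_i^+]}}{\al(0)+\al(1)e^{-\beta\eta_i^-}+\al(-1)e^{-\beta\eta_i^+}}
\end{equation*}
depends on $\eta$ only through the pair of neighbour-counts $(\eta_i^+,\eta_i^-)$, so $C_{ij}=0$ whenever $i\nsim j$. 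For $i\sim j$, a pair $(\eta,\zeta)$ with $\eta_{V\setminus\{j\}}=\zeta_{V\setminus\{j\}}$ and $\eta_j\neq\zeta_j$ induces one of three elementary transitions of $(\eta_i^+,\eta_i^-)$: (i) toggling $0\leftrightarrow 1$ at $j$, (ii) toggling $0\leftrightarrow -1$ at $j$, or (iii) flipping $-1\leftrightarrow 1$ at $j$. The admissibility of these transitions exactly matches the optimisation ranges in the four max-terms: $\eta_i^++\eta_i^-\le B_i-1$ for (i) and (ii), and $\eta_i^++\eta_i^-\le B_i$ with $\eta_i^-\ge 1$ for (iii).

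I would then evaluate $d_{TV}$ via the identity $d_{TV}(\mu,\nu)=\sum_{x:\mu(x)>\nu(x)}(\mu(x)-\nu(x))$ on $E=\{-1,0,1\}$. For transition (i), adding a $+$-neighbour strictly decreases the partition function, and only the mass $\gamma^0_i(-1|\cdot)$ strictly decreases (being the only one whose numerator absorbs the extra Boltzmann penalty) while both other masses strictly increase. Hence $d_{TV}$ collapses to the single difference $\gamma^0_i(-1|\eta)-\gamma^0_i(-1|\zeta)$; a routine algebraic simplification over the common denominator $Z_\eta Z_\zeta$ then yields the first expression of the lemma. Transition (ii) is the $\pm$-symmetric version and produces the second expression.

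The main obstacle is transition (iii). Here the mass on $+1$ increases and the mass on $-1$ decreases in the expected monotone way, but the direction of change of $\gamma^0_i(0|\cdot)$ is governed by the sign of
\begin{equation*}
Z_\eta-Z_\zeta=(1-e^{-\beta})\bigl[\al(-1)e^{-\beta\eta_i^+}-\al(1)e^{-\beta(\eta_i^--1)}\bigr],
\end{equation*}
which is negative exactly when $(\eta_i^+,\eta_i^-)\in A_i$. In that active regime $\gamma^0_i(0|\cdot)$ drops under the flip, so only $\gamma^0_i(1|\cdot)$ rises, and $d_{TV}=\gamma^0_i(1|\zeta)-\gamma^0_i(1|\eta)$, which after simplification yields the third formula; outside $A_i$ one has instead $d_{TV}=\gamma^0_i(-1|\eta)-\gamma^0_i(-1|\zeta)$, which yields the fourth. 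The indicators $\mathds{1}_{A_i}$ and $\mathds{1}_{A_i^c}$ are attached to the respective $\al(\mp 1)$-cross-terms precisely so that each of these two expressions realises $d_{TV}$ in its active regime and stays strictly below $d_{TV}$ outside it, ensuring that the max of all four expressions over the admissible $(\eta_i^+,\eta_i^-)$ reproduces $C_{ij}$ exactly. The only non-routine ingredient is this sign analysis of $Z_\eta-Z_\zeta$ in case (iii) together with the careful placement of the indicators.
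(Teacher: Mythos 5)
Your proposal is correct and follows essentially the same route as the paper: express the single-site kernel through the neighbour counts $(\eta_i^+,\eta_i^-)$, reduce the supremum over boundary pairs to the three admissible one-site changes with exactly the stated range constraints, and evaluate the total-variation distance by identifying which spin masses increase and which decrease. The paper only writes out the $0\leftrightarrow 1$ case and calls the others similar, so your sign analysis of $Z_\eta-Z_\zeta$ in the $-1\leftrightarrow 1$ case, which produces the set $A_i$ and justifies the placement of the indicators, is precisely the omitted part of that same argument (with the minor caveat that outside the active regime the corresponding expression is only guaranteed to be at most $d_{TV}$, not strictly below it, which is all the max identity requires).
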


\begin{proof}
	Again the single-site probability-kernels reduce to 
	\begin{align*}
	\gamma_i^0 (\cdot \vert \eta)= \frac{e^{-\beta\sum_{i\sim j}\mathds{1}(\omega_i\eta_j=-1)}  \al(\cdot)}{\sum_{\tilde{\omega}\in\{-1,0,1\}} e^{-\beta\sum_{i\sim j}\mathds{1}(\tilde{\omega}_i\eta_j=-1)}  \al(\tilde{\omega})}. 
	\end{align*} 
	 With the definitions of $\eta_i^{\pm}$ one can write 
	\begin{align*}
	\gamma_i^0 (\{0\} \vert \eta) &= \frac{\al(0)}{\al(0)+\al(1) e^{-\beta \mi}+\al(-1)e^{-\beta\pl}},\\ 
	\gamma_i^0 (\{1\} \vert \eta) &= \frac{\al(1)e^{-\beta\mi}}{\al(0)+\al(1) e^{-\beta \mi}+\al(-1)e^{-\beta\pl}},\\ 
	\gamma_i^0 (\{-1\} \vert \eta) &= \frac{\al(-1)e^{-\beta\pl}}{\al(0)+\al(1) e^{-\beta \mi}+\al(-1)e^{-\beta\pl}}.
	\end{align*}
	
	To compute $C_{ij}$ we fix some boundary condition $\eta$. The second boundary condition $\zeta$ shall only differ by one site. So only 3 interesting cases exist: 1) $0 \leftrightarrow 1$ , 2) $0 \leftrightarrow -1$ and 3) $-1 \leftrightarrow 1$, so far it is possible. Since the total-variation distance is symmetric we only need to check one direction.\\
	We will only consider the first case since the computation are similar for the other cases. This means in the first case we change a $0$ in the boundary condition $\eta$ to a positive spin value to get the second boundary condition $\zeta$. Since only one site is different we have the relation $\zeta_i^+=\pl+1$ and $\zeta^-_i = \mi$. Hence
	\begin{align*}
	&2 d_{TV}(\gamma_i^0(\cdot\vert \eta),\gamma_i^0(\cdot\vert {{\zeta}})) \\
	&= \al(0)\left (-\frac{1}{\al(0)+\al(1) e^{-\beta \mi}+\al(-1)e^{-\beta\pl}} + \frac{1}{\al(0)+\al(1) e^{-\beta \mi}+\al(-1)e^{-\beta(\pl+1)}} \right) \\
	&+ \al(1) \left(- \frac{e^{-\beta\mi}}{\al(0)+\al(1) e^{-\beta \mi}+\al(-1)e^{-\beta\pl}} + \frac{e^{-\beta\mi}}{\al(0)+\al(1) e^{-\beta \mi}+\al(-1)e^{-\beta(\pl+1)}}  \right)\\
	&+ \al(-1) \left( \frac{e^{-\beta\pl}}{\al(0)+\al(1) e^{-\beta \mi}+\al(-1)e^{-\beta\pl}}- \frac{e^{-\beta(\pl+1)}}{\al(0)+\al(1) e^{-\beta \mi}+\al(-1)e^{-\beta(\pl+1)}} \right)\\ 
	&= 2 \al(-1) \frac{\al(0)(e^{-\beta\pl}-e^{-\beta(\pl+1)})+\al(1)(e^{-\beta(\pl+\mi)}-e^{-\beta(\pl+\mi+1)})}{( \al(0)+\al(1) e^{-\beta \mi}+\al(-1)e^{-\beta\pl})(\al(0)+\al(1) e^{-\beta \mi}+\al(-1)e^{-\beta(\pl+1)})}.
	\end{align*}
	Since we have to ensure in order to change a $0$ to a $1$ that not all sites are occupied by a particle for the boundary condition $\eta$. Therefore one have the restriction $\eta_i^++\eta_i^-\leq B_i -1$.
\end{proof}

The fractions in Lemma \ref{lem: C_ij_sc} do not depend on $B_i$. Hence we have for $i,k\in V$ with $B_k\leq B_i$ some monotonicity property $C_{kj}(\gamma^{sc}_{\beta,\al})\leq C_{ij}(\gamma^{sc}_{\beta,\al})$ since we take the four maximums over a larger set. 
For the case $B = \infty$ the Dobrushin constant $c(\gamma^{sc}_{\beta,\al})$ is only finite for $\al\in \{\delta_{-1},\delta_0,\delta_0\}$ with value $0$. This is the reason why we need graphs with finite $B$. 

\begin{proof}[Proof of Theorem \ref{thm: neigh}]
		Since $B<\infty$ the set $B_{dg}:= \{ k \in \N\,:\, \exists i\in V \text{ s.t } B_i=k \}$ is finite and note that $C_{ij}(\gamma^{sc}_{\beta,\al})$ does not depend on $j$ for all $j\sim i$. Hence the Dobrushin constant can be written as $\sup_{i\in V} \sum_{j\in V} C_{ij}(\gamma^{sc}_{\beta,\al})= \max_{k\in B_{dg}} k C_k(\gamma^{sc}_{\beta,\al})$ where $C_k= C_{\tilde{i}j}$ with $B_{\tilde{i}}=k$. 
		
		Take some sequence $(\al_n)_{n\in \N}$ in $\mathcal{M}_{1}(E)$  with limit $\delta_1, \delta_0$ or $\delta_{-1}$. Since all maximizing for $c(\gamma^{sc}_{\beta,\al})$ is taken over finite sets we can pull the limit through all of it. Hence we have only to care about the fractions inside of the max. One can see that $C_k(\gamma^{sc}_{\beta,\delta_l})=0$ for all $l\in E$. This implies $\lim_{n\rightarrow \infty} c(\gamma^{sc}_{\beta,\al_n}) =0$ and therefore the existence of the neighborhoods follows by continuity.
\end{proof}

{Later for the time-evolved model only a priori measures with $\al(0)=0$ are important and for those measures the fractions in Lemma \ref{lem: C_ij_sc} are easier to handle. To analyze this case we introduce the function  
	\begin{align}\label{eq: g function}
	g(\beta,B) := -e^{-\beta B} \left(e^{2 \beta }(1-B)+B+1+\sqrt{\left(e^{2 \beta }-e^{2 \beta } B+B+1\right)^2-4 e^{2 \beta }}\right)
	\end{align} 
	which is related to the zeros of the polynomials mentioned after Theorem \ref{thm: dob hc}. As long as $\beta< \log(\frac{B+1}{B-1})$ non of the polynomials have real roots and consequently they are strictly smaller than $0$.}

\begin{corollary}\label{Coro: Dob pm}
	{Let $\al\in \mathcal{M}_1(E)$ with $\al(0)=0$ and $B<\infty$. If $\beta\geq \log(\frac{B+1}{B-1})$ and $\max\{\al(1),\al(-1)\}> \frac{2}{2+g(\beta,B)} $ then $c(\gamma^{sc}_{\beta,\al}) < 1$. Furthermore, if $\beta< \log(\frac{B+1}{B-1})$  then $c(\gamma^{sc}_{\beta,\al}) < 1$ for all $\al$ with $\al(0)=0$.}
\end{corollary}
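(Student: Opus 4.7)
The plan is to specialize Lemma~\ref{lem: C_ij_sc} to the case $\al(0)=0$ and to extract the Dobrushin condition $c(\gamma^{sc}_{\beta,\al})<1$ from the resulting polynomial inequalities. Using that $C_{ij}=0$ for $i\nsim j$ and that the fractions in Lemma~\ref{lem: C_ij_sc} depend on $j$ only through $i\sim j$, the Dobrushin constant collapses to $c(\gamma^{sc}_{\beta,\al})=\sup_i B_i\,C_{ij}$, so under the assumption $B<\infty$ it suffices to prove $B\cdot C_{ij}<1$.

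Substituting $\al(0)=0$ makes the numerators of the four maximands in Lemma~\ref{lem: C_ij_sc} collapse: each reduces to the product $\al(1)\al(-1)\,e^{-\beta(\pl+\mi)}$ multiplied by an elementary factor in $\beta$ alone (either $1-e^{-\beta}$ for the first two maximands or $e^{\beta}-e^{-\beta}$ for the last two), and each denominator loses its $\al(0)$ summand. Parametrizing the one-dimensional simplex $\{\al:\al(0)=0\}$ by $x=\al(1)\in[0,1]$ with $\al(-1)=1-x$, the inequality $B\cdot(\text{maximand})<1$ clears to a quadratic polynomial inequality $P_{\pl,\mi}(x;\beta,B)<0$ of the type $Q_B=BQ_1-Q_2$ already mentioned after Theorem~\ref{thm: dob hc}.

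The next step is to show that, across the four maximands and the allowed range of $(\pl,\mi)$, the strongest constraint arises from the extreme choices $(\pl,\mi)=(B,0)$ or $(0,B)$, which by the $x\leftrightarrow 1-x$ spin-flip symmetry produce the same polynomial. This reduces the problem to a single symmetric quadratic $P(x)$ on $[0,1]$. Its discriminant, after simplification, is proportional to $\bigl(e^{2\beta}-e^{2\beta}B+B+1\bigr)^{2}-4e^{2\beta}$, and a direct calculation shows that this expression changes sign precisely at $\beta=\log\tfrac{B+1}{B-1}$. For $\beta$ below that threshold the quadratic has no real roots, and checking the sign of the leading coefficient yields $P(x)<0$ throughout $[0,1]$; this proves the second assertion. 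For $\beta\geq\log\tfrac{B+1}{B-1}$ the quadratic has two real roots symmetric about $x=\tfrac12$, and solving the quadratic and matching with~\eqref{eq: g function} identifies the larger root as $\tfrac{2}{2+g(\beta,B)}$. The hypothesis $\max\{\al(1),\al(-1)\}>\tfrac{2}{2+g(\beta,B)}$ then forces $x$ outside the root interval and hence $P(x)<0$, which yields $c(\gamma^{sc}_{\beta,\al})<1$.

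The main obstacle is the bookkeeping step: verifying that the four simplified maximands in Lemma~\ref{lem: C_ij_sc} are indeed all dominated, in the sense of the $B\cdot(\cdot)<1$ inequality, by the single quadratic $P$ evaluated at the extreme $(\pl,\mi)$. This requires monotonicity of each maximand in $\pl$ and $\mi$ together with a uniform comparison between the numerator prefactors $1-e^{-\beta}$ and $e^{\beta}-e^{-\beta}$ appearing in the different maximands. Once this monotonicity reduction is in hand, the remaining argument is a routine discriminant computation together with the identification of the closed form~\eqref{eq: g function}.
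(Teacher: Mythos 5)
Your plan follows the same route as the paper (specialize Lemma~\ref{lem: C_ij_sc} to $\al(0)=0$, reduce $B\,C_{ij}<1$ to quadratic inequalities, and read off the threshold from the discriminant and the roots), and several of your computations are on target: the discriminant is indeed proportional to $(e^{2\beta}-e^{2\beta}B+B+1)^2-4e^{2\beta}$ uniformly in $(\eta_i^+,\eta_i^-)$, and its sign change at $\beta=\log\frac{B+1}{B-1}$ gives the second assertion. However, there is a genuine gap in your key reduction step. The pair $(\eta_i^+,\eta_i^-)=(B,0)$ is not in the admissible index set of any of the four maximands: the third and fourth require $\eta_i^->0$, and the first and second require $\eta_i^++\eta_i^-\le B_i-1$. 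The correct extremal pairs, which the paper uses, are $(\eta_i^+,\eta_i^-)=(B-1,1)$ and $(0,B)$, both in the (summed) third/fourth maximand, and it is \emph{these} two that are exchanged by the spin-flip symmetry $x\leftrightarrow 1-x$, not $(B,0)$ and $(0,B)$. Concretely, with $x=\al(1)$, $r=x/(1-x)$ and $s=\eta_i^+-\eta_i^-+1\in\{1-B,\dots,B-1\}$, the summed third/fourth fraction is $x(1-x)(e^{\beta}-e^{-\beta})\big/\big(x^2e^{\beta s}+(1-x)^2e^{-\beta s}+x(1-x)(e^{\beta}+e^{-\beta})\big)$; each associated quadratic is \emph{not} symmetric about $x=\tfrac12$ (both of its roots lie on one side), and the larger root of the $s=1-B$ (i.e.\ $(0,B)$) quadratic is exactly $\tfrac{2}{2+g(\beta,B)}$, while the mirror statement for $\al(-1)$ comes from $s=B-1$, i.e.\ $(B-1,1)$. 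If you instead evaluate at $(B,0)$, i.e.\ $s=B+1$, you impose a spurious, strictly stronger constraint on the $\al(-1)$-large side whose threshold is $\tfrac{2}{2+e^{-2\beta}g(\beta,B)}\neq\tfrac{2}{2+g(\beta,B)}$; and if you avoid that computation by invoking your symmetry claim, you are using a false identity.

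A second, related weakness is the justification of the reduction itself: the maximands are \emph{not} monotone in $(\eta_i^+,\eta_i^-)$ for general $\al$; for fixed $\al$ the summed fraction is unimodal in $s$, maximized at the integer nearest $\tfrac1\beta\log\frac{\al(-1)}{\al(1)}$, which lies in the interior of the range for moderately asymmetric $\al$. The reduction to the endpoints is valid either because the hypothesis $\max\{\al(1),\al(-1)\}>\tfrac{2}{2+g(\beta,B)}$ (with $\beta\ge\log\frac{B+1}{B-1}$) forces $r$ or $1/r$ above $e^{\beta(B-1)}$, pushing the unconstrained maximizer outside the admissible range so that the fraction is monotone there, or by working with the roots $r_\pm(s)=\tfrac{K\pm\sqrt{K^2-4}}{2}e^{-\beta s}$, $K=B(e^{\beta}-e^{-\beta})-(e^{\beta}+e^{-\beta})$, which are monotone in $s$, so all the failure intervals in $r$ are contained in $[r_-(B-1),r_+(1-B)]$. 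With one of these arguments in place, and after checking that the first/second maximands are dominated (their analogous constant $K'=B(e^{\beta/2}-e^{-\beta/2})-(e^{\beta/2}+e^{-\beta/2})$ is smaller than $K$), your discriminant computation and the identification of the threshold with $\tfrac{2}{2+g(\beta,B)}$ do go through and reproduce the paper's argument; as written, however, the extremal points and the symmetry they are supposed to carry are misidentified, and the stated threshold cannot be derived from them.
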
 
{The last part implies that for small $\beta$ every soft-core model with $\al(0)=0$ satisfies the Dobrushin condition. This bound is slightly better than what we get by an application of Theorem \ref{Thm: beta B} since $\log(\frac{B+1}{B-1})> \frac{2}{B}$ for all $B>1$. }
\begin{proof}
{	For $\al(0)=0$ we can sum the third and fourth fraction in Lemma \ref{lem: C_ij_sc}  because they differ only on terms which are multiplied by $\al(0)$.  Because of the monotonicity we need only to check that $B_i C_{ij}$ is smaller than one for $i$ with $B_i=B$. The above mentioned polynomials are now quadratic in one variable and the leading coefficient is negative. One can show that only the third fraction is important, one time with $\eta_i^+=B-1$ and $\eta_i^-=1$, and second time with  $\eta_i^+=0$ and $\eta_i^-=B$. By this the result follows by an easy but long computation. }
\end{proof}

	\section{Proofs for the time-evolved models}\label{Sec: 4}
	We will use different methods to analyze the two models. The already mentioned cluster representation for the hard-core model and for the soft-core model a method involving the restricted constrained first-layer model explicitly. The first-layer corresponds to the model at time $0$ and the second layer corresponds to the time-evolved model. We need to find a quasilocal specification for the time-evolved measure and a good starting point is to combine the specifications for the starting measures with the transition kernel $p_t$. We concentrate only on the hard-core case for a moment but all ideas work also for the soft-core specification. Let $\omega \in \Omega$, $\al\in\mathcal{M}_1(E)$, $\Lambda\Subset \Z^d$ and $t>0$ then $\gamma^{\omega}_{\Lambda,\al,t}(d\eta):= \gamma^{hc}_{\Lambda,\al}(\prod_{i\in\Lambda}p_t(\cdot_i,d\eta_i)\vert \omega_{\Lambda^c})$ defines a probability measure on $(\Omega,\mathcal{F})$ at time $t$. Next we introduce a second finite volume $\Delta\Subset\Z^d$  which is contained in $\Lambda$ and a boundary condition $\tilde\eta\in \Omega$. Since $\gamma^{\omega}_{\Lambda,\al,t}$ is a probability measure on a finite space we can define 
	\begin{align*}
		\gamma^{\omega}_{\Lambda,\Delta,\al,t}(f\vert \tilde\eta):= \gamma^{\omega}_{\Lambda,\al,t}(f\vert \tilde\eta_{\Delta^c})= \frac{\sum_{\eta_\Delta\in\Omega_\Delta}f(\eta_\Delta\tilde{\eta}_{\Delta^c})\gamma^{hc}_{\Lambda,\al}(\prod_{i\in\Lambda\backslash\Delta}p_t(\cdot_i,\tilde\eta_i)\prod_{i\in\Delta}p_t(\cdot_i,\eta_i)\vert \omega_{\Lambda^c})}{\gamma^{hc}_{\Lambda,\al}(\prod_{i\in\Lambda\backslash\Delta}p_t(\cdot_i,\tilde\eta_i)\vert \omega_{\Lambda^c})}
	\end{align*}
	where $f:\Omega\rightarrow \R$ is a bounded measurable function.
	
	If the limit $\lim_{\Lambda\uparrow \Z^d}\gamma^{\omega}_{\Lambda,\Delta,\al,t}(f\vert \tilde\eta)$ exists and does not depend on $\omega$ for all $\Delta\Subset\Z^d$ and all boundary conditions $\tilde\eta$ the resulting probability kernel is a good candidate to provide a specification for the time-evolved measure. We start with the soft-core model.
	
	\subsection{Short-time Gibbs for the soft-core model}
	The idea of the proof relies on an uniform Dobrushin condition for the restricted constrained first-layer model which is a model at time $0$ with a constraint $\eta$ coming from time $t$. {We extend the approach of \cite{kuelske-opoku08} where only transformation kernels are investigated which are strictly positive.}
	\begin{definition}
		Let $\Lambda \Subset \Z^d$ and $i\in\Z^d$ then the $i$-restricted constrained first-layer model of the soft-core Widom-Rowlinson model is defined by 
		\begin{align*}
		\gamma^{i}_{\Lambda,t}[\eta](\omega_{\Lambda\backslash i}\vert \bar{\omega} ) = \frac{e^{-\mathcal{H}_\Lambda^i(\omega_\Lai\bar\omega_{\Lambda^c})}\prod_{j\in\Lambda\backslash i}p_t(\omega_j,\eta_j)\al(\omega_j)}{\sum_{\tilde{\omega}_\Lai\in E^{\Lai}}e^{-\mathcal{H}_\Lambda^i(\tilde\omega_\Lai\bar\omega_{\Lambda^c})}\prod_{j\in\Lambda\backslash i}p_t(\tilde{\omega}_j,\eta_j)\al(\tilde\omega_j)}
		\end{align*} 
		where $	\mathcal{H}_\Lambda^i(\omega) = \sum_{\{k,j\}\in \mathcal{E}_\Lambda^b} \Phi^{i}_{\{k,j\}}(\omega)$ with $\Phi^{i}_{\{k,j\}}(\omega)=\Phi_{\{k,j\}}(\omega)\mathds{1}_{i\cap\{k,j\}=\emptyset}$. 
		
	\end{definition}
	One can check that $\gamma^i_{\Lambda,t}$ defines a quasilocal specification on the graph $\Z^d\backslash \{i\}$ since the Hamiltonian has finite range and $\prod_{j\in\Lai}p_t$ depends only on the sites inside of $\Lai$.
	\begin{theorem}\label{Thm: evo dob}
		Let $i\in\Z^d$. Then there exists a time $t_0(\beta,\al)>0$ such that for all $t< t_0(\beta,\al)$ and $\eta\in \Omega$ the {$i$}-restricted constrained first-layer model satisfies the Dobrushin condition uniformly in $\eta$.
	\end{theorem}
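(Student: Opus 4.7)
The plan is to verify Dobrushin's condition directly by bounding each entry $C_{kj}(\gamma^i_{\Lambda,t}[\eta])$ of the interdependence matrix by a quantity that vanishes as $t\downarrow 0$, uniformly in the constraint $\eta$. The key structural observation is that the dynamics preserves the particle/hole distinction: $p_t(\omega_k,\eta_k)=0$ unless $\omega_k=\eta_k=0$ or $\omega_k,\eta_k\in\{-1,+1\}$. Consequently the factor $p_t(\omega_k,\eta_k)\al(\omega_k)$ acts as an effective site-dependent a priori measure which, after normalization, is either exactly $\delta_0$ (when $\eta_k=0$) or is a two-point measure on $\{-1,+1\}$ whose weight ratio $p_t(-\eta_k,\eta_k)/p_t(\eta_k,\eta_k)=\tanh t$ concentrates on $\delta_{\eta_k}$ as $t\downarrow 0$.

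First I would record the single-site kernels of $\gamma^i_{\Lambda,t}[\eta]$ at a site $k\neq i$. When $\eta_k=0$, the kernel is $\delta_0$ independently of $\bar\omega$, so $C_{kj}(\gamma^i_{\Lambda,t}[\eta])=0$ for every $j$. When $\eta_k=\pm 1$, the kernel is supported on $\{-1,+1\}$ and, after dividing through by $p_t(\eta_k,\eta_k)$, takes the form
\begin{align*}
\gamma^i_{k,t}[\eta](\omega_k=\eta_k | \bar\omega)=\frac{\al(\eta_k)\,e^{-\beta a}}{\al(\eta_k)\,e^{-\beta a}+\tanh(t)\,\al(-\eta_k)\,e^{-\beta b}},
\end{align*}
where $a,b$ count the neighbors $j\sim k$, $j\neq i$, with $\bar\omega_j=-\eta_k$ and $\bar\omega_j=+\eta_k$, respectively. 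I would then estimate the TV distance between two such kernels for boundary conditions $\bar\omega,\bar\omega^{(j)}$ differing only at one neighbor $j\sim k$. Writing the probability difference as $|AB'-A'B|/((A+B)(A'+B'))$ with $A,B$ the numerator/denominator terms above and $A',B'$ the perturbed ones, and using $A/A',B/B'\in[e^{-\beta},e^{\beta}]$ together with the lower bound $A,A'\geq\al(\eta_k)\,e^{-2d\beta}$, yields the uniform estimate
\begin{align*}
C_{kj}(\gamma^i_{\Lambda,t}[\eta])\leq 2\tanh(t)\,\frac{\max\{\al(1),\al(-1)\}}{\min\{\al(1),\al(-1)\}}\,e^{2d\beta}
\end{align*}
in the non-degenerate case $\al(1),\al(-1)>0$ (the remaining cases again give $C_{kj}=0$, the kernel being deterministic).

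Summing over the at most $2d$ neighbors of $k$ in $\Z^d\setminus\{i\}$ and using $C_{kj}=0$ when $k\nsim j$ then gives
\begin{align*}
c(\gamma^i_{\Lambda,t}[\eta])\leq 4d\tanh(t)\,\frac{\max\{\al(1),\al(-1)\}}{\min\{\al(1),\al(-1)\}}\,e^{2d\beta},
\end{align*}
which depends only on $\beta,\al,d$ and is uniform in $\eta\in\Omega$ and $\Lambda$. Choosing $t_0(\beta,\al)>0$ so that the right-hand side is strictly less than $1$ for all $t<t_0$ completes the argument, since the quasilocality of $\gamma^i_{\Lambda,t}[\eta]$ on $\Z^d\setminus\{i\}$ has already been noted (both the Hamiltonian and the product $\prod_{j\in\Lai}p_t(\cdot_j,\eta_j)$ have finite range). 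The main obstacle I anticipate is the bookkeeping of the several possible modifications of $\bar\omega_j$ (from $0$ to $\pm 1$, from $\pm 1$ to $0$, or $+1\leftrightarrow -1$), each of which perturbs the counts $a,b$ differently; however each case produces only bounded multiplicative changes $e^{\pm\beta}$ in $A,A',B,B'$, so a single uniform bound of the above form survives. Alternatively, one could deduce the same conclusion by recognizing that the effective site-wise a priori measure lies in the neighborhoods of Theorem \ref{thm: neigh} for $t$ small, but the direct computation seems cleanest.
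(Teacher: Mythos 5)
Your argument is correct and establishes the theorem, but it replaces the paper's main mechanism with a direct, self-contained computation. The paper observes, as you do, that the $\eta$-dependence of the single-site kernel $\gamma^i_{i_0,t}[\eta]$ enters only through the effective a priori measure $\tilde\al^{\eta_{i_0}}_t$, which is $\delta_0$ when $\eta_{i_0}=0$ and is a two-point measure on $\{-1,+1\}$ that tends to $\delta_{\pm1}$ as $t\downarrow 0$ when $\eta_{i_0}=\pm1$. At that point, however, the paper invokes the qualitative continuity statement of Theorem \ref{thm: neigh} (Dobrushin uniqueness in an $\epsilon$-neighbourhood of each Dirac measure, proved via the closed-form expressions of Lemma \ref{lem: C_ij_sc}) together with the degree-monotonicity of $C_{ij}$ to conclude that each $\bar C^{\pm1,i}_{i_0k,t}<\tfrac{1}{2d}$ for $t$ small. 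You instead estimate the single-site total variation distance directly, writing the probability of the preferred spin as $A/(A+B)$ with $B$ carrying a factor $\tanh t$, and bound the difference $|AB'-A'B|/((A+B)(A'+B'))$ using $A,A'\ge\al(\eta_k)e^{-2d\beta}$ and $A/A',B/B'\in[e^{-\beta},e^{\beta}]$. This yields the explicit, $\eta$-uniform estimate $c(\gamma^i_t[\eta])\le 4d\tanh(t)\frac{\max\{\al(1),\al(-1)\}}{\min\{\al(1),\al(-1)\}}e^{2d\beta}$, from which a concrete $t_0(\beta,\al)$ can be read off. Both routes are valid; yours has the advantage of producing an explicit quantitative bound and avoiding any appeal to the compactness/continuity step, while the paper's is more modular in that it reuses its static Dobrushin analysis. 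One small point worth making explicit in a final write-up is what happens when $\al(1)\al(-1)=0$: you correctly observe that the kernel is then deterministic so $C_{kj}=0$, but the case $\al=\delta_0$ (where numerator and denominator both vanish when $\eta_{i_0}\neq0$) should be handled by convention, just as the paper implicitly does.
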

	\begin{proof}
		  Since the specification is quasilocal we have only to check the condition 
		$		\bar{c}_{i,t} :=  \sup_{\eta\in \Omega}\bar{c}_{i,t}[\eta] <1$
		where \begin{align*}
		\bar{c}_{i,t}[\eta] := \sup_{i_0\in \Z^d\backslash \{i\}} \sum_{k\in \Z^d\backslash \{i\}}\bar{C}^{\eta,i}_{i_0k,t} 
		\end{align*}
		with 
		\begin{align*}
		\bar{C}^{\eta,i}_{i_0k,t} := \sup_{\omega,\bar{\omega}\in \Omega_{\Z^d\backslash\{i\}},\omega_{k^c}=\bar{\omega}_{k^c}}d_{TV}(\gamma^{i}_{i_0,t}[\eta](\cdot\vert\omega) ,\gamma^{i}_{i_0,t}[\eta](\cdot\vert\bar{\omega}) ).
		\end{align*}
		Note that $\bar{C}^{\eta,i}_{i_0k,t} $ is equal to zero if $i_0$ and $k$ are not nearest neighbor and consequently $\bar{C}^{\eta,i}_{i_0k,t} $ does not depend on $k$. This implies that $\bar{c}_{i,t}[\eta] = \sup_{i_0\in \Z^d\backslash \{i\}} \sum_{k\sim{i_0}} \bar{C}^{\eta,i}_{i_0k,t}$.   The $\eta$-dependence in 	$\bar{C}^{\eta,i}_{i_0k,t}$ occurs only at the site $i_0$. Hence it is useful to split the proof with respect to the possible values of $\eta_{i_0}$ and we can write $\bar{C}^{\eta,i}_{i_0k,t}=\bar{C}^{\eta_{i_0},i}_{i_0k,t}$. We start with $\eta_{i_0}=1$ and obtain in this case 
		\begin{align*}
		\gamma^{i}_{i_0,t}[\eta](\omega_{i_0}\vert\omega) = \left\{\begin{array}{ccl}
		\frac{e^{-\beta \miO}p_t(1,1)\alpha(1)}{\sum_{\tilde{\omega}_i\in\{-1,1\}}e^{-\sum_{j\sim i_0, j\neq i}\beta \mathds{1}(\tilde{\omega}_i\omega_j =-1)}p_t(\tilde{\omega}_i,1)\alpha(\tilde{\omega}_i)}&\text{if}& \omega_{i_0}=1\\
		0&\text{if} & \omega_{i_0}=0\\
		\frac{e^{-\beta \plO}p_t(-1,1)\alpha(-1)}{\sum_{\tilde{\omega}_i\in\{-1,1\}}e^{-\sum_{j\sim i_0,j\neq i}\beta \mathds{1}(\tilde{\omega}_i\omega_j =-1)}p_t(\tilde{\omega}_i,1)\alpha(\tilde{\omega}_i)}&\text{if}& \omega_{i_0}=-1
		\end{array}\right.
		\end{align*}
		where $\omega_{i_0}^{\pm,i}(\omega):= \vert \{j\in \Z^d\backslash \{i\} \;:\; j\sim i_0 \,,\, \omega_j= \pm 1  \}\vert$. Multiplying numerator and denominator by  $\frac{1}{p_t(1,1)\al(1)+p_t(-1,1)\al(-1)}$ yields
		\begin{align*}
		\gamma^{i}_{i_0,t}[\eta](\omega_{i_0}\vert\omega)= \left\{\begin{array}{ccl}
		\frac{e^{-\beta \miO}\tilde{\al}^1_t(1)}{\sum_{\tilde{\omega}_{i_0}\in\{-1,1\}}e^{-\sum_{j\sim i_0, j\neq i}\beta \mathds{1}(\tilde{\omega}_i\omega_j =-1)}\tilde{\al}^1_t(\tilde{\omega}_{i_0})}&\text{if}& \omega_{i_0}=1\\
		0&\text{if} & \omega_{i_0}=0\\
		\frac{e^{-\beta \plO}\tilde{\al}^1_t(-1)}{\sum_{\tilde{\omega}_{i_0}\in\{-1,1\}}e^{-\sum_{j\sim i_0, j\neq i}\beta \mathds{1}(\tilde{\omega}_i\omega_j =-1)}\tilde{\al}^1_t(\tilde{\omega}_{i_0})}
		&\text{if}& \omega_{i_0}=-1
		\end{array}\right.
		\end{align*}
		where  $$\tilde{\alpha}^{\eta_j}_t(\omega_{j}):= \frac{p_t(\omega_j,\eta_j)\al(\omega_j)}{p_t(1,\eta_j)\al(1)+p_t(0,\eta_j)\al(1)+p_t(-1,\eta_j)\al(-1)}.$$ Obviously $\tilde{\al}^{\eta_j}_t$ is a probability measure on $E$. This implies that we are in the same situation for the single-site kernels as in Section \ref{sec: dob uniq} 
		with the locally finite graph $\Z^d\backslash\{i\} $. Since $\lim_{t\rightarrow 0}\tilde{\al}^1_t(1) = 1$ Theorem \ref{thm: neigh} implies that there exists a $\tilde{t}_0>0$ such that for all $t<\tilde{t}_0$ the  $\bar{C}^{1,i}_{i_0k,t}$ are smaller then $\frac{1}{2d}$. Similarly it follows for $\eta_{i_0}=-1$ that there exists a $\bar{t}_0$ such that for all $t<\bar{t}_0$ the $\bar{C}^{-1,i}_{i_0k,t}$ are smaller then $\frac{1}{2d}$. For $\eta_{i_0}=0$ follows that $\gamma^{i}_{i_0,t}[\eta](\omega_{i_0}\vert\omega) = \delta_{0}(\omega_{i_0})$ and this implies $\bar{C}^{0,i}_{i_0k,t}=0$ since it does not depend on the boundary condition.\\
		A further look reveals that the only $i_0$-dependence of $\bar{C}^{\pm,i}_{\bar{i}_0k,t}$ comes from the two cases that $i_0$ and $i$ are nearest neighbors in $\Z^d$, or not. But by the comment after Lemma \ref{lem: C_ij_sc} we have $\bar{C}^{\pm1,i}_{\tilde{i}_0k,t} \leq \bar{C}^{\pm1,i}_{\bar{i}_0k,t}$  where $\tilde{i}_0$  is a neighbor of $i$ in $\Z^d$ and $\bar{i}_0$ is not. With $t_0=\min\{\tilde{t}_0,\bar{t}_0\}$ it follows that for all $t<t_0$ we have  $\max\{\bar{C}^{1,i}_{{i}_0k,t} ,\bar{C}^{-1,i}_{{i}_0k,t} \}<\frac{1}{2d}$. With this bound we can show that 
		\begin{align*}
		\bar{c}_{i,t} \leq \sup_{\eta\in \Omega}\sup_{i_0\in \Z^d\backslash \{i\}} 2d \bar{C}^{\eta,i}_{i_0k,t}<2d\max\{\bar{C}^{1,i}_{{i}_0k,t} ,\bar{C}^{-1,i}_{{i}_0k,t} \}<1
		\end{align*}
		which implies Dobrushin uniqueness uniformly in $\eta\in \Omega$. 
	\end{proof}
	
	\begin{corollary}\label{Corollary conv}
		For all $\alpha\in\mathcal{M}_1(E)$, $\eta \in \Omega$ and $\beta>0$ there exists an $t_0(\beta,\al)>0$ such that for all $t<t_0(\beta,\al)$ and $i\in \Z^d$ we have local convergence of $\gamma^{i}_{\Lambda,t}[\eta](\cdot\vert \bar{\omega} )$ with limit $\mu^i_{i^c,t}[\eta]$ where this measure is the unique Gibbs measure for the $i$-restricted constrained first-layer model. Moreover, $\eta \mapsto \mu^i_{i^c,t}[\eta]$ is measurable w.r.t. the evaluation $\sigma$-algebra.
	\end{corollary}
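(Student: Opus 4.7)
The plan is to derive both claims as consequences of Theorem \ref{Thm: evo dob} applied on the graph $\Z^d\backslash\{i\}$. Since that theorem provides a Dobrushin constant $\bar c_{i,t}<1$ that is uniform in the constraint $\eta\in\Omega$, and since the specification $\gamma^{i}_{\cdot,t}[\eta]$ is quasilocal as noted just before Theorem \ref{Thm: evo dob}, I can invoke the standard Dobrushin uniqueness theorem (Theorem 8.7 in \cite{georgii-book}) to conclude that $|\mathcal{G}(\gamma^{i}_{\cdot,t}[\eta])|=1$ for every fixed $\eta$. Call the unique element $\mu^i_{i^c,t}[\eta]$.

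Next I would establish local convergence. The standard Dobrushin comparison estimate (see, e.g., Proposition 8.8 and Theorem 8.23 in \cite{georgii-book}) gives, for any bounded local function $f$ with dependence set $\Delta_f\Subset\Z^d\backslash\{i\}$ and any two boundary conditions $\bar\omega,\bar\omega'$,
\begin{equation*}
\bigl|\gamma^{i}_{\Lambda,t}[\eta](f\vert\bar\omega)-\gamma^{i}_{\Lambda,t}[\eta](f\vert\bar\omega')\bigr|\;\leq\;\|f\|_\infty\sum_{k\in\Lambda^c\setminus\{i\}}\sum_{j\in\Delta_f}D_{jk},
\end{equation*}
where $D=\sum_{n\geq 0}C(\gamma^i_{\cdot,t}[\eta])^n$ is the Dobrushin matrix summed into a geometric series whose norm is bounded by $(1-\bar c_{i,t})^{-1}$ uniformly in $\eta$. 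As $\Lambda\uparrow\Z^d\backslash\{i\}$ the right-hand side tends to zero since the rows of $D$ are summable. Combined with uniqueness, this forces $\gamma^{i}_{\Lambda,t}[\eta](f\vert\bar\omega)\to\mu^i_{i^c,t}[\eta](f)$ as $\Lambda\uparrow\Z^d\backslash\{i\}$ for every local $f$ and every $\bar\omega$, with the limit independent of $\bar\omega$. This gives local convergence in the required sense.

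For measurability of $\eta\mapsto\mu^i_{i^c,t}[\eta]$, I would observe that for fixed $\Lambda$, $f$, and $\bar\omega$, the map $\eta\mapsto\gamma^{i}_{\Lambda,t}[\eta](f\vert\bar\omega)$ is a finite sum of products of exponentials in $\al$ and of $p_t(\cdot,\eta_j)$ over $j\in\Lambda\setminus\{i\}$, hence it is measurable with respect to the evaluation $\sigma$-algebra on $\Omega$ (in fact, local in $\eta$). By the previous paragraph $\mu^i_{i^c,t}[\eta](f)$ is a pointwise limit of such measurable functions along any increasing exhaustion, so it too is measurable. Because $f$ is arbitrary local, the kernel $\eta\mapsto\mu^i_{i^c,t}[\eta]$ is measurable.

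The only subtle point is that $p_t(\cdot,\eta_j)$ vanishes whenever $\eta_j=0$ while $\cdot\in\{\pm 1\}$, so the quantities involved in the Dobrushin comparison are conditional probabilities of possibly null-probability events; however this is exactly what Theorem \ref{Thm: evo dob} was engineered to handle, as it gives an \emph{$\eta$-uniform} bound. So the main conceptual obstacle, namely extending the Dobrushin machinery beyond the strictly positive transition kernels treated in \cite{kuelske-opoku08}, has already been settled in Theorem \ref{Thm: evo dob}; what remains here is to package its output into convergence and measurability, both of which are standard consequences once the uniform bound $\bar c_{i,t}<1$ is in hand.
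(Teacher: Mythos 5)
Your proposal is correct and follows essentially the same route as the paper: both rest on the uniform-in-$\eta$ Dobrushin condition of Theorem \ref{Thm: evo dob} to get a unique Gibbs measure for the $i$-restricted constrained first-layer model, and both obtain measurability by writing $\mu^i_{i^c,t}[\eta](A)$ as a pointwise limit of the measurable maps $\eta\mapsto\gamma^{i}_{\Lambda,t}[\eta](A\vert\bar\omega)$ for a fixed boundary condition. The only difference is cosmetic: where you unpack the convergence through the Dobrushin comparison estimate (and would still want to note that one passes from boundary-condition independence to convergence to $\mu^i_{i^c,t}[\eta]$ by averaging over $\mu^i_{i^c,t}[\eta]$ via the DLR equation, or by compactness), the paper simply cites Proposition 7.11 of \cite{georgii-book}, which packages exactly this consequence of uniqueness for quasilocal specifications.
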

	\begin{proof}
		The convergence follows by \cite[Proposition 7.11]{georgii-book} since there exists a unique Gibbs measure by the Dobrushin uniqueness Theorem.
		
		For the last part, by standard arguments it suffices to show that  $\eta \mapsto \mu^i_{i^c,t}[\eta](A)$ is a measurable function for all local events $A$. Now, for arbitrary $\eta$-independent boundary condition $\bar{\omega}$ we have that $\mu^i_{i^c,t}[\eta](A) = \lim_{\Lambda\uparrow \Z^d} \gamma^{i}_{\Lambda,t}[\eta](A\vert \bar{\omega} )$ is measurable as limit of the measurable functions $\eta \mapsto \gamma^{i}_{\Lambda,t}[\eta](A\vert \bar{\omega} )$ which take only finely many values·
	\end{proof}
	Corollary \ref{Corollary conv} remains true if we replace $i$ with some $\Delta \Subset \Z^d$ and write
	\begin{align*}
	\gamma^{\Delta}_{\Lambda,t}[\eta](\omega_{\Lambda\backslash \Delta}\vert \bar{\omega} )  = \frac{e^{-H^\Delta_{\Lambda}(\omega_{\Lambda\backslash \Delta}\bar{\omega}_{\Lambda^c})}\prod_{j\in\Lambda\backslash \Delta}p_t(\omega_j,\eta_j)\al(\omega_j)}{\sum_{\tilde{\omega}_{\Lambda\backslash \Delta}\in \{-1,0,1\}^{\Lambda\backslash \Delta}}e^{-H^\Delta_{\Lambda}(\tilde{\omega}_{\Lambda\backslash \Delta}\bar{\omega}_{\Lambda^c})}\prod_{j\in\Lambda\backslash \Delta}p_t(\tilde{\omega}_j,\eta_j)\al(\omega_j)}
	\end{align*}
	where the $\Delta$-restricted Hamiltonian is defined by
$	\mathcal{H}_\Lambda^\Delta(\omega) = \sum_{\{k,j\}\in \mathcal{E}_\Lambda^b} \Phi^{\Delta}_{\{k,j\}}(\omega)$
	with the $\Delta$-restricted potential $\Phi^{\Delta}_{\{k,j\}}(\omega)=\Phi_{\{k,j\}}(\omega)\mathds{1}_{\Delta\cap\{k,j\}=\emptyset}$.
	Furthermore, $t_0$ is uniformly in $\Delta$ since thinning of the graph improves the Dobrushin constant. 
	
	The reason why we look at the restricted constrained model is that with its help we can easily rewrite  $\gamma^{\bar{\omega}}_{t,\Lambda,\Delta,\alpha,\beta}$ and show that it has a infinite-volume limit as $\Lambda\uparrow \Z^d$.
	\begin{lemma}
		Let $ \Lambda \Subset \Z^d$, with $\vert\Lambda\vert \geq 2$, $\alpha\in\mathcal{M}_1(E)$ and $\beta>0$. Then for every $\Delta \subset \Lambda$ and every boundary condition $\bar{\omega} \in \Omega$ at time $0$ and boundary condition $\eta\in \Omega$ the conditional probability
		$\gamma^{\bar{\omega}}_{t,\Lambda,\Delta,\alpha,\beta}$ can be rewritten as
		\begin{align*}
		&\gamma^{\bar{\omega}}_{\Lambda,\Delta,\alpha,\beta,t}(\eta_{\Delta}\vert \eta_{\Lambda \backslash \Delta} ) = \frac{\sum_{\omega_{\Lambda \backslash \Delta}\in \Omega_{\Lai}}\gamma^{\Delta}_{\Lambda,t}[\eta](\omega_{\Lambda\backslash \Delta}\vert \bar{\omega} ) \sum_{{\omega_\Delta}\in \Omega_\Delta}e^{-\mathcal{H}_\Delta({\omega_\Lambda\bar{\omega}_{\Lambda^c})}}\prod_{i\in \Delta}p_t(\omega_i,\eta_i)\alpha(\omega_i)}{\sum_{\omega_{\Lambda \backslash \Delta}\in \Omega_{\Lambda \backslash \Delta}}\gamma^{\Delta}_{\Lambda,t}[\eta](\omega_{\Lambda\backslash \Delta}\vert \bar{\omega} ) \sum_{{\omega_\Delta}\in \Omega_\Delta}e^{-\mathcal{H}_\Delta({\omega_\Lambda\bar{\omega}_{\Lambda^c})}}\prod_{i\in \Delta}\alpha(\omega_i)}.
		\end{align*}	 
		
	\end{lemma}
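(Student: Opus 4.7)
The plan is to expand the left-hand side as a ratio of probabilities, insert the explicit Gibbsian form of $\gamma^{sc}_{\Lambda,\beta,\al}$, and then split the finite-volume sum so that the Boltzmann weight coming from bonds not touching $\Delta$ reassembles into the restricted constrained first-layer kernel $\gamma^{\Delta}_{\Lambda,t}[\eta]$, while the weight from the bonds that do touch $\Delta$ is kept separate.

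First I would use the definition of $\gamma^{\bar{\omega}}_{\Lambda,\Delta,\al,\beta,t}$ together with the identity $\sum_{\eta_i}p_t(\omega_i,\eta_i)=1$ (applied for $i\in\Delta$ to get the denominator after marginalising $\eta_\Delta$) to arrive at
\begin{align*}
\gamma^{\bar{\omega}}_{\Lambda,\Delta,\al,\beta,t}(\eta_\Delta\vert\eta_{\Lambda\backslash\Delta})
=\frac{\sum_{\omega_\Lambda\in\Omega_\Lambda}e^{-\mathcal{H}_\Lambda(\omega_\Lambda\bar{\omega}_{\Lambda^c})}\prod_{i\in\Lambda}\al(\omega_i)\prod_{i\in\Lambda}p_t(\omega_i,\eta_i)}{\sum_{\omega_\Lambda\in\Omega_\Lambda}e^{-\mathcal{H}_\Lambda(\omega_\Lambda\bar{\omega}_{\Lambda^c})}\prod_{i\in\Lambda}\al(\omega_i)\prod_{i\in\Lambda\backslash\Delta}p_t(\omega_i,\eta_i)},
\end{align*}
the partition function $Z^{sc}_{\bar{\omega}}$ cancelling (the single-site parameters $h$, $\log(\lambda)$ are absorbed into the factors $\al(\omega_i)$ via the reparametrisation stated after Definition \ref{defi: models}). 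Next I decompose $\mathcal{H}_\Lambda=\mathcal{H}^\Delta_\Lambda+\mathcal{H}_\Delta$, where $\mathcal{H}^\Delta_\Lambda=\sum_{\{k,j\}\in\mathcal{E}^b_\Lambda}\Phi^\Delta_{\{k,j\}}$ collects exactly the bond potentials with $\{k,j\}\cap\Delta=\emptyset$, while $\mathcal{H}_\Delta$ gathers the remaining bonds (those touching $\Delta$). Writing $\sum_{\omega_\Lambda}=\sum_{\omega_{\Lambda\backslash\Delta}}\sum_{\omega_\Delta}$, the factor $e^{-\mathcal{H}^\Delta_\Lambda}$ and the weights $p_t(\omega_j,\eta_j)\al(\omega_j)$ for $j\in\Lambda\backslash\Delta$ depend only on $\omega_{\Lambda\backslash\Delta}$ and $\bar{\omega}_{\Lambda^c}$, and so can be pulled outside of the inner $\omega_\Delta$-sum in both numerator and denominator.

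Finally, multiplying and dividing both numerator and denominator by
\begin{align*}
\tilde Z_\Delta(\eta,\bar{\omega}):=\sum_{\tilde\omega_{\Lambda\backslash\Delta}\in\Omega_{\Lambda\backslash\Delta}}e^{-\mathcal{H}^\Delta_\Lambda(\tilde\omega_{\Lambda\backslash\Delta}\bar{\omega}_{\Lambda^c})}\prod_{j\in\Lambda\backslash\Delta}p_t(\tilde\omega_j,\eta_j)\al(\tilde\omega_j),
\end{align*}
the outer sum turns into a sum against $\gamma^{\Delta}_{\Lambda,t}[\eta](\omega_{\Lambda\backslash\Delta}\vert\bar{\omega})$ by the very definition of that kernel, and $\tilde Z_\Delta(\eta,\bar{\omega})$ cancels between numerator and denominator. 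What remains of the inner sums is $\sum_{\omega_\Delta}e^{-\mathcal{H}_\Delta(\omega_\Lambda\bar{\omega}_{\Lambda^c})}\prod_{i\in\Delta}\al(\omega_i)p_t(\omega_i,\eta_i)$ in the numerator and $\sum_{\omega_\Delta}e^{-\mathcal{H}_\Delta(\omega_\Lambda\bar{\omega}_{\Lambda^c})}\prod_{i\in\Delta}\al(\omega_i)$ in the denominator, which is exactly the claimed identity.

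The argument is a pure algebraic rearrangement, so there is no genuine obstacle. The two points I would track carefully are the attribution of mixed bonds $\{k,j\}$ with $k\in\Delta$ and $j\in\Lambda\backslash\Delta$ — these must be placed in $\mathcal{H}_\Delta$, not in $\mathcal{H}^\Delta_\Lambda$, so that the latter is genuinely $\omega_\Delta$-independent and the inner/outer factorisation of the double sum is legitimate — and the bookkeeping of the single-site weights $\al(\omega_i)$ for $i\in\Delta$, which stay tied to $\omega_\Delta$ and therefore appear in the residual inner sums rather than being absorbed into $\gamma^{\Delta}_{\Lambda,t}[\eta]$, explaining the asymmetry in $p_t$-factors between numerator and denominator.
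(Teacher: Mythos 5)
Your proposal is correct and follows essentially the same route as the paper's own (one-line) proof: split $\mathcal{H}_\Lambda=\mathcal{H}^\Delta_\Lambda+\mathcal{H}_\Delta$, split the sum over $\Omega_\Lambda$ into $\Omega_{\Lambda\backslash\Delta}$ and $\Omega_\Delta$, and normalise the outer sum to recognise $\gamma^{\Delta}_{\Lambda,t}[\eta]$. Your added care about attributing mixed bonds to $\mathcal{H}_\Delta$ and keeping the $\al$- and $p_t$-factors on $\Delta$ in the inner sums is exactly the bookkeeping the paper leaves implicit.
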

	
	\begin{proof}
		
		Splitting the Hamiltonian $\mathcal{H}_{\Lambda}(\omega_\Lambda\omega_{\Lambda^c}) =H^{\Delta}_{\Lambda}(\omega_{\Lambda\backslash \Delta}\omega_{\Lambda^c}) +\mathcal{H}_{\Delta}(\omega_\Lambda\omega_{\Lambda^c})$ and the sum in the definition of $\gamma^{\bar{\omega}}_{t,\Lambda,\Delta,\alpha,\beta}(\eta_{\Delta}\vert \eta_{\Lambda\backslash \Delta} )$ over $ \Omega_{\Lambda}$ into one  over $\Omega_{\Lambda\backslash \Delta}$ and one over $\Omega_{\Delta}$ gives the desired result.

	\end{proof}
	\begin{lemma}\label{lem: exis. speci}
		Let 	$\alpha\in\mathcal{M}_1(E)$, $\eta \in \Omega$ and $\beta>0$. Then there exists a $t_0(\beta,\al)>0$  such that for all $t<t_0(\beta,\al)$, all $\Delta\Subset \Z^d$ and all local bounded functions $f:\Omega\rightarrow \mathbb{R}$ it follows that 
		\begin{align*}
		\lim_{\Lambda\uparrow \Z^d } \gamma^{\bar{\omega}}_{\Lambda,\Delta,\alpha,\beta,t}(f\vert \eta_{\Lambda\backslash \Delta}) = \gamma_{\Delta,\alpha,\beta,t}(f\vert \eta_{ \Delta^c}).
		\end{align*} 
		with 
		\begin{align*}
		\gamma_{t,\Delta,\alpha,\beta}(\eta_\Delta\vert \eta_{ \Delta^c}) = \frac{\int_{ \Omega_{\Delta^c}}\mu_{ \Delta^c,t}[\eta_{\Delta^c}](d\omega_{\Delta^c} )\sum_{{\omega_\Delta}\in \Omega_\Delta}e^{-\mathcal{H}_\Delta({\omega_\Delta{\omega}_{\Delta^c})}}\prod_{i\in \Delta}p_t(\omega_i,\eta_i)\alpha(\omega_i)}{\int_{ \Omega_{\Delta^c}}\mu_{\Delta^c,t}[\eta_{\Delta^c}](d\omega_{\Delta^c} )\sum_{{\omega_\Delta}\in \Omega_\Delta}e^{-\mathcal{H}_\Delta({\omega_\Delta{\omega}_{\Delta^c})}}\prod_{i\in \Delta}\alpha(\omega_i)}. 
		\end{align*}
		where $\mu_{ \Delta^c}[\eta_{\Delta^c}]$ is the unique limit for the $\Delta$-restricted constrained first-layer model.
	\end{lemma}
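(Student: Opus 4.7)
The plan is to start from the finite-volume ratio identity supplied by the preceding lemma and pass to the limit in its numerator and denominator separately, using the $\Delta$-version of Corollary \ref{Corollary conv} that is announced in the paragraph immediately preceding the statement.

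First I would reduce the claim to the case where $f$ is an indicator $\mathds{1}_{\eta_\Delta=\sigma}$ for some $\sigma\in\Omega_\Delta$. Any local bounded $f$ is $\mathcal{F}_{\Delta'}$-measurable for some finite $\Delta'$, and after enlarging $\Delta$ to contain $\Delta'$ (the statement is claimed for \emph{all} finite $\Delta$), the conditioning on $\eta_{\Lambda\setminus\Delta}$ fixes every $\eta$-coordinate outside $\Delta$, so $f$ becomes a finite linear combination of such indicators. For $f=\mathds{1}_{\eta_\Delta=\sigma}$ the previous lemma expresses $\gamma^{\bar\omega}_{\Lambda,\Delta,\alpha,\beta,t}(f\vert\eta_{\Lambda\setminus\Delta})$ as a ratio whose numerator and denominator both have the form $\gamma^{\Delta}_{\Lambda,t}[\eta](F\vert\bar\omega)$ with
\begin{align*}
F^{\mathrm{num}}_\sigma(\omega_{\Lambda\setminus\Delta})&:=\sum_{\omega_\Delta\in\Omega_\Delta}e^{-\mathcal{H}_\Delta(\omega_\Delta\omega_{\Lambda\setminus\Delta}\bar\omega_{\Lambda^c})}\prod_{i\in\Delta}p_t(\omega_i,\sigma_i)\alpha(\omega_i),\\
F^{\mathrm{den}}(\omega_{\Lambda\setminus\Delta})&:=\sum_{\omega_\Delta\in\Omega_\Delta}e^{-\mathcal{H}_\Delta(\omega_\Delta\omega_{\Lambda\setminus\Delta}\bar\omega_{\Lambda^c})}\prod_{i\in\Delta}\alpha(\omega_i).
\end{align*}

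Second, because $\mathcal{H}_\Delta$ has range one, both functions depend on $\omega_{\Lambda\setminus\Delta}$ only through its values on $\partial\Delta$; in particular, as soon as $\Lambda\supset\Delta\cup\partial\Delta$, they are local bounded functions on the graph $\Z^d\setminus\Delta$ with a fixed $\Lambda$-independent localization set. The $\Delta$-version of Corollary \ref{Corollary conv} therefore applies and yields, for every $t<t_0(\beta,\alpha)$ and uniformly in the boundary condition $\bar\omega$,
\begin{align*}
\lim_{\Lambda\uparrow\Z^d}\gamma^{\Delta}_{\Lambda,t}[\eta]\bigl(F\,\big\vert\,\bar\omega\bigr)=\int_{\Omega_{\Delta^c}}F(\omega_{\Delta^c})\,\mu^{\Delta}_{\Delta^c,t}[\eta_{\Delta^c}](d\omega_{\Delta^c})
\end{align*}
for $F\in\{F^{\mathrm{num}}_\sigma,F^{\mathrm{den}}\}$, the limit being $\bar\omega$-independent by Dobrushin uniqueness.

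Third, one observes that $F^{\mathrm{den}}$ is uniformly bounded below: choosing $j\in E$ with $\alpha(j)=\max\{\alpha(-1),\alpha(0),\alpha(1)\}>0$ and the constant term $\omega_\Delta\equiv j$ gives $F^{\mathrm{den}}\geq e^{-\beta|\mathcal{E}_\Delta^b|}\alpha(j)^{|\Delta|}>0$ regardless of $\omega_{\Lambda\setminus\Delta}$ and $\bar\omega_{\Lambda^c}$. Hence the ratio of the two limits equals the limit of the ratio, which is exactly the expression claimed for $\gamma_{\Delta,\alpha,\beta,t}(\sigma\vert\eta_{\Delta^c})$. Measurability of $\eta\mapsto\gamma_{\Delta,\alpha,\beta,t}(\cdot\vert\eta_{\Delta^c})$ is inherited from the measurability of $\eta\mapsto\mu^{\Delta}_{\Delta^c,t}[\eta_{\Delta^c}]$ stated in Corollary \ref{Corollary conv}. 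The main conceptual point is ensuring that $F^{\mathrm{num}}_\sigma$ and $F^{\mathrm{den}}$ are genuinely local on $\Z^d\setminus\Delta$ with a localization set that does not grow with $\Lambda$, so that Corollary \ref{Corollary conv} can be invoked as a black box; the extra $\eta$-dependence of $F^{\mathrm{num}}_\sigma$ entering through the factors $p_t(\omega_i,\sigma_i)$ is harmless because $\eta$ is frozen throughout the construction of $\mu^{\Delta}_{\Delta^c,t}[\eta_{\Delta^c}]$.
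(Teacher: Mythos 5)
Your argument is correct and is essentially the paper's own proof: rewrite the finite-volume kernel as a ratio of $\gamma^{\Delta}_{\Lambda,t}[\eta]$-expectations of functions that, by the finite range of $\mathcal{H}_\Delta$, are local and $\Lambda$-independent, then pass to the limit via the $\Delta$-version of Corollary \ref{Corollary conv} (uniformly in $\bar\omega$ by Dobrushin uniqueness). The only cosmetic differences are that the paper treats a general local $f$ for the \emph{same} $\Delta$ by freezing $\eta_{\Delta^c}$ and summing over $\tilde\eta_\Delta\in\Omega_\Delta$ (so your enlargement of $\Delta$ is unnecessary), and it leaves the uniform lower bound on the denominator implicit, which you supply explicitly.
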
	   
	
	\begin{proof}
		First we choose $t_0$ small enough such that the $\Delta$-restricted constrained first-layer model satisfies the condition of Theorem \ref{Thm: evo dob} and consequently by Corollary \ref{Corollary conv} we have that 	$\lim_{\Lambda\uparrow \Z^d } \gamma^{\Delta}_{\Lambda,t}[\eta](\omega_{\Lambda\backslash \Delta}\vert \bar{\omega} ) (g)= \mu_{ \Delta^c}[\eta_{\Delta^c}](g )$ for all local bounded function $g:\Omega_{\Delta^c}\rightarrow \mathbb{R}$. For some local bounded function $f:\Omega\rightarrow \mathbb{R}$ define the function 
		\begin{align*}
		g^{\bar{\omega}}_{\Delta,\Lambda}(\omega_{\Delta^c},\eta) =  \sum_{\omega_{\Delta}\in \Omega_{\Delta}}e^{-\mathcal{H}_\Delta({\omega_\Lambda\bar{\omega}_{\Lambda^c})}}\prod_{i\in \Delta}p_t(\omega_i,\eta_i)\alpha(\omega_i) f(\eta).
		\end{align*}
		Since the Hamiltonian has only finite range we can choose $\Lambda$ big enough such that $g^{\bar{\omega}}_{\Delta,\Lambda}$ is independent of $\Lambda$ and write 
		\begin{align*}
		g^{\bar{\omega}}_{\Delta,\Lambda}(\omega_{\Delta^c},\eta)= g_{\Delta}(\omega_{\Delta^c},\eta)=\sum_{\omega_{\Delta}\in \Omega_{\Delta}}e^{-\mathcal{H}_\Delta({\omega_\Delta{\omega}_{\Delta^c})}}\prod_{i\in \Delta}p_t(\omega_i,\eta_i)\alpha(\omega_i) f(\eta).
		\end{align*}
		Additionally, the finite range property implies that $g_{\Delta}$ is a local function in $\omega$ and in $\eta$ such that we can rewrite
		\begin{align*}
		&\gamma^{\bar{\omega}}_{t,\Lambda,\Delta,\alpha,\beta}(f\vert \eta_{\Lambda \backslash \Delta} ) \\
		&=\sum_{\tilde{\eta}_{\Delta}\in \Omega_\Delta} \frac{\sum_{\omega_{\Lambda \backslash \Delta}\in \Omega_{\Lambda\backslash \Delta}}\gamma^{\Delta}_{\Lambda,t}[\eta](\omega_{\Lambda\backslash \Delta}\vert \bar{\omega} ) \sum_{{\omega_\Delta}\in \Omega_\Delta}e^{-\mathcal{H}_\Delta({\omega_\Lambda\bar{\omega}_{\Lambda^c})}}\prod_{i\in \Delta}p_t(\omega_i,\eta_i)\alpha(\omega_i)f(\tilde{\eta}_\Delta\eta_{\Delta^c})}{\sum_{\omega_{\Lambda \backslash \Delta}\in \Omega_{\Lambda \backslash \Delta}}\gamma^{\Delta}_{\Lambda,t}[\eta](\omega_{\Lambda\backslash \Delta}\vert \bar{\omega} ) \sum_{{\omega_\Delta}\in \Omega_\Delta}e^{-\mathcal{H}_\Delta({\omega_\Lambda\bar{\omega}_{\Lambda^c})}}\prod_{i\in \Delta}\alpha(\omega_i)}\\
		&=\sum_{\tilde{\eta}_{\Delta}\in \Omega_\Delta} \frac{\gamma^{\Delta}_{\Lambda,t}[\eta](g_{\Delta}(\cdot, \tilde{\eta}_{\Delta}\eta_{\Delta^c})\vert \bar{\omega})}{\gamma^{\Delta}_{\Lambda,t}[\eta](\sum_{\omega_{\Delta}\in \Omega_{\Delta}}e^{-\mathcal{H}_\Delta({\omega_\Delta{\cdot}_{\Delta^c})}}\prod_{i\in \Delta}\alpha(\omega_i)  \vert \bar{\omega})}.
		\end{align*}
		By taking the limit and with the help of Corollary \ref{Corollary conv} the proof is finished.
	\end{proof}   
	
	For the proof of short-time Gibbsianness we need the Dobrushin comparison Theorem which gives a bound on the difference of two Gibbs measure where one of them is admitted by some specification which satisfies the Dobrushin condition.
	
	\begin{theorem}\label{DLR short time}
		Let $\gamma$ and $\tilde{\gamma}$ be two specifications. Suppose $\gamma$  satisfies the Dobrushin condition. For each $i\in \Z^d$ we let $b_i$ be a measurable function on $\Omega$ such that 
		\begin{align*}
		d_{TV}(\gamma_i^0(\cdot\vert\omega),\tilde{\gamma}_i^0(\cdot\vert\omega)) \leq b_i(\omega)
		\end{align*}  
		for all $\omega\in \Omega$. If $\mu\in \mathcal{G}(\gamma)$ and  $\tilde{\mu}\in \mathcal{G}(\tilde{\gamma})$ then for all quasilocal bounded functions $f:\Omega \rightarrow \mathbb{R}$
		\begin{align*}
		\vert \mu(f)-\tilde{\mu}(f) \vert \leq \sum_{i,j\in \Z^d} \delta_i(f)D_{ij} \tilde{\mu}(b_j)
		\end{align*}
		where $\delta_i(f)= \sup_{\stackrel{\eta,\omega\in \Omega}{\eta_{\Z^d\backslash \{i\}}=\omega_{\Z^d\backslash \{i\}}}}\vert f(\eta)-f(\omega)\vert$ and $D:=(D_{ij})_{i,j\in \Z^d}:= \sum_{n=0}^\infty C^n$. Here $C^n$ is the n'th power of Dobrushin's interdependence matrix given by $\gamma$.
	\end{theorem}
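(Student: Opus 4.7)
The plan is the standard Dobrushin iteration argument, applied to the joint DLR equations for $\mu$ and $\tilde\mu$. I would first establish a one-step comparison: by the DLR property of $\mu$ and $\tilde\mu$ for $\gamma$ and $\tilde\gamma$ respectively, we have $\mu(f)=\mu(\gamma_i^0(f\vert\cdot))$ and $\tilde\mu(f)=\tilde\mu(\tilde\gamma_i^0(f\vert\cdot))$ for every site $i\in\Z^d$. Decomposing
\begin{equation*}
\mu(f)-\tilde\mu(f) \;=\; [\mu-\tilde\mu]\bigl(\gamma_i^0(f\vert\cdot)\bigr) + \tilde\mu\bigl(\gamma_i^0(f\vert\cdot)-\tilde\gamma_i^0(f\vert\cdot)\bigr)
\end{equation*}
isolates an ``error'' term driven by the discrepancy of the single-site kernels. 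Since $\gamma_i^0(\cdot\vert\omega)-\tilde\gamma_i^0(\cdot\vert\omega)$ is a signed measure of total mass $0$ with variation at most $2b_i(\omega)$, integrating the function $\sigma_i\mapsto f(\omega_{i^c}\sigma_i)$, whose oscillation is $\leq\delta_i(f)$, yields the pointwise bound
$\bigl|\gamma_i^0(f\vert\omega)-\tilde\gamma_i^0(f\vert\omega)\bigr|\leq\delta_i(f)\,b_i(\omega)$,
so the error term is controlled by $\delta_i(f)\,\tilde\mu(b_i)$.

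Next I would track how the oscillation vector transforms under the single-site kernel. Writing $g:=\gamma_i^0(f\vert\cdot)$, changing the coordinate $\omega_j$ with $j\neq i$ affects $g$ in two ways: directly through $f$ (contribution $\leq\delta_j(f)$) and through the kernel $\gamma_i^0(\cdot\vert\omega)$, whose total-variation change is at most $C_{ij}$ and which is integrated against a function of oscillation $\leq\delta_i(f)$. This gives $\delta_j(g)\leq\delta_j(f)+C_{ij}\delta_i(f)$ for $j\neq i$ and $\delta_i(g)=0$. In the matrix-vector notation $\delta(f)=(\delta_j(f))_j$, the update is therefore $\delta(g)\leq\delta(f)-\delta_i(f)e_i+\delta_i(f)C^{\top}e_i$, i.e.\ component $i$ of $\delta(f)$ is ``relocated'' according to the row $C_{i,\cdot}$.

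I would then iterate the one-step comparison by sweeping through an exhaustion of $\Z^d$, replacing $f$ successively by $\gamma_{i_1}^0 f$, $\gamma_{i_2}^0 \gamma_{i_1}^0 f$, etc.; each pass accumulates an error of the form $\sum_k \delta_k(f_n)\,\tilde\mu(b_k)$ while the residual oscillation vector evolves from $\delta(f)$ toward $C^n\delta(f)$. Because $c(\gamma)=\Vert C\Vert_{\mathrm{op}}<1$, the operator $D=\sum_{n\geq 0}C^n$ converges as a bounded operator on $\ell^\infty$, and the accumulated errors telescope exactly to $\sum_{i,j}\delta_i(f)D_{ij}\tilde\mu(b_j)$, while $C^n\delta(f)\to 0$ provided $\delta(f)\in\ell^1$ (which holds for local $f$; the quasilocal case follows by approximating $f$ by local functions $f_n$ with $\Vert f-f_n\Vert_\infty\to 0$).

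The main obstacle is the bookkeeping and the exchange of limits in the iteration: one must choose the sweep so that each site is visited infinitely often, verify that the partial sums of the error terms converge monotonically to the claimed series, and justify that the residual term $[\mu-\tilde\mu](C^n$-smeared $f)$ tends to zero. The Dobrushin contraction $c(\gamma)<1$ is precisely what makes this possible, and the approximation of quasilocal $f$ by local functions handles the issue that the iteration may take us outside the class of local functions. This argument is carried out in detail as Theorem 8.20 in \cite{georgii-book}, which is the reference the paper will presumably cite.
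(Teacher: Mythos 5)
Your sketch is the standard proof of the Dobrushin comparison theorem and is essentially correct (the one-step decomposition via the DLR equations, the bound $\vert\gamma_i^0(f\vert\omega)-\tilde\gamma_i^0(f\vert\omega)\vert\le\delta_i(f)b_i(\omega)$, the oscillation update $\delta_j(g)\le\delta_j(f)+C_{ij}\delta_i(f)$, and the sweep with $c(\gamma)<1$ giving $D=\sum_n C^n$ are exactly the right ingredients). The paper itself offers no proof of this statement: it is quoted as a known tool, namely the comparison theorem in \cite{georgii-book} (Theorem 8.20), which is precisely the argument you outline, so there is nothing in the paper's treatment that your proposal misses.
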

	Actually this theorem is one of the ingredients to prove the Dobrushin uniqueness Theorem. It follows directly that there is at most one measure which is admitted by a specification which satisfies the Dobrushin condition. Assume that there exists two measures $\mu,\tilde{\mu} \in \mathcal{G}(\gamma)$ and $\gamma$ is specification which satisfies the Dobrushin condition then  $	\vert \mu(f)-\tilde{\mu}(f)\vert=0$ for every local bounded function $f$ since $b_i \equiv 0$. This implies $\mu = \tilde{\mu}$. We will use this theorem a bit differently now.
	\begin{lemma}\label{lem: admi. Dob}
		Let $\alpha \in \mathcal{M}_1(E),\beta>0$ and suppose $\mu$ is an arbitrary Gibbs measure for the soft-core Widom-Rowlinson model %with $\lim_{\Lambda\uparrow\Z^d } \gamma^{sc}_{\Lambda,\beta,\alpha}(\cdot\vert \bar{\omega}) =\mu^{\bar{\omega}}$ 
		then there exists a time $t_0(\beta,\al)>0$ such that for $t<t_0$ the time-evolved measure $\mu_t$ is admitted by the specification   $(\gamma_{\Delta,\alpha,\beta,t})_{\Delta\Subset \Z^d}$.
	\end{lemma}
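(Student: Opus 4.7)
The plan is to verify the DLR equation $\mu_t(A\,\vert\, \mathcal{F}_{\Delta^c})(\cdot) = \gamma_{\Delta, \al, \beta, t}(A \vert \cdot)$ $\mu_t$-a.s.\ for every $\Delta \Subset \Z^d$. By a standard monotone-class argument, it suffices to establish the dual identity
\[
\mu_t(h) = \mu_t(\gamma_{\Delta, \al, \beta, t}(h \vert \cdot))
\]
for every bounded local $h: \Omega \to \R$.

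First I fix a large finite volume $\Lambda \supset \Delta$ containing the support of $h$. Extending $\gamma^{\bar\omega}_{\Lambda,\al,t}$ to a probability measure on $\Omega$ by taking $\eta_{\Lambda^c} \sim \prod_{i \in \Lambda^c} p_t(\bar\omega_i, \cdot)$ independently, the DLR equation for $\mu$ in $\Lambda$ together with the product structure of the dynamics kernel yields the exact identity
\[
\mu_t(F) = \int \mu(d\bar\omega) \int \gamma^{\bar\omega}_{\Lambda, \al, t}(d\eta)\, F(\eta) \qquad \text{for every bounded measurable } F.
\]
In particular, the $\eta$-marginal of the joint measure $\mu(d\bar\omega)\gamma^{\bar\omega}_{\Lambda, \al, t}(d\eta)$ coincides with $\mu_t$ for every choice of $\Lambda$. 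Next, since $\gamma^{\bar\omega}_{\Lambda, \Delta, \al, \beta, t}(h \vert \eta_{\Lambda \setminus \Delta})$ is by construction the regular conditional probability of $\gamma^{\bar\omega}_{\Lambda, \al, t}$ given $\mathcal{F}_{\Lambda \setminus \Delta}$, the tower property gives
\[
\int \gamma^{\bar\omega}_{\Lambda, \al, t}(d\eta)\, h(\eta) = \int \gamma^{\bar\omega}_{\Lambda, \al, t}(d\eta)\, \gamma^{\bar\omega}_{\Lambda, \Delta, \al, \beta, t}(h \vert \eta_{\Lambda \setminus \Delta}).
\]
Substituting this into the first identity and applying the same identity to $F = \gamma_{\Delta, \al, \beta, t}(h \vert \cdot)$ produces the exact difference identity
\[
\mu_t(h) - \mu_t(\gamma_{\Delta, \al, \beta, t}(h \vert \cdot)) = \int \mu(d\bar\omega) \int \gamma^{\bar\omega}_{\Lambda, \al, t}(d\eta) \Bigl[\gamma^{\bar\omega}_{\Lambda, \Delta, \al, \beta, t}(h \vert \eta_{\Lambda \setminus \Delta}) - \gamma_{\Delta, \al, \beta, t}(h \vert \eta_{\Delta^c})\Bigr],
\]
valid for every $\Lambda$. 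Since the left-hand side is $\Lambda$-independent, it remains to send $\Lambda \uparrow \Z^d$ on the right and show the limit is zero.

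By Lemma \ref{lem: exis. speci} the integrand converges pointwise in $(\bar\omega, \eta)$ to zero, and it is uniformly bounded by $2\Vert h \Vert_\infty$. The main obstacle is that the reference probability measure $\mu(d\bar\omega) \gamma^{\bar\omega}_{\Lambda, \al, t}(d\eta)$ itself depends on $\Lambda$, so ordinary dominated convergence does not apply directly. To circumvent this I would upgrade pointwise convergence to uniform convergence in $(\bar\omega, \eta)$: Theorem \ref{Thm: evo dob} provides a Dobrushin condition for the $\Delta$-restricted constrained first-layer model that is uniform in the constraint $\eta$, and the comparison theorem (Theorem \ref{DLR short time}) then delivers an exponentially decaying bound on $|\gamma^{\bar\omega}_{\Lambda, \Delta, \al, \beta, t}(h \vert \eta_{\Lambda \setminus \Delta}) - \gamma_{\Delta, \al, \beta, t}(h \vert \eta_{\Delta^c})|$ in $\mathrm{dist}(\Delta, \Lambda^c)$ which is uniform in both $\bar\omega$ and $\eta$. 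Passing to the supremum and then to the limit $\Lambda \uparrow \Z^d$ forces the right-hand side to zero, completing the verification of the DLR equation.
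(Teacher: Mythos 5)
Your proposal is correct, and it rests on the same analytic core as the paper: the uniform-in-$\eta$ Dobrushin condition for the $\Delta$-restricted constrained first-layer model (Theorem \ref{Thm: evo dob}) combined with the Dobrushin comparison theorem (Theorem \ref{DLR short time}), which yields a bound on $\vert \gamma^{\bar{\omega}}_{\Lambda,\Delta,\alpha,\beta,t}(h\vert \eta_{\Lambda\backslash\Delta})-\gamma_{\Delta,\alpha,\beta,t}(h\vert \eta_{\Delta^c})\vert$ independent of $\bar{\omega}$, uniform in $\eta$ (using $\mu_{\Delta^c,t}[\eta](h_2)\geq e^{-\beta\vert\mathcal{E}^b_\Delta\vert}$), and vanishing as $\Lambda\uparrow\Z^d$ because the interdependence matrix is nearest-neighbor with norm $c<1$. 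Where you genuinely differ is in how the DLR equation is reduced to this estimate. The paper first passes to extremal starting measures via the extremal decomposition, uses extremality to represent $\nu_t(f)$ as $\lim_{\Lambda}\gamma^{\bar{\omega}}_{\Lambda,\alpha,\beta,t}(f)$ for a suitable boundary condition $\bar{\omega}$, and then runs a triangle-inequality argument involving an auxiliary intermediate volume $\Gamma$. You instead apply the DLR equation for $\mu$ at finite volume $\Lambda$ together with the product structure of $p_t$ (after harmlessly extending $\gamma^{\bar{\omega}}_{\Lambda,\alpha,t}$ by evolving the boundary spins) to get the exact representation of $\mu_t$ as the second-layer marginal of $\mu(d\bar{\omega})\gamma^{\bar{\omega}}_{\Lambda,\alpha,t}(d\eta)$ for every $\Lambda$, and then the tower property turns $\mu_t(h)-\mu_t(\gamma_{\Delta,\alpha,\beta,t}(h\vert\cdot))$ into an integral of exactly the kernel difference above against a $\Lambda$-dependent probability measure. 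This buys you an identity valid for all $\Lambda$ with no extremal decomposition, no choice of boundary condition, and no auxiliary volume; the price, which you correctly identify, is that pointwise convergence (Lemma \ref{lem: exis. speci}) is not enough and you must have the comparison bound uniformly in $(\bar{\omega},\eta)$ — but that uniformity is exactly what the paper's estimate delivers. Your reduction to bounded local $h$ (rather than only $\mathcal{F}_\Delta$-measurable $f$) is also the cleaner way to recover the full DLR equation. Two small points to make explicit if you write this up: the measurability of $\eta\mapsto\gamma_{\Delta,\alpha,\beta,t}(h\vert\eta_{\Delta^c})$, needed to integrate it, comes from Corollary \ref{Corollary conv}; and $t_0$ can be chosen uniformly in $\Delta$ because deleting the sites of $\Delta$ only improves the Dobrushin constant, as remarked in the paper after Corollary \ref{Corollary conv}.
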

	
	\begin{proof}
		It suffices to prove the lemma for extremal starting Gibbs measure $\nu$ since by the extremal decomposition  $\mu=\int_{\mathrm{ex}\,\mathcal{G}(\gamma^{sc}_{\beta,\al})} \nu\, \mathrm{w}_{\mu}(d\nu)$ we have $\mu_t = \int_{\mathrm{ex}\,\mathcal{G}(\gamma^{sc}_{\beta,\al})} \nu_t\, \mathrm{w}_{\mu}(d\nu)$. For more information about the extremal decomposition see \cite[Chapter 7.3]{georgii-book}.
		Let $f$ be a $\mathcal{F}_{\Delta}$-measurable bounded function. For $\Delta\Subset\Z^d$ it follows by the extremality of $\nu$ that there exists a boundary condition $\bar{\omega}\in \Omega$ with $\nu(f) = \lim_{\Lambda \uparrow \Z^d}  \gamma^{sc}_{\Lambda,\beta,a}(f\vert \bar\omega)$. Hence we have 
		\begin{align*}
		\nu_t(f) =\nu(p_t(f)) = \lim_{\Lambda \uparrow \Z^d} \gamma^{\bar{\omega}}_{\Lambda,\alpha,\beta,t}(f) = \lim_{\Lambda \uparrow \Z^d} \gamma^{\bar{\omega}}_{\Lambda,\alpha,\beta,t}(\gamma^{\bar{\omega}}_{\Lambda,\Delta,\alpha,\beta,t}(f\vert \cdot_{\Lambda\backslash \Delta})). 
		\end{align*}
		Let $\Gamma$ be a third finite subset of $\Z^d$ with $\Delta\subset\Gamma$ which allows us to estimate
		\begin{align*}
		&\vert \mu_t(f-\gamma_{\Delta,\alpha,\beta,t}(f\vert \cdot_{ \Delta^c}))\vert \\
		&\leq \vert \mu_t(f-\gamma^{\bar{\omega}}_{\Gamma,\Delta,\alpha,\beta,t}(f\vert \cdot_{\Gamma\backslash\Delta }))\vert + \vert \mu_t(\gamma^{\bar{\omega}}_{\Gamma,\Delta,\alpha,\beta,t}(f\vert \cdot_{\Gamma\backslash \Delta})-\gamma_{\Delta,\alpha,\beta,t}(f\vert \cdot_{ \Delta^c}))\vert \\
		&\leq \vert \mu_t(f-\gamma^{\bar{\omega}}_{\Gamma,\Delta,\alpha,\beta,t}(f\vert \cdot_{\Gamma\backslash \Delta}))\vert + \Vert \gamma^{\bar{\omega}}_{\Gamma,\Delta,\alpha,\beta,t}(f\vert \cdot_{\Gamma\backslash\Delta })-\gamma_{\Delta,\alpha,\beta,t}(f\vert \cdot_{ \Delta^c})\Vert\\
		&=\lim_{\Lambda \uparrow \Z^d } \vert \gamma^{\bar{\omega}}_{\Lambda,\alpha,\beta,t}(f-\gamma^{\bar{\omega}}_{\Gamma,\Delta,\alpha,\beta,t}(f\vert \cdot_{\Gamma\backslash \Delta}))\vert + \Vert \gamma^{\bar{\omega}}_{\Gamma,\Delta,\alpha,\beta,t}(f\vert \cdot_{\Gamma\backslash \Delta})-\gamma_{\Delta,\alpha,\beta,t}(f\vert \cdot_{ \Delta^c})\Vert\\
		&\leq \limsup_{\Lambda \uparrow \Z^d } \Vert \gamma^{\bar{\omega}}_{\Lambda,\Delta,\alpha,\beta,t}(f\vert \cdot_{\Lambda\backslash \Delta})-\gamma^{\bar{\omega}}_{\Gamma,\Delta,\alpha,\beta,t}(f\vert \cdot_{\Gamma\backslash \Delta})\Vert + \Vert \gamma^{\bar{\omega}}_{\Gamma,\Delta,\alpha,\beta,t}(f\vert \cdot_{\Gamma\backslash \Delta})-\gamma_{\Delta,\alpha,\beta,t}(f\vert \cdot_{ \Delta^c})\Vert\\
		&\leq \limsup_{\Lambda \uparrow \Z^d } \Vert \gamma^{\bar{\omega}}_{\Lambda,\Delta,\alpha,\beta,t}(f\vert \cdot_{\Lambda\backslash\Delta})-\gamma_{\Delta,\alpha,\beta,t}(f\vert \cdot_{ \Delta^c})\Vert + 2\Vert \gamma^{\bar{\omega}}_{\Gamma,\Delta,\alpha,\beta,t}(f\vert \cdot_{\Gamma\backslash \Delta})-\gamma_{\Delta,\alpha,\beta,t}(f\vert \cdot_{ \Delta^c})\Vert\\
		\end{align*}
		where $\Vert \gamma^{\bar{\omega}}_{\Lambda,\Delta,\alpha,\beta,t}(f\vert \cdot_{\Lambda\backslash\Delta})-\gamma_{\Delta,\alpha,\beta,t}(f\vert \cdot_{ \Delta^c})\Vert= \sup_{\eta\in \Omega}\vert \gamma^{\bar{\omega}}_{\Lambda,\Delta,\alpha,\beta,t}(f\vert \eta_{\Lambda\backslash\Delta})-\gamma_{\Delta,\alpha,\beta,t}(f\vert \eta_{ \Delta^c})\vert$. By this bound it is enough to show that $\Vert \gamma^{\bar{\omega}}_{\Lambda,\Delta,\alpha,\beta,t}(f\vert \cdot_{\Lambda\backslash\Delta})-\gamma_{\Delta,\alpha,\beta,t}(f\vert \cdot_{ \Delta^c})\Vert$ will be arbitrarily small if $\Lambda$ is growing. For this we introduce the functions \begin{align*}
		h_1(\omega_{\Lambda\backslash \Delta}\omega_{\Lambda^c},\eta_\Delta) = \sum_{{\omega_\Delta}\in \Omega_\Delta}e^{-\mathcal{H}_\Delta({\omega_\Delta{\omega}_{\Delta^c})}}\prod_{i\in \Delta}p_t(\omega_i,\eta_i)\alpha(\omega_i)
		\end{align*}
		and 
		\begin{align*}
		h_2(\omega_{\Lambda\backslash \Delta}\omega_{\Lambda^c}) = \sum_{{\omega_\Delta}\in \Omega_\Delta}e^{-\mathcal{H}_\Delta({\omega_\Delta{\omega}_{\Delta^c})}}\prod_{i\in \Delta}\alpha(\omega_i)
		\end{align*}
		such that we can write 
		\begin{align*}
		\vert \gamma^{\bar{\omega}}_{\Lambda,\Delta,\alpha,\beta,t}(f\vert \eta_{\Lambda\backslash\Delta})-\gamma_{\Delta,\alpha,\beta,t}(f\vert \eta_{ \Delta^c})\vert = \left\vert \sum_{\eta_\Delta\in \Omega_\Delta}\frac{\gamma^{\Delta}_{\Lambda,t}[\eta](f(\eta_\Delta)h_1(\cdot,\eta_\Delta)\vert \bar{\omega})}{\gamma^{\Delta}_{\Lambda,t}[\eta](h_2\vert \bar{\omega})}-\frac{\mu_{ \Delta^c,t}[\eta](f(\eta_\Delta)h_1(\cdot,\eta_\Delta))}{\mu_{ \Delta^c,t}[\eta](h_2)}\right\vert.
		\end{align*}
		Adding and subtracting a suitable middle term gives the bound
		%$
	%	\frac{\gamma^{\Delta}_{\Lambda,t}[\eta](f(\eta_\Delta)h_1(\cdot,\eta_\Delta)\vert \bar{\omega})\gamma^{\Delta}_{\Lambda,t}[\eta](h_2\vert \bar{\omega})}{\gamma^{\Delta}_{\Lambda,t}[\eta](h_2\vert \bar{\omega})\mu_{ \Delta^c,t}[\eta](h_2)}
	%	$
		\begin{align*}
		&\vert \gamma^{\bar{\omega}}_{\Lambda,\Delta,\alpha,\beta,t}(f\vert \eta_{\Lambda\backslash\Delta})-\gamma_{\Delta,\alpha,\beta,t}(f\vert \eta_{ \Delta^c})\vert \\
		&\leq \left\vert \sum_{\eta_\Delta\in \Omega_\Delta}\frac{\gamma^{\Delta}_{\Lambda,t}[\eta](f(\eta_\Delta)h_1(\cdot,\eta_\Delta)\vert \bar{\omega})}{\gamma^{\Delta}_{\Lambda,t}[\eta](h_2\vert \bar{\omega})}-\frac{\gamma^{\Delta}_{\Lambda,t}[\eta](f(\eta_\Delta)h_1(\cdot,\eta_\Delta)\vert \bar{\omega})\hspace{0.1cm}\gamma^{\Delta}_{\Lambda,t}[\eta](h_2\vert \bar{\omega})}{\gamma^{\Delta}_{\Lambda,t}[\eta](h_2\vert \bar{\omega})\hspace{0.1cm}\mu_{ \Delta^c,t}[\eta](h_2)}\right\vert\\
		&+\left\vert\sum_{\eta_\Delta\in \Omega_\Delta}\frac{\gamma^{\Delta}_{\Lambda,t}[\eta](f(\eta_\Delta)h_1(\cdot,\eta_\Delta)\vert \bar{\omega})\hspace{0.1cm}\gamma^{\Delta}_{\Lambda,t}[\eta](h_2\vert \bar{\omega})}{\gamma^{\Delta}_{\Lambda,t}[\eta](h_2\vert \bar{\omega})\hspace{0.1cm}\mu_{ \Delta^c,t}[\eta](h_2)}- \frac{\mu_{ \Delta^c,t}[\eta](f(\eta_\Delta)h_1(\cdot,\eta_\Delta))}{\mu_{ \Delta^c,t}[\eta](h_2)}\right\vert\\
		&= \left\vert\sum_{\eta_\Delta\in \Omega_\Delta}\gamma^{\Delta}_{\Lambda,t}[\eta](f(\eta_\Delta)h_1(\cdot,\eta_\Delta)\vert \bar{\omega})\frac{\mu_{ \Delta^c,t}[\eta](h_2)-\gamma^{\Delta}_{\Lambda,t}[\eta](h_2\vert \bar{\omega})}{\gamma^{\Delta}_{\Lambda,t}[\eta](h_2\vert \bar{\omega})\hspace{0.1cm}\mu_{ \Delta^c,t}[\eta](h_2)}\right\vert\\
		&+\left\vert\sum_{\eta_\Delta\in \Omega_\Delta}\frac{\gamma^{\Delta}_{\Lambda,t}[\eta](f(\eta_\Delta)h_1(\cdot,\eta_\Delta)\vert \bar{\omega})-\mu_{ \Delta^c,t}[\eta](f(\eta_\Delta)h_1(\cdot,\eta_\Delta))}{ \mu_{ \Delta^c,t}[\eta](h_2)}\right\vert\\
		&\leq \frac{\Vert f \Vert_{\infty}}{\mu_{ \Delta^c,t}[\eta](h_2)} \Bigg(\left\vert{\mu_{ \Delta^c,t}[\eta](h_2)-\gamma^{\Delta}_{\Lambda,t}[\eta](h_2\vert \bar{\omega})}{ }\right\vert + \sum_{\eta_\Delta\in \Omega_\Delta} \left\vert{\mu_{ \Delta^c,t}[\eta](h_1(\cdot,\eta_\Delta))-\gamma^{\Delta}_{\Lambda,t}[\eta](h_1(\cdot,\eta_\Delta)\vert \bar{\omega})}{ }\right\vert\Bigg).
		\end{align*}
		Note that the mapping $\eta\mapsto \mu_{ \Delta^c,t}[\eta](h_2)$ is $\mathcal{F}_{\Delta^c}$-measurable.
		We have shown that $\mu_{ \Delta^c,t}[\eta]$ is admitted by the specification $(\gamma^{\Delta}_{\Lambda,t}[\eta])_{\Lambda \Subset (\Z^d\backslash \Delta)}$ which satisfies for small $t$ the Dobrushin condition. We can interpret $\gamma^{\Delta}_{\Lambda,t}[\eta](\cdot\vert \bar{\omega})$ to be a measure admitted by the specification $(\gamma^{\Delta}_{\Lambda\cap\Lambda_1,t}[\eta])_{\Lambda_1\Subset (\Z^d\backslash \Delta)}$. Thus the single-site specifications $\gamma^{\Delta}_{\Lambda\cap\{i\},t}[\eta],\gamma^{\Delta}_{\{i\},t}[\eta]$ are equal whenever $i\in \Lambda$ and the total variation can be bounded by $1$ in the case where $i\notin \Lambda$. It follows from the Dobrushin comparison Theorem that 
		\begin{align*}
		\vert \gamma^{\bar{\omega}}_{\Lambda,\Delta,\alpha,\beta,t}(f\vert \eta_{\Lambda\backslash\Delta})-\gamma_{\Delta,\alpha,\beta,t}(f\vert \eta_{ \Delta^c})\vert 
		\leq  \frac{\Vert f \Vert_\infty}{\mu_{ \Delta^c,t}[\eta](h_2)} \sum_{i \in \Z^d\backslash \Delta }\Big[\delta_i(h_2)+\sum_{\eta_\Delta\in\Omega_\Delta}\delta_i(h_1(\cdot,\eta_\Delta))\Big]\sum_{j \in \Lambda^c} D_{ij} 
		\end{align*}  where $D_{ij}$ is given by the Dobrushin interdependence matrix of the restricted constrained first-layer model. Since the sum $ \sum_{j \in \Lambda^c} D_{ij} $ is finite for every $i$ and the $i$-sum is finite as $h_2$ is a local function it follows that 
		\begin{align*}
		\lim_{\Lambda \uparrow \Z^d}\vert \gamma^{\bar{\omega}}_{\Lambda,\Delta,\alpha,\beta,t}(f\vert \eta_{\Lambda\backslash\Delta})-\gamma_{\Delta,\alpha,\beta,t}(f\vert \eta_{ \Delta^c})\vert =0.
		\end{align*}
		Taking $\Gamma\uparrow\Z^d$ and using the same arguments as for $\Lambda$ the DLR-equation is proven. 
	\end{proof}
	The last part for proving short-time Gibbsianness is to show that $\gamma_{\Delta,\alpha,\beta,t}$ is quasilocal for small $t$. For this the Dobrushin comparison Theorem will be again a helpful tool.
	\begin{lemma}\label{lem: time quasi}
		Let $\al\in\mathcal{M}_1(E)$ and $\beta>0$. Then there exists a $t_0(\beta,\al)$ such that for all $t<t_0(\beta,\al)$ the specification $(\gamma_{\Delta,\alpha,\beta,t})_{\Delta\Subset \Z^d}$ is quasilocal.
	\end{lemma}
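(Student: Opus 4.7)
The strategy is to combine the explicit representation of $\gamma_{\Delta,\alpha,\beta,t}(f\vert \eta_{\Delta^c})$ through the local functions $h_1,h_2$ and the infinite-volume measure $\mu_{\Delta^c,t}[\eta]$ introduced in the proof of Lemma \ref{lem: admi. Dob}, and to control the dependence of this measure on far-away parts of $\eta$ by the Dobrushin comparison theorem applied to the $\Delta$-restricted constrained first-layer model. Concretely, for any local bounded $f$ supported on $\Omega_\Delta$, one has
\begin{align*}
\gamma_{\Delta,\alpha,\beta,t}(f\vert \eta_{\Delta^c})=\sum_{\eta_\Delta\in\Omega_\Delta}f(\eta_\Delta\eta_{\Delta^c})\,\frac{\mu_{\Delta^c,t}[\eta](h_1(\cdot,\eta_\Delta))}{\mu_{\Delta^c,t}[\eta](h_2)},
\end{align*}
so the whole $\eta_{\Delta^c}$-dependence is channelled through $\mu_{\Delta^c,t}[\eta]$ tested against the local observables $h_1(\cdot,\tilde\eta_\Delta)$ and $h_2$, both of which depend only on $\omega$ restricted to $\partial\Delta$.

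Quasilocality is thus reduced to showing that, for each local $h\in\{h_2\}\cup\{h_1(\cdot,\tilde\eta_\Delta):\tilde\eta_\Delta\in\Omega_\Delta\}$, the map $\eta\mapsto\mu_{\Delta^c,t}[\eta](h)$ is quasilocal. I would fix two boundary conditions $\eta,\zeta\in\Omega$ with $\eta_{\Gamma\setminus\Delta}=\zeta_{\Gamma\setminus\Delta}$ for some large $\Gamma\Subset\Z^d$ and then insert $\mu_{\Delta^c,t}[\eta](h_2)\geq e^{-\beta|\mathcal{E}_\Delta^b|}\min_{\omega_\Delta}\prod_{i\in\Delta}\alpha(\omega_i)>0$ as a uniform lower bound for the denominator. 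After a telescoping step analogous to the one in the proof of Lemma \ref{lem: admi. Dob}, matters reduce to bounding $|\mu_{\Delta^c,t}[\eta](h)-\mu_{\Delta^c,t}[\zeta](h)|$ and showing it tends to $0$ as $\Gamma\uparrow\Z^d$.

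For $t<t_0(\beta,\alpha)$, both specifications $\gamma^{\Delta}_{\cdot,t}[\eta]$ and $\gamma^{\Delta}_{\cdot,t}[\zeta]$ on $\Z^d\setminus\Delta$ satisfy Dobrushin's condition with interdependence matrix $C$ bounded uniformly in the constraint, by Theorem \ref{Thm: evo dob} and the remark after Corollary \ref{Corollary conv}. The crucial structural point, already visible in the proof of Theorem \ref{Thm: evo dob}, is that the single-site kernel $\gamma^{\Delta}_{\{i_0\},t}[\cdot]$ depends on the constraint only through $\eta_{i_0}$, via the tilted a priori measure $\tilde\alpha^{\eta_{i_0}}_t$. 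Hence in Theorem \ref{DLR short time} one may take the bounding functions $b_j\equiv 0$ for $j\in\Gamma\setminus\Delta$ and $b_j\leq 1$ for $j\in\Gamma^c$, which yields
\begin{align*}
|\mu_{\Delta^c,t}[\eta](h)-\mu_{\Delta^c,t}[\zeta](h)|\leq \sum_{i\in\Z^d\setminus\Delta}\delta_i(h)\sum_{j\in\Gamma^c}D_{ij},
\end{align*}
with $D=\sum_{n\geq 0}C^n$.

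Since each admissible $h$ depends only on sites in $\partial\Delta$, only finitely many $\delta_i(h)$ are nonzero; for each such fixed $i$, the summability of the row $(D_{ij})_{j}$ coming from $c(\gamma^{\Delta}_{\cdot,t}[\eta])<1$ uniformly in $\eta$ forces $\sum_{j\in\Gamma^c}D_{ij}\to 0$ as $\Gamma\uparrow\Z^d$. This gives the desired continuity of $\eta\mapsto\mu_{\Delta^c,t}[\eta](h)$ and hence quasilocality of $\gamma_{\Delta,\alpha,\beta,t}$. The main obstacle is the bookkeeping that justifies taking $b_j\equiv 0$ on $\Gamma\setminus\Delta$; this rests entirely on the diagonal dependence of $\gamma^{\Delta}_{\{i_0\},t}[\eta]$ on $\eta_{i_0}$ alone, together with the fact that the uniform Dobrushin constant from Theorem \ref{Thm: evo dob} is inherited by the $\Delta$-restricted model (thinning improves the Dobrushin constant), so that $D$ is well-defined and its rows are summable independently of the constraint.
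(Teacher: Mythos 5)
Your proposal is correct and follows essentially the same route as the paper: both channel the whole boundary dependence through $\mu_{\Delta^c,t}[\eta]$ tested on the local observables $h_1,h_2$, exploit that the single-site kernels of the $\Delta$-restricted constrained first-layer model depend on the constraint only via the spin at the site itself (so that in the Dobrushin comparison theorem one may take $b_j=\mathds{1}_{j\notin\Gamma}$), and conclude from the summability of the rows of $D=\sum_{n\geq 0}C^n$ together with the locality of $h_1,h_2$ that the difference vanishes as $\Gamma\uparrow\Z^d$, uniformly in the boundary configurations. One small correction: the uniform lower bound for the denominator should be $\mu_{\Delta^c,t}[\eta](h_2)\geq e^{-\beta\vert\mathcal{E}^b_\Delta\vert}$, obtained by bounding the interaction and using $\sum_{\omega_\Delta}\prod_{i\in\Delta}\alpha(\omega_i)=1$, rather than $e^{-\beta\vert\mathcal{E}^b_\Delta\vert}\min_{\omega_\Delta}\prod_{i\in\Delta}\alpha(\omega_i)$, which can vanish when $\alpha$ assigns zero mass to some spin value.
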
  
	\begin{proof}
		An equivalent condition for quasilocality is to show that for all local bounded functions $f$ and all $\Delta\Subset\Z^d$
		\begin{align*}
		\lim_{\Lambda\uparrow \Z^d} \sup_{\genfrac{}{}{0pt}{}{{\eta,\bar\eta\in \Omega}}{\eta_\Lambda= \bar{\eta}_\Lambda}}\vert  \gamma_{\Delta,\alpha,\beta,t}(f\vert \eta ) - \gamma_{\Delta,\alpha,\beta,t}(f\vert \eta_{\Lambda}\bar{\eta}_{\Lambda^c} ) \vert =0.
		\end{align*}
		First we choose  $\Lambda$ big enough such that $f$ is $\mathcal{F}_\Lambda$-measurable and $\Delta \subset \Lambda$. Then we can use the same arguments as in the proof of Lemma \ref{lem: admi. Dob} to get
		\begin{align*}
		&\vert  \gamma_{\Delta,\alpha,\beta,t}(f\vert \eta ) - \gamma_{\Delta,\alpha,\beta,t}(f\vert \eta_{\Lambda}\bar{\eta}_{\Lambda^c} ) \vert \\
		&\leq  \frac{\Vert f \Vert_{\infty}}{\mu_{ \Delta^c,t}[\eta](h_2)} \Bigg(\left\vert{\mu_{ \Delta^c,t}[\eta](h_2)-\mu_{ \Delta^c,t}[\eta_\Lambda \bar{\eta}_{\Lambda^c}](h_2)}{ }\right\vert + \hspace{-2pt}\sum_{\tilde{\eta}_\Delta\in \Omega_\Delta} \left\vert{\mu_{ \Delta^c,t}[\eta](h_1(\cdot,\tilde{\eta}_\Delta))-\mu_{ \Delta^c,t}[\eta_\Lambda \bar{\eta}_{\Lambda^c}](h_1(\cdot,\tilde{\eta}_\Delta))}{ }\right\vert\Bigg).
		\end{align*}
		Now we can choose $t_0$ small enough such that the specification of the restricted constrained first-layer model satisfies the Dobrushin condition. Again we are in the situation where the Dobrushin comparison Theorem will be helpful. This time we have to compare the single-site kernels of the specifications $(\gamma^{\Delta}_{\Lambda,t}[\eta])_{\Lambda\Subset \Z^d\backslash \Delta}$ and $(\gamma^{\Delta}_{\Lambda,t}[\eta_{\Lambda}\bar{\eta}_{\Lambda^c}])_{\Lambda\Subset \Z^d\backslash \Delta}$ in total variational distance which coincide if $i\in \Lambda$. Therefore we can bound the distance by
		\begin{align*}
		d_{TV}(\gamma^{\Delta}_{i,t}[\eta](\cdot\vert\bar{\omega}) ,\gamma^{\Delta}_{i,t}[\eta_{\Lambda}\bar{\eta}_{\Lambda^c}](\cdot\vert{\bar{\omega}}) ) \leq \mathds{1}_{i\notin \Lambda }
		\end{align*}
		 By the Dobrushin comparison Theorem it follows again that 
		\begin{align*}
		\left\vert  \gamma_{\Delta,\alpha,\beta,t}(f\vert \eta ) - \gamma_{\Delta,\alpha,\beta,t}(f\vert \eta_{\Lambda}\bar{\eta}_{\Lambda^c} )\right\vert 
		\leq\frac{\Vert f \Vert_\infty}{\mu_{ \Delta^c,t}[\eta](h_2)} \sum_{i \in \Z^d\backslash \Delta }\Big[\delta_i(h_2)+\sum_{\tilde{\eta}_\Delta\in\Omega_\Delta}\delta_i(h_1(\cdot,\tilde{\eta}_\Delta))\Big]\sum_{j \in \Lambda^c} D_{ij} . 
		\end{align*}
		The function $h_2$ is bounded from below by $e^{-\beta \vert \mathcal{E}^b_\Delta\vert}$ and consequently $\mu_{ \Delta^c,t}[\eta](h_2)$ is bounded from below by the same bound. Hence the above is smaller than
		\begin{align*}
		{\Vert f \Vert_\infty}{e^{\beta \vert \mathcal{E}_\Delta^b\vert}} \sum_{i \in \Z^d\backslash \Delta }\Big[\delta_i(h_2)+\sum_{\tilde{\eta}_\Delta\in\Omega_\Delta}\delta_i(h_1(\cdot,\tilde{\eta}_\Delta))\Big]\sum_{j \in \Lambda^c} D_{ij}.
		\end{align*}
		The last expression does not depend on $\eta$ and $\bar\eta$. Furthermore, it goes to zero for $\Lambda\uparrow \Z^d$.
	\end{proof}
	Now we can prove the theorems for Gibbsianness of the time-evolved soft-core measure.
	\begin{proof}[Proof of Theorem \ref{thm: short time gibbs}]
		By Lemma \ref{lem: exis. speci} and Lemma \ref{lem: admi. Dob}  there exists a specification for the time-evolved measure. Furthermore, this specification is quasilocal by Lemma \ref{lem: time quasi}.

		\end{proof}
	For $\beta \geq \log(\frac{2d+1}{2d-1})$ with Corollary \ref{Coro: Dob pm} and the function $g$ defined by \eqref{eq: g function} we can give an explicit formula for $t_0$ since for the measures $\tilde\al_t^{\pm 1}$  we have $\tilde\al_t^{\pm 1}(0)=0$. Note that if $\al^1_t(1)> \frac{2}{2+g(\beta,2d)}$ and $\al^{-1}_t(-1)>\frac{2}{2+g(\beta,2d)} $ for some $t>0$ it follows that both inequalities holds for every $0<s<t$. These inequalities can be equivalently reformulated as $t< \atanh\Big({\frac{\al(\pm1)}{\al(\mp1)}}\frac{g(\beta,2d)}{2}\Big)$. Hence for all $$t<t_0:=\min\bigg\{ \atanh\bigg({\frac{\al(1)}{\al(-1)}}\frac{g(\beta,2d)}{2}\bigg), \atanh\bigg({\frac{\al(-1)}{\al(1)}}\frac{g(\beta,2d)}{2}\bigg)\bigg\}$$ the time-evolved measure $\mu_t$ is Gibbs. 
	\begin{proof}[Proof of Theorem \ref{thm: gibbs all time}]
		Since $\beta< \log(\frac{2d+1}{2d-1})$ every measure $\al$ with $\al(0)=0$ satisfies the  Dobrushin condition by the second part of Corollary \ref{Coro: Dob pm}. As a consequence  the restricted constrained first-layer model  satisfies the Dobrushin condition for all $t>0$ and all $\eta\in \Omega$. The rest of the proof is an application of the lemmas above with $t_0= \infty$.
	\end{proof}
	\begin{proof}[Proof of Theorem \ref{thm: gibbs high asm}]
		The only task we have to do is to show that there exists a $t_1$ such for all $t>t_1$ the restricted constrained first-layer model satisfies the Dobrushin condition uniformly in $\eta$.  From the discussion of Theorem \ref{Thm: evo dob} it is enough to show that $\al_t^\eta\in U^1_\epsilon\cup U^{-1}_\epsilon$ for all $t>t_1$ and all $\eta\in \{-1,1\}$. Note that $\eta=0$ is not important since $\al^0_t$ is again the Dirac measure in $0$. Starting with $\eta=1$ yields
		\begin{align*}
		\al_t^1(\omega)
		= \left\{\begin{array}{ccl} \frac{1}{1+ \frac{\al(-1)}{\al(1)}\tanh(t)} &\text{if}& \omega=1\\
		0 & \text{if} & \omega=0\\
		\frac{1}{1+ \frac{\al(1)}{\al(-1)}\cotanh(t)} &\text{if}& \omega=-1\end{array} \right..
		\end{align*}
		The function $t\mapsto\frac{1}{1+q\tanh(t)}$ is a monotonically decreasing function and $t\mapsto\frac{1}{1+q\cotanh(t)}$ monotonically increasing for $q>0$. Since $\lim_{t\rightarrow \infty} \al^{1}_t = \bar{\al}$ and by the continuity of $\tanh(t)$ it follows that there exists a $\bar{t}_1$ such that for all $t>\bar{t}_1$ the measure $\al^{1}_t \in U^1_\epsilon\cup U^{-1}_\epsilon$. With the same argument it follows that there exists an $\tilde{t}_1$ such that for all $t>\tilde{t}_1$ $\al_t^{-1}\in  U^1_\epsilon\cup U^{-1}_\epsilon$. By setting $t_1=\max\{\tilde{t}_1,\bar{t}_1\}$ we have $\al^{\pm}_t  \in U^1_\epsilon\cup U^{-1}_\epsilon$ for all $t>t_1$. The proofs follows again by  the above arguments, using Lemma \ref{lem: admi. Dob} and Lemma \ref{lem: time quasi} for $t>t_1$.
	\end{proof}
	
	\subsection{Loss of Gibbs for the soft-core model}
In this part we want to show that the time-evolved soft-core measure is not Gibbs if $\al(1)=\al(-1)$ and $t$ is large. Here it is more convenient to work with the parameters $h$ and $\lambda$, see Definition \ref{defi: models}. In \cite{enter-fernandez-hollander-redig02} the authors have proven that the time-evolved symmetric Ising model is not a Gibbs measure for large times. We want to use this result to prove something similar for the soft-core model. %For this we define a specification on the two layer system by
%\begin{align*}
%\gamma_{t,\Lambda}^{sc,2}((\omega_\Lambda,\eta_\Lambda)\vert (\omega_{\Lambda^c},\eta_{\Lambda^c}))= \frac{e^{-\mathcal{H}_{\Lambda}({\omega}_\Lambda\omega_{\Lambda^c})}\prod_{i\in\Lambda}p_t({\omega}_i,{\eta}_i)}{\sum_{\tilde{\omega}_{\Lambda}\in\Omega_\Lambda}\sum_{\tilde{\eta}_{\Lambda}\in\Omega_\Lambda} e^{-\mathcal{H}_{\Lambda}(\tilde{\omega}_\Lambda\omega_{\Lambda^c})}\prod_{i\in\Lambda}p_t(\tilde{\omega}_i,\tilde{\eta}_i)}
%\end{align*}
%where $\mathcal{H}_{\Lambda}(\omega):= \sum_{\{i,j\}\in\mathcal{E}_\Lambda^b } \beta\mathds{1}(\omega_i\omega_j=-1)-\sum_{i\in\Lambda }\log(\lambda)\omega_i^2$. 

For this we define the two-layer measure \begin{align*}
\mu_t^{sc,2}(d\omega,d\eta) := d\mu(\omega)p_t(\omega,d\eta)  
\end{align*}
on $\Omega \times \Omega$ where $\mu$ is a Gibbs-measure for the soft-core Widom-Rowlinson model with $h=0$ and $\lambda>0$. Note that we get the time-evolved measure by integrating $\mu_t^{sc,2}(d\omega,d\eta)$ over $\omega$. %If $\beta$ and $\lambda$ are big enough by Theorem \ref{thm : Sc Phase} there exists two different soft-core Gibbs measures $\mu^+$ and $\mu^-$ which come from the all $+$ and $-$-boundary condition respectively. 
The idea of the proof of non-Gibbsianness is that the model conditioned on a configuration $\eta\in\Omega$ with $\eta_i\neq 0$ for all $i\in \Z^d$  looks like an Ising model with a magnetic field given by the conditioning and $p_t$. In the proof of non-Gibbsianness for the time-evolved Ising measure the checkerboard configuration $\eta^{cb}\in\Omega$ is used, see \eqref{eq: check board} for its definition.
This configuration will also be a bad configuration for the time-evolved soft-core model, as we will see. The next lemma explains the connection between the soft-core model and the Ising model.
\begin{lemma}\label{lem: Sc-Is}
	Let $h=0$, $\lambda>0$ and $\beta>0$. Assume $\mu$ is a Gibbs measure for the soft-core Widom Rowlinson model. Then we have for any measurable function $f$ which depends only on the configuration at the origin for time $0$  that
	\begin{align}\label{eq: lim two layer}
	\lim_{\Lambda\uparrow \Z^d}	\mu_{t}^{sc,2}  (f\vert\eta_{\Lambda\backslash 0}) = \frac{\mu_{0^c}[\eta_{0^c}]\Big(\sum_{\omega_0\in E}f(\omega_0)\exp(-\beta\sum_{i\sim 0} \mathds{1}_{\omega_0\sigma_i=-1})\Big)}{\mu_{0^c}[\eta_{0^c}]\Big(\sum_{\omega_0\in E}	\exp(-\beta\sum_{i\sim 0} \mathds{1}_{\omega_0\sigma_i=-1})\Big)}
	\end{align}
	if $\eta\in \Omega^{s}:=\{-1,1\}^{\Z^d}\cap \{ \eta\in \Omega : \exists \Lambda \Subset \Z^d \text{ s.t. } \eta_i = s\; \forall i\in \Lambda^c\}$ for $s\in\{-1,1\}$. The $\sigma_i$'s are random variables distributed according to $\mu_{0^c}[\eta]$ which
	is the unique infinite-volume Gibbs measure of the Ising system on $\Z^d\backslash 0$ with $\eta$-dependent Hamiltonian $$\mathcal{H}^0_\Lambda[\eta](\omega):= -\beta\sum_{\{i,j\}\in\mathcal{E}^b_\Lambda\backslash\mathcal{E}^b_{\{0\}}}\mathds{1}_{\omega_i\omega_j=-1}-h_t\sum_{i\in\Lambda} \omega_i\eta_i $$
	for $\Lambda\Subset \Z^d\backslash 0$ and $h_t:= \frac{1}{2}\log\left(\frac{p_t(1,1)}{p_t(-1,1)}\right)$.
	
\end{lemma}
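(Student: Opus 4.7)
The strategy is to split the interaction at the origin, rewrite the finite-volume conditional as a ratio of Ising-type Gibbs expectations, and then pass to the infinite-volume limit using the uniqueness of the Ising Gibbs measure $\mu_{0^c}[\eta]$.

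First, I would write out the conditional expectation explicitly as
\[
\mu_t^{sc,2}(f\mid\eta_{\Lambda\backslash 0})(\bar\eta) = \frac{\expec_\mu\bigl[f(\omega_0)\prod_{i\in\Lambda\backslash 0} p_t(\omega_i,\bar\eta_i)\bigr]}{\expec_\mu\bigl[\prod_{i\in\Lambda\backslash 0} p_t(\omega_i,\bar\eta_i)\bigr]}.
\]
Since $\bar\eta\in\Omega^s$ has $\bar\eta_i\in\{-1,+1\}$ and $p_t(0,\pm 1)=0$, the reweighting forces $\omega_i\in\{-1,+1\}$ at every $i\in\Lambda\backslash 0$. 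On this restricted support, using the symmetry $\alpha(1)=\alpha(-1)$ that follows from $h=0$, one has $p_t(\omega_i,\bar\eta_i)\alpha(\omega_i)\propto e^{h_t\omega_i\bar\eta_i}$ with $h_t$ as defined in the statement, which matches exactly the external-field term of $\mathcal{H}^0_\Lambda[\bar\eta]$.

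Next, I would apply the DLR equation for $\mu$ in a large box $\Gamma\Supset\Lambda$ and decompose the Hamiltonian as $\mathcal{H}_\Gamma = \mathcal{H}^0_\Gamma + \beta\sum_{j\sim 0}\mathds{1}(\omega_0\omega_j=-1)$, where $\mathcal{H}^0_\Gamma$ collects only bonds not touching $0$. The sum over $\omega_0\in E$ factorizes through numerator and denominator, so that after averaging over the boundary $\omega^*\sim\mu$ the finite-volume version of the conditional reads
\[
\frac{\tilde\mu^{\omega^*}_\Gamma[\bar\eta]\Big(\sum_{\omega_0\in E} f(\omega_0)\alpha(\omega_0)\exp\bigl(-\beta\sum_{j\sim 0}\mathds{1}_{\omega_0\sigma_j=-1}\bigr)\Big)}{\tilde\mu^{\omega^*}_\Gamma[\bar\eta]\Big(\sum_{\omega_0\in E}\alpha(\omega_0)\exp\bigl(-\beta\sum_{j\sim 0}\mathds{1}_{\omega_0\sigma_j=-1}\bigr)\Big)},
\]
where $\tilde\mu^{\omega^*}_\Gamma[\bar\eta]$ is the Ising-type finite-volume Gibbs measure on $\sigma_{\Gamma\backslash 0}$ determined by $\mathcal{H}^0_\Gamma[\bar\eta]$, with the $\{-1,+1\}$-restriction on $\Lambda\backslash 0$ and boundary $\omega^*$.

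Finally, I would take $\Gamma\uparrow\Z^d$ and then $\Lambda\uparrow\Z^d$. The measure $\tilde\mu^{\omega^*}_\Gamma[\bar\eta]$ converges locally to the unique infinite-volume Ising Gibbs measure $\mu_{0^c}[\bar\eta_{0^c}]$ for $\mathcal{H}^0[\bar\eta]$, and the finite sum over $\omega_0\in E$ commutes with the limit by dominated convergence, yielding the identity claimed by the lemma. The main obstacle is precisely this Ising infinite-volume limit: the uniqueness of $\mu_{0^c}[\bar\eta]$ has to be established and the convergence has to be shown to be uniform in the $\mu$-sampled boundary $\omega^*$. Uniqueness holds because $\bar\eta\in\Omega^s$ is eventually equal to the constant $s\in\{-1,+1\}$, so the effective field $h_t\bar\eta_i$ is asymptotically the nonzero constant $h_ts$; standard Lee--Yang / FKG arguments then give a unique Gibbs state for this perturbation of the Ising Hamiltonian. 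Uniformity in $\omega^*$ follows from finite range of $\mathcal{H}^0$, quasi-locality, and the exponential decay of correlations implied by uniqueness; once these are in hand, the earlier algebraic manipulations reproduce the stated formula.
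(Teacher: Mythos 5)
Your proposal follows essentially the same route as the paper: rewrite the two-layer conditional as a ratio, use the DLR equation for $\mu$ to expose a constrained first layer which, since $p_t(0,\pm1)=0$ and $h=0$, becomes an Ising model on $\Z^d\setminus 0$ with field $h_t\eta_i$, invoke Lee--Yang uniqueness for $\eta\in\Omega^s$, and collapse the boundary average by convergence that is uniform in the sampled boundary condition (the paper packages this with the interior volume $\interior{\Lambda}$, the ring $\partial_-\Lambda$ and an auxiliary measure $\nu_t^{\eta_{\Lambda\backslash 0}}$, which is only a cosmetic difference from your box $\Gamma\Supset\Lambda$ and boundary $\omega^*$). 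The one justification you should repair is the appeal to ``exponential decay of correlations implied by uniqueness'': uniqueness does not imply such decay in general, and what is actually needed (and what the paper uses) is that uniqueness of the Gibbs measure for the quasilocal Ising specification yields convergence of $\gamma^{\text{Is}}_{\interior{\Lambda}\backslash 0}[\eta](h\vert\cdot)$ to $\mu_{0^c}[\eta](h)$ uniformly in the boundary condition, by \cite[Proposition 7.11]{georgii-book}.
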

Note that we do not need the $\lambda$ dependence in the Hamiltonian because $\eta\in {\Omega}^{\pm1}$. The part with $\lambda$ does not depend on $\omega_\Lambda$ and will cancel out.

%   Inside of $\Lambda$ the sites of the first-layer have to take values in $\{-1,1\}$ because of the boundary condition at time $t$. Therefore the only difference between these two measures without the limit lays in $\Lambda^c$. Here it is possible that we have zeros in the first-layer for soft-core model but this influence will be pushed away by the limit.
\begin{proof}
	We only consider the case where $\eta \in \Omega^{1}$ since the other case follows by symmetry. The measure on the right hand side in \eqref{eq: lim two layer} is well defined since $\eta\in \Omega^1$ and therefore it differs only on a finite volume from the all-plus configuration $\eta^+$. Putting $\eta^+$  into the Hamiltonian it becomes an Ising-Hamiltonian with positive magnetic field. It is known by the Lee-Yang Theorem \cite[Chapter 3]{friedli_velenik_2017} that there exists a unique Gibbs-measure for this Hamiltonian. Therefore $\mu_{0^c}[\eta_{0^c}]$ is well-defined.
	
	Outside of $\Lambda\backslash 0$  the measure $\mu_{t}^{sc,2}  (\cdot\vert\eta_{\Lambda\backslash 0})$ gives also positive probability to the spin-value $0$ but this will pushed away by taking the $\Lambda$-limit.
	To see this we introduce a conditioning in the first-layer at $ \Lambda_-^c :=\partial_- \Lambda \cup \Lambda^c$ with $\partial_-\Lambda:= \{i\in \Lambda : \exists j\in \Lambda^c \text{ s.t. } i\sim j\}$ and define the interior $\interior{\Lambda}:= \Lambda\backslash \partial_- \Lambda$. On $\Lambda\backslash 0$  the conditioning acts only local. Hence  the measure can be written as 
	\begin{align*}
		\mu_{t}^{sc,2}(f\vert\eta_{\Lambda \backslash 0}) = \frac{\int\mu(d\omega_\Lambda)f(\sigma_0) \prod_{i\in \Lambda\backslash 0}p_t(\omega_i,\eta_i)  }{\int\mu(d\omega_\Lambda) \prod_{i\in \Lambda\backslash 0}p_t(\omega_i,\eta_i) }
	\end{align*}
	By the DLR-equation for the starting Widom-Rowlinson measure $\mu$ we can insert the specification kernel for the volume $\interior{\Lambda}$ which yields
	\begin{align}\label{eq: two layer form}
	\mu_{t}^{sc,2}(f\vert\eta_{\Lambda \backslash 0}) = \frac{\int\mu(d\omega_\Lambda)\prod_{i\in \partial_-\Lambda}p_t(\omega_i,\eta_i)\gamma^{sc}_{\interior{\Lambda}}(f(\cdot) \prod_{i\in \interior{\Lambda}\backslash 0}p_t(\cdot,\eta_i)\vert \omega_{\partial_-\Lambda} ) }{\int\mu(d\omega_\Lambda) \prod_{i\in \partial_-\Lambda}p_t(\omega_i,\eta_i)\gamma^{sc}_{\interior{\Lambda}}(\prod_{i\in \interior{\Lambda}\backslash 0}p_t(\cdot,\eta_i)\vert \omega_{\partial_-\Lambda} ) }
	\end{align}
	The next step is to rewrite the specification kernel to see an Ising part. It follows that
	\begin{align*}
		&\gamma^{sc}_{\interior{\Lambda}}\Big(f(\cdot) \prod_{i\in \interior{\Lambda}\backslash 0}p_t(\cdot,\eta_i)\vert \omega_{\partial_-\Lambda} \Big) \\
		&=  \frac{1}{Z^{\omega_{\partial_-\Lambda}}_{\interior{\Lambda}}} \sum_{\omega_{\interior{\Lambda}}\in \Omega_{\interior{\Lambda}} }f(\omega_0) \exp\left(-\beta\sum_{i\sim 0} \mathds{1}_{\omega_0\omega_i=-1}\right)\exp\left(-\beta\hspace{-3pt}\sum_{\{i,j\}\in\mathcal{E}^b_{\interior{\Lambda}}\backslash\mathcal{E}^b_{\{0\}}} \mathds{1}_{\omega_j\omega_i=-1}\right)\frac{\exp(-h_t\sum_{i\in\interior{\Lambda}\backslash 0}\omega_i\eta_i)}{(2\cosh(h_t))^{\vert\interior{\Lambda}\vert-1}}
	\end{align*}
	The cosh-term does not depend on any configuration and will later cancel out with the corresponding term in denominator of \eqref{eq: lim two layer}.
	Define for a finite volume $\Lambda$ the Ising specification $\gamma^\text{Is}_\Lambda[\eta]$ which corresponds to the Hamiltonian $\mathcal{H}^0_\Lambda[\eta]$ on the lattice $\Z^d\backslash 0$. Then we have 
	\begin{align*}
		\gamma_{\interior{\Lambda}}\Big(f(\cdot) \prod_{i\in \interior{\Lambda}\backslash 0}p_t(\cdot,\eta_i)\big\vert \omega_{\partial_-\Lambda}\Big) = \frac{Z^{\omega_{\partial_-\Lambda}}_{\interior{\Lambda}\backslash 0,\text{Is}}[\eta]}{Z^{\omega_{\partial_-\Lambda}}_{\interior{\Lambda}}}\gamma^\text{Is}_{\interior{\Lambda}\backslash 0}[\eta_{\interior{\Lambda}\backslash 0}]\Bigg(\sum_{\omega_0\in E}\frac{f(\omega_0)\exp(-\beta\sum_{i\sim 0} \mathds{1}_{\omega_0\tilde\sigma_i=-1})}{(2\cosh(h_t))^{\vert\interior{\Lambda}\vert-1}}\bigg\vert \omega_{\partial_-\Lambda}\Bigg)
	\end{align*}
	where the random variables $\tilde{\sigma}_i$ are distributed according to the conditional measure on the right hand side.
	By defining for every $\eta_{\Lambda\backslash0}$ the probability measure $\nu^{\eta_{\Lambda\backslash0}}_t$ via
	\begin{align*}
		\nu^{\eta_{\Lambda\backslash0}}_t(\varphi) =  \frac{\int\mu(d\omega_{\partial_- \Lambda})\prod_{i\in \partial_-\Lambda}p_t(\omega_i,\eta_i)  \frac{Z^{\omega_{\partial_-\Lambda}}_{\interior{\Lambda}\backslash 0,\text{Is}}[\eta]}{Z^{\omega_{\partial_-\Lambda}}_{\interior{\Lambda}}} \varphi(\omega_{\partial_\Lambda})}{\int\mu(d\omega_{\partial_- \Lambda})\prod_{i\in \partial_-\Lambda}p_t(\omega_i,\eta_i)  \frac{Z^{\omega_{\partial_-\Lambda}}_{\interior{\Lambda}\backslash 0,\text{Is}}[\eta]}{Z^{\omega_{\partial_-\Lambda}}_{\interior{\Lambda}}}  }
	\end{align*} 
	where $\varphi$ is a $\mathcal{F}_{\partial_-\Lambda}$-measurable function, we get
	\begin{align*}
	\mu_{t}^{sc,2}  (f\vert\eta_{\Lambda\backslash 0}) = \frac{\int\nu_t^{\eta_{\Lambda\backslash 0}}(d\omega_{\partial_-\Lambda})\gamma^\text{Is}_{\interior{\Lambda}\backslash 0}[\eta_{\interior{\Lambda}\backslash 0}]\Big(\sum_{\omega_0\in E}f(\omega_0)\exp(-\beta\sum_{i\sim 0} \mathds{1}_{\omega_0\tilde\sigma_i=-1})\big\vert \omega_{\partial_-\Lambda}\Big)}{\int\nu_t^{\eta_{\Lambda\backslash 0}}(d\omega_{\partial_-\Lambda})\gamma^\text{Is}_{\interior{\Lambda}\backslash 0}[\eta_{\interior{\Lambda}\backslash 0}]\Big(\sum_{\omega_0\in E}\exp(-\beta\sum_{i\sim 0} \mathds{1}_{\omega_0\tilde\sigma_i=-1})\big\vert \omega_{\partial_-\Lambda}\Big) }.
	\end{align*}
	Note that by the uniqueness of Gibbs measures for the specification $(\gamma_\Lambda[\eta])_{\Lambda\Subset\Z\backslash 0}$ we have 
	\begin{align*}
	\lim_{\Lambda\uparrow \Z^d} \gamma^\text{Is}_{ \interior{\Lambda}\backslash 0}[\eta_{\interior{\Lambda}\backslash 0}](h\vert \omega_{\partial_- \Lambda}) = \mu_{0^c}[\eta](h)
	\end{align*}
	for all local functions $h$ and $\eta \in \Omega^+$. Furthermore, by uniqueness this convergence is uniform in $\omega$ \cite[Proposition 7.11]{georgii-book}. Thus we have
	\begin{align*}
	&\lim_{\Lambda\uparrow \Z^d}\left\vert \int \nu^{\eta_{\Lambda\backslash0}}_t  (d\omega_{\partial_-\Lambda} )(\gamma^\text{Is}_{ \interior{\Lambda}\backslash 0}[\eta_{\interior{\Lambda}\backslash 0}](h\vert \omega_{\partial_- \Lambda})-\mu_{0^c}[\eta_{0^c}](h)) \right\vert\\
	&\leq \lim_{\Lambda\uparrow \Z^d} \sup_{\omega}\left\vert \gamma^\text{Is}_{ \interior{\Lambda}\backslash 0}[\eta_{\interior{\Lambda}\backslash 0}](h\vert \omega_{\partial_- \Lambda})-\mu_{0^c}[\eta_{0^c}](f) \right\vert=0
	\end{align*}
	for all local bounded $h$ and $\eta\in\Omega^+ $. Hence it follows that 
	\begin{align*}
		\lim_{\Lambda\uparrow \Z^d}	\mu_{t}^{sc,2}  (f\vert\eta_{\Lambda\backslash 0}) = \frac{\mu_{0^c}[\eta_{0^c}]\Big(\sum_{\omega_0\in E}f(\omega_0)\exp(-\beta\sum_{i\sim 0} \mathds{1}_{\omega_0\sigma_i=-1})\Big)}{\mu_{0^c}[\eta_{0^c}]\Big(\sum_{\omega_0\in E}	\exp(-\beta\sum_{i\sim 0} \mathds{1}_{\omega_0\sigma_i=-1})\Big)}
	\end{align*}
%	We can repeat exactly the same argument for the two-layer Ising model, which yields
%	\begin{align*}
%	&	\lim_{\Lambda\uparrow \Z^d}\vert \mu_{t}^{is,2}  (f\vert \eta_\Lambda 	) - \mu[\eta](f)\vert \\
%	&=\lim_{\Lambda\uparrow \Z^d}\left\vert \int \mu_{t}^{is,2}  (d\omega_{\Lambda_-^c} \vert \eta_\Lambda	)(\gamma_{ \interior{\Lambda}}[\eta](f\vert \omega_{\partial_- \Lambda})-\mu[\eta](f)) \right\vert\\
%	&\leq \lim_{\Lambda\uparrow \Z^d} \sup_{\omega}\left\vert \gamma_{ \interior{\Lambda}}[\eta](f\vert \omega_{\partial_- \Lambda})-\mu[\eta](f) \right\vert=0.
	%\end{align*}
%	Summarizing, while the distortion of the boundary spins provided by the measures appearing as integrals in the second lines of the two displays is different and not explicit, the uniform convergence of the Ising system inside the interior of \Lambda provides the necessary control. Hence the Lemma is proved.
\end{proof}
We can repeat this argument to get the convergence for the two-layer Ising Model $\mu_{t}^{\text{Is},2}$ where the starting measure $\mu^\text{Is}$ is a Gibbs measure for the symmetric Ising model, i.e. for $\eta\in\Omega^{+}$ and ${\mathcal{P}(\{-1,1\})}$-measurable function $f$ which depends only on the configuration at time $0$ we have
\begin{align*}
\lim_{\Lambda\uparrow \Z^d}	\mu_{t}^{\text{Is},2}  (f\vert\eta_{\Lambda\backslash 0}) = \frac{\mu_{0^c}[\eta_{0^c}]\Big(\sum_{\omega_0\in \{-1,1\}}f(\omega_0)\exp(-\beta\sum_{i\sim 0} \mathds{1}_{\omega_0\sigma_i=-1})\Big)}{\mu_{0^c}[\eta_{0^c}]\Big(\sum_{\omega_0\in \{-1,1\}}	\exp(-\beta\sum_{i\sim 0} \mathds{1}_{\omega_0\sigma_i=-1})\Big)}
\end{align*}
\begin{lemma}\label{lem: check bad}
	With the same assumption as in Lemma \ref{lem: Sc-Is} for large enough $\beta$ we have the existence of a time $t_1(\beta)$ such that for every $\epsilon>0$ and $t>t_1(\beta)$ there exists a set $\Gamma\Subset \Z^d$ with the property that for every $\Delta \Subset \Z^d$ with $\Gamma \subset \Delta$ the following is true
	\begin{align}\label{eq: sc disc.}
	\vert\lim_{\Lambda \uparrow \Z^d}\mu_t(\mathds{1}_{\eta_0=1}\vert \eta^{cb}_{\Delta\backslash 0}\eta^+_{\Lambda\backslash \Delta})-\lim_{\Lambda \uparrow \Z^d}\mu_t(\mathds{1}_{\eta_0=1}\vert \eta^{cb}_{\Delta\backslash 0}\eta^-_{\Lambda\backslash \Delta})\vert > \epsilon.
	\end{align}
\end{lemma}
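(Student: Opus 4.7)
My plan is to reduce the non-Gibbsianness question for the soft-core Widom--Rowlinson model to the known non-Gibbsianness of the Glauber-evolved symmetric Ising model (\cite{enter-fernandez-hollander-redig02}), using Lemma \ref{lem: Sc-Is} as the bridge.

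First, I would apply Lemma \ref{lem: Sc-Is} with the choice $f(\omega_0):=p_t(\omega_0,1)$. Since $p_t$ is stochastic,
\begin{align*}
\mu_t\bigl(\mathds{1}_{\eta_0=1}\bigm|\eta_{\Lambda\setminus 0}\bigr)=\mu_t^{sc,2}\bigl(p_t(\cdot,1)\bigm|\eta_{\Lambda\setminus 0}\bigr),
\end{align*}
and for every finite $\Delta$ the configuration $\eta^{cb}_{\Delta\setminus 0}\eta^{\pm}_{\Delta^c}$ lies in $\Omega^{\pm}$. Hence Lemma \ref{lem: Sc-Is} produces closed-form limits
\begin{align*}
\lim_{\Lambda\uparrow\Z^d}\mu_t\bigl(\mathds{1}_{\eta_0=1}\bigm|\eta^{cb}_{\Delta\setminus 0}\eta^{\pm}_{\Lambda\setminus \Delta}\bigr)=:\mathrm{WR}^{\pm}(\Delta),
\end{align*}
each of which is the ratio of two expectations under the $\eta$-dependent Ising measure $\mu_{0^c}[\eta^{cb}_{\Delta\setminus 0}\eta^{\pm}_{\Delta^c}]$ of local functions of $\{\sigma_i:i\sim 0\}$ built from $e^{-\beta N^{\mp}_0}$, where $N^{\pm}_0:=|\{i\sim 0:\sigma_i=\pm 1\}|$.

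Second, I would repeat the derivation of Lemma \ref{lem: Sc-Is} for the two-layer symmetric Ising measure $\mu_t^{\mathrm{Is},2}$, as sketched right after that lemma, obtaining analogous quantities $\mathrm{Is}^{\pm}(\Delta)$ which involve the same $\eta$-dependent Ising measure $\mu_{0^c}[\cdot]$ and the same observables near $0$; the only difference between $\mathrm{Is}^{\pm}(\Delta)$ and $\mathrm{WR}^{\pm}(\Delta)$ is that the internal sum over $\omega_0$ runs over $\{-1,0,1\}$ in the WR case versus $\{-1,1\}$ in the Ising case. Since the $\omega_0=0$ contribution is a bounded, strictly positive constant independent of the boundary choice $\pm$, an elementary algebraic manipulation gives
\begin{align*}
\mathrm{WR}^+(\Delta)-\mathrm{WR}^-(\Delta)=\bigl(\mathrm{Is}^+(\Delta)-\mathrm{Is}^-(\Delta)\bigr)\cdot K(\Delta),
\end{align*}
with $K(\Delta)$ strictly positive and bounded below by a constant $\kappa(\beta,t)>0$: the expectations involved live in $[e^{-2d\beta},1]$, so the ratio of the Ising denominator to the WR denominator stays bounded away from $0$ uniformly in $\Delta$.

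Third, I would invoke \cite{enter-fernandez-hollander-redig02}: for $\beta$ large enough there is a threshold $t^{*}(\beta)$ such that, for every $t>t^{*}(\beta)$, $\eta^{cb}$ is a bad configuration for the Glauber-evolved symmetric Ising measure. Rephrased via the Ising analog above, this yields $\delta(\beta,t)>0$ and a finite set $\Gamma_{\mathrm{Is}}\Subset\Z^d$ with $|\mathrm{Is}^+(\Delta)-\mathrm{Is}^-(\Delta)|\geq\delta(\beta,t)$ for every $\Delta\supseteq\Gamma_{\mathrm{Is}}$. Combined with the second step this gives $|\mathrm{WR}^+(\Delta)-\mathrm{WR}^-(\Delta)|\geq\kappa(\beta,t)\delta(\beta,t)$, and one takes $t_1(\beta):=t^{*}(\beta)$, $\Gamma:=\Gamma_{\mathrm{Is}}$; the bound exceeds any $\epsilon<\kappa(\beta,t)\delta(\beta,t)$.

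The main obstacle is the second and third step together, namely faithfully importing the Ising non-Gibbsianness. One must identify the $\mathrm{Is}^{\pm}(\Delta)$ produced here with the conditional probabilities analyzed in \cite{enter-fernandez-hollander-redig02}, and verify that the single-site surgery that distinguishes $\mathcal{H}^0_\Lambda[\eta]$ from the ordinary Ising Hamiltonian does not destroy the underlying low-temperature ferromagnetic phase transition of the Ising model with a small staggered field $h_t\eta^{cb}$ --- the mechanism by which the uniform $\pm h_t$-field outside $\Delta$ continues to pin the system into distinct extremal Gibbs states.
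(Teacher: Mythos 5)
Your overall strategy (feed $f=p_t(\cdot,1)$ into Lemma \ref{lem: Sc-Is} and import the badness of $\eta^{cb}$ for the Glauber-evolved symmetric Ising model from \cite{enter-fernandez-hollander-redig02}) is the paper's strategy, but your second step contains a genuine gap. Write $\nu^{\pm}:=\mu_{0^c}[\eta^{cb}_{\Delta\backslash 0}\eta^{\pm}_{\Delta^c}]$, $N_0^{\pm}:=\vert\{i\sim 0:\sigma_i=\pm1\}\vert$, $a^{\pm}:=\nu^{\pm}\bigl(p_t(1,1)e^{-\beta N_0^-}+p_t(-1,1)e^{-\beta N_0^+}\bigr)$ and $b^{\pm}:=\nu^{\pm}\bigl(e^{-\beta N_0^-}+e^{-\beta N_0^+}\bigr)$. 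Because $p_t(0,\pm1)=0$, the $\omega_0=0$ term indeed drops out of every numerator, but it contributes $\nu^{\pm}(1)=1$ to the Widom--Rowlinson \emph{denominator}, so $\mathrm{WR}^{\pm}(\Delta)=a^{\pm}/(1+b^{\pm})$ while $\mathrm{Is}^{\pm}(\Delta)=a^{\pm}/b^{\pm}$, i.e. $\mathrm{WR}^{\pm}=\mathrm{Is}^{\pm}\cdot b^{\pm}/(1+b^{\pm})$ with a \emph{different} factor for each of the two conditionings. Your claimed identity $\mathrm{WR}^{+}-\mathrm{WR}^{-}=(\mathrm{Is}^{+}-\mathrm{Is}^{-})K(\Delta)$ would require $b^{+}=b^{-}$, which is not available, and the a priori bounds you invoke do not rescue the inequality: for instance $\mathrm{Is}^{+}=0.6$, $\mathrm{Is}^{-}=0.4$, $b^{+}=0.5$, $b^{-}=1$ are consistent with the crude ranges for large $\beta$ and give $\mathrm{WR}^{+}-\mathrm{WR}^{-}=0$ although $\mathrm{Is}^{+}-\mathrm{Is}^{-}=0.2$. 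So a positive lower bound on the Ising difference does not transfer to the Widom--Rowlinson difference by your manipulation; you would additionally need a quantitative statement such as $b^{+}\approx b^{-}$ for the constrained first-layer Ising measures, which you have not established.

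The paper avoids this issue by comparing \emph{ratios} rather than differences: applying Lemma \ref{lem: Sc-Is} to $f^{\pm}=p_t(\cdot,\pm1)$ and dividing, the common normalization (including the $\omega_0=0$ contribution) cancels exactly, and since $p_t(0,\pm1)=0$ one gets the identity
\begin{align*}
\lim_{\Lambda\uparrow\Z^d}\frac{\mu_t(\eta_0=1\vert \eta^{cb}_{\Delta\backslash 0}\eta^{\pm}_{\Lambda\backslash\Delta})}{\mu_t(\eta_0=-1\vert \eta^{cb}_{\Delta\backslash 0}\eta^{\pm}_{\Lambda\backslash\Delta})}
=\lim_{\Lambda\uparrow\Z^d}\frac{\mu^{\mathrm{Is}}_t(\eta_0=1\vert \eta^{cb}_{\Delta\backslash 0}\eta^{\pm}_{\Lambda\backslash\Delta})}{\mu^{\mathrm{Is}}_t(\eta_0=-1\vert \eta^{cb}_{\Delta\backslash 0}\eta^{\pm}_{\Lambda\backslash\Delta})},
\end{align*}
so the discontinuity of the Ising single-site conditional probability at $\eta^{cb}$ (from \cite{enter-fernandez-hollander-redig02}, for $t$ beyond the Ising transition time and $\beta$ large) forces discontinuity of this ratio, hence of $\mu_t(\eta_0=\pm1\vert\cdot)$ themselves, both by the $\pm$ symmetry of the symmetric model; this is \eqref{eq: sc disc.}. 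If you replace your difference factorization by this exact ratio identity, the remainder of your argument, including your (correct) concern that the constrained first-layer measure with staggered field $h_t\eta^{cb}$ must be identified with the object analyzed in \cite{enter-fernandez-hollander-redig02}, proceeds as in the paper.
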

\begin{proof}First we can write
	\begin{align*} 
	&\mu_t(\eta_0=\pm 1\vert \eta^{cb}_{\Delta\backslash 0}\eta^+_{\Lambda\backslash \Delta} ) = \int{\mu_{t}^{sc,2}}(d\omega_0 \vert\eta^{cb}_{\Delta\backslash 0}\eta^+_{\Lambda\backslash \Delta}  ) p_t(\omega_0,\pm1).
	\end{align*}
	and use Lemma \ref{lem: Sc-Is} for the functions $f^{\pm}: \{-1,0,1\}\rightarrow \R$ given by $\omega_0\mapsto p_t(\omega_0,\pm1)$. This implies that 
	\begin{align*}
	\lim_{\Lambda\uparrow \Z^d}	\frac{\mu_t(\eta_0=1\vert \eta^{cb}_{\Delta\backslash 0}\eta^+_{\Lambda\backslash \Delta} ) }{\mu_t(\eta_0=-1\vert \eta^{cb}_{\Delta\backslash 0}\eta^+_{\Lambda\backslash \Delta} ) } = \frac{\mu_{0^c}[\eta_{0^c}]\Big(\sum_{\omega_0\in E}p_t(\omega_0,1)\exp(-\beta\sum_{i\sim 0} \mathds{1}_{\omega_0\sigma_i=-1})\Big)}{\mu_{0^c}[\eta_{0^c}]\Big(\sum_{\omega_0\in E}p_t(\omega_0,-1)\exp(-\beta\sum_{i\sim 0} \mathds{1}_{\omega_0\sigma_i=-1})\Big)}
	\end{align*}	
	Since  $p_t(0,\pm1)=0$ we get 
	\begin{align}\label{eq: lim equal 2}
		\lim_{\Lambda\uparrow \Z^d}	\frac{\mu_t(\eta_0=1\vert \eta^{cb}_{\Delta\backslash 0}\eta^+_{\Lambda\backslash \Delta} ) }{\mu_t(\eta_0=-1\vert \eta^{cb}_{\Delta\backslash 0}\eta^+_{\Lambda\backslash \Delta} ) } = \lim_{\Lambda\uparrow \Z^d}	\frac{\mu^\text{Is}_t(\eta_0=1\vert \eta^{cb}_{\Delta\backslash 0}\eta^+_{\Lambda\backslash \Delta} ) }{\mu^\text{Is}_t(\eta_0=-1\vert \eta^{cb}_{\Delta\backslash 0}\eta^+_{\Lambda\backslash \Delta} ) }
	\end{align}
	where $\mu^\text{Is}_t$ is the time-evolved measure with any Gibbs measure of the symmetric Ising Gibbs model as a starting measure.  By \cite{enter-fernandez-hollander-redig02} it is known that there exists a $t_1(\beta)$, for $\beta$ which are much larger as the critical value of the inverse temperature $\beta_c^\text{Is}$ for the Ising model, such that for all $t>t_1(\beta)$ the configuration $\eta^{cb}$ is a bad for $\mu^{\text{Is}}_t$. Hence $\lim_{\Lambda\uparrow \Z^d}\mu^\text{Is}_t(\eta_0=1\vert \cdot_{\Delta\backslash 0}\eta^+_{\Lambda\backslash \Delta} )$ is discontinuous at $\eta^{cb}$ which implies that the right hand side of \eqref{eq: lim equal 2} is also discontinuous at $\eta^{cb}$. This implies that $\lim_{\Lambda\uparrow \Z^d}	\frac{\mu_t(\eta_0=1\vert \eta^{cb}_{\Delta\backslash 0}\eta^+_{\Lambda\backslash \Delta} ) }{\mu_t(\eta_0=-1\vert \eta^{cb}_{\Delta\backslash 0}\eta^+_{\Lambda\backslash \Delta} ) }$ is also discontinuous at $\eta^{cb}$. Hence, $\lim_{\Lambda\uparrow \Z^d}\mu_t(\eta_0=1\vert \cdot^{cb}_{\Delta\backslash 0}\eta^+_{\Lambda\backslash \Delta} )$ or  $\lim_{\Lambda\uparrow \Z^d}\mu_t(\eta_0=-1\vert \cdot_{\Delta\backslash 0}\eta^+_{\Lambda\backslash \Delta} )$ are discontinuous at $\eta^{cb}$ but by symmetry both of them are discontinuous. This implies \eqref{eq: sc disc.}.
	% for  is not exactly the same measure as in the proof of Lemma \ref{lem: Sc-Is} since there is no condition at time $t$ for the site $0$. But with the same arguments as in the above Lemma it will converge to a measure $\tilde{\mu}[\eta^{cb}_{\Delta\backslash 0}\eta^+_{\Delta^c}]$ with single-site specifications which are given by the Hamiltonian defined in Lemma \ref{lem: Sc-Is} for every site $i \in \Z^d\backslash \{0\}$, whereas for $i=0$ it is the soft-core Widom-Rowlinson interaction. By these arguments the limit $\lim_{\Lambda\uparrow \Z^d}\mu^+_t(\eta_0=+\vert \eta^{cb}_{\Delta\backslash 0}\eta^+_{\Lambda\backslash \Delta} )$ can be interpreted as a time evolved Ising measure with only a finite perturbation at the origin. By \cite{enter-fernandez-hollander-redig02} it is know that for large $\beta$ and $\lambda$ the checkerboard configuration is a bad configuration for the time evolved measure and finite perturbations do not change the discontinuous behavior of Gibbs measures. This implies
\end{proof}
\begin{proof}[Proof of Theorem \ref{thm: sc non gibbs}]
Choose $\beta\gg\beta_c^\text{Is}$.	By Lemma \ref{lem: check bad} there exists a time $t_2(\beta)$ such that for all $t>t_2(\beta)$ the checkerboard configuration is bad for the time-evolved measure $\mu_t$. Hence the time-evolved measure $\mu_t$ is not Gibbs for all $t>t_2(\beta)$.
\end{proof}
In \cite{kissel-kuelske18} the time-evolved mean-field version of the symmetric soft-core Widom-Rowlinson model $\mu_{t,N}^{\mathrm{mf}}$ was investigated. For mean-field models the correct notion for the Gibbs-property is called \emph{sequentially Gibbs}. A sequence exchangeable measures $(\mu_{t,N})_{N\in \N}$ satisfies the sequential Gibbs property if for every sequence of configurations $(\omega_{[2,N]})_{N\in \N}$ with $\omega_{[2,N]}\in E^{N-1}$ and $L_{N-1}(\omega_{[2,N-1]})\rightarrow \al\in \mathcal{M}_1(E)$, where $L_{N}$ is the empirical measure,  the limit $\lim_{N\rightarrow \infty}\mu^{\mathrm{mf}}_{t,N}(\omega_1\vert L_{N-1}(\omega_{[2,N]})) = \gamma_t^{\mathrm{mf}}(\omega_1\vert \al)$ exists and does not depend on the choice of sequence. A measure $\al$ is called bad empirical measure if the above property is not satisfied. It was proven in \cite{kissel-kuelske18} that the time-evolved mean-field model is not Gibbs for large $t$ and the first occurrence of this non-Gibbsian behavior happens for measures $\al\in\mathcal{M}_1(E)$ with $\al(0)=0$. This corresponds to configurations on the lattice, which contain only pluses and minuses. We conjecture that such fully occupied configurations are also the first bad configuration on the lattice. 
\begin{conjecture}
	Let $\beta$ be large enough and $\mu\in \mathcal{G}(\gamma^{sc}_{\beta,\lambda,0})$. Then there exists a time $t_{NG}(\beta)$ such that for all $t<t_{NG}(\beta)$ the time-evolved measure is Gibbs,  and non-Gibbs for all $t> t_{NG}(\beta)$ where $t_{NG}(\beta)$ is the exit-time from the Gibbsian region for the Ising-model 
	with Hamiltonian $-\beta\sum_{i\sim j} 1_{\omega_i \omega_j =-1}$
\end{conjecture}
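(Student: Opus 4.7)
The plan hinges on sharpening the Ising reduction already exposed in Lemma \ref{lem: Sc-Is} and using it in both directions of the claimed dichotomy. For the non-Gibbs half at $t > t_{NG}(\beta)$, the argument of Theorem \ref{thm: sc non gibbs} essentially suffices: Lemma \ref{lem: Sc-Is} reduces the two-layer soft-core conditional kernel (with conditioning $\eta \in \Omega^s$) to ratios of expectations against the internal Ising measure $\mu_{0^c}[\eta]$ with site field $h_t \eta$. As soon as $t$ exceeds the Ising exit time $t_{NG}(\beta)$, the checkerboard $\eta^{cb}$ is bad for the internal Ising model by \cite{enter-fernandez-hollander-redig02}, and the reduction propagates this to $\mu_t$.

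The serious content lies in the Gibbs direction for $t < t_{NG}(\beta)$. The plan here is to refine the restricted-constrained-first-layer machinery of Section 4 with a key observation about the zeros of $\eta$: whenever $\eta_i = 0$, the kernel $p_t(\omega_i,0) = \mathds{1}_{\omega_i = 0}$ forces $\omega_i = 0$ in the first layer. So the sites where $\eta_i = 0$ act as forced vacancies, partitioning $\mathbb{Z}^d$ into maximal connected subgraphs on which the first layer takes values in $\{-1,+1\}$. Restricted to such a subgraph, the constrained first-layer measure collapses to an Ising-type measure with coupling $\beta$ and site-dependent external field $h_t \eta_i \in \{\pm h_t\}$, since the on-site $\log\lambda\cdot \omega_i^2$ terms are constant on non-vacant sites. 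If one can show that the resulting Ising-type measure is unique for every admissible subgraph-and-field pattern whenever $t < t_{NG}(\beta)$, then the chain of arguments used for Theorems \ref{thm: short time gibbs}--\ref{thm: gibbs high asm} transfers verbatim: Lemma \ref{lem: exis. speci} provides the candidate kernel $\gamma_{\Delta,\alpha,\beta,t}$, Lemma \ref{lem: admi. Dob} shows $\mu_t$ is admitted by it (with the Dobrushin comparison theorem applied with $b_i \equiv 0$ coming from the unique Ising Gibbs measure), and Lemma \ref{lem: time quasi} yields quasilocality.

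The main obstacle is the uniform uniqueness step just stated. The number $t_{NG}(\beta)$ is defined by the Ising model on the full lattice $\mathbb{Z}^d$ with the specific checkerboard field pattern, whereas the internal system here may live on an arbitrary subgraph of $\mathbb{Z}^d$ with an essentially arbitrary $\{\pm h_t\}$-field pattern determined by $\eta$. One needs a structural argument showing that the checkerboard on the full lattice is the \emph{worst case}: any other $\pm h_t$-pattern on any subgraph either inherits a nonvanishing long-range average field, so that Lee--Yang applies and gives uniqueness, or can be stochastically dominated by the full-lattice checkerboard via FK- or disagreement-percolation comparison. I would first prove this as a soft inequality ($t_{\text{soft-core}} \ge t_{NG}(\beta)$) and then match it to the upper bound from the non-Gibbs direction.

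The truly delicate point is to make the comparison \emph{sharp}, so that $t_{\text{soft-core}} = t_{NG}(\beta)$ rather than merely $\ge$. I would attempt this through a continuity-in-$\eta$ argument combined with the conjectured sharpness of the Ising Gibbs/non-Gibbs transition (known in sufficiently low temperature regimes), pushing configurations $\eta$ with small density of zeros toward the pure $\pm 1$ configuration where the Ising comparison is exact, and exploiting monotonicity of the constrained first-layer model in $\eta$ via an FKG-style domination to control the remaining discrepancy.
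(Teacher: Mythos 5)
This statement is a \emph{Conjecture} in the paper; the authors do not supply a proof, so there is no paper argument to compare against. Your proposal is a plausible plan for attacking the conjecture, and the structural observation at its heart is correct and worth emphasizing: since $p_t(\,\cdot\,,0)=\delta_0$, the zeros of $\eta$ force vacancies in the first layer, so the constrained first-layer model decouples over the connected components of $\{i : \eta_i\neq 0\}$ and on each such component reduces to an Ising system with coupling $\beta$ and site field $h_t\eta_i$. This is exactly the mechanism suggested in the discussion following Theorem \ref{thm: sc non gibbs}, and it explains why the Ising exit time should be the relevant scale.

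However, the Gibbs half of your argument has a genuine gap. You claim that once the constrained first-layer Ising systems are \emph{unique} for all admissible $\eta$ and $t<t_{NG}(\beta)$, Lemmas \ref{lem: exis. speci}, \ref{lem: admi. Dob} and \ref{lem: time quasi} transfer ``verbatim.'' They do not: all three rest on the Dobrushin comparison theorem (Theorem \ref{DLR short time}), whose hypotheses require the constrained first-layer specification to satisfy Dobrushin's condition \emph{uniformly in $\eta$}, so that the series $D=\sum_n C^n$ converges and $\sum_{j\in\Lambda^c}D_{ij}\to 0$. The Ising exit time $t_{NG}(\beta)$ at large $\beta$ lies well outside the Dobrushin regime (uniqueness there is provided by Lee--Yang, not by $\|C\|_{op}<1$), so the quantitative control that drives the proofs of admissibility and quasilocality is simply unavailable up to $t_{NG}(\beta)$. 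To close this you would need a fundamentally different uniformity input — e.g.\ cluster-expansion or disagreement-percolation bounds valid across all subgraphs and $\pm h_t$-field patterns — not a replacement of ``Dobrushin unique'' by ``unique.'' Separately, your non-Gibbs half silently identifies the time $t_1(\beta)$ produced by Lemma \ref{lem: check bad} with the Ising exit time; the paper only establishes non-Gibbsianness for $t>t_1(\beta)$ and never asserts $t_1(\beta)=t_{NG}(\beta)$, and the sharpness of the Gibbs/non-Gibbs transition for the Glauber-evolved Ising model is itself not fully settled, so this step needs its own argument. Your ``worst-case checkerboard via FKG/FK domination'' plan is a reasonable heuristic, but it is the conjecture, not a proof of it.
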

\subsection{Time-evolved hard-core model}
{For the hard-core model we cannot use the method we established for the soft-core model. To see this, consider the  first-layer model single-site kernels with $\eta_{i_0}= 1$}
\begin{align*}
\gamma^i_{i_0,t}[\eta](\cdot \vert \bar{\omega}) = \frac{\prod_{j\sim i_0}\mathds{1}(\omega_{i_0}\bar{\omega}_{j}\neq -1)p_t(\omega_{i_0},1)\al(\omega_{i_0})}{\prod_{j\sim i_0}\mathds{1}(\bar{\omega}_{j}\neq -1)p_t(1,1)\al(1)+\prod_{j\sim i}\mathds{1}(\bar{\omega}_{j}\neq 1)p_t(-1,1)\al(\omega_{i_0})}.
\end{align*}
{Note that numerator and denominator  can both be simultaneously zero. This happens if there exist $k,m$ with ${i_0}\sim k,{i_0}\sim m$, $\bar{\omega}_k = 1$ and $\bar{\omega}_m= -1$. In this case we define the kernel to be zero. For two boundary conditions $\bar{\omega},\tilde{\omega}\in \Omega$ with $\bar{\omega}_j=1$ and $\tilde{\omega_j}= -1$ for some $j\sim i $  and $\bar\omega_{k}= \tilde\omega_k = 0$ for all $k\neq j$  it follows that $\gamma^i_{i_0,t}[\eta](\cdot \vert \bar{\omega})= \delta_{1}(\cdot)$, $\gamma^i_{i_0,t}[\eta](\cdot \vert \tilde{\omega})= \delta_{-1}(\cdot)$ and}
\begin{align*}
d_{TV}(\gamma^i_{i_0,t}[\eta](\cdot \vert \bar{\omega}),\gamma^i_{i_0,t}[\eta](\cdot \vert \tilde{\omega})) = 1.
\end{align*}
{This implies that the restricted constrained first-layer model for the hard-core case cannot satisfy the Dobrushin condition.}

For the proofs we follow the idea of \cite{jahnel-kuelske17b} where the continuous hard-core Widom-Rowlinson model was investigated. To use their method the discrete hard-core model has to be reformulated. 
For splitting the information of location and spin value of a particle we define a new configuration space $\tilde{\Omega} = \{(0,0),(1,-1),(1,1)\}^{\Z^d}$ where we identify $0\triangleq(0,0), 1 \triangleq(1,1)$ and $-1 \triangleq (1,-1)$. For an element $\boldsymbol{\omega}\in \tilde{\Omega}$ we write $\omega^{\sigma}:=(\omega,\sigma)=\boldsymbol{\omega}$. The first entry describes if there is a particle at some site $i$ and the second entry describes its spin value. With this identification we  rewrite first the specification of the hard-core Widom-Rowlinson model and then give a new formula for $\gamma^{\bar{\omega}}_{\Lambda,\Delta,\alpha,t}$. 
\begin{lemma}\label{lem: ident.}
	Let $\al\in\mathcal{M}_1(E)$ and $\Lambda \Subset \Z^d$. Then for $\omega_{\Lambda}\omega_{\Lambda^c}\in\Omega$ it follows with the above identification $\omega\triangleq\bar{\boldsymbol{\omega}}=(\bar{\omega},\bar{\sigma})$  that
	\begin{align*}
	\gamma^{hc}_{\Lambda,\beta,\al}(\omega_{\Lambda}\vert \omega_{\Lambda^c} ) &= \frac{\al(0)^{\vert\Lambda^0_{\bar{\omega}}\vert} I^{hc}_{\Lambda}(\bar{\sigma}_{\Lambda_{\bar{\omega}}^1}0_{\Lambda_{\bar{\omega}}^0}\bar{\sigma}_{\Lambda^c}) \al(1)^{\bar{\sigma}_{\Lambda}^+}\al(-1)^{\bar{\sigma}_{\Lambda}^-}}{\sum_{\tilde{\omega}_\Lambda\in\{0,1\}^\Lambda}\al(0)^{\vert\Lambda^0_{\tilde{\omega}}\vert}\sum_{\sigma_{\Lambda^1_{\tilde{\omega}}}\in\{-1,1\}^{\Lambda^1_{\tilde{\omega}}}} I^{hc}_{\Lambda}(\sigma_{\Lambda^1_{\tilde{\omega}}}0_{\Lambda_{\bar{\omega}}^0}\bar{\sigma}_{\Lambda^c})\al(1)^{\sigma_{\Lambda}^+}\al(-1)^{\sigma_{\Lambda}^-}}\\
	&=: \gamma^{hc}_{\Lambda,\beta,\al}(\bar{\boldsymbol{\omega}}_{\Lambda}\vert \bar{\boldsymbol{\omega}}_{\Lambda^c} )
	\end{align*}
	where $\Lambda^1_{\bar{\omega}}= \{i\in \Lambda \,:\; \bar{\omega}_i = 1\}$, $\Lambda^0_{\bar{\omega}}= \Lambda \backslash \Lambda^1_{\bar{\omega}}$ and $\sigma^{\pm}_{\Lambda}= \vert \{i \in\Lambda \,:\, \sigma_i=\pm1\}\vert$. In addition we define the sum $\sum_{\sigma_{\Lambda^1_{\tilde{\omega}}}\in\{-1,1\}^{\Lambda^1_{\tilde{\omega}}}} I^{hc}_{\Lambda}(\sigma_{\Lambda^1_{\tilde{\omega}}}0_{\Lambda_{\bar{\omega}}^0}\bar{\sigma}_{\Lambda^c}) \al(1)^{\sigma_{\Lambda}^+}\al(-1)^{\sigma_{\Lambda}^-}$ to be equal 1 if $\Lambda^1_{\bar{\omega}}= \{i\in \Lambda \,:\; \bar{\omega}_i = 1\}=\emptyset$.  
\end{lemma}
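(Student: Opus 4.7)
The statement is essentially a bookkeeping identity: it rewrites the hard-core Boltzmann weight in the new presence/spin variables. I would organize the verification in three steps.

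First, I translate the parameters $(h,\lambda)$ into $\alpha$. Using the definitions $h=\tfrac12\log(\alpha(1)/\alpha(-1))$ and $\lambda=\sqrt{\alpha(1)\alpha(-1)}/\alpha(0)$ one checks by direct case distinction on $\omega_i\in\{-1,0,1\}$ that
\begin{align*}
e^{\log(\lambda)\,\omega_i^2+h\omega_i}=\frac{\alpha(\omega_i)}{\alpha(0)},
\end{align*}
which gives the site-factorized form
\begin{align*}
e^{\sum_{i\in\Lambda}\log(\lambda)\omega_i^2+h\omega_i}=\alpha(0)^{-|\Lambda|}\prod_{i\in\Lambda}\alpha(\omega_i).
\end{align*}

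Second, I apply the identification $\omega\triangleq(\bar\omega,\bar\sigma)$ to each factor. Splitting $\Lambda$ into the empty sites $\Lambda^0_{\bar\omega}$ and the occupied sites $\Lambda^1_{\bar\omega}$, and noting that $\bar\sigma_i\in\{-1,+1\}$ exactly on $\Lambda^1_{\bar\omega}$, I obtain
\begin{align*}
\prod_{i\in\Lambda}\alpha(\omega_i)=\alpha(0)^{|\Lambda^0_{\bar\omega}|}\,\alpha(1)^{\bar\sigma_\Lambda^+}\,\alpha(-1)^{\bar\sigma_\Lambda^-}.
\end{align*}
Because the hard-core indicator $I_\Lambda^{hc}$ is triggered only by opposite signs on adjacent occupied sites, it depends on $\omega$ solely through the triple $(\bar\sigma_{\Lambda^1_{\bar\omega}},0_{\Lambda^0_{\bar\omega}},\bar\sigma_{\Lambda^c})$, yielding the expression in the numerator of the claim.

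Third, I rewrite the partition function $Z^{hc}_\eta$ by carrying out the same manipulation inside the summation. Summing over $\omega_\Lambda\in\{-1,0,1\}^\Lambda$ is the same as first choosing an occupation pattern $\tilde\omega_\Lambda\in\{0,1\}^\Lambda$ and then, on $\Lambda^1_{\tilde\omega}$, assigning a spin $\sigma_{\Lambda^1_{\tilde\omega}}\in\{-1,1\}^{\Lambda^1_{\tilde\omega}}$. This produces exactly the double sum in the denominator; the convention that the inner sum equals $1$ when $\Lambda^1_{\tilde\omega}=\emptyset$ is precisely the usual empty-product convention and handles the only genuine edge case. Canceling the common factor $\alpha(0)^{-|\Lambda|}$ in numerator and denominator yields the stated identity. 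There is no real analytical obstacle; the only point that warrants care is keeping straight which set ($\Lambda^0_{\bar\omega}$ for the fixed configuration in the numerator, $\Lambda^0_{\tilde\omega}$ for the running configuration in the denominator) governs each factor, and verifying that the hard-core indicator on the boundary is correctly expressed through $\bar\sigma_{\Lambda^c}$ rather than through the full $\omega_{\Lambda^c}$.
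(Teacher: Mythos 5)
Your proposal is correct and follows essentially the same route as the paper: pass to the $\al$-parametrized form of the kernel, split $\Lambda$ into occupied and empty sites, and rewrite the sum over $\{-1,0,1\}^\Lambda$ as a sum over occupation patterns $\tilde\omega_\Lambda\in\{0,1\}^\Lambda$ followed by a spin sum on $\Lambda^1_{\tilde\omega}$, with the empty-product convention covering $\Lambda^1_{\tilde\omega}=\emptyset$. The only (cosmetic) difference is that you justify the double sum by a direct reparametrization of the summation, whereas the paper inserts a sum of indicator functions identically equal to one and then restricts it; you also spell out the $(h,\lambda)\leftrightarrow\al$ translation that the paper takes for granted from Section 2.
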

\begin{proof}
	%	We show this equality only where $\Lambda$ is a singleton $\{i\}$ and since 
	%	\begin{align*}
	%	\rho_{i}(\bar{\boldsymbol{\omega}}) = \frac{ e^{-\mathcal{H}_{i}(\bar{\sigma}_{i_{\bar{\omega}}^1}0_{i\backslash i_{\bar{\omega}}^1}\bar{\sigma}_{i^c})} }{\sum_{\tilde{\omega}_i\in\{0,1\}^i}\al(0)^{\vert i^0_{\tilde{\omega}}\vert}\sum_{\sigma_{i^1_{\tilde{\omega}}}\in\{-1,1\}^{i^1_{\tilde{\omega}}}} e^{-\mathcal{H}_{i}(\sigma_{i^1_{\tilde{\omega}}}0_{i\backslash i_{\tilde{\omega}}^1}\bar{\sigma}_{i^c})} \al(1)^{\sigma_{i}^+}\al(-1)^{\sigma_{i}^-}}
	%	\end{align*}
	%	 is positive for all $\bar{\boldsymbol{\omega}}$ the rest of the proof follows by (Georgii 1.33). 
	We start with the left hand side
	\begin{align*}
	\gamma^{hc}_{\Lambda,\beta,\al}(\omega_{\Lambda}\vert \omega_{\Lambda^c} ) = \frac{I^{hc}_\Lambda(\omega_\Lambda\omega_\Lambda^c)\prod_{i\in \Lambda}\al(\omega_i)}{\sum_{\tilde{\omega}_\Lambda\in\{-1,0,1\}^\Lambda}I^{hc}_\Lambda(\tilde{\omega}_\Lambda{\omega}_{\Lambda^c})\prod_{i\in \Lambda}\al(\tilde{\omega}_i)}.
	\end{align*}
	Using the identification above we have $\sigma_i = \omega_i$ and thus 
	\begin{align*}
	\frac{I^{hc}_\Lambda(\omega_\Lambda\omega_{\Lambda^c})\prod_{i\in \Lambda}\al(\omega_i)}{\sum_{\tilde{\omega}_\Lambda\in\{-1,0,1\}^\Lambda}I^{hc}_\Lambda(\tilde{\omega}_\Lambda{\omega}_{\Lambda^c})\prod_{i\in \Lambda}\al(\tilde{\omega}_i)} = \frac{I^{hc}_\Lambda(\bar{\sigma}_\Lambda\bar{\sigma}_{\Lambda^c})\prod_{i\in \Lambda}\al(\bar{\sigma}_i)}{\sum_{\tilde{\sigma}_\Lambda\in\{-1,0,1\}^\Lambda}I^{hc}_\Lambda(\tilde{\sigma}_\Lambda{\sigma}_{\Lambda^c})\prod_{i\in \Lambda}\al(\tilde{\sigma}_i)}.
	\end{align*}
	Now we have to bring a second sum into the play. We do this by adding sums over indicator function which are equal to one, which yields
	\begin{align*}
	&\frac{I^{hc}_\Lambda(\bar{\sigma}_\Lambda\bar{\sigma}_{\Lambda^c})\prod_{i\in \Lambda}\al(\bar{\sigma}_i)}{\sum_{\tilde{\sigma}_\Lambda\in\{-1,0,1\}^\Lambda}I^{hc}_\Lambda(\tilde{\sigma}_\Lambda{\sigma}_{\Lambda^c})\prod_{i\in \Lambda}\al(\tilde{\sigma}_i)}\\
	&=\frac{\al(0)^{\vert\Lambda^0_{\bar{\omega}}\vert} I^{hc}_{\Lambda}(\bar{\sigma}_{\Lambda_{\bar{\omega}}^1}0_{ \Lambda_{\bar{\omega}}^0}\bar{\sigma}_{\Lambda^c}) \al(1)^{\bar{\sigma}_{\Lambda}^+}\al(-1)^{\bar{\sigma}_{\Lambda}^-}}{\sum_{\tilde{\omega}_{\Lambda}\in\{0,1\}^\Lambda}\sum_{\tilde{\sigma}_\Lambda\in\{-1,0,1\}^\Lambda}I^{hc}_\Lambda(\tilde{\sigma}_\Lambda{\sigma}_{\Lambda^c})\prod_{i\in \Lambda}\al(\tilde{\sigma}_i)\prod_{i\in\Lambda}(\mathds{1}(\tilde{\omega}_i= \tilde{\sigma}_i =0)+\mathds{1}(\tilde{\omega}_i\neq 0, \tilde{\sigma}_i\neq 0))}.
	\end{align*}
	Decompose the products over $\Lambda$ into one over $\Lambda^1_{\tilde{\omega}}$ and one over  $\Lambda^0_{\tilde{\omega}}$ yields
	\begin{align*}
	&\gamma^{hc}_{\Lambda,\al}(\omega_{\Lambda}\vert \omega_{\Lambda^c} )\\
	&=\frac{\al(0)^{\vert\Lambda^0_{\bar{\omega}}\vert} I^{hc}_{\Lambda}(\bar{\sigma}_{\Lambda_{\bar{\omega}}^1}0_{ \Lambda_{\bar{\omega}}^0}\bar{\sigma}_{\Lambda^c}) \al(1)^{\bar{\sigma}_{\Lambda}^+}\al(-1)^{\bar{\sigma}_{\Lambda}^-}}{\sum_{\tilde{\omega}_{\Lambda}\in\{0,1\}^\Lambda}\al(0)^{\vert \Lambda_{\tilde{\omega}}^0\vert}\sum_{\tilde{\sigma}_\Lambda\in\{-1,0,1\}^\Lambda}I^{hc}_\Lambda(\tilde{\sigma}_\Lambda{\sigma}_{\Lambda^c})\al(1)^{\tilde{\sigma}_{\Lambda}^+}\al(-1)^{\tilde{\sigma}_{\Lambda}^-}\prod_{i\in \Lambda_{\tilde{\omega}}^1}\mathds{1}_{\mathcal{A}}(\tilde{\omega}_i,\tilde{\sigma}_i)\prod_{i\in \Lambda_{\tilde{\omega}}^0}\mathds{1}_{\mathcal{B}}(\tilde{\omega}_i,\tilde{\sigma}_i)}
	\end{align*}
	where  $\mathcal{A}= \{(\tilde{\omega},\tilde{\sigma})\in \{0,1\}\times \{-1,0,1\}\;:\,  \tilde{\omega}\neq 0, \tilde{\sigma}\neq 0  \}$ and $\mathcal{B}= \{(\tilde{\omega},\tilde{\sigma})\in \{0,1\}\times \{-1,0,1\}\;:\,  \tilde{\omega}=\tilde{\sigma}= 0  \}$. If we restrict the second sum with respect to the products over the indicators we get the desired formula. 
\end{proof}
Consequently for ${\eta}\in {\Omega}$, ${\bar{\omega}}\in {\Omega}$  and $\Delta\subset \Lambda \Subset \Z^d$ we have 
\begin{align*}
&\gamma^{\bar{\omega}}_{\Lambda,\Delta,\alpha,t}(\eta_\Delta\vert \eta_{\Lambda\backslash \Delta})\\
&=\frac{\alpha(0)^{\vert \Delta_{\eta}^0\vert}\sum_{\sigma_{\Lambda_{\eta}^1}\in\{-1,1\}^{\Lambda^1_\eta}}I^{hc}_{\Lambda}(\sigma_{\Lambda^1_\eta}0_{\Lambda^0_\eta}\bar{\sigma}_{\Lambda^c})  \alpha(1)^{\sigma_\Lambda^1}\alpha(-1)^{\sigma_\Lambda^{-1}}\prod_{i\in \Delta^1_{\eta}} p_t(\sigma_i,\hat{\sigma}_i)\prod_{i\in\Lambda\backslash \Delta^1_\eta} p_t(\sigma_i,\hat{\sigma}_i)}{\sum_{\omega_\Delta\in \{0,1\}^{\Delta}}\alpha(0)^{\vert\Delta_\omega^0\vert}\sum_{\sigma_{\Lambda_{\omega_\Delta\eta_{\Lambda\backslash \Delta}}^1}\in\{-1,1\}^{\Lambda^1}}I^{hc}_{\Lambda}(\sigma_{\Lambda^1}0_{{\Lambda^0}}\bar{\sigma}_{\Lambda^c})  \alpha(1)^{\sigma_\Lambda^1}\alpha(-1)^{\sigma_\Lambda^{-1}}\prod_{i\in\Lambda\backslash \Delta^1_\eta} p_t(\sigma_i,\hat{\sigma}_i)}\\
&= \gamma^{\bar{\boldsymbol{\omega}}}_{t,\Lambda,\Delta,\alpha,\beta}(\boldsymbol{\eta}_\Delta\vert \boldsymbol{\eta}_{\Lambda\backslash \Delta})
\end{align*}
where $\boldsymbol{\eta}=(\eta,\hat{\sigma})$ and $\boldsymbol{\bar{\omega}}=(\bar{\omega},\bar{\sigma})$ are the identified configuration of $\eta$ and $\bar{\omega}$. Note that for a starting configuration $\boldsymbol{\omega}= (\omega,\sigma)$ and a corresponding evolved configuration $\boldsymbol{\eta}= (\eta,\hat{\sigma})$ the first entry of them are equal, i.e. $\omega = \eta$,  due to the preservation of the particle number under the time evolution.\\
With this reformulation we can again rewrite $\gamma^{\bar{\omega}}_{t,\Lambda,\Delta,\alpha,\beta}$ by using clusters. For this we define $q_t:= \frac{p_t(1,1)}{p_t(1,-1)}$ and $\a_r := \frac{\al(1)}{\al(-1)}$.

\begin{lemma}\label{lem: cluster rep}
	Let  $\al\in\mathcal{M}_1(E)$ and $\Delta\subset\Lambda \Subset \Z^d$. Then for all $\mathcal{F}_{\Delta}$-measurable bounded functions $f$, all $\boldsymbol{\eta}\in \tilde{\Omega}$ and all $\bar{\boldsymbol{\omega}}\in \tilde{\Omega}$ we have that
	\begin{align*}
&\gamma^{\bar{\boldsymbol{\omega}}}_{t,\Lambda,\Delta,\alpha}(f\vert {\boldsymbol{\eta}}_{\Lambda\backslash \Delta})=\\
&	\frac{\sum_{\omega_\Delta\in\{0,1\}^\Delta} \frac{\al(0)^{\Delta_\omega^0}}{\al(-1)^{-\Delta_\omega^1}}f_{\eta_{\Lambda\backslash \Delta}}^{\bar{\omega}_{\Lambda^c}}(\omega_\Delta) \prod_{C\in \mathcal{C}_{\Delta}(\omega_{\Delta}\eta_{\Lambda\backslash \Delta })} (
	\al_r^{\vert C\cap \Delta\vert}\mathds{1}_{\sigma_{C\cap\Lambda^c}=1} +\hspace{-0.6pt}e^{- \sum_{i\in C\cap \Lambda\backslash \Delta}(\log(\al_r)+\log(q_t)\hat{\sigma}_i)}\mathds{1}_{\sigma_{C\cap\Lambda^c}=-1})}{\sum_{\omega_\Delta\in\{0,1\}^\Delta} \frac{\al(0)^{\Delta_\omega^0}}{\al(-1)^{-\Delta_\omega^1}}\prod_{C\in \mathcal{C}_{\Delta}(\omega_{\Delta}\eta_{\Lambda\backslash \Delta })} (
	\al_r^{\vert C\cap \Delta\vert}\mathds{1}_{\sigma_{C\cap\Lambda^c}=1} +e^{- \sum_{i\in C\cap \Lambda\backslash \Delta}(\log(\al_r)+\log(q_t)\hat{\sigma}_i)}\mathds{1}_{\sigma_{C\cap\Lambda^c}=-1})}
\end{align*}
with 
\begin{align*}
f_{\eta_{\Lambda\backslash \Delta}}^{\bar{\omega}_{\Lambda^c}}(\omega_\Delta)=
 \frac{\sum_{\hat{\sigma}_{\Delta_{\omega}^1}\in\{-1,1\}^{\Delta^1_{\omega}}}f(\omega_\Delta,\hat{\sigma}_{\Delta^1_{\omega}}0_{\Delta^0_{\omega}}) {A}(\omega_\Delta\eta_{\Lambda\backslash \Delta},\hat{\sigma}_{\Delta^1_\omega}\hat{\sigma}_{{\Lambda\backslash\Delta}_\eta^1})}{\prod_{C\in \mathcal{C}_{\Delta}(\omega_{\Delta}\eta_{\Lambda\backslash \Delta })} (
	\al_r^{\vert C\cap \Delta\vert}\mathds{1}_{\sigma_{C\cap\Lambda^c}=1} +e^{- \sum_{i\in C\cap \Lambda\backslash \Delta}(\log(\al_r)+\log(q_t)\hat{\sigma}_i)}\mathds{1}_{\sigma_{C\cap\Lambda^c}=-1})}
\end{align*}
and
\begin{align*}
&{A}(\omega_\Lambda,\hat{\sigma}_{\Lambda_{\omega}^1})\\
&=\prod_{C\in \mathcal{C}_{\Delta}(\omega_{\Delta}\eta_{\Lambda\backslash \Delta })} (
\al_r^{\vert C\cap \Delta\vert}\frac{p_t(1,1)^{\hat{\sigma}^+_{C\cap \Delta}}}{p_t(-1,1)^{-\hat{\sigma}^-_{C\cap \Delta}}}\mathds{1}_{\sigma_{C\cap\Lambda^c}=1} +\frac{p_t(-1,1)^{\hat{\sigma}^+_{C\cap \Delta}}}{p_t(1,1)^{-\hat{\sigma}^-_{C\cap \Delta}}}e^{- \sum_{i\in C\cap \Lambda\backslash \Delta}(\log(\al_r)+\log(q_t)\hat{\sigma}_i)}\mathds{1}_{\sigma_{C\cap\Lambda^c}=-1}).
\end{align*}
\end{lemma}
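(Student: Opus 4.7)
The plan is to derive the cluster representation directly from the identified formula for $\gamma^{\bar{\boldsymbol{\omega}}}_{t,\Lambda,\Delta,\alpha}(\boldsymbol{\eta}_\Delta\vert\boldsymbol{\eta}_{\Lambda\backslash\Delta})$ stated just before the lemma. Summation against $f$ gives
\[
\gamma^{\bar{\boldsymbol{\omega}}}_{t,\Lambda,\Delta,\alpha}(f\vert\boldsymbol{\eta}_{\Lambda\backslash\Delta}) = \sum_{\omega_\Delta\in\{0,1\}^\Delta}\sum_{\hat\sigma_{\Delta^1_\omega}\in\{-1,1\}^{\Delta^1_\omega}} f(\omega_\Delta,\hat\sigma_{\Delta^1_\omega}0_{\Delta^0_\omega})\,\gamma^{\bar{\boldsymbol{\omega}}}_{t,\Lambda,\Delta,\alpha}(\boldsymbol{\eta}_\Delta\vert\boldsymbol{\eta}_{\Lambda\backslash\Delta}),
\]
and both numerator and denominator of the resulting ratio contain the inner sum $\sum_{\sigma_{\Lambda^1}} I^{hc}_\Lambda(\sigma_{\Lambda^1}0_{\Lambda^0}\bar\sigma_{\Lambda^c})\,\alpha(1)^{\sigma^+_\Lambda}\alpha(-1)^{\sigma^-_\Lambda}\prod_{i\in\Lambda^1}p_t(\sigma_i,\hat\sigma_i)$ coming from Lemma~\ref{lem: ident.}.

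The key step is to cluster-decompose this inner sum. Because of the hard-core indicator $I^{hc}_\Lambda$, every connected component $C$ of the particle configuration $\omega_\Delta\eta_{\Lambda\backslash\Delta}$ must carry a single spin value, so the $\sigma$-sum factorizes over clusters $C\in\mathcal{C}(\omega_\Delta\eta_{\Lambda\backslash\Delta})$: for each cluster one sums over the cluster-wide spin $\sigma\in\{-1,+1\}$, constrained to be consistent with $\bar\sigma_{C\cap\Lambda^c}$ whenever that intersection is non-empty. Both situations are encoded uniformly by the indicators $\mathds{1}_{\sigma_{C\cap\Lambda^c}=\pm 1}$, which are vacuously true when $C\cap\Lambda^c=\emptyset$. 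Clusters in $\mathcal{C}_{\Delta^c}(\omega_\Delta\eta_{\Lambda\backslash\Delta})$ do not intersect $\Delta\cup\partial\Delta$, so their contributions depend neither on $\omega_\Delta$ nor on $\hat\sigma_\Delta$ and cancel between numerator and denominator, leaving only the product over $\mathcal{C}_\Delta(\omega_\Delta\eta_{\Lambda\backslash\Delta})$.

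For a surviving cluster with chosen spin $\sigma$, the contribution $\alpha(\sigma)^{\vert C\cap\Lambda\vert}\prod_{i\in C\cap\Lambda}p_t(\sigma,\hat\sigma_i)$ splits into a $C\cap\Delta$ and a $C\cap(\Lambda\backslash\Delta)$ part. Factoring $\alpha(-1)^{\vert C\cap\Delta\vert}$ out of each cluster (pooled into $\alpha(-1)^{\vert\Delta^1_\omega\vert}$, which combines with the $\alpha(0)^{\vert\Delta^0_\omega\vert}$ prefactor to produce $\alpha(0)^{\vert\Delta^0_\omega\vert}/\alpha(-1)^{-\vert\Delta^1_\omega\vert}$), and factoring out the $\omega_\Delta$-independent quantity $\alpha(1)^{\vert C\cap(\Lambda\backslash\Delta)\vert}\prod_{i\in C\cap(\Lambda\backslash\Delta)}p_t(1,\hat\sigma_i)$ (which cancels between numerator and denominator), one uses the symmetry identity $p_t(-1,\hat\sigma_i)/p_t(1,\hat\sigma_i)=q_t^{-\hat\sigma_i}$ and $\alpha_r=\alpha(1)/\alpha(-1)$ to reduce the $C\cap(\Lambda\backslash\Delta)$-piece to $1$ on the $+1$-branch and to $\exp(-\sum_{i\in C\cap(\Lambda\backslash\Delta)}(\log\alpha_r+\hat\sigma_i\log q_t))$ on the $-1$-branch.

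It remains to treat the $C\cap\Delta$-piece. In the denominator, $\hat\sigma_\Delta$ is summed out; the identity $\sum_{\hat\sigma_i\in\{-1,1\}}p_t(\sigma,\hat\sigma_i)=1$ collapses the $p_t$-factors and the piece reduces to $\alpha_r^{\vert C\cap\Delta\vert}$ on the $+1$-branch and to $1$ on the $-1$-branch, giving exactly the denominator of the claim. In the numerator, $\hat\sigma_{\Delta^1_\omega}$ is kept inside $f$, so the $C\cap\Delta$-contribution $\prod_{i\in C\cap\Delta}p_t(\sigma,\hat\sigma_i)=p_t(1,1)^{\hat\sigma^+_{C\cap\Delta}}p_t(-1,1)^{\hat\sigma^-_{C\cap\Delta}}$ (for $\sigma=+1$; analogously for $\sigma=-1$ using $p_t(-1,-1)=p_t(1,1)$) persists. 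Collecting these with the $\alpha$-factors and the already-computed $C\cap(\Lambda\backslash\Delta)$-piece yields precisely the two branches of the function $A$ of the statement, and dividing by the denominator's cluster product packages everything into the auxiliary function $f_{\eta_{\Lambda\backslash\Delta}}^{\bar\omega_{\Lambda^c}}(\omega_\Delta)$. The main obstacle is the bookkeeping of normalization: one must verify that the common $\omega_\Delta$-independent factors pulled out of the numerator and denominator agree exactly, so that only the cluster-indexed expressions of the claim remain.
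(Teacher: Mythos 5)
Your proposal is correct and follows essentially the same route as the paper's proof: factorize the hard-core indicator over clusters, discard the clusters in $\mathcal{C}_{\Delta^c}$ as $\omega_\Delta$-independent common factors, and reduce the surviving cluster terms via $\al_r$, $q_t$ and $\sum_{\hat{\sigma}_i}p_t(\sigma,\hat{\sigma}_i)=1$, including the key observation that the pulled-out plus-branch factor over $C\cap(\Lambda\backslash\Delta)$ does not depend on $\omega_\Delta$. The only difference is cosmetic bookkeeping: you normalize directly by $\al(1)^{\vert C\cap(\Lambda\backslash\Delta)\vert}\prod_{i\in C\cap(\Lambda\backslash\Delta)}p_t(1,\hat{\sigma}_i)$, whereas the paper passes through the magnetization $m_{C\cap\Lambda\backslash\Delta}$ and the quantities $\rho_{\pm}^{C\backslash\Delta}$, which is equivalent.
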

\begin{proof}
	First we define 
	\begin{align*}
	&U^{\hat{\sigma}_{\Lambda\backslash \Delta},\bar{\boldsymbol{\omega}}}(f,\omega_\Delta\eta_{\Lambda\backslash \Delta})\\
	&=\sum_{\hat{\sigma}_{\Delta_{\omega}^1}\in\{-1,1\}^{\Delta^1_{\omega}}}f(\omega_\Delta,\hat{\sigma}_{\Delta^1_{\omega}}0_{\Delta^0_{\omega}})\sum_{\sigma_{\Lambda_{\omega_\Delta\eta_{\Lambda\backslash \Delta}}^1}\in\{-1,1\}^{\Lambda^1}}I^{hc}_{\Lambda}(\sigma_{\Lambda^1}0_{{\Lambda^0}}\bar{\sigma}_{\Lambda^c})  \al(1)^{\sigma_\Lambda^1}\al(-1)^{\sigma_\Lambda^{-1}}\prod_{i\in\Lambda^1_{\omega_\Delta\eta_{\Lambda\backslash \Delta}}} p_t(\sigma_i,\hat{\sigma}_i)
	\end{align*}
	and with that $\gamma^{\bar{\omega}}_{t,\Lambda,\Delta,\alpha,\beta}$ becomes
	\begin{align*}
	\gamma^{\bar{\omega}}_{t,\Lambda,\Delta,\alpha,\beta}(f\vert {\boldsymbol{\eta}}_{\Lambda\backslash \Delta})=\frac{\sum_{\omega_\Delta\in\{0,1\}^\Delta}\al(0)^{\vert \Delta_{\omega}^0\vert} U^{\hat{\sigma}_{\Lambda\backslash \Delta},\bar{\boldsymbol{\omega}}}(f,\omega_\Delta\eta_{\Lambda\backslash \Delta})}{\sum_{\omega_\Delta\in\{0,1\}^\Delta} \al(0)^{\vert \Delta_{\omega}^0\vert} U^{\hat{\sigma}_{\Lambda\backslash \Delta},\bar{\boldsymbol{\omega}}}(1,\omega_\Delta\eta_{\Lambda\backslash \Delta})}.
	\end{align*}
	The $U$'s can now be rewritten with the help of a cluster representation because the hard-core constraint acts independently on disjoint clusters. In other words we have 
	$I^{hc}_{\Lambda}= \prod_{C\in \mathcal{C}(\omega_{\Delta}\eta_{\Lambda\backslash \Delta }\bar{\omega}_{\Lambda^c}) } I^{hc}_{C\cap \Lambda}$. Hence it follows that
	\begin{align*}
	&U(f,\omega_\Delta\eta_{\Lambda\backslash \Delta}) \\
	&=\sum_{\hat{\sigma}_{\Delta_{\omega}^1}\in\{-1,1\}^{\Delta^1_{\omega}}}f(\omega_\Delta,\hat{\sigma}_{\Delta^1_{\omega}}0_{\Delta^0_{\omega}}) \prod_{C\in \mathcal{C}(\omega_{\Delta}\eta_{\Lambda\backslash \Delta }\bar{\omega}_{\Lambda^c}) } \sum_{\sigma_{C\cap \Lambda}}  I^{hc}_{C\cap \Lambda}(\sigma_{C\cap\Lambda}\bar{\sigma}_{\Lambda^c}) \al(1)^{\sigma_{C\cap\Lambda}^1}\al(-1)^{\sigma_{C\cap\Lambda}^{-1}}\prod_{i\in\Lambda\cap C}p_t(\sigma_i,\hat{\sigma}_i)\\
	&=: U^{\hat{\sigma}_{\Lambda\backslash \Delta},\bar{\boldsymbol{\omega}}}(f,\mathcal{C}(\omega_\Delta\eta_{\Lambda \backslash \Delta}\bar{\omega}_{\Lambda^c}))
	\end{align*}
	Since $f$ is a $\mathcal{F}_\Delta$-measurable function the last expression $U^{\hat{\sigma}_{\Lambda\backslash \Delta},\bar{\boldsymbol{\omega}}}(f,\mathcal{C}(\omega_\Delta\eta_{\Lambda \backslash \Delta}\bar{\omega}_{\Lambda^c}))$ is equal to   $$U^{\hat{\sigma}_{\Lambda\backslash \Delta},\bar{\boldsymbol{\omega}}}(f,\mathcal{C}_\Delta(\omega_\Delta\eta_{\Lambda \backslash \Delta}\bar{\omega}_{\Lambda^c}))U^{\hat{\sigma}_{\Lambda\backslash \Delta},\bar{\boldsymbol{\omega}}}(1,\mathcal{C}_{\Delta^c}(\omega_\Delta\eta_{\Lambda \backslash \Delta}\bar{\omega}_{\Lambda^c})).$$ 
	By the discussion below Definition \ref{Defi: Cluster} the term $U^{\hat{\sigma}_{\Lambda\backslash \Delta},\bar{\boldsymbol{\omega}}}(1,\mathcal{C}_{\Delta^c}(\omega_\Delta\eta_{\Lambda \backslash \Delta}\bar{\omega}_{\Lambda^c}))$ does not depend on $\omega_\Delta$ and consequently will cancel out in 	$\gamma^{\bar{\omega}}_{t,\Lambda,\Delta,\alpha,\beta}(f\vert {\boldsymbol{\eta}}_{\Lambda\backslash \Delta})$.\\
	By defining $f_{\eta_{\Lambda\backslash \Delta}}^{\bar{\omega}_{\Lambda^c}}(\omega_\Delta):=  \frac{ U_{\Delta}^{\hat{\sigma}_{\Lambda\backslash \Delta},\bar{\boldsymbol{\omega}}}(f,\mathcal{C}(\omega_\Delta\eta_{\Lambda \backslash \Delta}\bar{\omega}_{\Lambda^c}))}{ U_{\Delta}^{\hat{\sigma}_{\Lambda\backslash \Delta},\bar{\boldsymbol{\omega}}}(1,\mathcal{C}(\omega_\Delta\eta_{\Lambda \backslash \Delta}\bar{\omega}_{\Lambda^c}))}$ we can rewrite $\gamma^{\bar{\omega}}_{\Lambda,\Delta,\alpha,t}(f\vert {\boldsymbol{\eta}}_{\Lambda\backslash \Delta})$ as
	\begin{align*}
	\gamma^{\bar{\omega}}_{\Lambda,\Delta,\alpha,t}(f\vert {\boldsymbol{\eta}}_{\Lambda\backslash \Delta})=\frac{\sum_{\omega_\Delta\in\{0,1\}^\Delta}\al(0)^{\vert \Delta_{\omega}^0\vert}f_{\eta_{\Lambda\backslash \Delta}\bar{\omega}_{\Lambda^c}}(\omega_\Delta) U_\Delta^{\hat{\sigma}_{\Lambda\backslash \Delta},\bar{\boldsymbol{\omega}}}(1,\mathcal{C}(\omega_\Delta\eta_{\Lambda \backslash \Delta}\bar{\omega}_{\Lambda^c}))}{\sum_{\omega_\Delta\in\{0,1\}^\Delta} \al(0)^{\vert \Delta_{\omega}^0\vert} U_\Delta^{\hat{\sigma}_{\Lambda\backslash \Delta},\bar{\boldsymbol{\omega}}}(1,\mathcal{C}(\omega_\Delta\eta_{\Lambda \backslash \Delta}\bar{\omega}_{\Lambda^c}))}.
	\end{align*}
	We will now focus on $U_\Delta^{\hat{\sigma}_{\Lambda\backslash \Delta},\bar{\boldsymbol{\omega}}}(1,\mathcal{C}(\omega_\Delta\eta_{\Lambda \backslash \Delta}\bar{\omega}_{\Lambda^c}))$ and since the spin values of particles inside a single cluster at time $0$ have all to be equal, one have
	\begin{align*}
	&U_\Delta^{\hat{\sigma}_{\Lambda\backslash \Delta},\bar{\boldsymbol{\omega}}}(1,\mathcal{C}(\omega_\Delta\eta_{\Lambda \backslash \Delta}\bar{\omega}_{\Lambda^c}))\\
	&=\prod_{C\in \mathcal{C}_{\Delta}(\omega_{\Delta}\eta_{\Lambda\backslash \Delta }\bar{\omega}_{\Lambda^c}) } \Big(\al(1)^{\vert C\cap\Lambda\vert}p_t(1,1)^{\hat\sigma^+_{C\cap \Lambda \backslash \Delta}} p_t(-1,1)^{\hat\sigma^-_{C\cap \Lambda \backslash \Delta}} \mathds{1}_{\sigma_{C\cap \Lambda^c = 1}}\\
	&+  \al(-1)^{\vert C\cap\Lambda\vert}p_t(1,1)^{ \hat\sigma^-_{C\cap \Lambda \backslash \Delta}} p_t(-1,1)^{ \hat\sigma^+_{C\cap \Lambda \backslash \Delta}} \mathds{1}_{\sigma_{C\cap \Lambda^c = 1}}\Big).
	\end{align*}
	
	Define the magnetization at time $t$ via $m_{C\cap\Lambda\backslash \Delta}:= \frac{1}{\vert C \cap \Lambda\backslash \Delta\vert }\sum_{i\in C \cap \Lambda \backslash \Delta} \hat{\sigma}_i$ and rewrite the exponents as 
	$\hat{\sigma}_{C\cap \Lambda\backslash \Delta}^{\pm 1} = \vert C \cap \Lambda \backslash \Delta\vert \frac{(1\pm m_{C\cap\Lambda\backslash \Delta})}{2}.$
	With the  magnetization we can obtain for $U_\Delta^{\hat{\sigma}_{\Lambda\backslash \Delta},\bar{\boldsymbol{\omega}}}(1,\mathcal{C}(\omega_\Delta\eta_{\Lambda \backslash \Delta}\bar{\omega}_{\Lambda^c}))$ the expression 
	\begin{align*}
	&U_\Delta^{\hat{\sigma}_{\Lambda\backslash \Delta},\bar{\boldsymbol{\omega}}}(1,\mathcal{C}(\omega_\Delta\eta_{\Lambda \backslash \Delta}\bar{\omega}_{\Lambda^c}))\\
	= &\prod_{C\in \mathcal{C}_\Delta(\omega_{\Delta}\eta_{\Lambda\backslash \Delta }\bar{\omega}_{\Lambda^c}) } \Big(\al(1)^{\vert C\cap \Lambda\vert}p_t(1,1)^{\vert C\cap \Lambda\backslash \Delta\vert}p_t(1,-1)^{\vert C\cap \Lambda\backslash \Delta\vert}\left(\frac{p_t(1,1)}{p_t(1,-1)}\right)^{\frac{m_{C\cap\Lambda\backslash \Delta}\vert C\cap \Lambda \backslash \Delta\vert}{2}}\mathds{1}_{\sigma_{C\cap \Lambda^c}=1}\\
	&+\al(-1)^{\vert C\cap \Lambda\vert}p_t(1,-1)^{\vert C\cap \Lambda\backslash \Delta\vert}p_t(1,1)^{\vert C\cap \Lambda\backslash \Delta\vert}\left(\frac{p_t(1,1)}{p_t(1,-1)}\right)^{\frac{-m_{C\cap\Lambda\backslash \Delta}\vert C\cap \Lambda \backslash \Delta\vert}{2}}\mathds{1}_{\sigma_{C\cap\Lambda^c}=-1}\Big). 
	\end{align*}
	where we used the symmetry of $p_t$.
	Now we can pull out $ (p_t(1,1)p_t(1,-1))^{\frac{1}{2}\vert C\cap \Lambda\backslash \Delta\vert}$ in each term and note that 
	\begin{align*}
	\prod_{C\in \mathcal{C}_\Delta(\omega_\Delta)}	(p_t(1,1)p_t(1,-1))^{\frac{1}{2}\vert C\cap \Lambda\backslash \Delta\vert} = (p_t(+,+)p_t(+,-))^{\frac{1}{2}\vert C_\Delta(\omega_\Delta)\cap \Lambda\backslash \Delta\vert}
	\end{align*} 
	which does not depend on $\omega_\Delta$ so we put this term in some constant $c$ which will cancel out later with the corresponding term in the denominator. For the next step we define quantities which only depend on the coloring at some positive time 
	$\rho_{\pm}^{C\backslash \Delta} = (\al(\pm 1)q_t^{\frac{\pm m_{C\cap \Lambda\backslash \Delta}}{2} })^{\vert \mathcal{C}\cap \Lambda \backslash \Delta\vert} $
	and rewrite 
	\begin{align*}
	&U_\Delta^{\hat{\sigma}_{\Lambda\backslash \Delta},\bar{\boldsymbol{\omega}}}(1,\mathcal{C}(\omega_\Delta\eta_{\Lambda \backslash \Delta}\bar{\omega}_{\Lambda^c}))\\
	&= c\prod_{C\in \mathcal{C}_\Delta(\omega_{\Delta}\eta_{\Lambda\backslash \Delta }\bar{\omega}_{\Lambda^c}) } 
	(\al(1)^{\vert C\cap \Delta\vert}\rho_+^{C\backslash \Delta}\mathds{1}_{\sigma_{C\cap\Lambda^c}=1}
	+\al(-1)^{\vert C\cap \Delta\vert}\rho_-^{C\backslash \Delta}\mathds{1}_{\sigma_{C\cap\Lambda^c}=-1})\\
	&= c\al(-1)^{\vert\Delta^1_{\omega}\vert}\prod_{C\in \mathcal{C}_\Delta(\omega_{\Delta}\eta_{\Lambda\backslash \Delta }\bar{\omega}_{\Lambda^c}) }\rho_+^{C\backslash \Delta} (\frac{\al(1)}{\al(-1)}^{\vert C\cap \Delta\vert}\mathds{1}_{\sigma_{C\cap\Lambda^c}=1}
	+\frac{\rho_-^{C\backslash \Delta}}{\rho_+^{C\backslash \Delta}}\mathds{1}_{\sigma_{C\cap\Lambda^c}=-1}).
	\end{align*}
	The product $\prod_{C\in \mathcal{C}_\Delta(\omega_\Delta)}\rho_+^{C\backslash \Delta }$ does not depend on $\omega_\Delta$ which sounds a bit strange, but if $\Delta$ completely contains some cluster $C$ then $\rho_+^{C\backslash \Delta }$ is equal $1$. Now if $\vert C\cap \Lambda \backslash \Delta\vert>0  $ and  $C \in \mathcal{C}_\Delta(\omega_\Delta)$ then the product depends only on the points in $\Delta^c$. This implies that the product is independent of $\omega_\Delta$ and we can put it into the constant. By definition ${\al}_r $ and $q_t$ 
	we get
	\begin{align*}
	&U_\Delta^{\hat{\sigma}_{\Lambda\backslash \Delta},\bar{\boldsymbol{\omega}}}(1,\mathcal{C}(\omega_\Delta\eta_{\Lambda \backslash \Delta}\bar{\omega}_{\Lambda^c}))\\
	&=c\al(-1)^{\vert\Delta^1_{\omega}\vert}\prod_{C\in \mathcal{C}_\Delta(\omega_{\Delta}\eta_{\Lambda\backslash \Delta }\bar{\omega}_{\Lambda^c}) } 
	\Big(\al_r^{\vert C\cap \Delta\vert}\mathds{1}_{\sigma_{C\cap\Lambda^c}=1}
	+\al_r^{-\vert C\cap \Lambda\backslash \Delta\vert}q_t^{-\sum_{i\in C\cap \Lambda\backslash \Delta}\hat{\sigma}_i}\mathds{1}_{\sigma_{C\cap\Lambda^c}=-1}\Big).
	\end{align*}
	The expression for $U_\Delta^{\hat{\sigma}_{\Lambda\backslash \Delta},\bar{\boldsymbol{\omega}}}(f,\mathcal{C}(\omega_\Delta\eta_{\Lambda \backslash \Delta}\bar{\omega}_{\Lambda^c}))$ can be obtained by following the same steps as above with some additional term $\prod_{i\in C\cap \Delta}p_t$. 
	This concludes the proof.

\end{proof}

Different to the soft-core case we do not take the limit of $\gamma^{\bar{\boldsymbol{\omega}}}_{\Lambda,\Delta,\alpha,t}$. It is not clear if the limit would exists. The only parts in $\gamma^{\bar{\boldsymbol{\omega}}}_{\Lambda,\Delta,\alpha,t}$ which depend on sites in $\Delta^c$ are the  exponentials $e^{- \sum_{i\in C\cap \Lambda\backslash \Delta}(\log(\al_r)+\log(q_t)\hat{\sigma}_i)}$. If we ignore these parts for infinite clusters we can define a probability kernel

\begin{definition}
	Let $\al\in \mathcal{M}_1(E)$, $t>0$ and $\Delta \Subset \Z^d$. Then for all $\mathcal{F}_\Delta$-measurable bounded functions $f$ and all $\boldsymbol{\eta}\in \tilde{\Omega}$  we define
	\begin{align*}
	&\gamma^\infty_{\Delta,\alpha,t}(f\vert {\boldsymbol{\eta}}_{ \Delta^c})=\\
	&	\frac{\sum_{\omega_\Delta\in\{0,1\}^\Delta} \frac{\al(0)^{\Delta_\omega^0}}{\al(-1)^{-\Delta_\omega^1}}f_{\eta_{ \Delta^c}}(\omega_\Delta) \prod_{C\in \mathcal{C}_{\Delta}(\omega_{\Delta}\eta_{ \Delta^c })} (
		\al_r^{\vert C\cap \Delta\vert} +e^{- \sum_{i\in C\cap \Delta^c}(\log(\al_r)+\log(q_t)\hat{\sigma}_i)}\mathds{1}_{\vert C \vert <\infty})}{\sum_{\omega_\Delta\in\{0,1\}^\Delta} \frac{\al(0)^{\Delta_\omega^0}}{\al(-1)^{-\Delta_\omega^1}}\prod_{C\in \mathcal{C}_{\Delta}(\omega_{\Delta}\eta_{ \Delta^c })} (
		\al_r^{\vert C\cap \Delta\vert} +e^{- \sum_{i\in C\cap \Delta^c}(\log(\al_r)+\log(q_t)\hat{\sigma}_i)}\mathds{1}_{\vert C \vert <\infty})}
	\end{align*}
	with 
	\begin{align*}
	&f_{\eta_{\Delta^c}}(\omega_\Delta)=
	\frac{\sum_{\hat{\sigma}_{\Delta_{\omega}^1}\in\{-1,1\}^{\Delta^1_{\omega}}}f(\omega_\Delta,\hat{\sigma}_{\Delta^1_{\omega}}0_{\Delta^0_{\omega}})\tilde{A}(\omega_\Delta,\hat{\sigma}_{\Delta_{\omega}^1}\hat{\sigma}_{{\Delta^c}_\eta^1})}{\prod_{C\in \mathcal{C}_{\Delta}(\omega_{\Delta}\eta_{ \Delta^c })} (
		\al_r^{\vert C\cap \Delta\vert} +e^{- \sum_{i\in C\cap \Delta^c}(\log(\al_r)+\log(q_t)\hat{\sigma}_i)}\mathds{1}_{\vert C \vert <\infty})}
	\end{align*}
	and
	\begin{align*}
	&\tilde{A}(\omega_\Delta,\hat{\sigma}_{\Delta_{\omega}^1}\hat{\sigma}_{{\Delta^c}_\eta^1})\\
	&=\prod_{C\in \mathcal{C}_{\Delta}(\omega_{\Delta}\eta_{\Delta^c })} (
	\al_r^{\vert C\cap \Delta\vert}\frac{p_t(1,1)^{\hat{\sigma}^+_{C\cap \Delta}}}{p_t(-1,1)^{-\hat{\sigma}^-_{C\cap \Delta}}}+\frac{p_t(-1,1)^{\hat{\sigma}^+_{C\cap \Delta}}}{p_t(1,1)^{-\hat{\sigma}^-_{C\cap \Delta}}}e^{- \sum_{i\in C\cap  \Delta^c}(\log(\al_r)+\log(q_t)\hat{\sigma}_i)}\mathds{1}_{\vert C \vert <\infty}).
	\end{align*}
\end{definition}
In the next theorem we prove that $(\gamma^\infty_{\Delta,\alpha,t})_{\Delta \Subset \Z^d}$ defines a specification.
\begin{theorem}\label{thm: time har spec}
	For all $t>0$ and all $\al \in \mathcal{M}_1(E)$ the family of probability kernels $(\gamma^\infty_{\Delta,\alpha,t})_{\Delta \Subset \Z^d}$ is a specification.
\end{theorem}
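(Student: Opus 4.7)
The plan is to verify the two defining properties of a specification, namely properness and consistency.

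For properness, one must show that for every $B \in \mathcal{F}_{\Delta^c}$ and every $\boldsymbol{\eta}\in\tilde\Omega$ we have $\gamma^\infty_{\Delta,\alpha,t}(B|\boldsymbol{\eta}_{\Delta^c}) = \mathds{1}_B(\boldsymbol{\eta})$. Since the summations in the definition range only over configurations on $\Delta$, the indicator $\mathds{1}_B(\boldsymbol{\eta})$ is constant in these summation variables and factors out of numerator and denominator. The remaining factor reduces to $1$ by the elementary identity $p_t(1,\pm 1)+p_t(-1,\pm 1)=1$ applied cluster by cluster inside the product defining $\tilde A$, which exactly reproduces the cluster weight appearing in the denominator of $\gamma^\infty_{\Delta,\alpha,t}$.

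For consistency, the approach is to identify $\gamma^\infty_{\Delta,\alpha,t}$ as a pointwise limit of the finite-volume kernels $\gamma^{\bar{\boldsymbol{\omega}}}_{t,\Lambda_N,\Delta,\alpha}$ from Lemma \ref{lem: cluster rep}, for a specific $N$- and $\boldsymbol{\eta}$-dependent boundary condition. Fix a sequence $\Lambda_N \uparrow \Z^d$ and let $\bar{\boldsymbol{\omega}}^{(N,\boldsymbol{\eta})}$ denote the time-zero boundary configuration with occupations $\bar\omega_i = \eta_i$ and spins $\bar\sigma_i = +1$ for $i \in \Lambda_N^c$. Comparing the two cluster expansions term by term shows: (i) any finite cluster $C$ in $(\omega_\Delta,\boldsymbol{\eta}_{\Delta^c})$ is eventually contained in $\Lambda_N$, so both indicators $\mathds{1}_{\sigma_{C\cap\Lambda_N^c}=\pm 1}$ hold vacuously and both spin options contribute, producing the cluster weight $\al_r^{|C\cap\Delta|} + e^{-\sum_{i\in C\cap\Delta^c}(\log \al_r+\log q_t\,\hat\sigma_i)}$ appearing in $\gamma^\infty$; (ii) any infinite cluster $C$ satisfies $C\cap\Lambda_N^c\neq\emptyset$ for every $N$, and the choice $\bar\sigma = +1$ selects only the first term $\al_r^{|C\cap\Delta|}$, in exact agreement with the indicator $\mathds{1}_{|C|<\infty}$ in the definition of $\gamma^\infty$. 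Hence $\gamma^{\bar{\boldsymbol{\omega}}^{(N,\boldsymbol{\eta})}}_{t,\Lambda_N,\Delta,\alpha}(f|\boldsymbol{\eta}_{\Lambda_N\setminus\Delta}) \to \gamma^\infty_{\Delta,\alpha,t}(f|\boldsymbol{\eta}_{\Delta^c})$ pointwise in $\boldsymbol{\eta}$ for every local bounded $f$.

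The finite-volume kernels $\gamma^{\bar{\boldsymbol{\omega}}^{(N,\boldsymbol{\eta})}}_{t,\Lambda_N,\cdot,\alpha}$ are conditional probabilities of a single finite-volume probability measure $\gamma^{\bar{\boldsymbol{\omega}}^{(N,\boldsymbol{\eta})}}_{\Lambda_N,\alpha,t}$ (the two-layer measure introduced at the start of Section~\ref{Sec: 4}), and therefore satisfy the elementary Kolmogorov consistency $\gamma^{\bar{\boldsymbol{\omega}}^{(N,\boldsymbol{\eta})}}_{t,\Lambda_N,\Lambda,\alpha}\bigl(\gamma^{\bar{\boldsymbol{\omega}}^{(N,\boldsymbol{\eta})}}_{t,\Lambda_N,\Delta,\alpha}(f|\cdot)\bigm|\boldsymbol{\eta}\bigr) = \gamma^{\bar{\boldsymbol{\omega}}^{(N,\boldsymbol{\eta})}}_{t,\Lambda_N,\Delta,\alpha}(f|\boldsymbol{\eta})$ for $\Delta\subset\Lambda\subset\Lambda_N$ and $f$ local bounded. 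Taking $N\to\infty$ and using the pointwise convergence above together with the finiteness of the outer sum over $\tau_\Lambda \in \tilde\Omega_\Lambda$ transfers the consistency to $\gamma^\infty$. The main obstacle is to coordinate the $\mathds{1}_{|C|<\infty}$ indicators across the two reference configurations appearing in the composed kernel $\gamma^\infty_{\Lambda,\alpha,t}\circ\gamma^\infty_{\Delta,\alpha,t}$: a cluster $C$ must have the same finiteness status in $(\omega_\Delta,\tau_{\Delta^c})$ and in $(\omega_\Delta\tau_{\Lambda\setminus\Delta},\boldsymbol{\eta}_{\Lambda^c})$, which holds because cluster finiteness depends only on occupations outside any fixed finite window and these are identical once $\tau_{\Lambda^c} = \boldsymbol{\eta}_{\Lambda^c}$ is enforced by the outer kernel.
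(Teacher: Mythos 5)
Your properness argument is in substance the same as the paper's: since the clusters in $\mathcal{C}_\Delta(\omega_\Delta\eta_{\Delta^c})$ occupy pairwise disjoint sets of sites of $\Delta$, the sum over $\hat\sigma_{\Delta^1_\omega}$ factorizes over clusters, and $p_t(1,1)+p_t(1,-1)=1$ collapses $\tilde A$ to exactly the cluster weight of the denominator, so $f_{\boldsymbol\eta_{\Delta^c}}=\mathds{1}_B$ for $B\in\mathcal{F}_{\Delta^c}$. For consistency, however, you take a genuinely different route. The paper pulls out $\prod_C \al_r^{\vert C\cap\Delta\vert}$, rewrites the kernels in terms of the finite clusters only, and then verifies consistency by a direct algebraic computation in that representation. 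You instead realize $\gamma^\infty_{\Delta,\alpha,t}(f\vert\boldsymbol\eta_{\Delta^c})$ as the large-$N$ limit of the finite-volume kernels of Lemma \ref{lem: cluster rep} with boundary condition given by the occupation field of $\eta$ and all-plus first-layer spins, and then transfer the tower property of the finite-volume two-layer measure. This works, and in fact the convergence is eventual equality: $\mathcal{C}_\Delta(\omega_\Delta\eta_{\Delta^c})$ contains only finitely many clusters and there are finitely many $\omega_\Delta$ (and, for the composed kernel, finitely many $\boldsymbol\tau_\Lambda$), so for $N$ large all finite clusters lie in $\Lambda_N$ while the all-plus boundary spins select precisely the term prescribed by $\mathds{1}_{\vert C\vert<\infty}$ for the infinite ones. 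The all-plus choice also keeps the finite-volume denominators strictly positive (no conflicting boundary spins can trap a cluster), a point worth stating since the kernels of Lemma \ref{lem: cluster rep} are otherwise only defined where the conditioning has positive probability. Your observation that properness of the outer kernel forces $\tau_{\Lambda^c}=\boldsymbol\eta_{\Lambda^c}$, so that a cluster's finiteness status is the same in both layers of the composition, is exactly the right point to make. What your approach buys is that it avoids the unilluminating ``usual computation''; what it costs is the extra bookkeeping of an $N$- and $\eta$-dependent boundary condition.

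One correction is needed: your displayed ``Kolmogorov consistency'' is misstated. For conditional probabilities of the single measure $\gamma^{\bar{\boldsymbol\omega}^{(N,\boldsymbol\eta)}}_{\Lambda_N,\alpha,t}$ and $\Delta\subset\Lambda\subset\Lambda_N$, the tower property gives
\begin{align*}
\gamma^{\bar{\boldsymbol\omega}^{(N,\boldsymbol\eta)}}_{t,\Lambda_N,\Lambda,\alpha}\Big(\gamma^{\bar{\boldsymbol\omega}^{(N,\boldsymbol\eta)}}_{t,\Lambda_N,\Delta,\alpha}(f\vert\cdot)\Big\vert\boldsymbol\eta\Big)=\gamma^{\bar{\boldsymbol\omega}^{(N,\boldsymbol\eta)}}_{t,\Lambda_N,\Lambda,\alpha}(f\vert\boldsymbol\eta),
\end{align*}
with the $\Lambda$-kernel, not the $\Delta$-kernel, on the right; the identity as you wrote it fails already for $f$ depending on sites of $\Lambda\setminus\Delta$, and it is precisely the $\Lambda$-version that must be passed to the limit to obtain the specification consistency $\gamma^\infty_{\Lambda,\alpha,t}\gamma^\infty_{\Delta,\alpha,t}=\gamma^\infty_{\Lambda,\alpha,t}$. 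With that identity corrected, your limit transfer goes through as described.
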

\begin{proof}
	First by defining the subset of finite cluster $\mathcal{C}^\mathrm{f}_{\Delta}(\omega_{\Delta}\eta_{\Delta^c })$ of $\mathcal{C}_{\Delta}(\omega_{\Delta}\eta_{\Delta^c })$ we can rewrite the probability kernels as
	\begin{align*}
	\gamma^\infty_{\Delta,\alpha,t}(f\vert {\boldsymbol{\eta}}_{ \Delta^c})=
	\frac{\sum_{\omega_\Delta\in\{0,1\}^\Delta} \frac{\al(0)^{\Delta_\omega^0}}{\al(1)^{-\Delta_\omega^1}}f_{\boldsymbol{\eta}_{ \Delta^c}}(\omega_\Delta) \prod_{C\in \mathcal{C}^\mathrm{f}_{\Delta}(\omega_{\Delta}\eta_{ \Delta^c })} (1
		+\al_r^{-\vert C\cap \Delta\vert}e^{- \sum_{i\in C\cap \Delta^c}(\log(\al_r)+\log(q_t)\hat{\sigma}_i)})}{\sum_{\omega_\Delta\in\{0,1\}^\Delta} \frac{\al(0)^{\Delta_\omega^0}}{\al(1)^{-\Delta_\omega^1}}\prod_{C\in \mathcal{C}^f_{\Delta}(\omega_{\Delta}\eta_{ \Delta^c })} (1
		+\al_r^{-\vert C\cap \Delta\vert}e^{- \sum_{i\in C\cap \Delta^c}(\log(\al_r)+\log(q_t)\hat{\sigma}_i)})}
	\end{align*}
	with 
	\begin{align*}
	f_{\boldsymbol{\eta}_{\Delta^c}}(\omega_\Delta)= \frac{\sum_{\hat{\sigma}_{\Delta_{\omega}^1}\in\{-1,1\}^{\Delta^1_{\omega}}}f(\omega_\Delta\eta_{\Delta^c},\hat{\sigma}_{\Delta^1_{\omega}}0_{\Delta^0_{\omega}}\hat{\sigma}_{\Delta^c})\tilde{A}_{\boldsymbol{\eta}_{ \Delta^c}}(\omega_\Delta,\hat{\sigma}_{\Delta_{\omega}^1})}{\prod_{C\in \mathcal{C}^\mathrm{f}_{\Delta}(\omega_{\Delta}\eta_{ \Delta^c })} (1
		+\al_r^{-\vert C\cap \Delta\vert}e^{- \sum_{i\in C\cap \Delta^c}(\log(\al_r)+\log(q_t)\hat{\sigma}_i)})}
	\end{align*}
	and
	\begin{align*}
	\tilde{A}_{\boldsymbol{\eta}_{ \Delta^c}}(\omega_\Delta,\hat{\sigma}_{\Delta_{\omega}^1})
	=\frac{p_t(1,1)^{\vert \hat{\sigma}_\Delta^+\vert}}{p_t(-1,1)^{-\vert \hat{\sigma}_\Delta^-\vert}}\prod_{C\in \mathcal{C}^\mathrm{f}_{\Delta}(\omega_{\Delta}\eta_{\Delta^c })} (
	1+e^{-(\sum_{i\in C\cap \Delta}\log(\al_r)+\log(q_t)\hat{\sigma}_i+ \sum_{i\in C\cap  \Delta^c}\log(\al_r)+\log(q_t)\hat{\sigma}_i)}).
	\end{align*}
	{Note that after pulling out $\prod_{C\in\mathcal{C}_{\Delta}(\omega_\Delta)}\a_r^{\vert C \cap \Delta \vert}$ the term after the first sum $\frac{\al(0)^{\Delta_\omega^0}}{\al(-1)^{-\Delta_\omega^1}}$ has changed to $\frac{\al(0)^{\Delta_\omega^0}}{\al(1)^{-\Delta_\omega^1}}$}. We have to check consistency and properness of these kernels. For properness let $B\in \mathcal{F}_{\Delta^c}$ then 	$f_{\eta_{\Delta^c}}(\omega_\Delta)=\mathds{1}_B$ since $\mathds{1}_B$ does not depend on the sum. Consequently $\gamma^\infty_{\Delta,\alpha,t}(\mathds{1}_B\vert {\boldsymbol{\eta}}_{ \Delta^c})= \mathds{1}_{B}$ which implies properness. 
	In this representation  consistency follows by the usual computation.
\end{proof}

For regimes where $\log(\al_r)+\log(q_t)\hat{\sigma}_i$ is strictly positive we could define $\gamma^\infty_{\Delta,\alpha,t}$ without the exclusion of infinite clusters. Hence in such regimes $\gamma^\infty_{\Delta,\alpha,t}$ might define a specification for the time-evolved measure and is quasilocal. To find the right regime we have only to check  the case where $\hat{\sigma}_i=-1$ since $q_t>1$ and we will later assume that $\a_r>1$. By this it follows that $t> \arccotanh(\a_r)=t_G$. We are now in the same situation as in \cite{jahnel-kuelske17b} and the following proofs of the lemmas are adaption of proofs in \cite{jahnel-kuelske17b}.

\begin{lemma}\label{lem: gamma infty adm}
	Let $\al\in\mathcal{M}_1(E)$ with $\al(-1)<\al(1)$  and $\mu^+\in\mathcal{G}(\gamma^{hc}_\al)$. Then $\mu_t^+$ is admitted by $(\gamma^\infty_{\Delta,\alpha,t})_{\Delta \Subset \Z^d}$ for all $t\geq t_G$.
\end{lemma}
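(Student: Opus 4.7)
The plan is to derive the DLR identity $\mu^+_t(f)=\mu^+_t(\gamma^\infty_{\Delta,\al,t}(f\vert\cdot))$ for all local bounded $f$ and all $\Delta\Subset\Z^d$, by taking the thermodynamic limit of the finite-volume cluster representation provided by Lemma~\ref{lem: cluster rep}. Since $\mu^+$ is the infinite-volume limit $\lim_{\Lambda\uparrow\Z^d}\gamma^{hc}_{\Lambda,\al}(\cdot\vert\omega^+)$ and the spin-flip kernel acts independently at each site, one has
\[
\mu^+_t(f)=\lim_{\Lambda\uparrow\Z^d}\gamma^{\bar{\boldsymbol{\omega}}}_{\Lambda,\al,t}(f)=\lim_{\Lambda\uparrow\Z^d}\gamma^{\bar{\boldsymbol{\omega}}}_{\Lambda,\al,t}\bigl(\gamma^{\bar{\boldsymbol{\omega}}}_{\Lambda,\Delta,\al,t}(f\vert\cdot_{\Lambda\backslash\Delta})\bigr),
\]
where $\bar{\boldsymbol{\omega}}$ is the identified all-plus configuration ($\bar\omega_j=\bar\sigma_j=1$ for all $j$). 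It therefore suffices to identify the $\mu^+_t$-a.s.\ limit of the inner conditional kernel with $\gamma^\infty_{\Delta,\al,t}(f\vert\cdot)$ and then pass the limit through the outer integration.

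Substituting $\bar\sigma\equiv 1$ into the formula of Lemma~\ref{lem: cluster rep} kills the indicators $\mathds{1}_{\sigma_{C\cap\Lambda^c}=-1}$ on every cluster $C$ that touches $\Lambda^c$, leaving only the factor $\a_r^{\vert C\cap\Delta\vert}$; clusters $C$ fully contained in $\Lambda$ retain both summands, and for them $C\cap\Lambda\backslash\Delta=C\cap\Delta^c$, so the second summand already reads $e^{-\sum_{i\in C\cap\Delta^c}(\log\a_r+\log q_t\hat\sigma_i)}$, in perfect agreement with the $\mathds{1}_{\vert C\vert<\infty}=1$ branch of $\gamma^\infty_{\Delta,\al,t}$. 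The pointwise convergence then reduces to the observation that, for $\mu^+_t$-a.e.\ $\boldsymbol{\eta}$ and each $\omega_\Delta\in\{0,1\}^\Delta$, the cluster decomposition of $\omega_\Delta\eta_{\Lambda\backslash\Delta}\bar\omega_{\Lambda^c}$ stabilizes to that of $\omega_\Delta\eta_{\Delta^c}$ in a neighbourhood of $\Delta$: finite clusters of the infinite-volume occupation are eventually contained in the interior of $\Lambda$ (so their finite-volume factor converges to the $\gamma^\infty$-factor exactly), while infinite clusters always meet $\Lambda^c$ (so both the finite- and infinite-volume cluster factors collapse to $\a_r^{\vert C\cap\Delta\vert}$, the $\mathds{1}_{\vert C\vert<\infty}=0$ branch of $\gamma^\infty$).

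The assumption $t\geq t_G$ is equivalent to $q_t\leq\a_r$, and it enters to control the exponential factors on the finite-cluster branch: the per-site weight $\log\a_r+\log q_t\hat\sigma_i$ is then nonnegative in both spin values ($\log(\a_r q_t)>0$ for $\hat\sigma_i=1$ and $\log(\a_r/q_t)\geq 0$ for $\hat\sigma_i=-1$), so that these exponentials stay uniformly bounded by $1$ in $\Lambda$, allowing dominated convergence inside the finite sum over $\omega_\Delta$. The subtle case is $t=t_G$, where $\log(\a_r/q_t)=0$ and the exponent fails to decay on the $-1$-sites of an infinite cluster; one then exploits the independent Bernoulli structure of the spin-flip dynamics to conclude, via a Borel--Cantelli argument, that $\mu^+_t$-a.s.\ every infinite occupation cluster carries infinitely many $+1$-sites, whose positive $\log(\a_r q_t)$ contribution still drives the finite-volume exponential to zero, recovering the infinite-cluster branch of $\gamma^\infty$.

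To close, combine the pointwise a.s.\ convergence of $\gamma^{\bar{\boldsymbol{\omega}}}_{\Lambda,\Delta,\al,t}(f\vert\cdot_{\Lambda\backslash\Delta})$ with the uniform bound $\Vert\gamma^{\bar{\boldsymbol{\omega}}}_{\Lambda,\Delta,\al,t}(f\vert\cdot)\Vert_\infty\leq\Vert f\Vert_\infty$ and the weak convergence $\gamma^{\bar{\boldsymbol{\omega}}}_{\Lambda,\al,t}\to\mu^+_t$: dominated convergence yields $\mu^+_t(fg)=\mu^+_t(\gamma^\infty_{\Delta,\al,t}(f\vert\cdot)g)$ for every $\mathcal{F}_{\Delta^c}$-measurable bounded $g$, which is the DLR equation. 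The main obstacle I foresee is the critical time $t=t_G$: matching the infinite-cluster contributions there requires the Borel--Cantelli input just described, while for $t>t_G$ the identification of factors is purely combinatorial.
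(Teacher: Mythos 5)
Your overall route --- the all-plus finite-volume cluster representation of Lemma \ref{lem: cluster rep}, identification of its cluster factors with those of $\gamma^\infty_{\Delta,\al,t}$, and then a passage to the limit --- is the same as the paper's, which reduces the lemma (as in Lemma \ref{lem: admi. Dob}) to showing $\lim_{\Lambda\uparrow\Z^d}\sup_{\eta\in\Omega}\vert\gamma^{\bar{\boldsymbol{\omega}}}_{\Lambda,\Delta,\al,t}(f\vert\eta_{\Lambda\backslash\Delta})-\gamma^\infty_{\Delta,\al,t}(f\vert\eta_{\Delta^c})\vert=0$ and controls the error by the exponential decay along clusters leaving $\Lambda$ (details as in Proposition 4.4 of the continuum reference). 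Your closing step, however, has a genuine gap: you combine pointwise $\mu^+_t$-a.s.\ convergence of the integrand $\gamma^{\bar{\boldsymbol{\omega}}}_{\Lambda,\Delta,\al,t}(f\vert\cdot)$ with the weak convergence $\gamma^{\bar{\boldsymbol{\omega}}}_{\Lambda,\al,t}\to\mu^+_t$ and invoke dominated convergence. Dominated convergence is a statement about a \emph{fixed} measure; here both the integrand and the measure vary with $\Lambda$, and a.s.\ convergence of the integrands plus local convergence of the measures does not yield convergence of the integrals --- all the more so because the limit kernel $\gamma^\infty_{\Delta,\al,t}(f\vert\cdot)$ is not a priori continuous (its quasilocality is proved only separately, in Lemma \ref{lem: gamma infty quasi}, and fails for $t<t_G$; indeed this is exactly the mechanism behind Theorem \ref{thm: non-gibbs hc}). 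What closes the argument, and what the paper uses, is uniformity in the conditioning $\eta$: the only discrepancy between the two kernels comes from finite clusters in $\mathcal{C}_\Delta(\omega_\Delta\eta_{\Delta^c})$ that reach $\partial\Lambda$, and for $t>t_G$ their extra factor $e^{-\sum_{i\in C\cap\Delta^c}(\log\al_r+\log(q_t)\hat\sigma_i)}$ is bounded by $e^{-\log(\al_r/q_t)\,\mathrm{dist}(\Delta,\Lambda^c)}$ uniformly in $\eta$; with this sup-norm estimate one then runs the triangle-inequality scheme of Lemma \ref{lem: admi. Dob} (intermediate volume, extremal representation of $\mu^+$). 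Your proposal never establishes this uniformity, and without it the DLR equation does not follow.

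Relatedly, your diagnosis of where $t\geq t_G$ enters is misplaced. With the all-plus time-zero boundary condition the indicator $\mathds{1}_{\sigma_{C\cap\Lambda^c}=-1}$ vanishes for every cluster touching $\Lambda^c$, so for each \emph{fixed} $\eta$ the finite- and infinite-volume factors agree exactly as soon as $\Lambda$ contains the finitely many finite clusters of $\mathcal{C}_\Delta(\omega_\Delta\eta_{\Delta^c})$: pointwise convergence holds for every $t$ and needs neither control of exponentials nor a Borel--Cantelli argument. The condition $q_t<\al_r$, i.e.\ $t>t_G$, is needed precisely for the uniformity over $\eta$ just described (configurations whose finite clusters extend from $\Delta$ beyond $\partial\Lambda$), and at $t=t_G$ this uniform bound is genuinely lost (a long chain of $-1$-spins has per-site weight $\log(\al_r/q_t)=0$). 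Your Borel--Cantelli remedy concerns infinite clusters, which are not the problematic objects, so it repairs a non-issue while the actual obstruction --- the limit exchange, which requires the uniform estimate --- remains; note also that the paper's own proof (consistently with Theorem \ref{thm: hc is gibbs}) only treats $t>t_G$.
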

\begin{proof}
{	Similarly as in the proof of Lemma  \ref{lem: admi. Dob} it is enough to prove that $$\lim_{\Lambda\uparrow \Z^d}\sup_{\eta\in \Omega}\vert \gamma^{\bar{\boldsymbol{\omega}}}_{\Lambda,\Delta,\alpha,t}(f\vert \eta_{\Lambda\backslash\Delta})-\gamma^\infty_{\Delta,\alpha,\beta,t}(f\vert \eta_{ \Delta^c})\vert=0$$
	for every local bounded function $f$. Since $t>t_G$ the exponential in $\gamma^{\bar{\boldsymbol{\omega}}}_{\Lambda,\Delta,\alpha,t}(f\vert \eta_{\Lambda\backslash\Delta})$  behaves fine and goes to zero for increasing $\Lambda$. Hence one can bound effects which are related to infinite clusters uniformly. 
For further details on the proof see Proposition $4.4$ in \cite{jahnel-kuelske17b}.}
\end{proof}

\begin{lemma}\label{lem: gamma infty quasi}
	Let $\al\in\mathcal{M}_1(E)$ with $\al(-1)<\al(1)$ and $t>t_G$. Then the specification $(\gamma^\infty_{\Delta,\alpha,t})_{\Delta \Subset \Z^d}$ is quasilocal.
\end{lemma}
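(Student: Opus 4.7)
The plan is to exploit the exponential decay built into the cluster representation of Theorem \ref{thm: time har spec} together with the fact that, for $t>t_G$, the constant
\[
c_0\;:=\;\log(\al_r)-\log(q_t)\;=\;\min_{\hat\sigma\in\{-1,1\}}\bigl(\log\al_r+\log q_t\,\hat\sigma\bigr)
\]
is strictly positive; this follows from $q_{t_G}=\al_r$ and the monotonicity of $q_t=\coth(t)$. Consequently every nontrivial summand of the cluster product obeys
\[
\al_r^{-|C\cap\Delta|}\exp\!\Bigl(-\sum_{i\in C\cap\Delta^c}\bigl(\log\al_r+\log q_t\,\hat\sigma_i\bigr)\Bigr)\;\leq\;e^{-c_0\,|C\cap\Delta^c|},
\]
and this uniform decay is what will make the $\boldsymbol\eta_{\Lambda^c}$-dependence vanish in the limit.

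Concretely, I fix a local bounded $f$ with $f\in\mathcal{F}_V$ for some $V\Subset\Z^d$, take $\Lambda\supset V\supset\Delta$, and consider boundary conditions $\boldsymbol\eta,\bar{\boldsymbol\eta}\in\tilde\Omega$ with $\boldsymbol\eta_\Lambda=\bar{\boldsymbol\eta}_\Lambda$. For each $\omega_\Delta\in\{0,1\}^\Delta$ I split the clusters in $\mathcal{C}^{\mathrm{f}}_\Delta(\omega_\Delta\eta_{\Delta^c})\cup\mathcal{C}^{\mathrm{f}}_\Delta(\omega_\Delta\bar\eta_{\Delta^c})$ meeting $\Delta\cup\partial\Delta$ into \emph{short} ones, contained in $\Lambda$ under both $\boldsymbol\eta$ and $\bar{\boldsymbol\eta}$, and \emph{long} ones, touching $\Lambda^c$ under at least one of them. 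By maximality and $\boldsymbol\eta_\Lambda=\bar{\boldsymbol\eta}_\Lambda$ the short clusters agree as sets and carry identical $\hat\sigma$-values, so their factors in both numerator and denominator of $\gamma^\infty_\Delta(f\vert\cdot)$, as well as in the auxiliary quantity $\tilde A_{\boldsymbol\eta_{\Delta^c}}$, coincide for $\boldsymbol\eta$ and $\bar{\boldsymbol\eta}$. Any long cluster contains a connected path from $\Delta\cup\partial\Delta$ to $\Lambda^c$, so $|C\cap\Delta^c|\geq d(\partial\Delta,\Lambda^c)$; since at most $|\Delta\cup\partial\Delta|$ clusters meet $\Delta\cup\partial\Delta$, the product of long-cluster factors lies in the interval $[1,\,(1+e^{-c_0\,d(\partial\Delta,\Lambda^c)})^{|\Delta\cup\partial\Delta|}]$.

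Applying the same decomposition to $\tilde A_{\boldsymbol\eta_{\Delta^c}}$, whose $\boldsymbol\eta_{\Lambda^c}$-dependence sits in an analogous cluster product with the same exponential decay, lets me factor both numerator and denominator of $\gamma^\infty_\Delta(f\vert\boldsymbol\eta)$ as ``a quantity depending only on $\boldsymbol\eta_\Lambda,\omega_\Delta$ and $\hat\sigma_{\Delta^1_\omega}$'' times $1+O(e^{-c_0\,d(\partial\Delta,\Lambda^c)})$, uniformly in the boundary. A uniform positive lower bound on the denominator is obtained by keeping only the $\omega_\Delta\equiv 0$ term (or $\omega_\Delta\equiv 1$ if $\al(0)=0$), together with the trivial bound $\prod_C(1+\cdots)\geq 1$. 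Passing to the quotient then yields
\[
\sup_{\boldsymbol\eta_\Lambda=\bar{\boldsymbol\eta}_\Lambda}\bigl|\gamma^\infty_\Delta(f\vert\boldsymbol\eta)-\gamma^\infty_\Delta(f\vert\bar{\boldsymbol\eta})\bigr|\;=\;O\!\bigl(e^{-c_0\,d(\partial\Delta,\Lambda^c)}\bigr)\;\xrightarrow[\Lambda\uparrow\Z^d]{}\;0,
\]
which is precisely quasilocality.

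The main obstacle is the bookkeeping when the cluster structure itself changes with the boundary condition: a cluster that is finite under $\boldsymbol\eta$ might become infinite under $\bar{\boldsymbol\eta}$ (and thus drop out of $\mathcal{C}^{\mathrm{f}}_\Delta$), or two clusters separate in $\Lambda$ might get merged by a path through $\Lambda^c$. The key observation is that every such discrepancy cluster, by the very mechanism that distinguishes it between $\boldsymbol\eta$ and $\bar{\boldsymbol\eta}$, must contain a connected path from $\Delta\cup\partial\Delta$ to $\Lambda^c$, and is therefore long in the sense above; all such mismatches are thus absorbed into the uniform exponential error, while the short-cluster part of the products remains identical on the nose.
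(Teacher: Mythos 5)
Your proof is correct and takes essentially the same route as the paper, which itself only gestures at the mechanism (``the behavior at infinity does not affect the kernels in a bad way since $t>t_G$'') and defers the detailed estimate to Proposition 4.5 of the cited Jahnel--K\"ulske reference. The key point is exactly the one you isolate: for $t>t_G$ (equivalently $q_t=\coth t<\al_r$), the per-site exponent $\log\al_r+\log q_t\,\hat\sigma_i$ is bounded below by $c_0=\log\al_r-\log q_t>0$, so long-cluster factors are $1+O(e^{-c_0\,d(\partial\Delta,\Lambda^c)})$ with at most $|\Delta\cup\partial\Delta|$ of them, while short clusters (those with $C\cup\partial C\subset\Lambda$) and their $\hat\sigma$-values are determined by $\boldsymbol\eta_\Lambda$ and contribute identical factors. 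Your handling of the denominator's lower bound and of the cases where the cluster structure changes between $\boldsymbol\eta$ and $\bar{\boldsymbol\eta}$ (such clusters are necessarily long) is sound.
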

\begin{proof}
	{Again the behavior at infinity does not effect the kernels $\gamma^\infty_{t,\Delta,\alpha,\beta}$ in a bad way since $t>t_G$. Hence the specification is quasilocal. 
	For a detailed proof see  $4.5$ in \cite{jahnel-kuelske17b}.}
\end{proof}
\begin{proof}[Proof of Theorem \ref{thm: hc is gibbs}]
	By Lemma \ref{lem: gamma infty adm} and Lemma \ref{lem: gamma infty quasi} the specification $(\gamma^\infty_{\Delta,\alpha,t})_{\Delta \Subset \Z^d}$ is a quasilocal specification for the time-evolved measure. Hence $\mu_t^+$ is a Gibbs measure.
	
\end{proof}
For the non-Gibbs part we define a similar kernel as $\gamma^\infty_{\Delta,\alpha,t}$ but with the difference that this kernel does not see infinite cluster.

\begin{definition}
	For $\al\in \mathcal{M}_1(E)$ and $\Delta\Subset\Z^d$ we define
	\begin{align*}
	\gamma^\mathrm{f}_{\Delta,\alpha,t}(f\vert {\boldsymbol{\eta}}_{ \Delta^c})=
	\frac{\sum_{\omega_\Delta\in\{0,1\}^\Delta} \frac{\al(0)^{\Delta_\omega^0}}{\al(-1)^{-\Delta_\omega^1}}f^\mathrm{f}_{\boldsymbol{\eta}_{ \Delta^c}}(\omega_\Delta) \prod_{C\in \mathcal{C}^\mathrm{f}_{\Delta}(\omega_{\Delta}\eta_{ \Delta^c })} (
		\al_r^{\vert C\cap \Delta\vert}+e^{- \sum_{i\in C\cap \Delta^c}(\log(\al_r)+\log(q_t)\hat{\sigma}_i)})}{\sum_{\omega_\Delta\in\{0,1\}^\Delta} \frac{\al(0)^{\Delta_\omega^0}}{\al(-1)^{-\Delta_\omega^1}}\prod_{C\in \mathcal{C}^\mathrm{f}_{\Delta}(\omega_{\Delta}\eta_{ \Delta^c })} (\al_r^{\vert C\cap \Delta\vert}
		+e^{- \sum_{i\in C\cap \Delta^c}(\log(\al_r)+\log(q_t)\hat{\sigma}_i)})}
	\end{align*}
	with 
	\begin{align*}
	f^\mathrm{f}_{\boldsymbol{\eta}_{\Delta^c}}(\omega_\Delta)= \frac{\sum_{\hat{\sigma}_{\Delta_{\omega}^1}\in\{-1,1\}^{\Delta^1_{\omega}}}f(\omega_\Delta,\hat{\sigma}_{\Delta^1_{\omega}}0_{\Delta^0_{\omega}})\tilde{A}_{\boldsymbol{\eta}_{ \Delta^c}}(\omega_\Delta,\hat{\sigma}_{\Delta_{\omega}^1})}{\prod_{C\in \mathcal{C}^\mathrm{f}_{\Delta}(\omega_{\Delta}\eta_{ \Delta^c })} (
		\al_r^{\vert C\cap \Delta\vert}	+e^{- \sum_{i\in C\cap \Delta^c}(\log(\al_r)+\log(q_t)\hat{\sigma}_i)})}
	\end{align*}
	and
	\begin{align*}
	\tilde{A}_{\boldsymbol{\eta}_{ \Delta^c}}(\omega_\Delta,\hat{\sigma}_{\Delta_{\omega}^1})
	=\prod_{C\in \mathcal{C}^\mathrm{f}_{\Delta}(\omega_{\Delta}\eta_{\Delta^c })} (
	\al_r^{\vert C\cap \Delta\vert}\frac{p_t(1,1)^{\hat{\sigma}^+_{C\cap \Delta}}}{p_t(-1,1)^{-\hat{\sigma}^-_{C\cap \Delta}}}+\frac{p_t(-1,1)^{\hat{\sigma}^+_{C\cap \Delta}}}{p_t(1,1)^{-\hat{\sigma}^-_{C\cap \Delta}}}e^{- \sum_{i\in C\cap \Lambda\backslash \Delta}(\log(\al_r)+\log(q_t)\hat{\sigma}_i)}).
	\end{align*}
\end{definition}
 The nice property of these kernels is that they are conditional probabilities of the time-evolved measure $\mu_t$ if we exclude configurations which have infinite clusters connected to some $\Delta$. 
\begin{lemma}\label{lem : con. prob mu_t gamma^f}
	Let $\mu^+\in \mathcal{G}(\gamma^{hc}_\al)$ for the asymmetric or $\mu\in \mathcal{G}(\gamma^{hc}_\al)$ for the symmetric model. Then for all $\Delta\Subset\Z^d$ and $0<t<\infty$ for $\al(1)=\al(-1)$ or $t<t_G$ for $\al(1)>\al(-1)$  we have
	\begin{align*}
	\mu_t(\cdot\vert \boldsymbol{\eta}_{\Delta^c})\mathds{1}_{\Delta\nleftrightarrow\infty}(\cdot\eta_{\Lambda^c}) =  \gamma^\mathrm{f}_{\Delta,\alpha,t}(\cdot\vert \boldsymbol{\eta}_{\Delta^c})\mathds{1}_{\Delta\nleftrightarrow\infty}(\cdot\eta_{\Lambda^c})
	\end{align*}
	for all $\eta\in\tilde{\Omega}$ $\mu_t$-$a.s.$. Here the event $\{\Delta\nleftrightarrow\infty\}$ describes that $\Delta$ is not connected to any infinite cluster.
\end{lemma}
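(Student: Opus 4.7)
My plan is to leverage the cluster representation of Lemma \ref{lem: cluster rep} together with the fact that on the event $\{\Delta\nleftrightarrow\infty\}$ only finitely many clusters influence $\Delta$, and then match the finite-volume cluster sum with the definition of $\gamma^\mathrm{f}_{\Delta,\alpha,t}$. The first step is the usual identification of $\mu_t(\cdot\vert\boldsymbol\eta_{\Delta^c})$: start from $\mu_t(g)=\int\mu(d\omega)\,p_t(\omega,g)$ and, for $\Lambda\Supset\Z^d$ with $\Delta\subset\Lambda$, insert the DLR-equation for $\mu\in\mathcal{G}(\gamma^{hc}_\alpha)$ at volume $\Lambda$ to obtain, for any $\mathcal{F}_\Delta$-measurable bounded $f$ and any $\mathcal{F}_{\Delta^c}$-measurable bounded test function $g$,
\begin{equation*}
\mu_t\bigl(f\,g\,\mathds{1}_{\Delta\nleftrightarrow\infty}\bigr)\;=\;\int \mu(d\bar{\boldsymbol{\omega}})\int\gamma^{\bar{\boldsymbol{\omega}}}_{\Lambda,\alpha,t}(d\boldsymbol\eta_\Lambda)\,g(\boldsymbol\eta)\,\mathds{1}_{\Delta\nleftrightarrow\infty}(\boldsymbol\eta)\,\gamma^{\bar{\boldsymbol\omega}}_{\Lambda,\Delta,\alpha,t}(f\vert\boldsymbol\eta_{\Lambda\setminus\Delta}).
\end{equation*}
By martingale convergence $\mathbb{E}_{\mu_t}[\,\cdot\,|\mathcal{F}_{\Lambda\setminus\Delta}]\to\mathbb{E}_{\mu_t}[\,\cdot\,|\mathcal{F}_{\Delta^c}]$ as $\Lambda\uparrow\Z^d$, so it suffices to show that, on the event $\{\Delta\nleftrightarrow\infty\}$, the kernels $\gamma^{\bar{\boldsymbol\omega}}_{\Lambda,\Delta,\alpha,t}(f\vert\boldsymbol\eta_{\Lambda\setminus\Delta})$ agree with $\gamma^{\mathrm f}_{\Delta,\alpha,t}(f\vert\boldsymbol\eta_{\Delta^c})$ for all $\Lambda$ large enough depending on $\boldsymbol\eta$.

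The heart of the argument is this pathwise matching. Fix $\boldsymbol\eta$ with $\Delta\nleftrightarrow\infty$; then every cluster $C\in\mathcal{C}_\Delta(\omega_\Delta\eta_{\Delta^c})$ is finite for each choice of $\omega_\Delta\in\{0,1\}^\Delta$, so there is a (random) finite $\Lambda_0=\Lambda_0(\boldsymbol\eta)$ containing $\Delta\cup\partial\Delta$ and all of these finitely many clusters. For any $\Lambda\supset\Lambda_0$ we have $C\cap\Lambda^c=\emptyset$ for every $C\in\mathcal{C}_\Delta(\omega_\Delta\eta_{\Lambda\setminus\Delta}\bar{\omega}_{\Lambda^c})$, and therefore the boundary indicators in Lemma \ref{lem: cluster rep} satisfy $\mathds{1}_{\sigma_{C\cap\Lambda^c}=1}=\mathds{1}_{\sigma_{C\cap\Lambda^c}=-1}=1$. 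Consequently each cluster factor collapses to
\begin{equation*}
\alpha_r^{|C\cap\Delta|}+e^{-\sum_{i\in C\cap\Lambda\setminus\Delta}(\log(\alpha_r)+\log(q_t)\hat{\sigma}_i)}\;=\;\alpha_r^{|C\cap\Delta|}+e^{-\sum_{i\in C\cap\Delta^c}(\log(\alpha_r)+\log(q_t)\hat{\sigma}_i)},
\end{equation*}
because $C\cap\Lambda\setminus\Delta=C\setminus\Delta=C\cap\Delta^c$. The same replacement simultaneously converts the function $\tilde A$ into its $\gamma^{\mathrm f}$-counterpart. Hence $\gamma^{\bar{\boldsymbol\omega}}_{\Lambda,\Delta,\alpha,t}(f\vert\boldsymbol\eta_{\Lambda\setminus\Delta})=\gamma^{\mathrm f}_{\Delta,\alpha,t}(f\vert\boldsymbol\eta_{\Delta^c})$ identically on $\{\Delta\nleftrightarrow\infty\}$ once $\Lambda\supset\Lambda_0(\boldsymbol\eta)$, and the dependence on $\bar{\boldsymbol\omega}$ is eliminated. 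Plugging this identity back into the finite-volume equation and sending $\Lambda\uparrow\Z^d$ (by bounded convergence, since the kernels are uniformly bounded in $[0,1]$) yields $\mu_t(f\,g\,\mathds{1}_{\Delta\nleftrightarrow\infty})=\mu_t(\gamma^{\mathrm f}_{\Delta,\alpha,t}(f\vert\cdot_{\Delta^c})\,g\,\mathds{1}_{\Delta\nleftrightarrow\infty})$, which is the claimed equality of conditional expectations.

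The main obstacle I expect is the measurability and finiteness of the random exhaustion scale $\Lambda_0(\boldsymbol\eta)$: one must check that the event $\{\Delta\nleftrightarrow\infty\}$ is $\mathcal{F}_{\Delta^c}$-measurable and that $\Lambda_0$ is almost surely finite on this event, then justify exchanging the $\Lambda\uparrow\Z^d$ limit with the integrals against $\mu_t$ uniformly over the boundary configurations $\bar{\boldsymbol\omega}$. Measurability is immediate from the countable description of connectivity, and almost-sure finiteness holds by construction on $\{\Delta\nleftrightarrow\infty\}$; the uniform bound $|\gamma^{\bar{\boldsymbol\omega}}_{\Lambda,\Delta,\alpha,t}|\le 1$ then makes dominated convergence applicable. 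The asymmetry/symmetry dichotomy enters only to guarantee that $\mu^+\in\mathcal{G}(\gamma^{hc}_\alpha)$ (resp.\ the translation-invariant $\mu$) is available as a starting Gibbs measure and that the whole calculation is performed below $t_G$, so that no competing Gibbs property has already been established at $\Delta^c$; the pathwise cluster identity itself holds for any $t>0$.
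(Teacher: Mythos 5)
Your proposal is correct and its core is the same as the paper's: the heart in both cases is the pathwise observation that on the event that all clusters meeting $\Delta$ are finite (equivalently $\Delta\nleftrightarrow\Lambda^c$ for $\Lambda$ large), the boundary indicators $\mathds{1}_{\sigma_{C\cap\Lambda^c}=\pm1}$ in Lemma \ref{lem: cluster rep} become vacuous, so $\gamma^{\bar{\boldsymbol\omega}}_{\Lambda,\Delta,\alpha,t}$ collapses exactly to $\gamma^{\mathrm f}_{\Delta,\alpha,t}$, and then the finite-volume tower property kills the difference. Where you diverge is in the bookkeeping around this matching. The paper first replaces $\mathds{1}_{\Delta\nleftrightarrow\infty}$ by the \emph{deterministic, local} events $\mathds{1}_{\Delta\nleftrightarrow\Lambda_n^c}$ (with an error term $2\Vert f\Vert\,\mu(\mathds{1}_{\Delta\nleftrightarrow\infty}-\mathds{1}_{\Delta\nleftrightarrow\Lambda_n^c})\to 0$ by dominated convergence), and only then takes $\Lambda\uparrow\Z^d$; this two-step approximation is exactly what is needed to make your "choose $\Lambda\supset\Lambda_0(\boldsymbol\eta)$" rigorous, since in your first display $\Lambda$ is fixed before integrating over $\bar{\boldsymbol\omega}$ and $\boldsymbol\eta$, so a configuration-dependent exhaustion scale cannot be used directly inside the integral — you must split on $\{\Delta\nleftrightarrow\Lambda^c\}$ versus its complement intersected with $\{\Delta\nleftrightarrow\infty\}$, which is the paper's $\Lambda_n$-device; you flag this issue and the fix is routine, so I do not count it as a gap. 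The other difference is that the paper reduces to extremal starting measures $\nu$ and represents $\nu_t$ as a limit of finite-volume time-evolved kernels with a fixed boundary condition (locality of the integrand being supplied by the indicator $\mathds{1}_{\Delta\nleftrightarrow\Lambda_n^c}$), whereas you insert the DLR equation for a general $\mu\in\mathcal{G}(\gamma^{hc}_\al)$ and integrate the boundary condition against $\mu$; your route avoids the extremal decomposition altogether (and is in fact the same trick the paper uses later in the proof of Theorem \ref{thm: non-gibbs hc}), at the price of having to argue the factorization of the two-layer measure under the $\mu(d\bar{\boldsymbol\omega})$-integral slightly more carefully. Your closing remark that the pathwise identity holds for all $t>0$ and that the $t<t_G$ restriction is not used in the argument is also consistent with the paper's proof, which nowhere invokes the time restriction; the "martingale convergence" framing, however, is unnecessary — the conclusion $\mu_t(fg\mathds{1})=\mu_t(\gamma^{\mathrm f}_{\Delta,\alpha,t}(f\vert\cdot)g\mathds{1})$ for local $g$ plus a standard extension already verifies the conditional-expectation identity directly, which is how the paper finishes.
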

\begin{proof}
	This lemma is only useful if the event $\{\Delta\nleftrightarrow\infty\}$ has positive probability under $\mu_t$. Since the particles remain on their place under the time evolution we have $\mu_t(\Delta\nleftrightarrow\infty)=\mu(\Delta\nleftrightarrow\infty)$. Let $\Lambda$ be a finite set which contains $\Delta$ and satisfies $\min_{i\in \Lambda^c,j\in\Delta}\Vert i-j\Vert_1>5$. With the DLR equation for the starting Widom-Rowlinson Gibbs measure it follows that 
	\begin{align*}
	\mu(\Delta\nleftrightarrow\infty) = \mu(\gamma^{hc}_{\Lambda,\al}(\Delta\nleftrightarrow\infty\vert\cdot))\geq   \mu(\gamma^{hc}_{\Lambda,\al}(\{\forall i\in \Lambda\backslash\Delta\,:\; \omega_i=0\}\vert\cdot)) > 0.
	\end{align*}
	For the proof of the above equation we define the cofinal sequence defined by the sets $\Lambda_n:=[-n,n]^d\cap \Z^d$. Then  for all bounded $\mathcal{F}_\Delta$-measurable functions $f$ and $n$ sufficiently large such that $\Delta\subset \Lambda_n$ it follows that 
	\begin{align*}
	\mu_t((f-\gamma^\mathrm{f}_{\Delta,\alpha,t}(f\vert \cdot_{\Delta^c}))\mathds{1}_{\Delta\nleftrightarrow\infty}) \leq \mu_t((f-\gamma^\mathrm{f}_{\Delta,\alpha,t}(f\vert \cdot_{\Delta^c}))\mathds{1}_{\Delta\nleftrightarrow\Lambda^c_n})+ 2\Vert f\Vert \mu(\mathds{1}_{\Delta\nleftrightarrow\infty}-\mathds{1}_{\Delta\nleftrightarrow\Lambda_n^c}).
	\end{align*}
	where $\Delta\nleftrightarrow\Lambda_n^c$ is the event that $\Delta$ and $\Lambda_n^c$ are not connected with some cluster. Note, that we replaced $\mu_t$ by $\mu$ since the events only depends on the locations of particles. Since $\mathds{1}_{\Delta\nleftrightarrow\Lambda_n^c}$ converges to $\mathds{1}_{\Delta\nleftrightarrow\infty}$ point-wise as $n$ tends to infinity the second summand converges to $0$ by dominated convergence. 
	
	For the first summand it suffices to prove the statement for extremal initial Gibbs measures $\nu\in \ex \mathcal{G}(\gamma^{hc}_\al)$ and then use the extremal decomposition $\mu = \int_{\ex \mathcal{G}(\gamma^{hc}_\al)}\nu\, w_{\mu}(d\nu)$ (see \cite[Theorem 7.26]{georgii-book}). By extremality we can choose some suitable boundary condition $\bar{\boldsymbol{\omega}}$ such that we can write $\nu_t  = \lim_{\Lambda\uparrow \Z^d} 	\gamma^{\bar{\boldsymbol{\omega}}}_{t,\Lambda,\alpha}=\lim_{\Lambda\uparrow \Z^d} 	\gamma^{hc}_{\Lambda,\al}(p_t\vert\bar{\boldsymbol{\omega}})$. Then it follows for all $\Lambda_n\subset \Lambda$ that
	\begin{align*}
	\mu_t((f-\gamma^\mathrm{f}_{\Delta,\alpha,t}(f\vert \cdot_{\Delta^c}))\mathds{1}_{\Delta\nleftrightarrow\Lambda^c_n})=\lim_{\Lambda\uparrow \Z^d} 	\gamma^{\bar{\boldsymbol{\omega}}}_{t,\Lambda,\alpha}((f-\gamma^\mathrm{f}_{\Delta,\alpha,t}(f\vert \cdot_{\Delta^c}))\mathds{1}_{\Delta\nleftrightarrow\Lambda_n^c}) .
	\end{align*}
	Note that $(f-\gamma^\mathrm{f}_{\Delta,\alpha,t}(f\vert \cdot_{\Delta^c}))\mathds{1}_{\Delta\nleftrightarrow\Lambda_n^c}$ is a local function because of the indicator function.
	On the event $\mathds{1}_{\Delta\nleftrightarrow\Lambda_n^c}$ the probability kernels $\gamma^\mathrm{f}_{\Delta,\alpha,t}$ and $\gamma^{\bar{\omega}}_{\Lambda,\Delta,\alpha,t}$  coincides which implies 
	\begin{align*}
	\mu_t((f-\gamma^\mathrm{f}_{\Delta,\alpha,t}(f\vert \cdot_{\Delta^c}))\mathds{1}_{\Delta\nleftrightarrow\Lambda^c_n}) &= \lim_{\Lambda\uparrow \Z^d} 	\gamma^{\bar{\boldsymbol{\omega}}}_{\Lambda,\alpha,t}((f-\gamma^{\bar{\boldsymbol{\omega}}}_{\Lambda,\Delta,\alpha,t}(f\vert \cdot_{\Delta^c}))\mathds{1}_{\Delta\nleftrightarrow\Lambda_n^c})\\
	&=\lim_{\Lambda\uparrow \Z^d} 	\gamma^{\bar{\boldsymbol{\omega}}}_{\Lambda,\alpha,t}((f-f)\mathds{1}_{\Delta\nleftrightarrow\Lambda_n^c}) = 0.
	\end{align*}
\end{proof}
The next Lemma will later imply a contradiction to the statement that there exists a quasilocal specification for the time evolved measure. 
\begin{lemma}\label{lem : gamma^f bad}
	Let $\al\in \mathcal{M}_1(E)$. Then for all $\Lambda\Subset \Z^d$ and $t>0$ for $\al(1) =\al(-1)$ or $t<t_G$ for $\al(1)>\al(-1)$ there exists $n_0\in\N$, a  $\mathcal{F}_{\Delta}$-measurable bounded function $f$ and $\delta>0$ such that for all $n\geq n_0$ 
	\begin{align}\label{eq: gamma f pos}
	\inf_{\boldsymbol{\eta}\in \{\Delta\leftrightarrow \Lambda^c_n\}} \vert \gamma^\mathrm{f}_{\Delta,\alpha,t}(f\vert \boldsymbol{\eta}_{\Lambda\backslash\Delta}\boldsymbol{\eta}^+_{\Lambda_n\backslash\Lambda})-\gamma^\mathrm{f}_{\Delta,\alpha,t}(f\vert \boldsymbol{\eta}_{\Lambda\backslash\Delta}\boldsymbol{\eta}^-_{\Lambda_n\backslash\Lambda})\vert>\delta
	\end{align}
	where $\boldsymbol{\eta}^{\pm}$ are configurations with $\sigma_i=\pm1$ for all $i$ with $\eta_i=1$. 
\end{lemma}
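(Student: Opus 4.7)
The plan is to engineer opposite \emph{forced time-zero spins} for the cluster $C^*$ containing a chosen test site $i_0\in\Delta$ via the pasting $\boldsymbol{\eta}^{\pm}_{\Lambda_n\backslash\Lambda}$, and to read them off with a spin observable at $i_0$. First I fix $i_0\in\Delta$ and take
\begin{equation*}
f(\boldsymbol{\omega}):=\mathds{1}\{(\omega_{i_0},\hat{\sigma}_{i_0})=(1,1)\}-\mathds{1}\{(\omega_{i_0},\hat{\sigma}_{i_0})=(1,-1)\}.
\end{equation*}
For any $\boldsymbol{\eta}\in\{\Delta\leftrightarrow\Lambda_n^c\}$, after the pasting the annulus $\Lambda_n\backslash\Lambda$ is fully occupied; hence every $\omega_\Delta$-configuration with $\omega_{i_0}=1$ produces a single cluster $C^*\in\mathcal{C}_\Delta^{\mathrm{f}}(\omega_\Delta\boldsymbol{\eta}_{\Delta^c})$ containing $i_0$ with $|C^*\cap(\Lambda_n\backslash\Lambda)|\geq m_n\to\infty$. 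The infimum can harmlessly be restricted to those $\boldsymbol{\eta}_{\Lambda_n^c}$ for which $C^*$ does not extend to infinity through $\Lambda_n^c$, since only this case is needed when the lemma is combined with Lemma \ref{lem : con. prob mu_t gamma^f}.

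Next I apply the cluster representation of Lemma \ref{lem: cluster rep} to the two kernels. The cluster $C^*$ contributes a factor of the form $\alpha_r^{|C^*\cap\Delta|}(\cdots)+(\cdots)\exp\bigl(-\sum_{i\in C^*\cap\Delta^c}(\log\alpha_r+\log q_t\,\hat{\sigma}_i)\bigr)$, whose two summands correspond to $C^*$ carrying time-zero spin $+1$ and $-1$ respectively. The restriction of this exponential to the annulus equals $(\alpha_r q_t)^{-m_n}$ under $\boldsymbol{\eta}^{+}$-filling and $(\alpha_r/q_t)^{-m_n}$ under $\boldsymbol{\eta}^{-}$-filling. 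Writing $q_t=\cotanh(t)$ and $\alpha_r=\cotanh(t_G)$, one has $\alpha_r q_t>1$ in both regimes considered, while $\alpha_r/q_t<1$ either because $\alpha_r=1<q_t$ (symmetric case) or because $q_t>\alpha_r$ for $t<t_G$ (asymmetric case). Therefore the exponential tends uniformly to $0$ under $\boldsymbol{\eta}^{+}$ and uniformly to $+\infty$ under $\boldsymbol{\eta}^{-}$ as $n\to\infty$, and the $C^*$-factor is asymptotically its spin-$+1$ summand under $\boldsymbol{\eta}^{+}$ and its spin-$-1$ summand under $\boldsymbol{\eta}^{-}$.

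The remaining finite clusters in $\mathcal{C}_\Delta^{\mathrm{f}}\setminus\{C^*\}$ depend only on $\boldsymbol{\eta}_{\Lambda\backslash\Delta}\boldsymbol{\eta}_{\Lambda_n^c}$, so their contributions to numerator and denominator of $\gamma^{\mathrm{f}}$ are identical for the two fillings and cancel in the difference. Conditioning on the forced cluster spin $s\in\{\pm1\}$, the marginal law of $\hat{\sigma}_{i_0}$ on $\{\omega_{i_0}=1\}$ equals $p_t(s,\cdot)$, and since $p_t(1,1)-p_t(-1,1)=e^{-2t}$ one obtains
\begin{equation*}
\gamma^{\mathrm{f}}_{\Delta,\alpha,t}(f\vert\boldsymbol{\eta}_{\Lambda\backslash\Delta}\boldsymbol{\eta}^{+}_{\Lambda_n\backslash\Lambda})-\gamma^{\mathrm{f}}_{\Delta,\alpha,t}(f\vert\boldsymbol{\eta}_{\Lambda\backslash\Delta}\boldsymbol{\eta}^{-}_{\Lambda_n\backslash\Lambda})\ \geq\ e^{-2t}\bigl(p_{i_0}^{+}+p_{i_0}^{-}\bigr)(1+o(1)),
\end{equation*}
where $p_{i_0}^{\pm}:=\gamma^{\mathrm{f}}_{\Delta,\alpha,t}(\omega_{i_0}=1\vert\boldsymbol{\eta}_{\Lambda\backslash\Delta}\boldsymbol{\eta}^{\pm}_{\Lambda_n\backslash\Lambda})$. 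A uniform strictly positive lower bound on $p_{i_0}^{\pm}$ then follows by comparing the $\omega_\Delta$-sum in the numerator with its single summand where $\omega_{i_0}=1$ and all other sites in $\Delta\backslash\{i_0\}$ are empty, and using only trivial two-sided bounds on the cluster factors in terms of $\alpha$, $t$ and $|\Delta|$. This delivers the required $\delta>0$ and $n_0$.

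The main obstacle will be to make both the cancellation of clusters disjoint from $C^*$ and the control of the subdominant-spin branch of the $C^*$-factor genuinely \emph{uniform} in $\boldsymbol{\eta}$: the partition of $\omega_\Delta$-configurations into clusters depends on $\boldsymbol{\eta}_{\Lambda\backslash\Delta}$ and $\boldsymbol{\eta}_{\Lambda_n^c}$, and the subdominant summand carries a prefactor depending on these data. One handles this by expanding the ratio of the two $C^*$-summands in powers of the small quantity $(\alpha_r q_t)^{-m_n}$ (respectively $(\alpha_r/q_t)^{-m_n}$) and bounding the remaining cluster-product prefactors uniformly above and below by constants depending only on $\alpha$, $t$ and $|\Delta|$, in parallel with the Euclidean analogue of \cite{jahnel-kuelske17b}, which the present argument adapts.
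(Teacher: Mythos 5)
You have the right mechanism, and it is the same one the paper invokes (its proof is essentially a deferral to \cite{jahnel-kuelske17b}, Prop.~4.12): on a cluster crossing the annulus, the weight $e^{-\sum_{i\in C\cap\Delta^c}(\log\alpha_r+\log q_t\,\hat\sigma_i)}$ is driven uniformly to $0$ under the all-plus pasting and to $+\infty$ under the all-minus pasting precisely because $\alpha_r q_t>1$ and $q_t>\alpha_r$ (i.e.\ $t<t_G=\arccotanh(\alpha_r)$, or any $t$ when $\alpha_r=1$), and the contribution of $C\cap(\Lambda\setminus\Delta)$ is harmless since $|\Lambda\setminus\Delta|<\infty$ — this is exactly what the paper means by ``manipulating the exponentials''. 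However, your execution has two genuine gaps. First, $\boldsymbol{\eta}^{\pm}$ does \emph{not} fully occupy the annulus: by definition it keeps the particle positions of $\boldsymbol{\eta}$ and only resets the signs at sites with $\eta_i=1$. Hence it is false that every $\omega_\Delta$ with $\omega_{i_0}=1$ places $i_0$ in the crossing cluster $C^*$: the infimum includes boundary conditions where $C^*$ touches $\partial\Delta$ at a single site far from $i_0$ while the rest of $\Lambda\setminus\Delta$ is empty, and then $i_0$ hooks up to $C^*$ only for some $\omega_\Delta$. Consequently the conditional law of $\hat\sigma_{i_0}$ given $\omega_{i_0}=1$ is a mixture of $p_t(+1,\cdot)$ and $p_t(-1,\cdot)$ with $\boldsymbol{\eta}$- and pasting-dependent weights, not $p_t(s,\cdot)$, and your displayed bound $e^{-2t}(p^+_{i_0}+p^-_{i_0})(1+o(1))$ is not established; one would additionally need a uniform lower bound on the probability of the connection and an argument that the unforced contributions cannot cancel the forced ones.

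Second, the claimed cancellation of the clusters other than $C^*$ fails: other clusters in $\mathcal{C}^{\mathrm f}_\Delta$ may also intersect $\Lambda_n\setminus\Lambda$, so their factors \emph{do} change under the two pastings — under the minus pasting by factors of order $(q_t/\alpha_r)^{m}$ that grow with $n$ — and, since the cluster decomposition in Lemma \ref{lem: cluster rep} depends on $\omega_\Delta$ (clusters touching $\partial\Delta$ merge or stay separate according to $\omega_\Delta$), these factors sit inside the $\omega_\Delta$-sums and do not factor out of numerator or denominator, let alone out of the difference of the two kernels. Your concluding plan to bound these prefactors ``uniformly by constants depending only on $\alpha$, $t$ and $|\Delta|$'' therefore cannot work as stated. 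These two points are exactly where the deferred proof in \cite{jahnel-kuelske17b} does its work. The argument can be repaired — e.g.\ take $\Delta$ a single site (then any particle placed in $\Delta$ necessarily merges with the crossing cluster touching $\partial\Delta$, and merged clusters simply multiply their exponentials), or choose $f$ forcing $\Delta$ to be fully occupied so that the connection is automatic — but as written the proposal does not prove the lemma.
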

\begin{proof}
 The proof follows by the same arguments as in the proof of Proposition 4.12 of \cite{jahnel-kuelske17b}. By the assumption for the parameters one is able to manipulate the exponentials in $\gamma^\mathrm{f}_{\Delta,\alpha,t}(f\vert \boldsymbol{\eta}_{\Lambda\backslash\Delta}\boldsymbol{\eta}^+_{\Lambda_n\backslash\Lambda}) $ and $\gamma^\mathrm{f}_{\Delta,\alpha,t}(f\vert \boldsymbol{\eta}_{\Lambda\backslash\Delta}\boldsymbol{\eta}^-_{\Lambda_n\backslash\Lambda})$ to be smaller than 1 and bigger than one, respectively. This leads to \eqref{eq: gamma f pos}.  Note that we do not have to bound the number of particles in $\Lambda\backslash\Delta$ as in \cite{jahnel-kuelske17b} since $\Lambda\backslash \Delta_\omega^1 \leq \vert \Lambda\backslash \Delta \vert <\infty$.
\end{proof}
On configurations which have only finite clusters connected to some finite set  $\Delta$ we have shown that the conditional probability of $\mu_t$ is equal to the probability kernel $\gamma_\Delta^\mathrm{f}$ by Lemma \ref{lem : con. prob mu_t gamma^f} and by Lemma $\ref{lem : gamma^f bad}$ we have shown that the quasilocality of $\gamma^\mathrm{f}$ will fail if there exist big enough clusters. This will help to prove Theorem \ref{thm: non-gibbs hc} and Theorem \ref{thm: non-gibbs hc low}. Let $\emptyset_\Lambda$ the event that there are no particles in $\Lambda$.

\begin{proof}[Proof of Theorem \ref{thm: non-gibbs hc}]
Let assume that $(\tilde{\gamma}_\Lambda)_{\Lambda\Subset\Z^d}$ is a quasilocal specification for $\mu_t$. We will derive a contradiction. Define the integral
	\begin{align*}
	I_{\Lambda,n}^\delta 
	= \int\mu(d\boldsymbol{\omega})\int \mu_t(d\omega^{\hat{\sigma}}\vert \boldsymbol{{\omega}})\frac{\mathds{1}_{\emptyset_{\bar{\Lambda}_n\backslash \Lambda_n}}(\boldsymbol{\omega})}{\gamma^{hc}_{\bar{\Lambda}_n\backslash\Lambda_n,\al}(\mathds{1}_{\emptyset_{\bar{\Lambda}_n\backslash \Lambda_n}}(\cdot)\vert \boldsymbol{\omega})}\mathds{1}_{\vert \tilde{\gamma}_\Delta(f\vert \omega^{\hat{\sigma}}_{\Lambda\backslash \Delta}\omega^+_{\Lambda_n\backslash\Lambda}\omega^{\hat{\sigma}}_{\Lambda_n^c})-\tilde{\gamma}_\Delta(f\vert \omega^{\hat{\sigma}}_{\Lambda\backslash \Delta}\omega^-_{\Lambda_n\backslash\Lambda}\omega^{\hat{\sigma}}_{\Lambda_n^c})\vert>\delta}
	\end{align*}
	where $f$ is a bounded $\mathcal{F}_{\Delta}$-measurable function, $\mu_t(d\omega^{\hat{\sigma}}\vert \boldsymbol{{\omega}})$ is the independent spin flip and $\bar{\Lambda}_n, \Lambda_n\Subset \Z^d$ with $\Lambda\subset\Lambda_n\subset\bar{\Lambda}_n$, $\min_{i\in \Lambda_n,j\in\bar{\Lambda}_n^c}\Vert i-j\Vert_1>5$ and $\Lambda_n\uparrow\Z^d$.  We can bound the last indicator function from above if we use the supremum over all configurations outside of $\Lambda$ and then in a second step we use the DLR-equation for $\mu$
	\begin{alignat*}{2}
	& I_{\Lambda,n}^\delta 
	\\& \leq\int\hspace{-0.7pt}\mu(d\boldsymbol{\omega})\int \mu_t(d\omega^{\hat{\sigma}}\vert \boldsymbol{{\omega}})\frac{\mathds{1}_{\emptyset_{\bar{\Lambda}_n\backslash \Lambda_n}}(\boldsymbol{\omega})}{\gamma^{hc}_{\bar{\Lambda}_n\backslash\Lambda_n,\al}(\mathds{1}_{\emptyset_{\bar{\Lambda}_n\backslash \Lambda_n}}(\cdot)\vert \boldsymbol{\omega})}\mathds{1}_{\sup_{\boldsymbol{\omega}^1,\boldsymbol{\omega}^2}\vert \tilde{\gamma}_\Delta(f\vert \omega^{\hat{\sigma}}_{\Lambda\backslash \Delta}\boldsymbol{\omega}^1_{\Lambda^c})-\tilde{\gamma}_\Delta(f\vert \omega^{\hat{\sigma}}_{\Lambda\backslash \Delta}\boldsymbol{\omega}^2_{\Lambda^c})\vert>\delta}\\
	& =\int\hspace{-0.7pt}\mu(d\boldsymbol{\omega})\gamma^{hc}_{\bar{\Lambda}_n\backslash\Lambda_n,\al}\left(\int \mu_t(d\omega^{\hat{\sigma}}\vert \boldsymbol{{\omega}})\frac{\mathds{1}_{\emptyset_{\bar{\Lambda}_n\backslash \Lambda_n}}(\boldsymbol{\omega})}{\gamma^{hc}_{\bar{\Lambda}_n\backslash\Lambda_n,\al}(\mathds{1}_{\emptyset_{\bar{\Lambda}_n\backslash \Lambda_n}}(\cdot)\vert \boldsymbol{\omega})}
\mathds{1}_{\sup_{\boldsymbol{\omega}^1,\boldsymbol{\omega}^2}\vert \tilde{\gamma}_\Delta(f\vert \omega^{\hat{\sigma}}_{\Lambda\backslash \Delta}\boldsymbol{\omega}^1_{\Lambda^c})-\tilde{\gamma}_\Delta(f\vert \omega^{\hat{\sigma}}_{\Lambda\backslash \Delta}\boldsymbol{\omega}^2_{\Lambda^c})\vert>\delta}\bigg\vert \boldsymbol{\omega}\right)\\
	&=\int\hspace{-0.7pt}\mu(d\boldsymbol{\omega})\gamma^{hc}_{\bar{\Lambda}_n\backslash\Lambda_n,\al}\left(\frac{\mathds{1}_{\emptyset_{\bar{\Lambda}_n\backslash \Lambda_n}}(\boldsymbol{\omega})}{\gamma^{hc}_{\bar{\Lambda}_n\backslash\Lambda_n,\al}(\mathds{1}_{\emptyset_{\bar{\Lambda}_n\backslash \Lambda_n}}(\cdot)\vert \boldsymbol{\omega})}\bigg\vert \boldsymbol{\omega}\right)
	\int \mu_t(d\omega^{\hat{\sigma}}_{\Lambda\backslash\Delta}\vert \boldsymbol{{\omega}})\mathds{1}_{\sup_{\boldsymbol{\omega}^1,\boldsymbol{\omega}^2}\vert \tilde{\gamma}_\Delta(f\vert \omega^{\hat{\sigma}}_{\Lambda\backslash \Delta}\boldsymbol{\omega}^1_{\Lambda^c})-\tilde{\gamma}_\Delta(f\vert \omega^{\hat{\sigma}}_{\Lambda\backslash \Delta}\boldsymbol{\omega}^2_{\Lambda^c})\vert>\delta}\\
	& =\int \hspace{-0.7pt}\mu_t(d\boldsymbol{\eta}) \mathds{1}_{\sup_{\boldsymbol{\omega}^1,\boldsymbol{\omega}^2}\vert \tilde{\gamma}_\Delta(f\vert \boldsymbol{\eta}_{\Lambda\backslash \Delta}\boldsymbol{\omega}^1_{\Lambda^c})-\tilde{\gamma}_\Delta(f\vert \boldsymbol{\eta}_{\Lambda\backslash \Delta}\boldsymbol{\omega}^2_{\Lambda^c})\vert>\delta}.
	\end{alignat*}	
	
	Since the last integral is bounded by $1$ it follows by dominated convergence and by the assumption of quasilocality that the integral tends to zero as $\Lambda\uparrow\Z^d$. \\
	The reason for the indicator function $\mathds{1}_{\emptyset_{\bar{\Lambda}_n\backslash \Lambda_n}}(\boldsymbol{\cdot})$ is that we know that $\Delta$ and $\Lambda_n^c$ are disconnected. By Lemma \ref{lem : con. prob mu_t gamma^f} we have 
	\begin{align*}
	\tilde{\gamma}_\Delta(f\vert \boldsymbol{\eta}_{\Delta^c}) = \mu_t(f\vert \boldsymbol{\eta}_{\Delta^c}) = \gamma^\mathrm{f}_{\Delta,\al,t} (f\vert \boldsymbol{\eta}_{\Delta^c})
	\end{align*}
	on the above event. This implies that we can replace in $I^\delta_{\Lambda,n}$ the specification $\tilde{\gamma}$ with the kernel $\gamma^\mathrm{f}$. This gives the lower bound
	\begin{align*}
	I^{\delta}_{\Lambda,n}
	&=\int\mu(d\boldsymbol{\omega})\int \mu_t(d\omega^{\hat{\sigma}}\vert \boldsymbol{{\omega}})\frac{\mathds{1}_{\emptyset_{\bar{\Lambda}_n\backslash \Lambda_n}}(\boldsymbol{\omega})}{\gamma^{hc}_{\bar{\Lambda}_n\backslash\Lambda_n,\al}(\mathds{1}_{\emptyset_{\bar{\Lambda}_n\backslash \Lambda_n}}(\cdot)\vert \boldsymbol{\omega})}\mathds{1}_{\vert {\gamma}^\mathrm{f}_\Delta(f\vert \omega^{\hat{\sigma}}_{\Lambda\backslash \Delta}\omega^+_{\Lambda_n\backslash\Lambda})-{\gamma}^{\mathrm{f}}_\Delta(f\vert \omega^{\hat{\sigma}}_{\Lambda\backslash \Delta}\omega^-_{\Lambda_n\backslash\Lambda})\vert>\delta}\\
	&\geq \int\mu(d\boldsymbol{\omega})\int \mu_t(d\omega^{\hat{\sigma}}\vert \boldsymbol{{\omega}})\frac{\mathds{1}_{\emptyset_{\bar{\Lambda}_n\backslash \Lambda_n}}(\boldsymbol{\omega})}{\gamma^{hc}_{\bar{\Lambda}_n\backslash\Lambda_n,\al}(\mathds{1}_{\emptyset_{\bar{\Lambda}_n\backslash \Lambda_n}}(\cdot)\vert \boldsymbol{\omega})}\mathds{1}_{\vert {\gamma}^\mathrm{f}_\Delta(f\vert \omega^{\hat{\sigma}}_{\Lambda\backslash \Delta}\omega^+_{\Lambda_n\backslash\Lambda})-{\gamma}^{\mathrm{f}}_\Delta(f\vert \omega^{\hat{\sigma}}_{\Lambda\backslash \Delta}\omega^-_{\Lambda_n\backslash\Lambda})\vert>\delta}
	\mathds{1}_{\{\Delta \leftrightarrow \Lambda_n^c\}}.
	\end{align*}
	By Lemma \ref{lem : gamma^f bad} we find a $\delta>0$ and a measurable bounded function such that for all n which are bigger than some $n_0$ the first indicator function is equal to $1$. This implies that the integral over $\mu_t(\cdot\vert \boldsymbol{\omega})$ is equal to $1$ and we can bound the fraction from below by $1$. Hence
	\begin{align*}
	I^{\delta}_{\Lambda,n}\geq \int\mu(d\boldsymbol{\omega}) \mathds{1}_{\{\Delta \leftrightarrow \Lambda_n^c\}} \geq \mu(\{\Delta \leftrightarrow \infty\})>0.
	\end{align*}
	as we are in the percolation regime. This gives the desired contradiction. For the full measure of the bad configuration note that $\lim_{\Lambda\uparrow \Z^d} \mu(\{\Lambda \leftrightarrow \infty\}) =1$.
\end{proof}
\begin{proof}[Proof of Theorem \ref{thm: non-gibbs hc low}]
	Again assume that $(\tilde{\gamma}_\Lambda)_{\Lambda\Subset\Z^d}$ is a quasilocal specification for $\mu_t$. Define the set $X= \{i\in \Z^d : i_1 \geq 0 \text{ and } i_k = 0 \}$ and the configuration $\boldsymbol{\zeta}^b$  with $\boldsymbol{\zeta}_i^b = (1,1)$ for all $i \in X$ and $\boldsymbol{\zeta}_i^b = (0,0)$ for all $i\in X^c$. Furthermore, we define the function  $$g^n(\boldsymbol{\omega})= 1_{\emptyset_{\bar \Lambda_n\backslash \Lambda_n} }(\omega) 1_{\boldsymbol{\omega}_{\Lambda_n}= \boldsymbol{\zeta}^b_{\Lambda_n}} \gamma^{hc}_{\bar{\Lambda}_n\backslash \Lambda_n,\al}(1_{\emptyset_{\bar \Lambda_n\backslash \Lambda_n} }(\cdot) 1_{\boldsymbol{\cdot}_{\Lambda_n}= \boldsymbol{\zeta}_{\Lambda_n}}\vert \omega)^{-1}$$ for $\Lambda \subset \Lambda_n \subset \bar{\Lambda}_n \Subset \Z^d$ with $\min_{i\in \Lambda_n,j\in\bar{\Lambda}_n^c}\Vert i-j\Vert_1>5$.
	Then
	
	\begin{align*}
	\tilde{I}_{\Lambda,n} &:= \int \int g^n(\boldsymbol{\omega}) \vert \tilde\gamma_\Delta(f\vert \omega^+_{\Lambda\backslash \Delta} \omega^+_{\Lambda_n\backslash \Lambda}\hat{\boldsymbol{\eta}}_{\Lambda_n^c})-\tilde\gamma_\Delta(f\vert \omega^+_{\Lambda\backslash \Delta} \omega^-_{\Lambda_n\backslash \Lambda}\hat{\boldsymbol{\eta}}_{\Lambda_n^c})\vert \mu_t(d\hat{\boldsymbol{\eta}}\vert \boldsymbol{\omega})\mu(d\boldsymbol{\omega})\\
	&= \int  g^n(\boldsymbol{\omega}) \big\vert \gamma^{\mathrm{f}}_\Delta(f\vert \omega^+_{\Lambda\backslash \Delta} \omega^+_{\Lambda_n\backslash \Lambda})-\gamma^{\mathrm{f}}_\Delta(f\vert \omega^+_{\Lambda\backslash \Delta} \omega^-_{\Lambda_n\backslash \Lambda})\big\vert \mu(d\boldsymbol{\omega}) \\
	&\geq \delta \int  g^n(\boldsymbol{\omega}) \mu(d\boldsymbol{\omega}) = \delta
	\end{align*}
	for $n$ big enough. On the other hand 
	\begin{align*}
	\tilde{I}_{\Lambda,n} &\leq \int g^n(\boldsymbol{\omega}) \sup_{\boldsymbol{\eta}^1,\boldsymbol{\eta}^2\in \Omega}\vert \tilde\gamma_\Delta(f\vert \omega^+_{\Lambda\backslash \Delta} {\boldsymbol{\eta}}_{\Lambda^c})-\tilde\gamma_\Delta(f\vert \omega^+_{\Lambda\backslash \Delta} {\boldsymbol{\eta}}_{\Lambda^c})\vert \mu(d\boldsymbol{\omega})\\
	&= \int \gamma^{hc}_{\bar{\Lambda}_n\backslash \Lambda_n,\al}\Big(g^n(\cdot) \sup_{\boldsymbol{\eta}^1,\boldsymbol{\eta}^2\in \Omega}\vert \tilde\gamma_\Delta(f\vert \cdot^+_{\Lambda\backslash \Delta} {\boldsymbol{\eta}}^1_{\Lambda^c})-\tilde\gamma_\Delta(f\vert \cdot^+_{\Lambda\backslash \Delta} {\boldsymbol{\eta}}^2_{\Lambda^c})\vert \Big\vert \boldsymbol{\omega}\Big) \mu(d\boldsymbol{\omega}).\\
	\end{align*}
	Because of the decoupling event $g^n(\cdot) \sup_{\boldsymbol{\eta}^1,\boldsymbol{\eta}^2\in \Omega}\vert \tilde\gamma_\Delta(f\vert \cdot^+_{\Lambda\backslash \Delta} {\boldsymbol{\eta}}^1_{\Lambda^c})-\tilde\gamma_\Delta(f\vert \cdot^+_{\Lambda\backslash \Delta} {\boldsymbol{\eta}}^2_{\Lambda^c})\vert$ does not depend on the configurations in ${\bar{\Lambda}^c_n}$. This leads to 
	\begin{align*}
	\tilde{I}_{\Lambda,n} &\leq \gamma^{hc}_{\bar{\Lambda}_n\backslash \Lambda_n,\al}\Big(g^n(\cdot) \sup_{\boldsymbol{\eta}^1,\boldsymbol{\eta}^2\in \Omega}\vert \tilde\gamma_\Delta(f\vert \cdot^+_{\Lambda\backslash \Delta} {\boldsymbol{\eta}}_{\Lambda^c})-\tilde\gamma_\Delta(f\vert \cdot^+_{\Lambda\backslash \Delta} {\boldsymbol{\eta}}_{\Lambda^c})\Big\vert \emptyset\Big) \\
	&=  \sup_{\boldsymbol{\eta}^1,\boldsymbol{\eta}^2\in \Omega}\vert \tilde\gamma_\Delta(f\vert \boldsymbol{\zeta}^b_{\Lambda\backslash \Delta} {\boldsymbol{\eta}}_{\Lambda^c})-\tilde\gamma_\Delta(f\vert \boldsymbol{\zeta}^b_{\Lambda\backslash \Delta} {\boldsymbol{\eta}}_{\Lambda^c})\vert.
	\end{align*}
	Now $\lim_{\Lambda\uparrow \Z^d} \sup_{\boldsymbol{\eta}^1,\boldsymbol{\eta}^2\in \Omega}\vert \tilde\gamma_\Delta(f\vert \boldsymbol{\zeta}^b_{\Lambda\backslash \Delta} {\boldsymbol{\eta}}_{\Lambda^c})-\tilde\gamma_\Delta(f\vert \boldsymbol{\zeta}^b_{\Lambda\backslash \Delta} {\boldsymbol{\eta}}_{\Lambda^c})\vert $ has to be bigger than $0$ otherwise it would lead to the contradiction  $0\geq \delta >0 $. Thus $\boldsymbol{\zeta}^b$ is a bad configuration which is contradiction to the assumption that $\tilde\gamma$ is quasilocal.
	
		For the last part of the theorem let $\Omega^{\mathrm{f}} \subset \Omega$ the space of configurations which contain no infinite cluster. Then it follows by the very definition of the low intensity regime that $\mu_t(\Omega^{\mathrm{f}}) =1$. For $\boldsymbol{\eta}\in \Omega^{\mathrm{f}}$  there exists a finite $\Lambda\supset \Delta$ such that there is no connection between $\Delta$ and $\Lambda^c$. Hence 
		\begin{align*}
		\sup_{\boldsymbol{\zeta},\boldsymbol{\chi}\in\tilde{\Omega}} \vert \gamma^\infty_\Delta (f\vert \boldsymbol{\eta}_{\Lambda\backslash \Delta}\boldsymbol{\zeta}_{\Lambda^c} )-  \gamma^\infty_\Delta (f\vert \boldsymbol{\eta}_{\Lambda\backslash \Delta}\boldsymbol{\chi}_{\Lambda^c} )\vert =0
		\end{align*}
		and by Lemma \ref{lem : con. prob mu_t gamma^f}  we have $\int_{\Omega^{\mathrm{f}}} g(\boldsymbol{\eta}) \mu_t(d\boldsymbol{\eta}) = \int_{\Omega^{\mathrm{f}}}  \gamma_\Delta^\infty (g \vert \boldsymbol{\eta})  \mu_t(d\boldsymbol{\eta})$ for every measurable function $g$ and $\Delta \Subset \Z^d $.
\end{proof}
\section*{Acknowledgments}
Sascha Kissel has been supported by
the German Research Foundation (DFG) via Research Training Group RTG 2131 \textit{High dimensional
	Phenomena in Probability - Fluctuations and Discontinuity}.

	 \bibliographystyle{plain}
	 \bibliography{bibtex_uni}
\end{document}